\newcommand{\brandon}[1]{\textcolor{blue}{#1}}
\newcommand{\defterm}[1]{\emph{#1}}
\theoremstyle{plain}
	\newtheorem{thm}{Theorem}
	\numberwithin{thm}{section}
	\newtheorem*{thm*}{Theorem}
	\newtheorem{cor}[thm]{Corollary}
	\newtheorem*{cor*}{Corollary}
	\newtheorem{prop}[thm]{Proposition}
	\newtheorem*{prop*}{Proposition}
	\newtheorem{lem}[thm]{Lemma}
	\crefname{lem}{Lemma}{Lemmas}
	\newtheorem*{lem*}{Lemma}
	\newtheorem*{ex*}{Exercise}
	\newtheorem*{claim*}{Claim}
	\newtheorem*{question*}{Question}
	\newtheorem*{fact*}{Fact}
	\crefname{assume}{Assumption}{Assumptions}
\theoremstyle{definition}
	\newtheorem{Def}[thm]{Definition}
	\newtheorem*{Def*}{Definition}
	\newtheorem*{obs*}{Observation}
	\newtheorem{rmk}[thm]{Remark}
	\newtheorem*{rmk*}{Remark}
	\newtheorem{soln*}{Solution}
	\newtheorem*{note*}{Note}
	\newtheorem*{eg*}{Example}	
	\newtheorem{egs}[thm]{Examples}
	\newtheorem*{egs*}{Examples}
	\newtheorem*{construction*}{Construction}
	\newtheorem*{warning*}{Warning}
	\newtheorem*{conj*}{Conjecture}
	\newtheorem*{notation*}{Notation}
\crefname{prop}{Proposition}{Propositions}
\crefname{thm}{Theorem}{Theorems}
\newcommand{\op}{\mathrm{op}}
\newcommand{\id}{\mathrm{id}}
\newcommand{\Set}{\mathsf{Set}}
\newcommand{\sSet}{\mathsf{sSet}}
\newcommand{\calI}{\mathcal{I}}
\newcommand{\calJ}{\mathcal{J}}
\newcommand{\scrK}{\mathscr{K}}
\newcommand{\cell}{\mathsf{cell}}
\newcommand{\bd}{\partial}
\newcommand{\pushoutcorner}[1][dr]{\save*!/#1+1.2pc/#1:(1,-1)@^{|-}\restore}
\newcommand{\mSet}{\mathsf{sSet}^+}
\newcommand{\PreComp}{\mathsf{PreComp}}
\newcommand{\precomp}{\mathrm{pre}}
\newcommand{\join}{\star}
\renewcommand{\Im}{\operatorname{Im}}
\newcommand{\co}{\mathrm{co}}
\newcommand{\coop}{\mathrm{co-op}}
\newcommand{\sDelta}[1]{\Delta^{#1}} 
\newcommand{\mDelta}[1]{\widetilde{\Delta}^{#1}} 
\newcommand{\horn}[2]{\Lambda_{#2}^{#1}} 
\newcommand{\frontface}[2]{\upmodels_1^{#1,#2}}
\newcommand{\backface}[2]{\upmodels_2^{#1,#2}}
\newcommand{\face}[2]{\partial_{#1,#2}} 
\newcommand{\adelta}[2]{\Delta^{#1}_{#2}}
\newcommand{\adeltap}[2]{{\Delta^{#1}_{#2}}'}
\newcommand{\sface}[1]{\partial_{#1}} 
\newcommand{\incl}{\hookrightarrow}
\newcommand{\pushout}{\arrow [dr, phantom, "\ulcorner" very near end]}
\newcommand{\mQ}[1]{\widetilde{Q}^{#1}}
\newcommand{\msSet}{\mathsf{sSet}^+}
\newcommand{\mdelta}[1]{\widetilde{\Delta}^{#1}} 
\newcommand{\triv}[1]{\tau_{#1}}
\newcommand{\eq}{\Delta^3_{\mathrm{eq}}}
\colorlet{shaded}{gray!20!white}
\newcommand{\Cube}{\square}
\newcommand{\cSet}{\mathsf{cSet}}
\newcommand{\mcSet}{\mathsf{cSet}^+}
\newcommand{\mcube}[1]{\widetilde{\Cube}^{#1}}
\newcommand{\cube}[1]{\Cube^{#1}}
\newcommand{\obox}[3]{\sqcap^{#1}_{#2, #3}}
\newcommand{\hxi}{\widehat{\Xi}}
\newcommand{\fcap}[2]{T(\sqcap^{#1}_{#2,0})^\flat}
\newcommand{\mC}{\widetilde{C}}
\newcommand\myitem[1][]{\item[#1]\def\@currentlabel{#1}}
\newcommand{\stbox}{\overline{\Box}}
\newcommand{\stc}{\overline{C}}
\newcommand{\stb}{\overline{B}}
\newcommand{\catC}{\mathcal{C}}
\newcommand{\catD}{\mathcal{D}}
\newcommand{\uhom}{\underline{\hom}}
\newcommand{\sk}{\mathrm{sk}}
\title{Equivalence of cubical and simplicial approaches to $(\infty,n)$-categories}
\author[B.~Doherty, K.~Kapulkin, Y.~Maehara]{Brandon Doherty \and Krzysztof Kapulkin \and Yuki Maehara}
\date{\today}
\begin{document}

\maketitle

\begin{abstract}
  We prove that the marked triangulation functor from the category of marked cubical sets equipped with a model structure for ($n$-trivial, saturated) comical sets to the category of marked simplicial set equipped with a model structure for ($n$-trivial, saturated) complicial sets is a Quillen equivalence.
  Our proof is based on the theory of cones, previously developed by the first two authors together with Lindsey and Sattler.
\end{abstract}

\section*{Introduction}\label{sec:introduction}
In this paper, we prove that two models of $(\infty,n)$-categories (including the case $n = \infty$) are equivalent.
The first of these models is \emph{complicial sets}, developed by Verity \cite{verity:weak-complicial-1,verity:weak-complicial-2} and Riehl \cite[App.~D]{riehl-verity:more-elements}, using marked simplicial sets, i.e., simplicial sets in which certain simplices are formally designated as equivalences, witnessing composition of lower-dimensional simplices.
The second model, in \emph{comical sets}, introduced in \cite{campion-kapulkin-maehara}, is based on marked cubical sets instead and is intended to provide an alternative to complicial sets, particularly well suited for the treatment of Gray tensor product.

Both of these models arise as fibrant-cofibrant objects in certain model structures on the categories $\msSet$ of marked simplicial sets and $\mcSet$ of marked cubical sets, respectively.
In \cite{campion-kapulkin-maehara}, a triangulation functor $T \colon \mcSet \to \msSet$ is constructed as an extension of the usual triangulation functor $T \colon \cSet \to \sSet$ \cite{CisinskiAsterisque} to the marked setting and it is conjectured that this functor is a Quillen equivalence.

We prove a slight modification of this conjecture.
In our proof, we change the definition of a comical set from the one given in \cite{campion-kapulkin-maehara} by adjusting some of the generating anodyne maps,  to require that the markings on the standard $(i,\varepsilon)$-cube $\Box^n_{i, \varepsilon}$ be more restrictive.
As in \cite{campion-kapulkin-maehara}, we construct a model structure whose fibrant-cofibrant objects are the comical sets.

Thus our main theorem can be stated as follows.

\begin{thm*}[cf.~\cref{comical-model-structure,T-equivalence}]
The category $\mcSet$ of marked cubical sets carries a model structure in which the cofibrations are monomorphisms and the fibrant objects are the ($n$-trivial, saturated) comical sets.

The marked triangulation functor $T \colon \mcSet \to \msSet$ is a left Quillen equivalence between this model structure and the model structure for (resp.~$n$-trivial, saturated) complicial sets.
\end{thm*}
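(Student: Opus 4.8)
\emph{Overview and the model structure on $\mcSet$.} The plan splits into two independent pieces: building the model structure on $\mcSet$, and then showing $T^+$ is a Quillen equivalence. For the first piece I would repeat, essentially verbatim, the construction of \cite{campion-kapulkin-maehara}, only with the adjusted pseudo-generators. Take the cofibrations to be the monomorphisms — equivalently, the saturation of the boundary inclusions $\cbdy{n}\hookrightarrow\cube{n}$ together with the maps adding a single marking — and let $J$ consist of the comical open-box inclusions $\obox{n}{i}{\varepsilon}\hookrightarrow\cube{n}$ carrying the (now more restrictive) markings of $\admcube{n}{i}{\varepsilon}$, the saturation maps, and the $n$-triviality maps. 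As $\mcSet$ is a presheaf category, Cisinski's machinery produces from this data a cofibrantly generated, left proper model structure. The only point that needs genuine work is that its fibrant objects — a priori those having the right lifting property against the whole class of anodyne extensions generated by $J$ — coincide with the $n$-trivial saturated comical sets, which are defined by lifting against $J$ alone; this reduces to showing that the Leibniz products of members of $J$ with boundary inclusions are again anodyne, and that is exactly what the cone construction delivers, as in \cite{campion-kapulkin-maehara}.

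\emph{$T^+$ is left Quillen.} The functor $T^+$ preserves colimits, hence has a right adjoint $U^+$. It preserves monomorphisms, because the unmarked triangulation $T\colon\cSet\to\sSet$ sends $\cbdy{n}\hookrightarrow\cube{n}$ to a relative cell complex of simplicial boundary inclusions and adding a marking is a pushout-type operation compatible with $T$. For preservation of acyclic cofibrations it is enough — since these are generated from $J$ by the usual closure operations, and triangulation is compatible with the geometric product so that pushout-products go to pushout-products — to check that $T^+$ sends each member of $J$ to a complicial anodyne map. The saturation and $n$-triviality maps go to the corresponding simplicial maps by naturality of $T$ and compatibility with truncation. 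The essential case is the comical open-box inclusions: using the cone decomposition of $T(\cube{n})$, one writes $T^+(\obox{n}{i}{\varepsilon}\hookrightarrow\cube{n})$ as a finite composite of pushouts of complicial horn inclusions $\horn{k}{j}\hookrightarrow\Delta^{k}$ — marked so as to agree with the markings that triangulation places on $\cube{n}$, which is precisely why $\admcube{n}{i}{\varepsilon}$ had to be tightened — together with elementary complicial thinness extensions. Hence $T^+$ is left Quillen.

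\emph{Quillen equivalence.} I would invoke the standard criterion: a Quillen adjunction is a Quillen equivalence exactly when the right adjoint reflects weak equivalences between fibrant objects and the derived unit is a weak equivalence on every cofibrant object — and in $\mcSet$ every object is cofibrant. For the derived unit $X\to U^+\bigl((T^+X)^{\mathrm{fib}}\bigr)$, left properness reduces the problem by a cellular argument to a single cube $\cube{n}$ with an arbitrary marking, where one compares $U^+$ of a comical fibrant replacement of the marked triangulated cube with $\cube{n}$ directly, using that $T$ is already a weak equivalence on underlying presheaves in the classical sense and that the markings correspond under the adjunction. For reflection of weak equivalences, one tests a map $f$ of comical sets with $T^+f$ a complicial equivalence against comical mapping spaces, using once more — via cones — that $T^+$ carries cubical cylinders and path objects to simplicial ones up to complicial equivalence; alternatively one can present both model structures as Bousfield localizations and check that $T^+$ respects the localizing families.

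\emph{Main obstacle.} The decisive and most laborious step is the claim in the second paragraph that the marked triangulation of a comical open-box inclusion is complicial anodyne. This requires an explicit account of how $T(\cube{n})$ decomposes into nondegenerate simplices, of which of those simplices acquire which markings, and of a filtration of $T(\cube{n})$ by $T(\obox{n}{i}{\varepsilon})$ in which each stage attaches exactly one complicial horn with precisely the right marking and thinness data — all while keeping track simultaneously of saturation and $n$-triviality. The cone machinery developed by the first two authors with Lindsey and Sattler is the bookkeeping device that makes this manageable, and the reason the present paper has to modify the definition of a comical set is exactly that the original markings on $\admcube{n}{i}{\varepsilon}$ are too coarse for these triangulated horns to be recognizably complicial.
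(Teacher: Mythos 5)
Your overall architecture is right — build the model structure via Cisinski machinery, show $T^+$ is left Quillen, then upgrade to a Quillen equivalence — but two of your three steps misidentify the actual mechanism, and the third has a genuine gap.

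\textbf{Where cones do and do not enter.} You attribute both the verification that the model structure exists and the verification that $T^+$ sends comical open-box inclusions to complicial anodynes to ``the cone machinery.'' Neither step uses cones. The model structure (\cref{comical-model-structure}) is established by a direct case-by-case check (eight cases, tabulated in the proof) that Leibniz tensors of pseudo-generators with cellular generators land back in the anodyne class; this is purely an analysis of normal forms of faces in geometric products. And the proof that $T^+$ takes $\sqcap^n_{i,\varepsilon} \hookrightarrow \Box^n_{i,\varepsilon}$ to a complicial trivial cofibration (\cref{open-box-cube-anodyne,marking-extension-anodyne}) is a self-contained combinatorial argument about the simplices of $T\Box^n$: one introduces essential/normal/abnormal simplices, the normalization $N$, a partial order on nondegenerate $m$-simplices (\cref{simplex-order}), and linearizations, and then builds $T\Box^n_{i,0}$ out of $T\sqcap^n_{i,0}$ via complicial horn fillings and marking extensions indexed by that order. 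Cones play no role there. They enter only in \cref{sec:cones,sec:equivalence}, to build the functor $Q$ and analyze its counit.

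\textbf{The Quillen-equivalence step.} You propose the standard criterion (derived unit a weak equivalence on cofibrants; right adjoint reflects weak equivalences between fibrants) and then sketch a cellular/localization reduction. This is where the real gap lies. The derived unit of $T^+$ involves $U^+$ of a complicial fibrant replacement of $T^+X$, and neither fibrant replacement in $\msSet$ nor its image under $U^+$ is tractable enough for the proposed ``reduce to a single marked cube'' argument; the derived unit is not a cellular construction, so left properness alone does not localize the problem at $\Box^n$. The appeal to $T$ being ``a weak equivalence on underlying presheaves in the classical sense'' refers to the Grothendieck/Kan setting and does not transfer to the complicial/comical model structures. The paper sidesteps all of this by going the other way: it builds a left Quillen functor $Q \colon \msSet \to \mcSet$ from cones, shows $TQ \simeq \id$ via an explicit natural weak equivalence $\rho$ (\cref{TQ}), and proves $Q$ is a Quillen equivalence by showing the counit $Q\textstyle\int X \to X$ is anodyne for every comical $X$ using coherent families of composites (\cref{theta-exists,counit-anodyne}). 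From $TQ \simeq \id$ and $Q$ an equivalence, $T$ is forced to be an equivalence (\cref{T-equivalence}). The asymmetry matters: the counit of $Q$ is a map into a \emph{given} comical set, so one has comical lifting available and never needs to perform a fibrant replacement; that is precisely what makes the argument go through and what your proposed route lacks.
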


Our proof employs the theory of cones in cubical sets, developed in \cite{doherty-kapulkin-lindsey-sattler}, by generalizing it to the marked setting, i.e., from $(\infty,0)$- and $(\infty,1)$-categories to arbitrary $(\infty,n)$-categories.
In brief, we consider a left Quillen functor $Q \colon \msSet \to \mcSet$ (see \cite{kapulkin-lindsey-wong,kapulkin-voevodsky} for its treatment in the unmarked setting) whose value at $\Delta^n$ is given by applying the cone construction to the point $n$ times.
We then show the composite $TQ$ is equivalent to the identity functor and further that $Q$ itself is a left Quillen equivalence.

\textbf{Organization of the paper.}
In \cref{sec:background}, we recall the standard background on marked simplicial and cubical sets.
Then in \cref{sec:model}, we introduce our comical sets, modifying the definition of \cite{campion-kapulkin-maehara}.
We show that the marked triangulation functor $T \colon \mcSet \to \msSet$ is a left Quillen functor in \cref{Quillen-functor}.
We then proceed to prove that it is also a Quillen equivalence by first setting up a generalization of the theory of cones to the marked setting and introducing the homotopy inverse $Q$ to triangulation in \cref{sec:cones}, before concluding our proof in \cref{sec:equivalence}.

\textbf{Acknowledgements.}
The first-named author was partially supported by a Canada Graduate Scholarship from the Natural Sciences and Engineering Research Council of Canada (NSERC). 
The second-named author was partially supported by a Discovery Grant from the Natural Sciences and Engineering Research Council of Canada (NSERC). 
The third-named author was partially supported by JSPS KAKENHI Grant Number 21K20329.


\section{Background}\label{sec:background}

\subsection{Model structures and Cisinski--Olschok theory}
Many model structures of interest have as their cofibrations precisely the monomorphisms.
A general construction of such a model structure was given by Cisinski \cite{CisinskiAsterisque} in the case where the underlying category is a presheaf topos, and it was later generalised by Olschok \cite{olschok:thesis} to arbitrary locally presentable categories.
The aim of this section is to review a monoidal variant of this construction.

Given $f\colon A \to A'$ and $g \colon B \to B'$ in a monoidal category $(\scrK,\otimes)$, we write $f \hat \otimes g$ for the \emph{Leibniz $\otimes$-product}
\[
f\hat\otimes g \colon (A'\otimes B) \coprod_{A \otimes B}(A \otimes B') \to A'\otimes B'.
\]

Given a collection $\mathcal{I}$ of maps in a category $\mathscr{K}$, the \emph{cellular closure} of $\mathcal{I}$, denoted $\cell(\mathcal{I})$, is the closure of $\mathcal{I}$ under pushouts along arbitrary maps and transfinite composition.

The following useful fact will be used throughout the proof of \cref{CO-with-monoidal}.

\begin{prop}[{\cite[Prop.~5.12]{riehl-verity:theory-and-practice}}]\label{cell-tensor}
	Let $\scrK$ be a cocomplete category equipped with a bifunctor $\otimes \colon\scrK \times \scrK\to \scrK$ that is cocontinuous in each variable.
	Then for any collections $\mathcal{I},\mathcal{J}$ of morphisms in $\mathscr{K}$, we have $\cell(\calI)\hat\otimes\cell(\calJ) \subset \cell(\calI\hat\otimes\calJ)$.
\end{prop}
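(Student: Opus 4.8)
The plan is to reduce the two-variable inclusion to one-variable versions and then exploit the fact that, with one argument held fixed, the Leibniz product is a cocontinuous endofunctor of the arrow category $\scrK^\to$, whereas $\cell(-)$ is the closure of a set of arrows under two operations — cobase change and transfinite composition — each of which can be presented as a small colimit in $\scrK^\to$.

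Concretely, I would first establish the one-variable statements: for every morphism $i$ of $\scrK$ we have $\{i\}\hat\otimes\cell(\calJ)\subseteq\cell(\{i\}\hat\otimes\calJ)$, and, with the roles of the variables exchanged, $\cell(\calI)\hat\otimes\{g\}\subseteq\cell(\calI\hat\otimes\{g\})$ for every $g$. These are proved by the same argument and, importantly, do not require $\otimes$ to be symmetric. Granting them, the full statement follows by a short bootstrap: for $g\in\cell(\calJ)$ the second one-variable statement gives $\cell(\calI)\hat\otimes\{g\}\subseteq\cell(\calI\hat\otimes\{g\})$, while for each $i\in\calI$ we have $i\hat\otimes g\in\{i\}\hat\otimes\cell(\calJ)\subseteq\cell(\{i\}\hat\otimes\calJ)\subseteq\cell(\calI\hat\otimes\calJ)$ by the first, so $\calI\hat\otimes\{g\}\subseteq\cell(\calI\hat\otimes\calJ)$, whence $\cell(\calI\hat\otimes\{g\})\subseteq\cell(\calI\hat\otimes\calJ)$ by idempotence of $\cell$; combining these and taking the union over $g\in\cell(\calJ)$ yields the claim.

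For the one-variable statement, fix $i\colon A\to A'$ and consider $i\hat\otimes-\colon\scrK^\to\to\scrK^\to$, which is cocontinuous because $\otimes$ is cocontinuous in each variable and the Leibniz product with $i$ is assembled from $\otimes$ and a single pushout. Since $\cell(\calJ)$ is the smallest class of objects of $\scrK^\to$ that contains $\calJ$ and is closed under cobase change and transfinite composition, it suffices to check that $i\hat\otimes-$ carries each of these operations, applied to arrows in its image, into $\cell$ of the image of $\calJ$. For cobase change this is a direct computation: if $h'$ is the cobase change of $h$ along some map, one expands $\dom(i\hat\otimes h')$ and $\cod(i\hat\otimes h')$ using that $\otimes$ preserves pushouts, cancels a common corner, and identifies $i\hat\otimes h'$ as a cobase change of $i\hat\otimes h$ (using also that $i\hat\otimes\id$ is an isomorphism). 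For transfinite composition the key input is the pasting identity $i\hat\otimes(g\circ h) = (i\hat\otimes g)\circ c$, where $c$ is a cobase change of $i\hat\otimes h$ — again a pushout computation reorganizing the Leibniz product of a composite — together with cocontinuity of $i\hat\otimes-$ at limit ordinals; these combine to exhibit $i\hat\otimes-$ of the transfinite composite of a chain $(g_\lambda)$ as the transfinite composite of cobase changes of the $i\hat\otimes g_\lambda$. Feeding this back, $i\hat\otimes-$ maps $\cell(\calJ)$ into $\cell(\{i\hat\otimes j : j\in\calJ\}) = \cell(\{i\}\hat\otimes\calJ)$, as desired.

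I expect the transfinite-composition step to be the main obstacle — not because it is deep, but because it requires the pasting identity for Leibniz products of composites and careful bookkeeping of the iterated pushouts at successor and limit stages; the cobase-change step and the bootstrap are formal once the pushout cancellations are in place. An equivalent packaging, which makes the closure properties explicit, is to verify directly that $\{g : i\hat\otimes g\in\cell(\{i\}\hat\otimes\calJ)\}$ contains $\calJ$ and is closed under cobase change and transfinite composition, hence contains $\cell(\calJ)$.
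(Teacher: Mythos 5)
Your argument is correct, and it is essentially the standard proof of this fact: the paper itself gives no argument, simply citing Riehl--Verity, and the proof there proceeds exactly as you outline, via the observations that the Leibniz tensor with a fixed map carries cobase changes to cobase changes and transfinite composites to transfinite composites of cobase changes (using cocontinuity at limit stages), followed by the one-variable-at-a-time bootstrap with idempotence of $\cell$. Nothing is missing; the only step requiring real care is the bookkeeping in the transfinite-composition case, which you have correctly identified and packaged.
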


\begin{thm} \label{CO-with-monoidal}
	Let $\mathscr{K}$ be a locally presentable category equipped with:
	\begin{itemize}
		\item sets $\mathcal{I}$, $\mathcal{J}$ of monomorphisms;
		\item a biclosed monoidal structure $\otimes$ whose unit is the terminal object $1$; and
		\item a bipointed object $\partial_0,\partial_1 \colon1 \to I$
	\end{itemize}
	such that:
	\begin{itemize}
		\item[(0)] the unique map from the initial object to any object is a monomorphism;
		\item[(1)] $\cell(\mathcal{I})$ is precisely the class of monomorphisms in $\mathscr{K}$;
		\item[(2)] $(\partial_0,\partial_1) \colon1 \amalg 1 \to I$ is a monomorphism;
		\item[(3)] $\partial_0, \partial_1 \in \cell(\mathcal{J})$;
		\item[(4)] $\mathcal{I}\hat\otimes\mathcal{I} \subset \cell(\mathcal{I})$;
		\item[(5)] $\mathcal{I}\hat\otimes\mathcal{J} \subset \cell(\mathcal{J})$; and
		\item[(6)] $\mathcal{J}\hat\otimes\mathcal{I} \subset \cell(\mathcal{J})$.
	\end{itemize}
	Then there exists a (necessarily unique) left proper, combinatorial model structure on $\mathscr{K}$ such that:
	\begin{itemize}
		\item[(i)] the cofibrations are precisely the monomorphisms; and
		\item[(ii)] a map into a fibrant object is a fibration if and only if it has the right lifting property with respect to each member of $\mathcal{J}$.
	\end{itemize}
	Moreover this model structure is monoidal with respect to $\otimes$.
\end{thm}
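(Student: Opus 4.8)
The plan is to realize the desired model structure as an instance of the (non‑monoidal) Cisinski--Olschok construction \cite{CisinskiAsterisque,olschok:thesis}, applied to the cylinder $C := {-}\otimes I$ and the set $\calJ$ of generating anodyne maps, and then to verify the pushout--product axiom separately. The one computational tool used throughout is \cref{cell-tensor}: combining it with hypothesis (1), which identifies the monomorphisms with $\cell(\calI)$, we deduce from (5) that $\cell(\calI)\hat\otimes\cell(\calJ)\subseteq\cell(\calI\hat\otimes\calJ)\subseteq\cell(\calJ)$ and from (6) that $\cell(\calJ)\hat\otimes\cell(\calI)\subseteq\cell(\calJ\hat\otimes\calI)\subseteq\cell(\calJ)$; in words, $\cell(\calJ)$ is closed under Leibniz $\otimes$-product, on either side, with an arbitrary monomorphism, and similarly $\cell(\calI)$ is closed under such products by (4).

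First I would verify that $C = {-}\otimes I$ is an \emph{exact cylinder functor}. Biclosedness makes $C$ a left adjoint, hence cocontinuous; it preserves monomorphisms because for a monomorphism $m$ the map $m\otimes\id_I$ is the Leibniz product $m\hat\otimes(\emptyset\to I)$, which lies in $\cell(\calI)$ by (4) and the previous paragraph; the unit isomorphism ${-}\otimes 1\cong\id$, the bipointing $\partial_0,\partial_1$, and the unique map $I\to 1$ supply natural transformations $\partial^0,\partial^1\colon\id\Rightarrow C$ and $\sigma\colon C\Rightarrow\id$ with $\sigma\partial^\varepsilon=\id$; and the map $(\partial^0_X,\partial^1_X)\colon X\amalg X\to X\otimes I$ is the Leibniz product $(\emptyset\to X)\hat\otimes\bigl((\partial_0,\partial_1)\colon 1\amalg 1\to I\bigr)$, which is a monomorphism by (2) together with the closure property above. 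Then I would observe that hypotheses (3), (5), (6) are exactly what is needed so that every generating anodyne map manufactured by the Cisinski--Olschok framework from $\calJ$, $\calI$ and the cylinder $I$ — the endpoint inclusions $\partial^\varepsilon_X=(\emptyset\to X)\hat\otimes\partial_\varepsilon$, the maps $\partial_\varepsilon\hat\otimes j$ for monomorphisms $j$, and the cylinder-closure maps $\bigl((\partial_0,\partial_1)\bigr)\hat\otimes j$ for $j$ anodyne — already lies in $\cell(\calJ)$: indeed $\partial_0,\partial_1\in\cell(\calJ)$ by (3), and each such map is a Leibniz product of a member of $\cell(\calJ)$ with a monomorphism. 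The Cisinski--Olschok theorem therefore yields a combinatorial model structure on $\mathscr{K}$ whose cofibrations are the monomorphisms, whose trivial cofibrations form the smallest saturated class containing $\calJ$, and in which a map with fibrant codomain is a fibration precisely when it has the right lifting property against $\calJ$; this gives (i) and (ii). Left properness is immediate since every object is cofibrant. For uniqueness, note that (ii) applied to the terminal (hence always fibrant) object forces the fibrant objects of any model structure satisfying (i) and (ii) to be those $X$ for which $X\to 1$ has the right lifting property against $\calJ$; so any such model structure has the same cofibrations and the same fibrant objects as ours, whence the two coincide by the standard fact that a model structure is determined by those two classes.

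It remains to check that the model structure is monoidal. The unit axiom is automatic because the monoidal unit is the terminal object, which is cofibrant, so only the pushout--product axiom is at issue: $f\hat\otimes g$ must be a cofibration when $f$ and $g$ are, and a trivial cofibration when in addition one of them is. Since the cofibrations and trivial cofibrations are the saturations of $\calI$ and of $\calJ$ respectively and $\hat\otimes$ is cocontinuous in each variable, for fixed $f$ the classes of $g$ with $f\hat\otimes g$ a cofibration (resp.\ a trivial cofibration) are saturated; fixing first a generator in one variable and then a general (trivial) cofibration in the other, the axiom reduces to the three containments $\calI\hat\otimes\calI\subseteq\cell(\calI)$, $\calI\hat\otimes\calJ\subseteq\cell(\calJ)$, $\calJ\hat\otimes\calI\subseteq\cell(\calJ)$, which are precisely hypotheses (4), (5), (6). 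I expect the only real difficulty here to be bookkeeping rather than mathematics: one must pin down exactly which generating anodyne maps the Cisinski--Olschok machinery produces and confirm that iterating its cylinder constructions never escapes $\cell(\calJ)$ — and it is precisely there that \cref{cell-tensor} does the work, by upgrading the single-generator hypotheses (4), (5), (6) to closure statements for the full cellular closures.
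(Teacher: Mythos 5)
Your construction of the model structure itself follows the same route as the paper: apply Olschok's theorem to the cylinder $X \mapsto X\otimes I$, use (1), (2), (4) (via \cref{cell-tensor}) to see that the cylinder is cartesian/exact, and use (3), (5), (6) to show that the pseudo-generating set produced by the machinery (namely $\calJ\cup\{f\hat\otimes\partial_\varepsilon \mid f\in\calI\}$ closed under $(-)\hat\otimes(\partial_0,\partial_1)$) lies in $\cell(\calJ)$, which gives (i), (ii), left properness, and (with the standard fact that cofibrations plus fibrant objects determine a model structure) uniqueness. That part is correct and essentially identical to the paper's argument.

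The gap is in the monoidality step. You assert that the trivial cofibrations ``form the smallest saturated class containing $\calJ$'' and then reduce the acyclicity half of the pushout--product axiom to hypotheses (5) and (6) by saturation in each variable. But in a Cisinski--Olschok model structure the set $\calJ$ (equivalently, the class $\Lambda$ of anodyne maps it generates together with the cylinder data) is only a \emph{pseudo}-generating set: it detects fibrations \emph{with fibrant codomain}, and the class of trivial cofibrations is in general strictly larger than the saturation of $\calJ$ --- this is exactly why the statement of \cref{CO-with-monoidal}(ii) is restricted to maps into fibrant objects. Consequently, knowing $\calI\hat\otimes\calJ\subset\cell(\calJ)$ and $\calJ\hat\otimes\calI\subset\cell(\calJ)$ only yields that Leibniz products of monomorphisms with \emph{anodyne} maps are anodyne; it does not, by saturation alone, show that $f\hat\otimes g$ is a weak equivalence when $g$ is an arbitrary trivial cofibration. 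Closing this gap requires an additional argument --- e.g.\ showing that for a fixed cofibration $f$ the Leibniz functor $f\hat\otimes(-)$ is left Quillen by checking it on the pseudo-generators and passing through the right adjoint and fibrant objects (or, equivalently, characterizing weak equivalences via homotopy classes of maps into fibrant objects). This is precisely what the paper outsources to \cite[Prop.~A.4]{maehara:gray}, which it cites together with (4)--(6); your proposal needs either that citation or a proof of that reduction.
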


\begin{proof}
	We will apply \cite[Thm.~2.2.5]{olschok:thesis}.
	The weak factorisation system $(\mathcal{L},\mathcal{R})$, the left class of which is the monomorphisms in our case, is obtained by applying the small object argument to the set $\mathcal{I}$.
	Since it is cofibrantly generated by construction and every object is cofibrant by (0), this factorisation system is cofibrant in the sense of \cite[Def.~2.2.3]{olschok:thesis}.
	Our functorial cylinder is given by
	\[
	\begin{tikzcd}[column sep = large]
		X \amalg X \cong X \otimes (1 \amalg 1)
		\arrow [r, "{X \otimes (\partial_0,\partial_1)}"] &
		X \otimes I
		\arrow [r, "{X \otimes !}"] &
		X \otimes 1 \cong X.
	\end{tikzcd}
	\]
	Note that (1), (2) and (4) imply that $X \otimes (\partial_0,\partial_1)$ is a monomorphism for each $X$ since it can be written as $i_X \hat\otimes(\partial_0,\partial_1)$ where $i_X \colon0 \to X$ is the unique map from the initial object.
	Hence this functorial cylinder is good in the sense of \cite[Def.~1.2.1]{olschok:thesis}.
	The biclosedness of $\otimes$ implies that it moreover satisfies the condition (a) of \cite[Def.~2.1.9]{olschok:thesis}.
	To verify that it also satisfies (b), it suffices to observe that any map from a terminal object (and in particular $\partial_0,\partial_1$) is a monomorphism, so $\mathcal{L}\star\gamma$ and $\mathcal{L} \star \gamma^k$, which are respectively $\cell(\mathcal{I}) \hat \otimes (\bd_0,\bd_1)$ and $\cell(\mathcal{I}) \hat \otimes \bd_k$ in our case, are monomorphisms again by (1), (2) and (4).
	Thus, by \cite[Thm.~2.2.5]{olschok:thesis}, we obtain a left proper, combinatorial model structure on $\scrK$ such that the cofibrations are precisely the monomorphisms.
	
	Let $\Lambda$ denote the closure of
	\[
	\calJ \cup \bigl\{f \hat\otimes\partial_\varepsilon \ | \ f \in \calI, \varepsilon \in \{0,1\}\bigr\}
	\]
	under the operation $(-)\hat\otimes(\partial_0,\partial_1)$.
	Then $\Lambda$ is precisely the class $\Lambda \bigl((-)\otimes I, \mathcal{I},\mathcal{J}\bigr)$ in the sense of \cite[Def.~2.1.6]{olschok:thesis}, so our use of the symbol $\Lambda$ agrees with that introduced in \cite[2.2.7]{olschok:thesis}.
	Note that all maps in $\Lambda$ are monomorphisms by (1), (2) and (4).
	In fact, they are trivial cofibrations in this model structure by \cite[Lem.~2.2.15]{olschok:thesis}.
	Thus, on the one hand, any fibration has the right lifting property with respect to $\Lambda$.
	On the other hand, \cite[Lem.~2.2.20(b)]{olschok:thesis} implies that any map into a fibrant object that has the right lifting property with respect to $\Lambda$ is a fibration.
	Therefore we have:	
	\begin{itemize}
	\item[(ii')] a map into a fibrant object is a fibration if and only if it has the right lifting property with respect to each member of $\Lambda$.
	\end{itemize}
	We can reduce (ii') to (ii).
	Indeed, it follows from (3) and (5) that the above generating set of $\Lambda$ is contained in $\cell(\calJ)$.
	Moreover $\cell(\calJ)$ is closed under the operation $(-)\hat\otimes(\partial_0,\partial_1)$ by (1), (2) and (6), which implies $\Lambda \subset \cell(\calJ)$.
	
	That this model structure is monoidal with respect to $\otimes$ follows from \cite[Prop.~A.4]{maehara:gray} and (4-6).
\end{proof}

When comparing model structures, we will frequently make use of the following basic lemma, a generalization of \cite[Cor.~1.5]{doherty-kapulkin-lindsey-sattler}.

\begin{lem}\label{involution-equiv}
Let $\catC, \catD$ be model categories, and $F : \catC \rightleftarrows \catD : G$ an adjoint equivalence of the underlying categories. If both of the adjunctions $F \dashv G$ and $G \dashv F$ are Quillen, then they are Quillen equivalences.
\end{lem}

\begin{proof}
By assumption, both $F$ and $G$ are left Quillen functors, and we have natural isomorphisms $\id_{\catC} \cong GF$, $FG \cong \id_{\catD}$. Since the left derived functor construction is functorial up to natural isomorphism, it follows that the left derived functors of $F$ and $G$ are inverse equivalences of categories.
\end{proof}

Note that in all cases in which we will apply this result, the equivalences of categories $F$ and $G$ will in fact be isomorphisms.

\subsection{Complicial sets}
In this section, we introduce marked simplicial sets and model structures for ($n$-trivial, saturated) complicial sets.

When working with simplicial sets, we will write the action of simplicial operators on the right.
For example, given an $n$-simplex $x \in X_n$ of a simplicial set $X$, we would write $x \partial_{i}$ for its $i$-face; this is consistent with the usual notation for composition when $x$ is regarded as a simplicial map $x \colon \Delta^n \to X$.

\begin{Def}
	A \defterm{marked simplicial set} is a simplicial set $X$ equipped with a subset of its simplices $eX \subseteq \amalg_{n\geq 1} X_n$, called the \defterm{marked} simplices, containing all degenerate simplices.
	A \defterm{map} of marked simplicial sets $f\colon X \to Y$ is a map of simplicial sets which carries marked simplices to marked simplices. We denote $\mSet$ for the category of marked simplicial sets with maps for morphisms.
\end{Def}

Marked simplicial sets used to be called \emph{stratified simplicial sets} (cf.~e.g., \cite{verity:complicial}), but the name `marked' is more descriptive and has since become more popular.

When illustrating marked simplicial sets, we will indicate marked 1-simplices with a tilde. Marked 2-simplices will be shaded, with tildes in their interiors in cases where this is feasible. In certain cases, we will draw thick arrows in the interiors of 2-simplices pointing from the face $\bd_1$ towards the faces $\bd_0$ and $\bd_2$; when this is done, 2-simplices will be distinguished only by shading, with tildes omitted for clarity.

There is a natural forgetful functor $\mSet \to \sSet$, which has both left and right adjoints.
The left adjoint $X \mapsto X^\flat$ endows a simplicial set $X$ with the \defterm{minimal marking}, marking only the degenerate simplices. The right adjoint $X \mapsto X^\sharp$ endows a simplicial set $X$ with the \defterm{maximal marking}, marking all simplices.

For $n \ge 0$, we abuse the notation and write $\Delta^n$ for the minimally marked simplicial set $(\Delta^n)^\flat$.

\begin{Def}
	We say that $X \in \mSet$ is \defterm{$n$-trivial} if every simplex of dimension $\geq n+1$ is marked.
\end{Def}

We write $\tau_n \colon \mSet \to \mSet$ for the functor that is the identity on the underlying simplicial set and marks all $k$-simplices for $k \geq n+1$. Note that this functor has a right adjoint, which sends a marked simplicial set to its maximal subcomplex in which all simplices above dimension $n$ are marked.

\begin{Def}
	A map $X \to Y$ of marked simplicial sets is:
	\begin{itemize}
		\item \emph{regular} if it creates markings, i.e., for an $n$-simplex $x$ of $X$ we have: $x \in eX_n$ if and only if $f(x) \in eY_n$;
		\item \emph{entire} if the induced map between the underlying simplicial sets is invertible.
	\end{itemize}
\end{Def}

We introduce the following distinguished objects and maps in $\mSet$.

\begin{Def}\leavevmode
	\begin{itemize}
		\item For $n \ge 1$, the \defterm{marked $n$-simplex}, denoted $\mDelta n$, is $\tau_{n-1} (\sDelta n)$, \emph{i.e.}, the $n$-simplex with the non-degenerate $n$-simplex marked and no other non-degenerate simplices marked.
		\item For $n \ge 1$ and $0 \le k \le n$:
		\begin{itemize}
			\item the \defterm{$k$-complicial $n$-simplex}, denoted $\adelta n k$, is the marked simplicial set with underlying simplicial set $\Delta^n$ in which a non-degenerate simplex $\alpha \colon [m] \to [n]$ is marked if and only if the image of $\alpha$ contains $\{k-1,k,k+1\}\cap[n]$;
			\item an $n$-simplex $\Delta^n \to X$ of a marked simplicial set $X$ is \defterm{$k$-complicial} if it factors through $\Delta^n \to \adelta n k$;
			\item the \defterm{$k$-complicial horn}, denoted $\horn n k$, is the regular subcomplex of $\adelta n k$ whose underlying simplicial set is the $k$-th horn;
			\item an $n$-horn $(\horn n k)^\flat \to X$ of a marked simplicial set $X$ is \defterm{$k$-complicial} if it factors through $(\horn n k)^\flat \to \horn n k$; and
			\item $\adeltap n k$ is the marked simplicial set obtained from $\adelta n k$ by further marking $\sface{k-1}$ and $\sface{k+1}$.
		\end{itemize}
	\end{itemize}
\end{Def}

\begin{Def}\leavevmode
	\begin{itemize}
		\item For $n \geq 1$, the \defterm{$n$-marker} is the inclusion $\sDelta n \to \mDelta n$.
		\item For $n \geq 1$, the \emph{$k$-complicial horn inclusion} is the inclusion $\horn n k \incl \adelta n k$.
		\item For $n \geq 2$, the \defterm{elementary $k$-complicial marking extension} is the entire inclusion $\adeltap n k \incl \tau_{n-2}\adelta n k$.
		\item We say a map in $\msSet$ is \defterm{complicial} if it is in
		\[
		\cell\bigl(\{\horn n k \incl \adelta n k \ | \ n \ge 1,~0 \le k \le n\} \cup \{\adeltap n k \incl \tau_{n-2}\adelta n k \ | \ n \ge 2,~0 \le k \le n\}\bigr).
		\]
	\end{itemize}
\end{Def}

\begin{Def}
	We write $\Delta^+$ for the full subcategory of $\msSet$ spanned by $\Delta^n$ for $n \ge 0$ and $\mDelta n$ for $n \ge 1$.
\end{Def}

\begin{prop}[{\cite[Obs.~12]{verity:weak-complicial-1}}]
	The category $\msSet$ is equivalent to the reflective, full subcategory of $[(\Delta^+)^\op,\Set]$ spanned by those functors that send each marker $\Delta^n \to \mdelta{n}$ to a monomorphism. \qed
\end{prop}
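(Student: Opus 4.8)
The plan is to realise the equivalence via the restricted Yoneda embedding. Write $N \colon \msSet \to [(\Delta^+)^\op, \Set]$ for the nerve functor $X \mapsto \msSet(-,X)|_{\Delta^+}$. The first step is to compute its values: directly from the definitions of $\sDelta{n} = (\Delta^n)^\flat$ and $\mdelta{n} = \trunc{n-1}\sDelta{n}$ one obtains natural bijections $\msSet(\sDelta{n}, X) \cong X_n$ and $\msSet(\mdelta{n}, X) \cong eX_n$, under which the map induced by the $n$-marker $\sDelta{n} \to \mdelta{n}$ is precisely the inclusion $eX_n \subseteq X_n$. In particular it is a monomorphism, so $N$ factors through the full subcategory $\calC \subseteq [(\Delta^+)^\op,\Set]$ appearing in the statement.

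Next I would show that $N$ is fully faithful. Given $X, Y \in \msSet$, a natural transformation $N(X) \Rightarrow N(Y)$ restricts along $\Delta \hookrightarrow \Delta^+$ to a simplicial map $f \colon X \to Y$; naturality at the markers, together with the fact that $eY_n \subseteq Y_n$ is a monomorphism, forces the components at the $\mdelta{n}$ to be restrictions of $f$ and in particular forces $f$ to carry $eX_n$ into $eY_n$, i.e.\ to be a map of marked simplicial sets. Conversely every map of marked simplicial sets induces a natural transformation by postcomposition, and the two assignments are mutually inverse (using once more that the components of a transformation $N(X) \Rightarrow N(Y)$ at the $\mdelta{n}$ are determined by those at the $\sDelta{n}$).

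The third step is essential surjectivity onto $\calC$. Given $F \in \calC$, restriction along $\Delta \hookrightarrow \Delta^+$ produces a simplicial set $X$ with $X_n = F(\sDelta{n})$, and I set $eX_n$ to be the image of the monomorphism $F(\sDelta{n} \to \mdelta{n})$. The axiom that no $0$-simplex is marked holds vacuously since $\Delta^+$ contains no $\mdelta{0}$. For the axiom that every degenerate simplex is marked, observe that each codegeneracy $\sigma \colon \sDelta{m} \to \sDelta{n}$, viewed as an $m$-simplex of $(\Delta^n)^\flat$, is degenerate and hence marked, so in $\Delta^+$ it factors as a morphism $\mdelta{m} \to \sDelta{n}$ precomposed with the $m$-marker; applying $F$ shows that the operator $X_n \to X_m$ has image inside $eX_m$. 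One then checks $N(X) \cong F$ in $\calC$: on objects this is the identification just set up, and for naturality it suffices to run through the (explicitly describable) classes of morphisms in $\Delta^+$ — simplicial operators among the $\sDelta{n}$, simplicial operators among the $\mdelta{n}$, the markers, and the maps $\mdelta{m} \to \sDelta{n}$ — where the last two classes are pinned down by the underlying simplicial operator because every marker is an epimorphism.

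Finally, for reflectivity: a presheaf $F$ sends the $n$-marker to a monomorphism precisely when it is orthogonal to the fold map $\mdelta{n} \coprod_{\sDelta{n}} \mdelta{n} \to \mdelta{n}$ obtained by pushing the $n$-marker out along itself, so $\calC$ is the class of objects orthogonal to a set of morphisms in the locally presentable category $[(\Delta^+)^\op,\Set]$, hence reflective. (Equivalently, $N$ preserves limits and both categories are locally presentable, so $N$ admits a left adjoint, which exhibits $\msSet \simeq \calC$ as a reflective subcategory.) I expect the only genuine work to lie in the third step: verifying naturality over all of $\Delta^+$, and in particular recognising that the ``extra'' morphisms $\mdelta{m} \to \sDelta{n}$ — which carry no information beyond their underlying simplicial operator — are exactly what forces the reconstructed $X$ to satisfy the degeneracy axiom. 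Everything else is formal.
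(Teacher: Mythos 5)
The paper itself gives no proof of this statement — it simply cites Verity's Observation 12 — so there is nothing in the source to compare against. Your argument is correct and is the standard nerve / restricted-Yoneda proof that that citation refers to: you identify $N(X)(\sDelta n) \cong X_n$ and $N(X)(\mdelta n) \cong eX_n$ with the marker going to the inclusion, get full faithfulness from naturality at the markers together with $eY_n \hookrightarrow Y_n$ being monic, reconstruct a marked simplicial set from any $F$ satisfying the mono condition (with the degeneracy axiom falling out of the maps $\mdelta m \to \sDelta n$ that witness degeneracy of marked simplices), and obtain reflectivity by exhibiting $\calC$ as a small orthogonality class. The only place I would sharpen the exposition is the reflectivity step: the pushout $\mdelta n \amalg_{\sDelta n} \mdelta n$ must be taken in $[(\Delta^+)^\op, \Set]$ with $\sDelta n, \mdelta n$ read as representables (in $\msSet$ itself that pushout collapses to $\mdelta n$ and the codiagonal is the identity, trivializing the condition); you do say ``in the locally presentable category $[(\Delta^+)^\op,\Set]$,'' so the intent is clear, but writing $y\sDelta n \to y\mdelta n$ would remove the ambiguity.
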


The join operation $\join$ is extended from simplicial sets to marked simplicial sets by declaring that a simplex in $X \join Y$ is marked if and only if either its component in $X$ or its component in $Y$ is marked.

\begin{lem}[{\cite[Lem.~39]{verity:weak-complicial-1}}]\label{join-preserves-complicial}
Let $K, X, Y \in \sSet^+$, and let $X \to Y$ be a complicial map. Then the map $K \star X \to K \star Y$ is also complicial. \qed
\end{lem}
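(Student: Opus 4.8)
The plan is to derive this from the sharper assertion that the Leibniz join $i \mathbin{\hat\star} j$ is complicial whenever $i\colon A \to B$ is a monomorphism in $\msSet$ and $j$ is complicial. Granting this, write $j\colon X \to Y$ for the complicial map in the statement and factor $K \star j$ as
\[
K \star X \longrightarrow (K \star X)\coprod_{X} Y \longrightarrow K \star Y ,
\]
where the left-hand map is the pushout of $j$ along $X = \emptyset \star X \to K \star X$, hence complicial since $\cell$ is closed under pushout, and the right-hand map is exactly the Leibniz join $(\emptyset \incl K) \mathbin{\hat\star} j$, hence complicial by the sharper assertion applied to the monomorphism $\emptyset \incl K$; as $\cell$ is closed under composition, $K \star j$ is then complicial.

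To prove the sharper assertion, I would first reduce it to the generators. The join $\star$ fails to be cocontinuous in each variable, but it does preserve connected colimits in each variable, which is immediate from the explicit description of the marked join: the underlying simplicial join preserves connected colimits in each variable, and the markings of $K \star (-)$ are computed pointwise. Although \cref{cell-tensor} is stated for bifunctors cocontinuous in each variable, its proof uses this only through pushouts and transfinite composites (the colimits out of which relative cell complexes are built), so the same argument applies with $\otimes$ replaced by $\star$ and yields $\cell(\calM) \mathbin{\hat\star} \cell(\calG) \subseteq \cell(\calM \mathbin{\hat\star} \calG)$, where $\calM$ is a cellular model for $\msSet$ given by the boundary inclusions $\partial \Delta^p \incl \Delta^p$ together with the markers $\Delta^p \incl \mDelta p$, and $\calG$ is the set of complicial generators (the $k$-complicial horn inclusions $\horn n k \incl \adelta n k$ and the elementary $k$-complicial marking extensions $\adeltap n k \incl \trunc{n-2}\adelta n k$, over $n \ge 1$ and $0 \le k \le n$). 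Since $i \in \cell(\calM)$ and $j \in \cell(\calG)$, it then suffices to show that $m \mathbin{\hat\star} g$ is complicial for every $m \in \calM$ and $g \in \calG$.

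For $m = (\partial \Delta^p \incl \Delta^p)$ and $g$ the $k$-complicial horn inclusion $\horn n k \incl \adelta n k$, the isomorphism $\Delta^p \star \Delta^n \cong \Delta^{p+n+1}$ identifies $m \mathbin{\hat\star} g$ with the $(p+k+1)$-complicial horn inclusion $\horn{p+n+1}{p+k+1} \incl \adelta{p+n+1}{p+k+1}$: on underlying simplicial sets the Leibniz join of a boundary inclusion with a horn inclusion is again a horn inclusion, and since $\Delta^p$ carries the minimal marking, the marking on $\Delta^p \star \adelta n k$ coincides with that of $\adelta{p+n+1}{p+k+1}$. So $m \mathbin{\hat\star} g \in \calG$ in this subcase, and the subcase where $m$ is a marker is handled the same way.

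The remaining — and genuinely harder — subcases are those in which $g$ is an elementary $k$-complicial marking extension $\adeltap n k \incl \trunc{n-2}\adelta n k$, so that $g$, and hence $m \mathbin{\hat\star} g$, is entire. Here I would compute explicitly which simplices of $\Delta^p \star \trunc{n-2}\adelta n k$ (or, when $m$ is a marker, of the corresponding target of $m \mathbin{\hat\star} g$) are not already marked in the source of $m \mathbin{\hat\star} g$, and then verify that these markings can be adjoined one at a time, in order of decreasing dimension, as pushouts of elementary marking extensions $\adeltap N K \incl \trunc{N-2}\adelta N K$ — after first attaching, where necessary, finitely many complicial horn inclusions so that the required attaching maps into the partially built object exist. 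This is the main obstacle: unlike the horn case, where $\star$ carries a generator to a single generator, joining an entire generator (or a marker) with a cell adjoins many markings simultaneously, and the work lies in enumerating precisely which simplices acquire a marking, choosing the order in which to introduce them, and checking at each stage that enough of the markings forced by the relevant copies of $\adeltap N K$ are already present for the next pushout of an elementary marking extension to be available. This bookkeeping is the content of \cite[Lem.~39]{verity:weak-complicial-1}.
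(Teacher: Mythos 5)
The paper offers no proof of this lemma at all --- it is quoted from Verity --- so the only question is whether your argument stands on its own, and it does not. The reduction you set up (factor $K \star j$ through the pushout of $j$ followed by $(\emptyset \incl K)\mathbin{\hat\star} j$, then use preservation of connected colimits to reduce to Leibniz joins of cellular generators with the complicial generators) is sound and is essentially the standard skeleton. But the decisive cases --- the Leibniz joins involving the elementary marking extensions, and those involving the markers --- are exactly the ones you do not carry out: you describe what you ``would compute'' and then defer the verification to ``the content of \cite[Lem.~39]{verity:weak-complicial-1}'', i.e.\ to the very statement being proved. Since the reduction to generators is routine, all of the mathematical content of the lemma lies precisely in that omitted bookkeeping, so the proposal has a genuine gap rather than a complete alternative proof.

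Moreover, the one case you do settle explicitly is not quite correct. For $k=0$ the marking on $\Delta^p \star \adelta n 0$ does \emph{not} coincide with that of $\adelta{p+n+1}{p+1}$: a non-degenerate simplex whose image contains $p+1$ and $p+2$ but not $p$ (for instance the edge $\{p+1,p+2\}$) is marked in the join, since its component in $\adelta n 0$ contains $\{0,1\}$, but it is unmarked in $\adelta{p+n+1}{p+1}$, whose marking condition involves the vertex $p$ coming from the first join factor. Hence $(\partial\Delta^p \incl \Delta^p)\mathbin{\hat\star}(\horn n 0 \incl \adelta n 0)$ is not literally a generator; one can check that the extra markings all lie in the horn, so the map is a pushout of the generator, but that check is itself part of the combinatorics you skip. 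Similarly, the claim that ``the subcase where $m$ is a marker is handled the same way'' fails: $(\Delta^p \to \mDelta p)\mathbin{\hat\star}(\horn n k \incl \adelta n k)$ is entire (it adds the single marking on the face $\partial_{p+k+1}$ of the top simplex), so it must be exhibited as a pushout of an elementary marking extension, not of a horn inclusion. These slips are repairable, but they confirm that the case analysis you postpone is where the lemma actually lives; as written, the proposal amounts to the same citation the paper makes, dressed as a reduction.
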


We let $\eq$ denote the marked simplicial set whose underlying simplicial set is $\Delta^3$, and whose non-degenerate marked simplices consist of the 3-simplex $\id_{[3]}$, together with all non-degenerate 2-simplices and the 1-simplices $0 \to 2$ and $1 \to 3$. The \emph{elementary saturation map} is the entire map $\eq \to (\Delta^3)^\sharp$. In general, a \emph{saturation map} is any map of the form $\Delta^n \star \eq \to \Delta^n \star (\Delta^3)^\sharp$ for $n \geq -1$. (Here $\Delta^{-1}$ denotes $\varnothing$, so that the elementary saturation map is a saturation map.)

\begin{thm}\label{complicial-model-structure}
	The category $\msSet$ carries the following model structures:
	
	\begin{enumerate}
		\item \label{model-struct-complicial} A \emph{model structure for complicial sets} in which
		\begin{itemize}
			\item cofibrations are monomorphisms;
			\item fibrant objects are complicial sets;
			\item fibrations with fibrant codomain are characterized by the right lifting property with respect to complicial horn inclusions and complicial marking extensions.
		\end{itemize}
		
		\item \label{model-struct-complicial-sat} A \emph{model structure for saturated complicial sets} in which
		\begin{itemize}
			\item cofibrations are monomorphisms;
			\item fibrant objects are saturated complicial sets;
			\item fibrations with fibrant codomain are characterized by the right lifting property with respect to complicial
			 horn inclusions, complicial marking extensions, and the saturation maps.
		\end{itemize}
		
		\item \label{model-struct-complicial-triv} A \emph{model structure for $n$-trivial complicial sets} for $n \geq 0$ in which
		\begin{itemize}
			\item cofibrations are monomorphisms;
			\item fibrant objects are $n$-trivial complicial sets;
			\item fibrations with fibrant codomain are characterized by the right lifting property with respect to complicial
			 horn inclusions, complicial marking extensions, and markings $\Delta^m \to \mDelta m$ for $m > n$.
		\end{itemize}
		
		\item \label{model-struct-complicial-sat-triv} A \emph{model structure for $n$-trivial saturated complicial sets} in which
		\begin{itemize}
			\item cofibrations are monomorphisms;
			\item fibrant objects are $n$-trivial saturated complicial sets;
			\item fibrations with fibrant codomain are characterized by the right lifting property with respect to complicial
			 horn inclusions, complicial marking extensions, saturation maps, and markings $\Delta^m \to \mDelta m$ for $m > n$.
		\end{itemize}
	\end{enumerate}
\end{thm}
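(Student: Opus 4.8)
The plan is to obtain all four model structures as instances of \cref{CO-with-monoidal}. For $\scrK$ I would take $\msSet$ equipped with the Cartesian product, which makes it a Cartesian closed (hence biclosed monoidal) category whose unit is the terminal object $\Delta^0$; for the bipointed object I would take $I = (\Delta^1)^\sharp$ with its two vertices $\partial_0,\partial_1 \colon \Delta^0 \to (\Delta^1)^\sharp$; and for the cellular model I would take
\[
\calI = \{\sbdy n \incl \sDelta n \mid n \ge 0\} \cup \{\sDelta n \incl \mDelta n \mid n \ge 1\}.
\]
Only the set $\calJ$ varies with the case: it is the set of complicial horn inclusions together with the elementary complicial marking extensions for the model structure for complicial sets; one adjoins the saturation maps for the saturated version; one adjoins the markers $\sDelta m \incl \mDelta m$ with $m > n$ for the $n$-trivial version; and one adjoins all of these for the $n$-trivial saturated version. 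With these choices, conclusion (ii) of \cref{CO-with-monoidal} is precisely the asserted description of the fibrations with fibrant codomain, and specialising it to maps into $\Delta^0$ identifies the fibrant objects with the (resp.~$n$-trivial, saturated) complicial sets by definition of the latter.

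Conditions (1), (2) and (4) of \cref{CO-with-monoidal} are independent of $\calJ$, so they would be checked once. That $\cell(\calI)$ is a cellular model for $\msSet$ is standard (one first attaches the non-degenerate simplices of the target using the boundary inclusions, then adds the missing markings using the markers); (2) is immediate; and (4) holds because a Leibniz product of monomorphisms is a monomorphism — the Leibniz product of two boundary inclusions is a subobject inclusion, while any Leibniz product one of whose factors is a marker is \emph{entire}, and entire maps are monomorphisms. Condition (3) holds in every case because, unwinding the definitions, the vertex inclusions $\partial_\varepsilon \colon \Delta^0 \to (\Delta^1)^\sharp$ coincide with the complicial horn inclusions $\horn 1 0 \incl \adelta 1 0$ and $\horn 1 1 \incl \adelta 1 1$ (note $\adelta 1 \varepsilon = (\Delta^1)^\sharp$), which belong to every choice of $\calJ$.

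The real content is in conditions (5) and (6); by symmetry of the Cartesian product it suffices to treat $\calI \hat\otimes \calJ$, and I would argue by cases on a map $g \in \calJ$, with $f$ ranging over $\calI$. If $g$ is a complicial horn inclusion or an elementary complicial marking extension, then $f \hat\otimes g$ is again a complicial map — this is the stability of the complicial anodyne maps under Leibniz product with monomorphisms, part of the standard theory of complicial sets \cite{verity:weak-complicial-1,riehl-verity:more-elements} — and this disposes of the horns and marking extensions in all four cases. If $g$ is a marker $\sDelta m \incl \mDelta m$ with $m > n$, then $f \hat\otimes g$ is entire, and every simplex it newly marks has both of its factors marked and hence has dimension at least $m$; marking such simplices one at a time (each step a pushout of a marker $\sDelta p \incl \mDelta p$ with $p \ge m > n$) presents $f \hat\otimes g$ as a cell complex on these markers, which handles the $n$-trivial cases. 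Finally, if $g$ is a saturation map $\sDelta j \join \eq \to \sDelta j \join (\Delta^3)^\sharp$, then $g$ is entire, since its two sides have the same underlying simplicial set; hence $f \hat\otimes g$ is entire, and it remains to show that the markings it adds can be realised by a cell complex built from elementary complicial marking extensions and saturation maps. This last point — a combinatorial analysis of which simplices in a product acquire markings under saturation, and how to witness them — is the step I expect to be the \emph{main obstacle}; everything else is either routine or already available in the literature. Once (1)--(6) are verified, \cref{CO-with-monoidal} produces each of the four model structures with the stated cofibrations, fibrant objects, and fibrations.
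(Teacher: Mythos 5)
The paper does not reprove this theorem at all: its proof is a one-line citation, combining [Thm.~100, Ex.~104, Lem.~105] of \cite{verity:weak-complicial-1} with [App.~B] of \cite{ozornova-rovelli:model-structure}. You instead propose to re-derive it from \cref{CO-with-monoidal}, mirroring the paper's proof of the cubical analogue \cref{comical-model-structure}. That strategy is reasonable in outline, and your handling of conditions (1), (2), (4), of condition (3) via the $1$-dimensional complicial horns, and of the marker case of (5) is correct; but the two remaining steps are genuine gaps, not routine checks, and they affect all four model structures.

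First, the claim that the Leibniz \emph{cartesian} product of a boundary inclusion with a complicial horn inclusion or elementary marking extension is complicial is not ``part of the standard theory'' in the form you need: the stability results actually available (and used in this paper) concern the join (\cref{join-preserves-complicial}) and the lax Gray tensor product (\cref{gray-homotopical}), and the latter cannot be invoked here anyway, since it presupposes the very model structures you are constructing. In the cartesian product a simplex of $X \times Y$ is marked only when \emph{both} components are, so $\adelta n k \times \Delta^m$ carries very few markings, and the admissibility of the horns one must fill in a shuffle-by-shuffle argument is precisely the hard combinatorial content -- it is the simplicial counterpart of the case analysis the paper carries out for the Gray tensor in the proof of \cref{comical-model-structure}, and it is neither done in your proposal nor precisely located in the literature. (Note also that verifying (5)--(6) for the cartesian product would, by the last clause of \cref{CO-with-monoidal}, prove all four model structures cartesian monoidal, a conclusion stronger than the theorem asserts; this should already make you doubt that the step is ``routine or already available''.) Second, you explicitly leave open the saturation-map case of (5), and this is not a peripheral detail: the difficulty of fitting the saturation maps into a pushout-product analysis is exactly why the saturated structures are obtained in the literature by other means (localization, together with join-compatibility statements such as \cref{either-side-join}; compare how this paper later trades saturation maps for Rezk maps in \cref{Rezk-lift}). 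As written, then, the proposal establishes none of the four model structures; to complete it you must either supply these two verifications (if you insist on \cref{CO-with-monoidal}, the lax Gray tensor product -- whose unit is also the terminal object -- is the more promising choice of $\otimes$, as in the paper's cubical argument, though even then the saturation case needs an argument in the style of \cite{ozornova-rovelli:model-structure}), or simply quote Verity and Ozornova--Rovelli as the paper does.
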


\begin{proof}
	This is a combination of \cite[Thm.~100, Ex.~104, \&~Lem.~105]{verity:weak-complicial-1} and \cite[App.~B]{ozornova-rovelli:model-structure}.
\end{proof}

Note that since the terminal object is always fibrant, this includes a characterization of the fibrant objects of the model structure, which are called \defterm{($n$-trivial, saturated) complicial sets}.


\begin{lem}\label{either-side-join}
For $m, m' \geq -1$, the map $\Delta^m \star \eq \star \Delta^{m'} \to \Delta^m \star (\Delta^3)^\sharp \star \Delta^{m'}$ is a trivial cofibration in the model structures for ($n$-trivial) saturated complicial sets.
\end{lem}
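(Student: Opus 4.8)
The plan is to trade the two-sided join for a one-sided saturation map using the opposite involution, and then to transport that one-sided map along a further join. First I would reduce to the (non-$n$-trivial) saturated model structure: the $n$-trivial saturated structure is obtained from it by further localizing at the markers $\Delta^k\to\mDelta k$ ($k>n$), so it has at least as many trivial cofibrations, and it suffices to treat the saturated case. The map in question is entire, hence a monomorphism, hence a cofibration, so only weak equivalence remains to be shown.

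Next I would use that $\eq$ is self-opposite: order-reversal on $[3]$ exchanges the marked edges $0\to2$ and $1\to3$ and fixes the marked $2$-simplices and the unmarked $3$-simplex, so $\eq^{\op}\cong\eq$; together with $((\Delta^3)^\sharp)^{\op}=(\Delta^3)^\sharp$, $(\Delta^k)^{\op}\cong\Delta^k$, and $(A\star B)^{\op}\cong B^{\op}\star A^{\op}$, this identifies the opposite of the saturation map $\Delta^{m'}\star\eq\to\Delta^{m'}\star(\Delta^3)^\sharp$ with the ``wrong-sided'' map $\eq\star\Delta^{m'}\to(\Delta^3)^\sharp\star\Delta^{m'}$. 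The involution $(-)^{\op}$ preserves monomorphisms, is compatible with the interval $\mDelta 1$ (i.e.\ $(\mDelta 1)^{\op}\cong\mDelta 1$), and — granting the claim discussed below that saturated complicial sets are closed under $(-)^{\op}$ — preserves fibrant objects; it therefore preserves weak equivalences and is a left Quillen self-equivalence of the saturated model structure, a Quillen equivalence by \cref{involution-equiv}. Since saturation maps are trivial cofibrations by \cref{complicial-model-structure}, it follows that $\eq\star\Delta^{m'}\to(\Delta^3)^\sharp\star\Delta^{m'}$ is a trivial cofibration.

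By associativity of $\star$, the map we want is $\Delta^m\star(-)$ applied to this trivial cofibration, so it remains to see that $\Delta^m\star(-)$ preserves trivial cofibrations of the saturated model structure. Here I would observe: (i) $\Delta^m\star(-)$ preserves connected colimits (it is a left adjoint after passing to the coslice under $\Delta^m$), so the class of maps it sends to trivial cofibrations is closed under pushout, transfinite composition and retract; (ii) it sends complicial maps to complicial maps by \cref{join-preserves-complicial}, hence sends complicial horn inclusions and marking extensions into the trivial cofibrations; (iii) it sends a saturation map $\Delta^j\star\eq\to\Delta^j\star(\Delta^3)^\sharp$ to $\Delta^{m+j+1}\star\eq\to\Delta^{m+j+1}\star(\Delta^3)^\sharp$, again a saturation map. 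As these maps generate the trivial cofibrations, (i) finishes the argument.

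The step I expect to be the main obstacle is the claim that the opposite of a saturated complicial set is again one --- equivalently, that a complicial set with the right lifting property against the elementary saturation map $\eq\to(\Delta^3)^\sharp$ also has it against all joins $\Delta^i\star\eq\star\Delta^j\to\Delta^i\star(\Delta^3)^\sharp\star\Delta^j$, in particular the wrong-sided ones. I would attack this via the join--slice adjunctions, using stability of complicial sets and of the saturation condition under the slice constructions attached to joins, so that the lifting problem against $\eq\star\Delta^j\to(\Delta^3)^\sharp\star\Delta^j$ transposes to one against $\eq\to(\Delta^3)^\sharp$ over a slice of the target. A minor point to check is that the trivial cofibrations of the saturated model structure really are generated by complicial maps together with saturation maps; any extra cylinder-type generators from the Cisinski--Olschok construction are complicial, hence also handled by \cref{join-preserves-complicial}.
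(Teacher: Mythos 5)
Your argument has a genuine gap, and it sits exactly where you predicted. The paper does not prove this lemma internally at all: its proof is a one-line citation of \cite[Rmk.~1.20]{ozornova-rovelli:model-structure}, which records that the various one-sided and two-sided join variants of the saturation maps may be used interchangeably. Your plan instead reduces the lemma to the claim that saturated complicial sets are closed under $(-)^\op$, i.e.\ that a complicial set with the right lifting property against the maps $\Delta^j \star \eq \to \Delta^j \star (\Delta^3)^\sharp$ also lifts against the wrong-sided maps $\eq \star \Delta^{j} \to (\Delta^3)^\sharp \star \Delta^{j}$. Via the join--slice adjunctions this is precisely the statement that the slices of a saturated complicial set attached to the \emph{other} side of the join are again saturated, which is of the same depth as the lemma itself and is nowhere established in your sketch; note also that in the paper the Quillen self-equivalence $(-)^\op$ (\cref{simplicial-op-equiv}) is deduced \emph{from} \cref{either-side-join}, so you cannot lean on it without circularity. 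As written, the ``main obstacle'' you flag is not a checkable technical point but the mathematical content of the lemma.

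Even granting that step, the final reduction does not go through as stated. The horn inclusions, marking extensions and saturation maps are only a \emph{pseudo-}generating set: by \cref{complicial-model-structure} they characterize fibrations with fibrant codomain, and the class of all trivial cofibrations of a Cisinski--Olschok model structure is not the cellular-plus-retract closure of these maps (nor obviously of these maps together with the cylinder-type Leibniz products, whose compliciality you would also have to prove). Consequently, knowing abstractly that $\eq \star \Delta^{m'} \to (\Delta^3)^\sharp \star \Delta^{m'}$ is a trivial cofibration does not let you conclude by generator-chasing that $\Delta^m \star (-)$ applied to it is one; you would need $\Delta^m \star (-)$ to preserve all trivial cofibrations (i.e.\ that the join is left Quillen in each variable for the saturated structure), which again essentially contains the lemma. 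What does go through from your outline are the easy parts: the reduction to the saturated (non-$n$-trivial) case, the cofibration claim, and the identification of the opposite of a saturation map with a wrong-sided one via $\eq^\op \cong \eq$.
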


\begin{proof}
This follows from \cite[Rmk.~1.20]{ozornova-rovelli:model-structure}.
\end{proof}

The involution $(-)^\op \colon \sSet \to \sSet$ extends in a natural way to $\sSet^+$, with a simplex of $X^\op$ being marked if and only if the corresponding simplex of $X$ is marked.

\begin{prop}\label{simplicial-op-equiv}
The adjunction $(-)^\op : \sSet^+ \rightleftarrows \sSet^+ : (-)^\op$ is a Quillen self-equivalence of each of the model structures of \cref{complicial-model-structure}.
\end{prop}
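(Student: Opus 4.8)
The plan is to deduce everything from \cref{involution-equiv}. Since $(-)^\op$ is strictly its own inverse it is an involution of $\msSet$, so for each of the four model structures it suffices to check that the self-adjunction $(-)^\op \dashv (-)^\op$ is Quillen, i.e.\ that $(-)^\op$, viewed as the left adjoint, preserves cofibrations and trivial cofibrations. Preservation of cofibrations is immediate, since the cofibrations are precisely the monomorphisms and $(-)^\op$ is an automorphism of $\msSet$; so the content lies entirely in the trivial cofibrations.

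Because these model structures are cofibrantly generated and $(-)^\op$ preserves colimits and retracts, it is enough to exhibit a generating set of trivial cofibrations that $(-)^\op$ maps into the trivial cofibrations. For the model structure for complicial sets such a set is given by the complicial horn inclusions $\horn n k \incl \adelta n k$ and the elementary complicial marking extensions $\adeltap n k \incl \trunc{n-2}\adelta n k$; passing to the $n$-trivial and/or saturated versions adjoins the markings $\Delta^m \to \mDelta m$ for $m > n$ and/or the saturation maps $\Delta^n \star \eq \to \Delta^n \star (\Delta^3)^\sharp$. (Any further Leibniz products of generating cofibrations with the endpoint inclusion $\Delta^0 \to (\Delta^1)^\sharp$ that a particular presentation might use are harmless: $(-)^\op$ fixes the generating cofibrations $\sbdy n \incl \Delta^n$ and $\Delta^n \incl \mDelta n$, and carries $(\Delta^1)^\sharp$ to itself, merely swapping its two endpoints.)

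The horn inclusions, marking extensions and markings are permuted among themselves by the order-reversing involution $i \mapsto n-i$ of $[n]$. Explicitly, $(\horn n k \incl \adelta n k)^\op$ is $\horn n {n-k} \incl \adelta n {n-k}$ --- the marking condition ``the image of $\alpha$ contains $\{k-1,k,k+1\}\cap[n]$'' becomes ``the image contains $\{n-k-1,n-k,n-k+1\}\cap[n]$'' --- and, since $(-)^\op$ sends $\sface j$ to $\sface{n-j}$, $(\adeltap n k \incl \trunc{n-2}\adelta n k)^\op$ is $\adeltap n {n-k} \incl \trunc{n-2}\adelta n {n-k}$; each marking $\Delta^m \to \mDelta m$ is fixed. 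So all of these land back among the generating trivial cofibrations.

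The one place where $(-)^\op$ does something nontrivial --- and the reason \cref{either-side-join} is needed --- is the saturation maps, because $(-)^\op$ reverses joins: $(X \star Y)^\op = Y^\op \star X^\op$. One checks that $\eq$ is self-opposite (the relabelling $i \mapsto 3-i$ interchanges the marked edges $0 \to 2$ and $1 \to 3$ and preserves the set of non-degenerate $2$-simplices) and so is $(\Delta^3)^\sharp$; hence the opposite of $\Delta^n \star \eq \to \Delta^n \star (\Delta^3)^\sharp$ is $\eq \star \Delta^n \to (\Delta^3)^\sharp \star \Delta^n$, which is no longer literally a saturation map since $\Delta^n$ has moved to the other side of the join. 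This is exactly the map shown to be a trivial cofibration in the model structures for ($n$-trivial) saturated complicial sets by \cref{either-side-join} (applied with its parameters $m,m'$ taken to be $-1$ and $n$). Therefore $(-)^\op$ sends a generating set of trivial cofibrations into the trivial cofibrations, hence preserves all trivial cofibrations; being a cofibration-preserving left adjoint, it is left Quillen, and \cref{involution-equiv} promotes this to a Quillen self-equivalence of each of the model structures of \cref{complicial-model-structure}. I expect the only real subtleties to be pinning down the exact generating set of trivial cofibrations in each case and the join-reversal in the saturation maps, the latter being precisely what \cref{either-side-join} is designed to absorb.
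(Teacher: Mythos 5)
Your route is the paper's: reduce via \cref{involution-equiv} to showing that $(-)^\op$ is left Quillen, check that it carries complicial horn inclusions, elementary marking extensions and markers to maps of the same type (via the relabelling $k \mapsto n-k$), and handle the saturation maps through the join-reversal $(X \star Y)^\op \cong Y^\op \star X^\op$, the self-duality of $\eq$ and $(\Delta^3)^\sharp$, and \cref{either-side-join} with $(m,m')=(-1,n)$. All of these class-by-class verifications are correct and are exactly what the paper does.

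The one step whose stated justification does not hold up is the reduction to those maps. You claim that the complicial horn inclusions and marking extensions (together with the markers and saturation maps in the trivial/saturated variants) form a generating set of trivial cofibrations. \cref{complicial-model-structure} asserts much less: these maps detect, by the right lifting property, only the fibrations \emph{with fibrant codomain}; they are pseudo-generating. In Cisinski--Olschok/Verity-type model structures the cellular closure of such a set is in general a proper subclass of the trivial cofibrations (compare inner anodyne maps versus trivial cofibrations in the Joyal model structure), and no explicit generating set of trivial cofibrations is identified here, so ``cofibrantly generated plus cocontinuity'' cannot be applied to this particular set. The reduction is nevertheless valid, for a different reason: since $(-)^\op$ preserves monomorphisms and sends each pseudo-generating map into the trivial cofibrations, its right adjoint (again $(-)^\op$) preserves fibrant objects and fibrations with fibrant codomain by adjointness, and an adjunction whose left adjoint preserves cofibrations and whose right adjoint preserves fibrations between fibrant objects is Quillen (the standard criterion of Dugger/Joyal--Tierney). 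With that substitution your argument is complete and coincides with the paper's proof.
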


\begin{proof}
By \cref{involution-equiv}, it suffices to show that $(-)^\op$ is a left Quillen functor. For this, it suffices to show that it preserves the classes of complicial horn inclusions, complicial marking extensions, saturation maps, and markers. For saturation maps, this follows from the natural isomorphism $(X \star Y)^\op \cong Y^\op \star X^\op$ and the fact that $(\eq)^\op \to ((\Delta^3)^\sharp)^\op$ is isomorphic to $\eq \to (\Delta^3)^\sharp$, together with \cref{either-side-join}. For the other three classes, it is immediate from the definitions. 
\end{proof}

For our purposes, it will often be more convenient to work with an alternative to the saturation maps.

\begin{Def}
Let $L \subset \eq$ denote the regular subcomplex of $\eq$ whose underlying simplicial set consists of the  faces $\bd_0$ and $\bd_3$ of $\Delta^3$. More concretely, $L$ is the marked simplicial set illustrated below:
\[
\begin{tikzpicture}
\filldraw
(-1.5,1.5) circle [radius = 1.5pt]
(0,1.5) circle [radius = 1.5pt]
(0,0) circle [radius = 1.5pt]
(1.5,0) circle [radius = 1.5pt];

\draw[->] (0.3,0) -- (1.2,0);
\draw[->] (0,1.2) -- (0,0.3);
\draw[->] (-1.2,1.5) -- (-0.3,1.5);
\draw[->] (-1.3,1.3) -- (-0.2,0.2) node [midway, left] {$\sim$};
\draw[->] (0.2,1.3) -- (1.3,0.2) node [midway, right] {$\sim$};

\filldraw[shaded, rounded corners]
(0.1,0.1) -- (1.25,0.1) -- (0.1,1.25) -- cycle
(-0.1,1.4) -- (-1.25,1.4) -- (-0.1,0.25) -- cycle;

\node at (0.45,0.45) {$\sim$};
\node at (-0.45,1) {$\sim$};

\end{tikzpicture}
\]
Let $L' = \tau_0 L$, i.e.\ the simplicial set obtained by marking the three umarked 1-simplices of $L$. The \emph{elementary Rezk map} is the entire map $L \to L'$. In general, a \emph{Rezk map} is any map of the form $\Delta^n \star L \to \Delta^n \star L'$ for $n \geq -1$. 
\end{Def}

\begin{lem}\label{Rezk-diagram}
In each diagram of inclusions
	\[
	\begin{tikzcd}
		\Delta^n \star L
		\arrow [r]
		\arrow [d]
	    &
		\Delta^n \star \eq
		\arrow [d] \\
		\Delta^n \star L'
		\arrow [r] &
		\Delta^n \star(\Delta^3)^\sharp 
	\end{tikzcd}
	\]
for $n \geq -1$, the horizontal maps are complicial.
\end{lem}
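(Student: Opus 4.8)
The plan is to analyze the two horizontal inclusions $\Delta^n \star L \incl \Delta^n \star \eq$ and $\Delta^n \star L' \incl \Delta^n \star (\Delta^3)^\sharp$ by filtering each one through a sequence of intermediate inclusions, each of which is either a $k$-complicial horn inclusion (joined with $\Delta^n$ on the left, hence still complicial by \cref{join-preserves-complicial}) or a complicial marking extension treated similarly. First I would handle the bottom map $\Delta^n \star L' \incl \Delta^n \star (\Delta^3)^\sharp$: here $L'$ already has all its $1$-simplices marked, so the underlying simplicial inclusion is $\Delta^n \star (\bd_0 \Delta^3 \cup \bd_3 \Delta^3) \incl \Delta^n \star \Delta^3$, and the missing simplices are the interior $2$-simplices $\bd_1, \bd_2$ (joined up with the simplices of $\Delta^n$) and the top $3$-simplex. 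I would attach these one at a time, recognizing each attachment as a pushout of an inner horn inclusion $\horn{m}{k} \incl \adelta{m}{k}$ (for suitable $m$, $k$) joined with $\Delta^n$, using that all the relevant simplices are maximally marked so the marking bookkeeping is automatic. The key observation is that the $2$-simplex $0\,1\,2$ appears as the $\bd_3$-face of the $3$-simplex filling an inner horn $\horn{3}{1}$ or $\horn{3}{2}$ of $\Delta^3$, and then the full $3$-simplex together with its remaining faces is filled by the marking extension $\adeltap{3}{k} \incl \trunc{1}\adelta{3}{k}$; joining with $\Delta^n$ and invoking \cref{join-preserves-complicial} keeps everything complicial.

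For the top map $\Delta^n \star L \incl \Delta^n \star \eq$, the same skeletal filtration applies at the level of underlying simplicial sets, but now I must be careful about markings: in $L$ only the $1$-simplices $0\to 2$ and $1\to 3$ are marked (besides degeneracies), whereas in $\eq$ all $2$-simplices are additionally marked. So after attaching the underlying $2$-simplices via inner horn inclusions joined with $\Delta^n$, I would pick up the extra markings on those $2$-simplices and on the $3$-simplex via complicial marking extensions of the form $\adeltap{m}{k} \incl \trunc{m-2}\adelta{m}{k}$, again joined with $\Delta^n$. The point is that the marked $1$-simplices $0\to2$ and $1\to3$ of $\eq$ (and $L$) are exactly the ones that make the relevant horn fillers into \emph{equivalence} $3$-simplices, so the horns one fills are genuinely inner/admissible complicial horns with the correct marking on the hollow faces.

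The main obstacle I anticipate is purely combinatorial bookkeeping: identifying precisely which horn inclusions and which marking extensions, and in which order, realize the passage from $L$ (resp. $L'$) to $\eq$ (resp. $(\Delta^3)^\sharp$), and checking that the markings induced at each stage on the faces of the attached cells match the markings prescribed by $\adelta{m}{k}$ and $\adeltap{m}{k}$. This is delicate because $\eq$ has a nonstandard marking (the two "long" edges $0\to2$, $1\to3$ are marked but the other edges are not), so one must verify that each intermediate object, when we attach a cell along a horn, actually has the horn's boundary marked correctly — this is where the specific choice of $L$ as the union of $\bd_0$ and $\bd_3$ is essential, since those two outer faces already carry the marking data needed to make the inner-horn fillers admissible. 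Once the filtration is set up correctly, each step is a pushout of a complicial map (by \cref{join-preserves-complicial}), and closure of complicial maps under pushout and composition finishes the argument; an alternative, if the direct filtration proves too fiddly, is to factor the horizontal maps through $\Delta^n \star \eq$ itself and compare with the saturation and Rezk maps using \cref{either-side-join}, but I expect the direct skeletal argument to be cleanest.
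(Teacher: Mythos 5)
Your strategy is the same as the paper's: show that $L \hookrightarrow \eq$ and $L' \hookrightarrow (\Delta^3)^\sharp$ are each composites of pushouts of complicial horn inclusions and elementary marking extensions, then conclude for general $n$ via \cref{join-preserves-complicial}. The paper asserts the combinatorial claim just as tersely, so your level of detail is comparable.

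That said, the combinatorics you sketch have several slips. First, you omit the edge $0 \to 3$ from the list of missing cells — the underlying inclusion $\bd_0\Delta^3 \cup \bd_3\Delta^3 \hookrightarrow \Delta^3$ also lacks this edge, and it must be produced as the free face of a $2$-dimensional horn filler. Second, neither $\horn{3}{1}$ (needing $\bd_0, \bd_2, \bd_3$) nor $\horn{3}{2}$ (needing $\bd_0, \bd_1, \bd_3$) is contained in $L$ or $L'$, since each requires a $2$-face you have not yet built; also $012 = \bd_3$ already belongs to $L$, so the stated ``key observation'' about it being a face of a filler is confused, and marking extensions are entire maps, so they cannot ``fill the full $3$-simplex.'' A working filtration for $L \to \eq$ is: push out $\horn{2}{1} \to \adelta{2}{1}$ along the edges $01, 13$ (adding $013$ and $03$), then $\horn{3}{1} \to \adelta{3}{1}$ (adding the $3$-simplex and $023$), then the marking extension $\adeltap{3}{1} \to \trunc{1}\adelta{3}{1}$ (marking $023$); for $L' \to (\Delta^3)^\sharp$ one additionally marks $03$ via $\adeltap{2}{1} \to \trunc{0}\adelta{2}{1}$. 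Third, $L$ is a regular subcomplex of $\eq$, so its $2$-simplices $\bd_0, \bd_3$ are marked, not just the long edges as you state — this marking is exactly what makes the horn maps above admissible. Finally, the proposed fallback via \cref{either-side-join} would not work: that lemma yields trivial cofibrations, which is a weaker conclusion than ``complicial.''
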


\begin{proof}
We first treat the case $n = -1$ and the general statement then follows from \cref{join-preserves-complicial}.

The inclusion $L \hookrightarrow \eq$ can be obtained as a composite of two pushouts of complicial horn inclusions and one pushout of complicial marking extension:
	\[
	\begin{tikzcd}
		\Lambda^2_1
		\arrow [r, "\partial_2"]
		\arrow [d]
		\pushout
	    &
		L
		\arrow [d] 
		&
		\Lambda^3_1
		\arrow [r]
		\arrow [d]
		\pushout
		&
		L \cup \Delta^2_1
		\arrow [d]
		&
		{\Delta^3_1}' 
		\arrow [r]
		\arrow [d]
		\pushout
		&
		L \cup \Delta^2_1 \cup \Delta^3_1
		\arrow [d]
		\\
		\Delta^2_1
		\arrow [r] 
		&
		L \cup \Delta^2_1
		&
		\Delta^3_1
		\arrow [r]
		& 
		L \cup \Delta^2_1 \cup \Delta^3_1
		&
		\tau_1 \Delta^3
		\arrow [r]
		&
		\eq
	\end{tikzcd}
	\]
This proves that the inclusion $L \hookrightarrow \eq$ is complicial.

The inclusion $L' \hookrightarrow (\Delta^3)^\sharp$ is its image under $\tau_0$, which preserves complicial maps  by \cite[Lem.~23]{verity:weak-complicial-1}, and hence also complicial.
\end{proof}

\begin{cor}\label{Rezk-tcof}
Every Rezk map is a trivial cofibration in the model structures for ($n$-trivial) saturated complicial sets.
\end{cor}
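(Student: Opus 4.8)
The plan is to deduce the statement from \cref{Rezk-diagram} and \cref{either-side-join} by a two-out-of-three argument around the square of inclusions
\[
\begin{tikzcd}
	\Delta^n \star L \arrow [r] \arrow [d] & \Delta^n \star \eq \arrow [d] \\
	\Delta^n \star L' \arrow [r] & \Delta^n \star(\Delta^3)^\sharp.
\end{tikzcd}
\]
First I would observe that the Rezk map $\Delta^n \star L \to \Delta^n \star L'$ is a cofibration in each of the relevant model structures: the inclusion $L \hookrightarrow L'$ is entire, hence a monomorphism, and joining with a fixed marked simplicial set preserves monomorphisms. So it remains to check that the Rezk map is a weak equivalence.

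Next I would record that every complicial map is a trivial cofibration in each of the model structures for ($n$-trivial) saturated complicial sets. Indeed, the complicial horn inclusions and elementary complicial marking extensions are among the maps characterizing fibrations into fibrant objects in \cref{complicial-model-structure}, so they are trivial cofibrations; since the class of trivial cofibrations is closed under the formation of $\cell(-)$, and the complicial maps are by definition the cellular closure of these two families, the claim follows. Applying this to \cref{Rezk-diagram}, the two horizontal maps $\Delta^n \star L \to \Delta^n \star \eq$ and $\Delta^n \star L' \to \Delta^n \star (\Delta^3)^\sharp$ are trivial cofibrations. The right-hand vertical map $\Delta^n \star \eq \to \Delta^n \star (\Delta^3)^\sharp$ is a saturation map, hence a trivial cofibration by \cref{either-side-join} with $m = n$ and $m' = -1$.

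Finally I would run two-out-of-three. The composite along the top and then the right edge, $\Delta^n \star L \to \Delta^n \star \eq \to \Delta^n \star (\Delta^3)^\sharp$, is a composite of trivial cofibrations and hence a weak equivalence; it coincides with the composite $\Delta^n \star L \to \Delta^n \star L' \to \Delta^n \star (\Delta^3)^\sharp$ along the left edge and then the bottom. The second factor of this latter composite is a weak equivalence by the previous paragraph, so two-out-of-three forces the Rezk map $\Delta^n \star L \to \Delta^n \star L'$ to be a weak equivalence, and together with the first paragraph it is a trivial cofibration.

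I do not anticipate a real obstacle here; the only points requiring care are the bookkeeping that ``complicial'' implies ``trivial cofibration'' uniformly across all the model structures in question (which is exactly where the generating sets of \cref{complicial-model-structure} enter) and invoking \cref{either-side-join} with the parameters that specialize it to $\Delta^n \star \eq \to \Delta^n \star (\Delta^3)^\sharp$.
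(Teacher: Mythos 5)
Your argument is correct and is exactly the paper's proof, which deduces the corollary from \cref{Rezk-diagram} and two-out-of-three (with \cref{either-side-join} supplying that the saturation map on the right edge is a trivial cofibration, and the fact that complicial maps are trivial cofibrations handling the horizontal edges). You have simply spelled out the steps the paper calls ``immediate,'' so there is nothing to change.
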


\begin{proof}
This is immediate from \cref{Rezk-diagram} and the two-out-of-three property.
\end{proof}

\begin{lem} \label{join-preserves-pushouts}
  For any marked simplicial set $K$, the join functor $K \star - \colon \sSet^+ \to K \downarrow \sSet^+$ is a left adjoint.
\end{lem}

\begin{proof}
  This follows from an argument involving the analogous fact in the unmarked case (cf.~\cite[Lem.~D.2.7]{riehl-verity:elements}). 
\end{proof}

\begin{lem}\label{Rezk-pushout-complicial}
For any $n \geq -1$, the map $(\Delta^n \star L') \cup_{\Delta^n \star L} (\Delta^n \star \eq) \to \Delta^n \star (\Delta^3)^\sharp$ is complicial.
\end{lem}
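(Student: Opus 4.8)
The plan is to analyze the pushout-corner map
\[
(\Delta^n \star L') \cup_{\Delta^n \star L} (\Delta^n \star \eq) \to \Delta^n \star (\Delta^3)^\sharp
\]
by comparing it against the maps whose complicial-ness we already control via \cref{Rezk-diagram}. The key structural observation is that this pushout-corner map fits into a commutative square whose other three edges are, up to the results already established, complicial. Concretely, consider the commutative diagram
\[
\begin{tikzcd}[column sep = small]
\Delta^n \star L \arrow[r] \arrow[d] & \Delta^n \star \eq \arrow[d] \arrow[ddr, bend left] & \\
\Delta^n \star L' \arrow[r] \arrow[drr, bend right] & (\Delta^n \star L') \cup_{\Delta^n \star L} (\Delta^n \star \eq) \arrow[dr, dashed] & \\
& & \Delta^n \star (\Delta^3)^\sharp
\end{tikzcd}
\]
First I would note that the map $\Delta^n \star \eq \to \Delta^n \star (\Delta^3)^\sharp$ is complicial: by \cref{Rezk-diagram} the horizontal maps $\Delta^n \star L \to \Delta^n \star \eq$ and $\Delta^n \star L' \to \Delta^n \star (\Delta^3)^\sharp$ are complicial, and we want to leverage this to understand the right-hand vertical composite.

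The main step is the following. By \cref{Rezk-diagram}, the inclusion $\Delta^n \star L' \hookrightarrow \Delta^n \star (\Delta^3)^\sharp$ is complicial. Since the class of complicial maps is, by definition, a cellular closure and hence closed under pushout, the map
\[
\Delta^n \star \eq \to (\Delta^n \star L') \cup_{\Delta^n \star L} (\Delta^n \star \eq)
\]
obtained by pushing out $\Delta^n \star L' \hookrightarrow \Delta^n \star (\Delta^3)^\sharp$ — wait, that is not quite the pushout we need; rather, I would instead push out the complicial map $\Delta^n \star L \to \Delta^n \star L'$ (which is complicial, being a pushout of the elementary Rezk map composed as in the proof of \cref{Rezk-diagram}, or more directly: $L \hookrightarrow L'$ is a pushout of complicial marking extensions and horn inclusions, so $\Delta^n \star L \to \Delta^n \star L'$ is complicial by \cref{join-preserves-complicial}) along the map $\Delta^n \star L \to \Delta^n \star \eq$ to see that the canonical map $\Delta^n \star \eq \to (\Delta^n \star L') \cup_{\Delta^n \star L} (\Delta^n \star \eq)$ is complicial. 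Then I would invoke the commuting triangle
\[
\Delta^n \star \eq \to (\Delta^n \star L') \cup_{\Delta^n \star L} (\Delta^n \star \eq) \to \Delta^n \star (\Delta^3)^\sharp
\]
whose composite is complicial by \cref{Rezk-diagram} (case of the right vertical map, which is itself a pushout-and-composition of horn inclusions and marking extensions joined with $\Delta^n$, via \cref{join-preserves-complicial}).

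The remaining — and genuinely delicate — step is to pass from "the composite and the first leg are complicial" to "the second leg is complicial." This does not follow formally, since complicial maps do not satisfy two-out-of-three. Here I would argue directly: the pushout-corner map is a monomorphism (all maps in sight are monos and the relevant square is a pushout of monos), and I would exhibit it as a relative cell complex built from complicial horn inclusions and elementary complicial marking extensions by carefully identifying which non-degenerate simplices of $\Delta^n \star (\Delta^3)^\sharp$ are not already present in the pushout, and in what order they can be attached. The simplices to be added are exactly those whose $(\Delta^3)$-component meets the "middle" of $\Delta^3$ in a way not recorded by $L$, together with the markings that upgrade them; attaching the marked $3$-simplices and $2$-simplices of $(\Delta^3)^\sharp$ (joined with faces of $\Delta^n$) via complicial marking extensions, in a dimension-increasing order, should realize the map as complicial. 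I expect this bookkeeping — pinning down exactly the filtration and checking each attachment is a pushout of a generating complicial map — to be the main obstacle; it is the cubical/simplicial analogue of the standard "anodyne extension of a join" arguments and will likely reuse the pushout-decomposition of $L \hookrightarrow \eq$ and $L' \hookrightarrow (\Delta^3)^\sharp$ from the proof of \cref{Rezk-diagram} in an essential way.
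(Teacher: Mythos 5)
Your proposal correctly identifies that two-out-of-three is unavailable and that a direct relative-cell-complex argument is what's needed, but you stop precisely at the step the paper actually carries out, and you overlook the two simplifications that make it short rather than delicate.

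First, you never exploit the fact that $\Delta^n \star -$ preserves pushouts. Because of this, the pushout-corner domain $(\Delta^n \star L') \cup_{\Delta^n \star L} (\Delta^n \star \eq)$ is canonically isomorphic to $\Delta^n \star (L' \cup_{L} \eq)$, and the map in question is simply $\Delta^n \star (L' \cup_{L} \eq) \to \Delta^n \star (\Delta^3)^\sharp$. By \cref{join-preserves-complicial}, the whole lemma then reduces to the single case $n = -1$, i.e.\ to showing $L' \cup_{L} \eq \to (\Delta^3)^\sharp$ is complicial. Your proposal keeps the general $n$ around and therefore anticipates a genuinely delicate filtration that never actually arises.

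Second, your suggestion to ``carefully identify which non-degenerate simplices of $\Delta^n \star (\Delta^3)^\sharp$ are not already present in the pushout'' misreads the situation: the map is \emph{entire}. Both $L' \cup_{L} \eq$ and $(\Delta^3)^\sharp$ have $\Delta^3$ as their underlying simplicial set (the pushout $L' \cup_{L} \eq$ adjoins no new simplices to $\eq$, only markings), so there are no horn fillings to perform at all. The only question is which markings are missing. Computing directly, the markings of $\eq$ together with those imposed by $L'$ leave exactly the edge $0 \to 3$ unmarked, and a single pushout of the elementary $1$-complicial marking extension $\adeltap{2}{1} \hookrightarrow \trunc{0}\adelta{2}{1}$ along the face $\partial_1$ (or $\partial_2$) of $\Delta^3$ marks it. So the whole lemma is one marking extension plus \cref{join-preserves-complicial} plus ``join preserves pushouts'' — no filtration and no horn inclusions required. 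The ``main obstacle'' you anticipate does not exist once you make these two observations; without them your argument is incomplete.
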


\begin{proof}
The marked simplicial set $L' \cup_{L} \eq$ has $\Delta^3$ as its underlying simplicial set, with the edge $0 \to 3$ as its only unmarked simplex. Thus the inclusion $L' \cup_{L} \eq \to (\Delta^3)^\sharp$ simply marks this edge; it is therefore a pushout of the elementary 1-complicial marking extension in dimension 2, with the map $\adeltap 2 1 \to L' \cup_{L} \eq$ given by either of the faces $\bd_1$ or $\bd_2$.

It thus follows that each map $\Delta^n \star (L' \cup_{L} \eq) \to \Delta^n \star (\Delta^3)^\sharp$ is complicial by \cref{join-preserves-complicial}. 
The conclusion then follows by \cref{join-preserves-pushouts}.
\end{proof}

The following result allows us to substitute the Rezk maps for the saturation maps in the set of pseudo-generating trivial cofibrations of the ($n$-trivial) saturated complicial model structures.

\begin{prop}\label{Rezk-lift}\leavevmode
\begin{enumerate}
\item A map with fibrant codomain in the model structure for saturated complicial sets is a fibration if and only if it has the right lifting property with respect to the complicial horn inclusions, complicial marking extensions, and the Rezk maps. 
\item A map with fibrant codomain in the model structure for $n$-trivial saturated complicial sets is a fibration if and only if it has the right lifting property with respect to the complicial horn inclusions, complicial marking extensions, Rezk maps, and markers $\Delta^m \to \widetilde{\Delta}^m$ for $m > n$.
\end{enumerate}
\end{prop}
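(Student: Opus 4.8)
The plan is to reduce the claim to \cref{Rezk-tcof} together with a lifting-property manipulation showing that, in the presence of the complicial horn inclusions and complicial marking extensions, having the right lifting property with respect to the Rezk maps is equivalent to having it with respect to the saturation maps. The key observation is that both classes of generating trivial cofibrations pseudo-generate the same fibrations because they differ only by maps that are already known to be pushouts of complicial horn inclusions and marking extensions, as recorded in \cref{Rezk-diagram} and \cref{Rezk-pushout-complicial}.

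\textbf{Step 1: The ``only if'' direction.} Suppose $p \colon X \to Y$ is a fibration with fibrant codomain in the ($n$-trivial) saturated complicial model structure. By \cref{complicial-model-structure}, $p$ has the right lifting property with respect to the complicial horn inclusions, complicial marking extensions, (the markers $\Delta^m \to \mDelta m$ for $m > n$ in the $n$-trivial case,) and the saturation maps. By \cref{Rezk-tcof}, every Rezk map is a trivial cofibration, and since $p$ is a fibration it automatically has the right lifting property against every trivial cofibration; in particular against the Rezk maps. This direction requires nothing beyond \cref{Rezk-tcof}.

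\textbf{Step 2: The ``if'' direction.} Conversely, suppose $p \colon X \to Y$ has fibrant codomain and has the right lifting property with respect to the complicial horn inclusions, complicial marking extensions, Rezk maps, (and markers $\Delta^m \to \mDelta m$ for $m > n$). By \cref{complicial-model-structure} it suffices to show $p$ lifts against every saturation map $\Delta^n \star \eq \to \Delta^n \star (\Delta^3)^\sharp$, $n \ge -1$. Factor this map as
\[
\Delta^n \star \eq \to (\Delta^n \star L') \cup_{\Delta^n \star L} (\Delta^n \star \eq) \to \Delta^n \star (\Delta^3)^\sharp.
\]
The first map is a pushout of the Rezk map $\Delta^n \star L \to \Delta^n \star L'$ along $\Delta^n \star L \to \Delta^n \star \eq$, so $p$ lifts against it since the right lifting property is closed under pushouts. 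The second map is complicial by \cref{Rezk-pushout-complicial}, i.e.\ it lies in the cellular closure of the complicial horn inclusions and complicial marking extensions, against all of which $p$ lifts by hypothesis; hence $p$ lifts against it as well. Composing the two lifts gives a lift against the saturation map. (In the $n$-trivial case, the hypothesis already includes the markers, so no further work is needed there.) Therefore $p$ is a fibration.

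\textbf{Main obstacle.} There is no serious obstacle: the substantive content has been isolated in the earlier lemmas. The one point demanding care is verifying that the first map in the factorization in Step 2 is genuinely a pushout of the Rezk map $\Delta^n \star L \to \Delta^n \star L'$ — this uses that joining with a fixed marked simplicial set preserves pushouts, which is exactly the fact invoked in the proof of \cref{Rezk-pushout-complicial}, so it can be cited from there. One should also be mindful that the statement concerns \emph{both} the saturated and the $n$-trivial saturated model structures simultaneously, so the markers $\Delta^m \to \mDelta m$ must be carried along passively in parentheses throughout, but they play no role in the saturation/Rezk interchange itself.
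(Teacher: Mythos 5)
Your proposal is correct and follows essentially the same route as the paper: one direction via \cref{Rezk-tcof}, and the other by factoring the saturation map through $(\Delta^m \star L') \cup_{\Delta^m \star L} (\Delta^m \star \eq)$, lifting first against the pushout of the Rezk map and then against the complicial map of \cref{Rezk-pushout-complicial}. The paper presents this as an explicit diagram chase (pre-composing with the square of \cref{Rezk-diagram} and then lifting out of the pushout), but that is the same factorization you use.
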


\begin{proof}
Let $X \to Y$ be a map in $\sSet^+$ having the right lifting property with respect to the complicial horn inclusions and complicial marking extensions, with $Y$ a saturated complicial set; we will show that it has the  right lifting property with respect to the saturation maps if and only if it has the right lifting property with respect to the Rezk maps.

First, suppose that $X \to Y$ has the  right lifting property with respect to the saturation maps. Then it is a fibration in the model structure for saturated complicial sets, and therefore has the right lifting property with respect to the Rezk maps by \cref{Rezk-tcof}.

Now suppose that $X \to Y$ has the  right lifting property with respect to the Rezk maps, and consider a diagram of the form
	\[
	\begin{tikzcd}
		\Delta^m \star \eq 
		\arrow [r]
		\arrow [d]
	    &
		X
		\arrow [d] \\
		\Delta^m \star (\Delta^3)^\sharp
		\arrow [r] &
		 Y
	\end{tikzcd}
	\]
for $m \geq -1$.

We may pre-compose with the diagram shown in the statement of \cref{Rezk-diagram}, and obtain a lift in the resulting composite diagram, as shown below.
	\[
	\begin{tikzcd}
	    \Delta^m \star L
	    \arrow [r]
	    \arrow [d]
	    &
		\Delta^m \star \eq 
		\arrow [r]
		\arrow [d]
	    &
		X
		\arrow [d] 
		\\
		\Delta^m \star L' \ar[r]
		\arrow [r]
		\arrow [rru, dashed]
		&
		\Delta^m \star (\Delta^3)^\sharp
		\arrow [r] 
		&
		 Y
	\end{tikzcd}
	\]
Thus we obtain a map from the pushout $(\Delta^m \star L') \cup_{\Delta^m \star L} (\Delta^m \star \eq)$ into $X$, and the following commuting diagram:
	\[
	\begin{tikzcd} [column sep = large]
		(\Delta^m \star L') \cup_{\Delta^m \star L} (\Delta^m \star \eq) 
		\arrow [r]
		\arrow [d]
	    &
		X
		\arrow [d] \\
		\Delta^m \star (\Delta^3)^\sharp
		\arrow [r] &
		 Y
	\end{tikzcd}
	\]
The left-hand map is complicial by \cref{Rezk-pushout-complicial}, thus this diagram admits a lift. Pre-composing with the pushout inclusion $\Delta^m \star \eq \to (\Delta^m \star L') \cup_{\Delta^m \star L} (\Delta^m \star \eq)$, we obtain a lift in the original diagram.
\end{proof}

\begin{Def}
	Let $[n] \in \Delta$ and let $0 \leq p,q \leq n$ be such that $p+q = n$.
	Then we write $\frontface p q \colon [p] \to [n]$ for the simplicial operator $i \mapsto i$, and $\backface p q \colon [q] \to [n]$ for the operator $i \mapsto p+i$.
\end{Def}

%

\begin{Def}
	Let $X,Y \in \mSet$, let $(x,y) \in X_n \times Y_n$ be a simplex of $X \times Y$, and let $0 \leq i \leq n$. We say that $(x,y)$ is \defterm{$i$-cloven} if \emph{either} $x   \frontface i {n-i}$ is marked in $X$ or $y   \backface i {n-i}$ is marked in $Y$. We say that $(x,y)$ is \defterm{fully cloven} if it is $i$-cloven for all $0 \leq i \leq n$.
	
	The \defterm{Gray tensor product} of $X$ and $Y$, denoted $X \otimes Y$, is defined to be the marked simplicial set with underlying simplicial set $X \times Y$, where a simplex $(x,y) \in X_n \times Y_n$ is marked if and only if it is fully cloven.
\end{Def}

\begin{thm}[{\cite[Lem.~131]{verity:complicial}}]\label{thm:gray}
	The Gray tensor product endows $\mSet$ with a (nonsymmetric) monoidal structure, such that the forgetful functor $(\mSet, \otimes) \to (\sSet, \times)$ is strict monoidal. \qed
\end{thm}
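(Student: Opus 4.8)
The plan is to lift the cartesian monoidal structure of $(\sSet,\times)$ along the forgetful functor $U\colon\mSet\to\sSet$, which is faithful (a map of marked simplicial sets \emph{is} its underlying simplicial map, subject to the property of preserving markings). Concretely I would check, in order: (1) for $X,Y\in\mSet$ the prescribed $X\otimes Y$ really is a marked simplicial set; (2) $\otimes$ is a bifunctor $\mSet\times\mSet\to\mSet$; (3) the minimally marked point $\Delta^0$ is a two-sided unit and the standard unit and associativity bijections of $\times$ underlie \emph{isomorphisms} in $\mSet$; and (4) the coherence axioms hold. Given (1)--(3), strict monoidality of $U$ is immediate, since by construction $U(X\otimes Y)=UX\times UY$, $U(f\otimes g)=Uf\times Ug$, $U$ carries $\Delta^0$ to the terminal simplicial set, and $U$ carries the structure isomorphisms of $\otimes$ to those of $\times$; and (4) is then formal, because the pentagon, triangle, and naturality squares commute in $(\sSet,\times)$ and $U$ is faithful.

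For (1): no $0$-simplex of $X\otimes Y$ is marked, since for $n=0$ both $\frontface 00$ and $\backface 00$ are the identity, so $0$-clovenness of $(x,y)$ would force $x$ or $y$ to be a marked $0$-simplex. Every degenerate simplex is marked: if $(x,y)=(w_1,w_2)\sigma_k$ with $\sigma_k\colon[n]\to[n-1]$ the codegeneracy collapsing $k,k+1$, then for $i\ge k+1$ the operator $\sigma_k\circ\frontface i{n-i}\colon[i]\to[n-1]$ factors through a codegeneracy of $[i]$, so $x\,\frontface i{n-i}$ is degenerate, hence marked in $X$; dually, for $i\le k$ the operator $\sigma_k\circ\backface i{n-i}$ factors through a codegeneracy of $[n-i]$, so $y\,\backface i{n-i}$ is degenerate, hence marked in $Y$. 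Thus $(x,y)$ is $i$-cloven for every $i$. For (2), functoriality is inherited from $\times$ once one checks $f\otimes g$ preserves markings, which holds because $f(x\,\frontface i{n-i})=f(x)\,\frontface i{n-i}$ and $f,g$ preserve markings. For the unit isomorphisms in (3): in $X\otimes\Delta^0$ the $i$-clovenness condition is automatic for $i<n$ (the relevant back face of the unique simplex of $\Delta^0$ is degenerate, hence marked) and for $i=n$ reads ``$x$ is marked in $X$'', so the canonical bijection $X\times\Delta^0\cong X$ preserves and reflects markings; the left unitor is symmetric, with $i=0$ playing the role of $i=n$.

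The main step, and the one I expect to require the most care, is the associator $(X\otimes Y)\otimes Z\cong X\otimes(Y\otimes Z)$: one must show the canonical bijection $(X\times Y)\times Z\cong X\times(Y\times Z)$ preserves and reflects the Gray markings. The key inputs are the elementary identities of simplicial operators
\[
(x\,\frontface i{n-i})\,\frontface j{i-j}=x\,\frontface j{n-j},\qquad
(x\,\frontface i{n-i})\,\backface j{i-j}=x\,\partial_{\{j,\dots,i\}},\qquad
(x\,\backface j{n-j})\,\frontface k{n-j-k}=x\,\partial_{\{j,\dots,j+k\}},
\]
together with $(x\,\backface j{n-j})\,\backface k{n-j-k}=x\,\partial_{\{j+k,\dots,n\}}$, where $\partial_{\{j,\dots,i\}}\colon[i-j]\to[n]$ denotes the injective operator with image $\{j,\dots,i\}$; these let one unwind the nested clovenness conditions. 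Abbreviating by $a_j$ the statement ``$x\,\frontface j{n-j}$ is marked in $X$'', by $b_{j,i}$ the statement ``$y\,\partial_{\{j,\dots,i\}}$ is marked in $Y$'', and by $c_i$ the statement ``$z\,\backface i{n-i}$ is marked in $Z$'', one computes that $((x,y),z)$ is marked in $(X\otimes Y)\otimes Z$ iff $\forall i\,\bigl[(\forall j\le i\,(a_j\vee b_{j,i}))\vee c_i\bigr]$, which, distributing, equals $\bigwedge_{0\le j\le i\le n}(a_j\vee b_{j,i}\vee c_i)$; and that $(x,(y,z))$ is marked in $X\otimes(Y\otimes Z)$ iff $\forall j\,\bigl[a_j\vee(\forall i\ge j\,(b_{j,i}\vee c_i))\bigr]$, which equals the same conjunction. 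Hence the two marked simplicial sets coincide, so the associator is an isomorphism in $\mSet$.

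With the associator and both unitors established as isomorphisms in $\mSet$, step (4) is formal: the pentagon, triangle and all naturality squares become, after applying the faithful functor $U$, the corresponding diagrams in $(\sSet,\times)$, which commute; hence they commute in $\mSet$. Finally, since $U(X\otimes Y)=UX\times UY$ and all structure isomorphisms are defined to have the structure isomorphisms of $\times$ as underlying maps, $U$ is strict monoidal; one may also note in passing that the unit $\Delta^0$ is the terminal object of $\mSet$, so this monoidal structure has terminal unit. The single genuine obstacle is the bookkeeping in the associativity computation, i.e.\ identifying which faces of $x$, $y$, $z$ appear in each unwinding; once the operator identities above are applied it reduces to the purely propositional equality of the two iterated conjunctions.
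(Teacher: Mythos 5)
Your argument is correct, but it is worth noting that the paper does not prove this statement at all: it is quoted from Verity (\cite[Lem.~131]{verity:complicial}), so your proposal amounts to reconstructing that external proof from scratch rather than following anything in the paper. Your verification checks out at every step: the degeneracy argument (front faces handle $i\ge k+1$, back faces handle $i\le k$, so every degenerate simplex is fully cloven), the unit computations for $X\otimes\Delta^0$ and $\Delta^0\otimes Y$ (only the extreme index $i=n$, resp.\ $i=0$, imposes a genuine condition, which is exactly ``$x$ marked'', resp.\ ``$y$ marked''), and the associativity computation, where the operator identities $\frontface i{n-i}\circ\frontface j{i-j}=\frontface j{n-j}$, $\frontface i{n-i}\circ\backface j{i-j}=\backface j{n-j}\circ\frontface{i-j}{n-i}$ (both with image $\{j,\dots,i\}$) and $\backface j{n-j}\circ\backface k{n-j-k}=\backface{j+k}{n-j-k}$ reduce both nested clovenness conditions to the single conjunction $\bigwedge_{0\le j\le i\le n}(a_j\vee b_{j,i}\vee c_i)$; the coherence axioms and strict monoidality of the forgetful functor then follow formally from faithfulness of $U$ and the fact that all structure maps are the cartesian ones. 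What the citation buys the paper is brevity and consistency with Verity's framework of stratified sets; what your route buys is a self-contained check that the ``fully cloven'' description used here really does define a monoidal structure with terminal unit, with the only substantive content being exactly the bookkeeping you isolate in the associator.
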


\begin{prop}\label{gray-homotopical}
In any of the model structures of \cref{complicial-model-structure}, given a pair of cofibrations $i \colon A \to B, j \colon X \to Y$ in $\sSet$, the pushout Gray tensor product $i \hat{\otimes} j \colon A \otimes Y \cup_{A \otimes X} B \otimes X \to B \otimes Y$ is a cofibration. Moreover, if either $i$ or $j$ is trivial then so is $i \hat{\otimes} j$.
\end{prop}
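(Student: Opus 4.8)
Here $i$ and $j$ are to be read as monomorphisms of \emph{marked} simplicial sets, the Gray tensor product being defined on $\sSet^+$. For the first assertion: by \cref{thm:gray} the forgetful functor $U\colon\sSet^+\to\sSet$ is strict monoidal, and it is cocontinuous, being left adjoint to $(-)^\sharp$; hence $U(i\hat\otimes j)$ is the Leibniz product of $Ui$ and $Uj$ for the cartesian structure on $\sSet$, which is a monomorphism since $\sSet$ is a presheaf topos. A map of marked simplicial sets is a monomorphism exactly when its underlying simplicial map is one, so $i\hat\otimes j$ is a cofibration. Applied to the cellular model $\mathcal I=\{\partial\Delta^m\hookrightarrow\Delta^m\}_{m\geq 0}\cup\{\Delta^m\to\mDelta{m}\}_{m\geq 1}$ of $\sSet^+$, the same argument gives $\mathcal I\hat\otimes\mathcal I\subseteq\cell(\mathcal I)$.

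For the rest, the plan is to identify each of the model structures of \cref{complicial-model-structure} with the one produced by \cref{CO-with-monoidal} for $(\sSet^+,\otimes)$ --- the Gray tensor product being biclosed with unit the terminal object $\Delta^0$ (cf.\ \cref{thm:gray}) --- so that the claimed pushout-product property becomes the final (monoidality) clause of that theorem. Take the interval $I=\mDelta{1}$, bipointed by the vertex inclusions $\partial_0,\partial_1\colon\Delta^0\to\mDelta{1}$; take $\mathcal I$ as above; and take $\mathcal J$ to consist of the complicial horn inclusions and complicial marking extensions, together with the Rezk maps in the saturated cases (permissible by \cref{Rezk-lift}) and the markers $\Delta^m\to\mDelta{m}$ for $m>n$ in the $n$-trivial cases. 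Hypotheses (1) and (2) of \cref{CO-with-monoidal} are standard; (3) holds because, under the identifications $\adelta{1}{0}=\adelta{1}{1}=\mDelta{1}$, the maps $\partial_0,\partial_1$ are the $0$- and $1$-complicial horn inclusions $\horn{1}{0}\hookrightarrow\adelta{1}{0}$ and $\horn{1}{1}\hookrightarrow\adelta{1}{1}$, hence lie in $\cell(\mathcal J)$. Granting (4)--(6), \cref{CO-with-monoidal} yields a left proper combinatorial model structure, monoidal for $\otimes$, whose cofibrations are the monomorphisms and whose fibrations into fibrant objects are detected by $\mathcal J$; the ($n$-trivial, saturated) complicial model structure is also left proper (all objects are cofibrant) and combinatorial, and by \cref{complicial-model-structure} together with \cref{Rezk-lift} it has these same two properties, so the uniqueness clause identifies the two and the result follows.

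What remains, and what I expect to be the main obstacle, is the verification of (5) and (6): $f\hat\otimes j\in\cell(\mathcal J)$ for all $f\in\mathcal I$ and $j\in\mathcal J$; condition (4) was shown above. For (5) I would argue by cases in $j$. The essential case is $j$ a complicial horn inclusion $\horn{n}{k}\hookrightarrow\adelta{n}{k}$: one decomposes $\Delta^m\otimes\adelta{n}{k}$ relative to its pushout-product subobject along a shelling of the nondegenerate simplices of $\Delta^m\times\Delta^n$ --- the Gray-theoretic refinement of the classical shuffle decomposition of a product of simplices --- so that the successive attaching maps are pushouts of complicial horn inclusions and elementary marking extensions, with the ``fully cloven'' marking condition of $\otimes$ tracked at every stage. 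When $j$ is a complicial marking extension or a marker, $f\hat\otimes j$ is entire, and one realises the markings it adds via elementary marking extensions and markers in strictly lower dimension. When $j$ is a Rezk map one instead exploits the interaction of $\otimes$ with the join, via \cref{join-preserves-complicial}, \cref{Rezk-diagram}, \cref{Rezk-pushout-complicial} and \cref{either-side-join}. Finally (6) follows from (5): since $(X\otimes Y)^\op\cong Y^\op\otimes X^\op$ and $\mathcal I$, $\mathcal J$ are each stable under $(-)^\op$ --- replacing, if needed, the Rezk maps by the two-sided saturation maps $\Delta^m\star\eq\star\Delta^{m'}\to\Delta^m\star(\Delta^3)^\sharp\star\Delta^{m'}$, which is harmless by \cref{either-side-join} --- the self-equivalence $(-)^\op$ of \cref{simplicial-op-equiv} carries $f\hat\otimes j$ to $j^\op\hat\otimes f^\op$ and preserves $\cell(\mathcal J)$. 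The combinatorial heart of (5) overlaps substantially with computations already in \cite{verity:complicial} and \cite{ozornova-rovelli:model-structure}.
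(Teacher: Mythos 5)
The first half of your proof --- that $i\hat\otimes j$ is a cofibration --- is correct and is the same argument the paper gives, reducing via the strict-monoidal, cocontinuous forgetful functor $U\colon\sSet^+\to\sSet$ to the corresponding fact for the cartesian Leibniz product of monomorphisms.

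For the half of the statement that actually has content --- that $i\hat\otimes j$ is trivial when one of $i,j$ is --- your proposal diverges from the paper substantially, and as written it is not a proof. The paper dispenses with this in one sentence by citing \cite[Cor.~2.3]{ozornova-rovelli-verity}. You instead attempt to re-derive monoidality of the complicial model structure from scratch by running \cref{CO-with-monoidal} for $(\sSet^+,\otimes)$ and appealing to the uniqueness clause. The overall strategy is sound in outline, but the combinatorial heart of the argument --- verifying hypotheses (5) and (6), i.e.\ that $f\hat\otimes j$ and $j\hat\otimes f$ lie in $\cell(\mathcal J)$ for $f\in\mathcal I$ and $j$ a complicial horn inclusion or marking extension --- is only gestured at (``one decomposes \dots along a shelling \dots with the marking condition tracked at every stage''), and you acknowledge this is ``the main obstacle.'' This is exactly the nontrivial computation that Verity and Ozornova--Rovelli carry out, and it is the entire substance of the claim; the paper's citation of Ozornova--Rovelli--Verity is not avoidable by your route, only relocated. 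Your handling of (6) has an additional loose end: you note that $\mathcal J$ is not stable under $(-)^\op$ when it contains the one-sided Rezk/saturation maps, and propose swapping in the two-sided saturation maps ``if needed'' --- but changing $\mathcal J$ mid-argument requires re-verifying (3), (5), and the identification with the complicial model structure for the new $\mathcal J$, none of which you address. You also assert without citation that $\otimes$ is biclosed with unit $\Delta^0$; this is true but not what \cref{thm:gray} as quoted states. In short: your approach is a legitimate alternative in principle, but as written the key step is an unproven sketch, so there is a genuine gap.
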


\begin{proof}
That the pushout Gray tensor product of cofibrations is a cofibration follows from the corresponding result for the cartesian product on $\sSet$. The remainder of the statement is immediate from \cite[Cor.~2.3]{ozornova-rovelli-verity}.
\end{proof}

\begin{Def}
	A \defterm{pre-complicial set} is a marked simplicial set $X$ with the right lifting property with respect to the complicial marking extensions. These form a reflective subcategory of $\mSet$ which we will denote $\PreComp$ (cf.~\cite[Def.~121 \& p.~68]{verity:complicial}). We will denote the localization functor $X \mapsto X^\precomp$; for $X \in \sSet^+$, the pre-complicial set $X^\precomp$ will be referred to as the \emph{pre-complicial reflection} of $X$.
\end{Def}

\begin{prop}[{\cite[Thm.~1.31]{campion-kapulkin-maehara}}]\label{precomp-tcof}
For every $X \in \sSet^+$, the unit map $X \to X^\precomp$ is a trivial cofibration in all of the model structures of \cref{complicial-model-structure}. \qed
\end{prop}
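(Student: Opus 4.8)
The plan is to realize the pre-complicial reflection concretely via the small object argument and then read off the two defining properties of a trivial cofibration. Since $\PreComp \subseteq \mSet$ is reflective, with $\PreComp$ consisting of the objects having the right lifting property against the complicial marking extensions $\adeltap n k \incl \trunc{n-2}\adelta n k$, the unit $X \to X^\precomp$ is determined up to canonical isomorphism, and one model for it is the left-hand factor produced by the small object argument applied to this set of maps in the factorization of $X \to 1$: its codomain has the RLP against the complicial marking extensions (hence is pre-complicial), and since those maps are \emph{entire} one checks that this factor indeed satisfies the universal property of the reflection. This exhibits $X \to X^\precomp$ as a transfinite composite of pushouts of complicial marking extensions — in particular a monomorphism, hence a cofibration in each of the four model structures of \cref{complicial-model-structure}.

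It remains to show that $X \to X^\precomp$ is a weak equivalence in each of these model structures, and here I would simply observe that each complicial marking extension is already a trivial cofibration. It is enough to check this in the (ordinary) model structure for complicial sets of \cref{complicial-model-structure}, since the other three are localizations of it — same cofibrations, more weak equivalences — so their classes of trivial cofibrations only grow. In that model structure the complicial marking extensions are among the elementary anodyne extensions used to cut out the fibrations with fibrant codomain, and for a model structure presented in this Cisinski--Olschok style those designated generators are trivial cofibrations. Granting this, and using that the trivial cofibrations in any model category form a weakly saturated class (closed under pushout, transfinite composition and retract), the relative cell complex $X \to X^\precomp$ is a trivial cofibration, first in the complicial model structure and therefore in all four.

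The only non-formal ingredient — and the step that needs the most care — is the assertion that the complicial marking extensions are trivial cofibrations; everything else is a formal consequence of the reflective description of $\PreComp$ together with closure properties of trivial cofibrations. If one prefers to avoid quoting the structure theory behind \cref{complicial-model-structure}, this step can instead be deduced from the standard fact that a cofibration with the left lifting property against every fibration between fibrant objects is a trivial cofibration: a fibration between fibrant objects has the RLP against the elementary anodyne extensions (its codomain being fibrant), hence against their cellular closure, hence against $X \to X^\precomp$, so the latter is precisely such a cofibration.
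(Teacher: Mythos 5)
Your argument is correct, but note that the paper itself gives no proof here: \cref{precomp-tcof} is justified solely by the citation to \cite[Thm.~1.31]{campion-kapulkin-maehara}, so your write-up is a self-contained reconstruction rather than a parallel of anything in the text. Your route is sound: the small object argument applied to the complicial marking extensions does produce the unit of the reflection (the left factor is a relative cell complex of entire maps, hence entire, hence an epimorphism in $\mSet$ since a map of marked simplicial sets is determined by its underlying simplicial map, which gives uniqueness in the universal property; existence comes from lifting against the pre-complicial codomain), so $X \to X^\precomp$ lies in the cellular closure of the marking extensions. Your fallback argument for triviality is the cleanest part and I would lead with it: in each of the four model structures of \cref{complicial-model-structure} the marking extensions appear among the maps characterizing fibrations with fibrant codomain, so any fibration between fibrant objects has the RLP against them and hence against the whole cellular closure, and the standard saturation fact (a cofibration with the LLP against all fibrations between fibrant objects becomes invertible in the homotopy category, hence is a weak equivalence) finishes the proof uniformly in all four cases. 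This makes your intermediate reduction "it suffices to treat the complicial model structure because the other three are localizations of it" unnecessary — which is just as well, since that localization relationship (same cofibrations, larger class of weak equivalences) is true but is nowhere recorded in this paper, so as written that step leans on an unproved comparison; the uniform argument avoids it entirely. What your approach buys is independence from the external reference; what the citation buys the authors is brevity, since \cite{campion-kapulkin-maehara} already packages exactly this kind of cell-complex-of-anodyne-maps analysis.
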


\begin{lem}\label{more-markings}
Let $X \in \sSet^+$ and $S$ a set of simplices of $X$ which are marked in $X^\precomp$. 
Let $X^\dag$ denote the marked simplicial set obtained from $X$ by marking all simplices of $S$. 
Then the entire map $X \to X^\dag$ is a trivial cofibration.
\end{lem}

\begin{proof}
Any entire map is a cofibration. To see that $X \to X^\dag$ is a weak equivalence, consider the following commuting diagram:
	\[
	\begin{tikzcd}
		X
		\arrow [d]
		\arrow [rd]
	    &
	     \\
		X^\dag
		\arrow [r] &
		X^\precomp 
	\end{tikzcd}
	\]

The map $X \to X^\precomp$ is a trivial cofibration. Because every simplex in $X$ is marked in $X^\precomp$, the map $X^\dag \to X^\precomp$ is a pushout of $X \to X^\precomp$, hence it is a trivial cofibration as well. Thus $X \to X^\dag$ is a weak equivalence by two-out-of-three.
\end{proof}

\subsection{Cubical sets}\label{cSet-unmarked}

In this section, we review the basic theory of cubical sets and their homotopy theory.
Cubical sets are presheaves on the \emph{box category} $\Box$. The objects of $\Box$ are posets of the form $[1]^n$ and the maps are generated (inside the category of posets) under composition by the following four special classes:
\begin{itemize}
  \item \emph{faces} $\partial^n_{i,\varepsilon} \colon [1]^{n-1} \to [1]^n$ for $i = 1, \ldots , n$ and $\varepsilon = 0, 1$ given by:
  \[ \partial^n_{i,\varepsilon} (x_1, x_2, \ldots, x_{n-1}) = (x_1, x_2, \ldots, x_{i-1}, \varepsilon, x_i, \ldots, x_{n-1})\text{;}  \]
  \item \emph{degeneracies} $\sigma^n_i \colon [1]^n \to [1]^{n-1}$ for $i = 1, 2, \ldots, n$ given by:
  \[ \sigma^n_i ( x_1, x_2, \ldots, x_n) = (x_1, x_2, \ldots, x_{i-1}, x_{i+1}, \ldots, x_n)\text{;}  \]
  \item \emph{negative connections} $\gamma^n_{i,0} \colon [1]^n \to [1]^{n-1}$ for $i = 1, 2, \ldots, n-1$ given by:
  \[ \gamma^n_{i,0} (x_1, x_2, \ldots, x_n) = (x_1, x_2, \ldots, x_{i-1}, \max\{ x_i , x_{i+1}\}, x_{i+2}, \ldots, x_n) \text{.} \]
  \item \emph{positive connections} $\gamma^n_{i,1} \colon [1]^n \to [1]^{n-1}$ for $i = 1, 2, \ldots, n-1$ given by:
  \[ \gamma^n_{i,1} (x_1, x_2, \ldots, x_n) = (x_1, x_2, \ldots, x_{i-1}, \min\{ x_i , x_{i+1}\}, x_{i+2}, \ldots, x_n) \text{.} \]
\end{itemize}

These maps obey the following \emph{cubical identities} (here onwards we generally omit the superscript $n$ on face, degeneracy, and connection maps whenever it is irrelevant or can be deduced from context):

\begin{multicols}{2}
$\partial_{j, \varepsilon'} \partial_{i, \varepsilon} = \partial_{i+1, \varepsilon} \partial_{j, \varepsilon'}$ for $j \leq i$;

$\sigma_i \sigma_j = \sigma_j \sigma_{i+1} \quad \text{for } j \leq i$;

$\sigma_j \partial_{i, \varepsilon} = \left\{ \begin{array}{ll}
\partial_{i-1, \varepsilon} \sigma_j   & \text{for } j < i \text{;} \\
\id                                                       & \text{for } j = i \text{;} \\
\partial_{i, \varepsilon} \sigma_{j-1} & \text{for } j > i \text{;}
\end{array}\right.$

$\gamma_{j,\varepsilon'} \gamma_{i,\varepsilon} = \left\{ \begin{array}{ll} \gamma_{i,\varepsilon} \gamma_{j+1,\varepsilon'} & \text{for } j > i \text{;} \\
\gamma_{i,\varepsilon}\gamma_{i+1,\varepsilon} & \text{for } j = i, \varepsilon' = \varepsilon \text{;}\\
\end{array}\right.$

$\gamma_{j,\varepsilon'} \partial_{i, \varepsilon} =  \left\{ \begin{array}{ll}
\partial_{i-1, \varepsilon} \gamma_{j,\varepsilon'}   & \text{for } j < i-1 \text{;} \\
\id                                                         & \text{for } j = i-1, \, i, \, \varepsilon = \varepsilon' \text{;} \\
\partial_{i, \varepsilon} \sigma_i         & \text{for } j = i-1, \, i, \, \varepsilon = 1-\varepsilon' \text{;} \\
\partial_{i, \varepsilon} \gamma_{j-1,\varepsilon'} & \text{for } j > i \text{;} 
\end{array}\right.$

$\sigma_j \gamma_{i,\varepsilon} =  \left\{ \begin{array}{ll}
\gamma_{i-1,\varepsilon} \sigma_j  & \text{for } j < i \text{;} \\
\sigma_i \sigma_i           & \text{for } j = i \text{;} \\
\gamma_{i,\varepsilon} \sigma_{j+1} & \text{for } j > i \text{.} 
\end{array}\right.$
\end{multicols}

A \emph{cubical set} is contravariant functor $\Box^{\op} \to \Set$ and a cubical map is a natural transformation of such functors.
We will write $\cSet$ for the category of cubical sets.
As in the simplicial case, we will write the action of cubical operators on the right.

We write $\Box^n$ for the representable cubical set, represented by $[1]^n$.
We furthermore write $\partial \Box^n$ for the maximal proper subobject of $\Box^n$, which is spanned by all of its faces $\partial_{i, \varepsilon} \colon \Box^{n-1} \hookrightarrow \Box^n$, and $\sqcap^n_{i, \varepsilon}$ for the subobject of $\Box^n$ spanned by all faces except $\partial_{i, \varepsilon}$.

An $n$-cube $x \colon \Box^n \to X$ is \emph{degenerate} if it is in the image of a degeneracy map $\sigma_i$ or a connection $\gamma_{i, \varepsilon}$.
Otherwise it is \emph{non-degenerate}.

When dealing with representable cubical sets, we will sometimes have occasion to consider the \emph{sign} and \emph{parity} of their faces of codimension 1. A face $\bd_{i,\varepsilon}$ of $\Box^n$ is:

\begin{itemize}
\item \emph{negative} if $\varepsilon = 0$;
\item \emph{positive} if $\varepsilon = 1$;
\item \emph{even} if $i + \varepsilon$ is even;
\item \emph{odd} if $i+ \varepsilon$ is odd.
\end{itemize}

We will occasionally represent cubical sets using pictures.
In doing so, we will typically follow the conventions used in \cite{doherty-kapulkin-lindsey-sattler}, in which $0$-cubes are represented as vertices, $1$-cubes as arrows, $2$-cubes as squares, and $3$-cubes as cubes.

For a $1$-cube $f$, we draw
\[
\xymatrix{ x \ar[r]^f & y} \]
to indicate $x = f \partial_{1,0}$ and $y = f \partial_{1,1}$.
For a $2$-cube $s$, we typically draw
\[
\xymatrix{
 x
  \ar[r]^h
  \ar[d]_f
&
   y
  \ar[d]^g
\\
  z
  \ar[r]^k
&
 w
}
\]
to indicate $s\partial_{1,0} = f$,  $s\partial_{1,1} = g$, $s\partial_{2,0} = h$,  and $s\partial_{2,1} = k$. In cases where it is not feasible to consistently assign each cubical dimension to a unique visual direction, such as when the $(1,\varepsilon)$-face of one 2-cube is identified with the $(2,\varepsilon')$-face of another, we will instead use the convention of \cite{campion-kapulkin-maehara}, drawing an arrow inside each 2-cube pointing from its odd faces towards its even faces, as illustrated below.

\[
\begin{tikzpicture}[baseline = 12]
\foreach \x in {0,1,3,4}
\filldraw (\x,0) circle [radius = 1pt] (\x,1) circle [radius = 1pt];
\draw[->] (0.2,1) -- (0.8,1) node[midway, above]{\small{$\bd_{2,0}$}};
\draw[->] (1,0.8) -- (1,0.2) node[midway, right]{\small{$\bd_{1,1}$}};
\draw[->] (3.2,0) -- (3.8,0) node[midway, below]{\small{$\bd_{1,1}$}};
\draw[->] (0,0.8) -- (0,0.2) node[midway, left]{\small{$\bd_{1,0}$}};
\draw[->] (0.2,0) -- (0.8,0) node[midway, below]{\small{$\bd_{2,1}$}};
\draw[->] (3.2,1) -- (3.8,1) node[midway, above]{\small{$\bd_{1,0}$}};
\draw[->] (3,0.8) -- (3,0.2) node[midway, left]{\small{$\bd_{2,0}$}};
\draw[->] (4,0.8) -- (4,0.2) node[midway, right]{\small{$\bd_{2,1}$}};

\draw[->, double] (0.3,0.3) -- (0.7,0.7);
\draw[->, double] (3.7,0.7) -- (3.3,0.3);

\end{tikzpicture}
\]

As for the convention when drawing $3$-dimensional boxes, we use the following ordering of axes:
\[
\begin{tikzpicture}
\filldraw
(0,0) circle [radius = 1pt]	
(2,0) circle [radius = 1pt]	
(0,-2) circle [radius = 1pt]	
(1,-1) circle [radius = 1pt];

\draw[->] (0.2,0) -- (1.8,0) node [midway, above] {$1$};
\draw[->] (0.1,-0.1) -- (0.9,-0.9) node [midway, below] {$3$};
\draw[->] (0,-0.2) -- (0,-1.8) node [midway, left] {$2$};
\end{tikzpicture}
\]
For readability, we do not label $2$- and $3$-cubes.
Similarly, if a specific $0$-cube is irrelevant for the argument or can be inferred from the context, we represent it by $\bullet$, and we omit labels on edges whenever the label is not relevant for the argument.

Lastly, a degenerate $1$-cube $x \sigma_1$ on $x$ is represented by
\[
\xymatrix{ x \ar@{=}[r] & x\text{,}} \]
while a $2$- or $3$-cube whose boundary agrees with that of a degenerate cube is assumed to be degenerate unless indicated otherwise.
For instance, a $2$-cube depicted as
\[
\xymatrix{
 x
  \ar@{=}[r]
  \ar[d]_f
&
   x
  \ar[d]^f
\\
  y
  \ar@{=}[r]
&
 y
}
\]
represents $f \sigma_1$.

The category $\Box$ is an EZ-Reedy category, with $\Box_+$ generated under composition by the face maps $\partial_{i, \varepsilon}$ and $\Box_{-}$ generated by the degeneracies $\sigma_i$ and connections $\gamma_{i,\varepsilon}$.  We will refer to morphisms of $\Box_+$ as \emph{face maps}, or \emph{composite face maps} where greater precision is desired.

We briefly recall certain key definitions and results from the theory of EZ-Reedy categories which will be of use in our study of cubical sets. Throughout the following, let $\catC$ denote an EZ-Reedy category, and $X$ a presheaf on $\catC$.

\begin{itemize}
\item For $c \in \catC$, an element $x \in X_c$ is \emph{degenerate} if it is equal to $x' \phi$ for some non-identity $\phi \colon c' \to c$ in $\catC_{-}$ and some $x' \in X_{c'}$; we then refer to $x$ as a \emph{degeneracy} of $x'$. An element which is not degenerate is \emph{non-degenerate}.
\item The \emph{Eilenberg-Zilber lemma} states that for every $c \in \catC$, every element $x \in X_c$ is equal to $x' \phi$ for a unique (possibly identity) $\phi \colon c' \to c$ in $\catC_{-}$ and a unique non-degenerate $x' \in X_{c'}$.
\item For $n \geq -1$, the \emph{$n$-skeleton} of $X$, denoted $\sk_{n} X$, is the subcomplex of $X$ consisting of all elements of $X_c$ for all $c \in \catC$ of degree less than or equal to $n$, together with their degeneracies. (In the case $n = -1$ this is simply the empty presheaf $\varnothing$.) A presheaf $X$ is \emph{$n$-skeletal} if it is equal to $\sk_{n} X$, or equivalently, if it has no non-degenerate elements above degree $n$.
\item For $c \in \catC$ of degree $n$, the \emph{boundary} of the representable presheaf $\catC(-,c)$, denoted $\bd \catC(-,c)$, is its $(n-1)$-skeleton, i.e. $\sk_{n-1} \catC(-,c)$. The \emph{boundary} of an element $x \in X_c$ is the composite of the corresponding map $\catC(-,c) \to X$ with the inclusion $\bd \catC(-,c) \hookrightarrow \catC(-,c)$.
\item Each presheaf $X$ is the colimit of the inclusions of its skeleta $\sk_{n} X \hookrightarrow \sk_{n+1} X$. Moreover, for $n \geq 0$, we may construct $\sk_n X$ from $\sk_{n-1} X$ by ``gluing in elements of degree $n$ along their boundaries''; more precisely, we have a pushout diagram
\[
\begin{tikzcd}
\coprod\limits_{\catC(-,c) \to X}\bd \catC(-,c) \pushout  \arrow[r] \arrow[d] & \mathrm{sk}_{n-1} X \arrow[d] \\
\coprod\limits_{\catC(-,c) \to X} \catC(-,c) \arrow[r] & \sk_n X  \\
\end{tikzcd}
\]
where the coproducts range over all elements of $X_c$ for $c \in \catC$ of degree $n$  Thus we may prove results about presheaves by the technique of \emph{induction on skeleta}, i.e. prove that some result holds for representable presheaves and then extend to all presheaves via these colimit decompositions.
\end{itemize}

In the specific case $\catC = \Box$, these definitions of degenerate elements and boundaries coincide with those we have given for degenerate cubes and boundaries of $n$-cubes. Degeneracies in cubical sets, in the sense of this general definition, consist of degeneracies and connections (and their composites), while the $n$-skeleton of a cubical set $X$ is its subcomplex whose non-degenerate cubes consist of all non-degenerate cubes of dimension less than or equal to $n$.

Of particular importance to us will be the following characterization of maps in $\Box$.

\begin{prop}[{\cite[Thm.~5.1]{grandis-mauri}}] \label{cube-standard-form}
  Every map in the category $\Box$ can be factored uniquely as a composite
  \[ (\partial_{c_1, \varepsilon'_1} \ldots \partial_{c_r, \varepsilon'_r})
     (\gamma_{b_1,\varepsilon_1} \ldots \gamma_{b_q,\varepsilon_q})
     (\sigma_{a_1} \ldots \sigma_{a_p})\text{,} \]
  where $1 \leq a_1 < \ldots < a_p$, $1 \leq b_1 \leq \ldots \leq b_q$, $b_i < b_{i+1}$ if $\varepsilon_{i} = \varepsilon_{i+1}$, and $c_1 > \ldots > c_r \geq 1$.   \qed
\end{prop}

We will refer to the decomposition given by \cref{cube-standard-form} as the \emph{standard form} of $\delta$.

Similarly, by the Eilenberg-Zilber lemma, any cube $x$ in a cubical set $X$ may be written as 
\[
x'
     (\gamma_{b_1,\varepsilon_1} \ldots \gamma_{b_q,\varepsilon_q})
     (\sigma_{a_1} \ldots \sigma_{a_p})
\]
for some unique non-degenerate cube $x'$ of $X$ and some unique (possibly empty) sequences $1 \leq a_1 < \ldots < a_p$, $1 \leq b_1 \leq \ldots \leq b_q$, with $b_i < b_{i+1}$ if $\varepsilon_{i} = \varepsilon_{i+1}$. We refer to this as the \emph{standard form} of $x$.

We will occasionally use the phrase ``the standard form of $x$ is $z\psi$'', where $\psi$ is a map in $\Box$; this is taken to mean that $\psi$ is the final map in the standard form of $x$. 
For instance, for a non-degenerate cube $y$, the standard form of $y \sigma_1 \sigma_2$ is $z \sigma_2$, where $z = y \sigma_1$.

The category $\Box$ admits a monoidal product $\otimes \colon \Box \times \Box \to \Box$ given by $[1]^m \otimes [1]^n = [1]^{m+n}$.
Its extension to $\cSet$ via Day convolution is the \emph{geometric product} of cubical sets, also denoted by $\otimes$.
We will often need its more explicit description.

\begin{prop}[{\cite[Prop.~1.24]{doherty-kapulkin-lindsey-sattler}}]\label{geo-prod-description}
  Given cubical sets $X$ and $Y$, we have the following description of their geometric product $X \otimes Y$.
\begin{itemize}
	\item For $n \geq 0$, the $n$-cubes in $X \otimes Y$ are the formal products $x \otimes y$ of pairs $x \in X_k$ and $y \in Y_\ell$ such that $k+\ell = n$, subject to the identification $(x\sigma_{k+1})\otimes y = x\otimes(y\sigma_{1})$.
 	\item For $x \in X_k$ and $y \in Y_\ell$, the faces, degeneracies, and connections of the $(k+\ell)$-cube $x \otimes y$ are computed as follows:
 	\begin{itemize}
 		\item $(x\otimes y)\partial_{i,\varepsilon} = 
 		\begin{cases} (x\partial_{i,\varepsilon})\otimes y & 1 \leq i \leq k \\ 
 		x\otimes( y\partial_{i-k,\varepsilon})  & k + 1 \leq i \leq k+l
 		\end{cases}$
 		\item $(x\otimes y)\sigma_{i} = 
 		\begin{cases} (x\sigma_{i})\otimes y     & 1 \leq i \leq k + 1 \\
 		x\otimes (y\sigma_{i-k}) & k + 1 \leq i \leq k+l+ 1 
 		\end{cases}$  
 		\item $(x\otimes y)\gamma_{i,\varepsilon} = 
 		\begin{cases} (x\gamma_{i, \varepsilon})\otimes y     & 1 \leq i \leq k \\ 
 		x\otimes (y\gamma_{i-k,\varepsilon}) & k + 1 \leq i \leq k+l
 		\end{cases}$
 	\end{itemize}
 \end{itemize}
 In particular, an $n$-cube $x \otimes y$ of $X \otimes Y$ is non-degenerate exactly when both $x$ and $y$ are non-degenerate in $X$ and $Y$, respectively. \qed
 \end{prop}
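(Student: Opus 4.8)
The plan is to realize the explicit data in the statement as a cubical set carrying a universal ``bilinear'' family, and then to appeal to the universal property of Day convolution. From the coend formula defining the geometric product together with the (co-)Yoneda lemma, one obtains a natural isomorphism
\[
  \cSet(X\otimes Y,\,W)\;\cong\;\int_{[1]^k,[1]^\ell\in\Box}\Set\bigl(X_k\times Y_\ell,\,W_{k+\ell}\bigr),
\]
i.e.\ a map $X\otimes Y\to W$ is precisely a family of functions $\phi_{k,\ell}\colon X_k\times Y_\ell\to W_{k+\ell}$ natural in $([1]^k,[1]^\ell)\in\Box\times\Box$, the $W$-side being regarded as a functor via $\otimes\colon\Box\times\Box\to\Box$; in particular $\Box^k\otimes\Box^\ell\cong\Box^{k+\ell}$. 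So it suffices to produce a cubical set $Z$ with such a family $X_k\times Y_\ell\to Z_{k+\ell}$ that is universal among these.

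I would take $Z$ to be exactly the object in the statement: $Z_n$ the set of formal products $x\otimes y$ with $x\in X_k$, $y\in Y_\ell$, $k+\ell=n$, modulo the relation $(x\sigma_{k+1})\otimes y=x\otimes(y\sigma_1)$, with faces, degeneracies and connections acting by the displayed formulas. Checking that $Z$ is a well-defined cubical set has two parts. First, the operator formulas must respect the defining relation; the only point is that $\sigma_{k+1}\otimes\id$ and $\id\otimes\sigma_1$ are literally the same operator $[1]^{k+\ell+1}\to[1]^{k+\ell}$ of $\Box$ (both delete the $(k+1)$-st coordinate), so the two branches of the degeneracy formula agree on their common index $k+1$. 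Second, the cubical identities must hold; this is a direct case analysis, bookkept by whether each operator index lies in the ``$X$-block'' $\{1,\dots,k\}$ or the ``$Y$-block'' $\{k+1,\dots,k+\ell\}$ --- since every generator of $\Box$ involves at most two adjacent coordinates it can be written, after possibly moving the interface via the first point, as $\alpha\otimes\id$ or $\id\otimes\beta$, whence the cubical identities for $Z$ reduce to those for $X$ and for $Y$. By construction the assignment $(x,y)\mapsto x\otimes y$ is natural in $([1]^k,[1]^\ell)$: the displayed formulas say exactly $(x\theta)\otimes y=(x\otimes y)(\theta\otimes\id)$ and $x\otimes(y\theta)=(x\otimes y)(\id\otimes\theta)$ on generators $\theta$, hence on all operators.

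To see this family is universal: given $W$ and a natural family $\phi_{k,\ell}$, I would set $\bar\phi(x\otimes y):=\phi_{k,\ell}(x,y)$. This descends to the quotient since, by naturality of $\phi$ and the coincidence $\sigma_{k+1}\otimes\id=\id\otimes\sigma_1$,
\[
  \phi_{k+1,\ell}(x\sigma_{k+1},y)=\phi_{k,\ell}(x,y)(\sigma_{k+1}\otimes\id)=\phi_{k,\ell}(x,y)(\id\otimes\sigma_1)=\phi_{k,\ell+1}(x,y\sigma_1),
\]
and $\bar\phi$ is a cubical map because the operator formulas defining $Z$ are matched term-by-term by naturality of $\phi$ via the same block decomposition; uniqueness is clear as $Z$ is generated by the $x\otimes y$. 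Thus $\cSet(Z,-)\cong\cSet(X\otimes Y,-)$, and Yoneda gives $Z\cong X\otimes Y$ compatibly with the structure families. (Alternatively one could write $X$ and $Y$ as colimits of representables over their categories of elements and use cocontinuity of $\otimes$ together with $\Box^k\otimes\Box^\ell\cong\Box^{k+\ell}$, then reduce the resulting colimit of hom-sets $\Box([1]^n,[1]^{k+\ell})$ via the normal form of \cref{cube-standard-form}; the argument above avoids that reduction.)

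For the final clause, if $x$ is degenerate, say $x=x'\sigma_i$ or $x'\gamma_{i,\varepsilon}$, the operator formulas give $x\otimes y=(x'\otimes y)\sigma_i$ or $(x'\otimes y)\gamma_{i,\varepsilon}$, so $x\otimes y$ is degenerate, and symmetrically for $y$. Conversely, suppose $x,y$ are both non-degenerate but $x\otimes y=(x'\otimes y')\sigma_i$ for some cube $x'\otimes y'$ (the connection case is identical); applying the degeneracy formula to the right-hand side rewrites $x\otimes y$ as a formal product one of whose factors is degenerate. But every instance of the defining relation $(a\sigma_{k+1})\otimes b=a\otimes(b\sigma_1)$ has a degenerate first factor on one side and a degenerate second factor on the other, so a formal product with both factors non-degenerate is identified with no other pair --- it is the unique representative of its cube --- contradicting the existence of a representative with a degenerate factor. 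Hence $x\otimes y$ is non-degenerate. The one genuinely laborious step is the case analysis for the cubical identities of $Z$; it is purely mechanical once phrased through the block decomposition, and everything else is formal.
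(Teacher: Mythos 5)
The paper itself offers no proof of this proposition — it is imported verbatim from \cite[Prop.~1.20]{doherty-kapulkin-lindsey-sattler} — so the comparison is with that citation rather than with an in-paper argument. Your proposal is a correct, self-contained proof by the universal-property route: you characterize maps $X\otimes Y\to W$ as families $X_k\times Y_\ell\to W_{k+\ell}$ natural in $\Box\times\Box$, build the explicitly described object $Z$, and check that it carries the universal such family; and your handling of the non-degeneracy clause is exactly right, since every instance of the generating relation has a degenerate factor on each side, so a pair of non-degenerate cubes is the sole representative of its class. Compared with unwinding the Day coend directly (your parenthetical alternative, closer in spirit to the cited source), this buys you freedom from analyzing how arbitrary morphisms $[1]^n\to[1]^{k+\ell}$ in $\Box$ interact with the tensor splitting, at the cost of a generators-and-relations verification.

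Two caveats on that verification, neither of which is a conceptual gap. First, constructing $Z$ by prescribing generator actions and checking the cubical identities tacitly uses that $\Box$ is presented by faces, degeneracies and connections subject to exactly the listed identities; this is standard but should be stated or cited. Second, the assertion that descent of the operator actions to the quotient comes down to the single coincidence $\sigma_{k+1}\otimes\id=\id\otimes\sigma_1$ understates the check: one must apply each generator to both sides of an instance of $(x\sigma_{k+1})\otimes y=x\otimes(y\sigma_1)$ and see that the outputs are again identified, and this uses the cubical identities of $X$ and $Y$ together with the relation at a shifted interface. For example, applying $\sigma_{k+1}$ produces $(x\sigma_{k+1}\sigma_{k+1})\otimes y$, $(x\sigma_{k+1})\otimes(y\sigma_1)$ and $x\otimes(y\sigma_1\sigma_1)$, which are reconciled only after invoking $\sigma_{k+1}\sigma_{k+1}=\sigma_{k+1}\sigma_{k+2}$ and then the generating relation applied to $x\sigma_{k+1}$, respectively to $y\sigma_1$. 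These checks are of the same mechanical kind as the case analysis you already flag as laborious, so the strategy stands; just make clear that this bookkeeping is part of it.
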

 
The \emph{triangulation} functor $T \colon \cSet \to \sSet$ is given by the Yoneda extension of the co-cubical object $\Box \to \sSet$ given by $[1]^n \mapsto (\Delta^1)^n$.
We write $U \colon \sSet \to \cSet$ for its right adjoint, given by $(UX)_n = \sSet((\Delta^1)^n, X)$.

The category $\Cube$ admits two canonical identity-on-objects involutions $(-)^\co, (-)^\coop \colon \Cube \to \Cube$.
The first maps $\partial^n_{i, \varepsilon}$ to $\partial^n_{n+1-i, \varepsilon}$, $\sigma^n_i$ to $\sigma^n_{n+1-i}$, and $\gamma^n_{i,\varepsilon}$ to $\gamma^n_{n+1-i, \varepsilon}$, while the second maps $\partial^n_{i, \varepsilon}$ to $\partial^n_{i, 1- \varepsilon}$, $\sigma^n_i$ to $\sigma^n_{i}$, and $\gamma^n_{i,\varepsilon}$ to $\gamma^n_{i, 1- \varepsilon}$.
We write $(-)^\op$ for the composite of those.
Precomposition with these automorphisms induces involutions also denoted $(-)^\co , (-)^\coop, (-)^\op \colon \cSet \to \cSet$.

\begin{prop}[{\cite[Prop.~1.17]{campion-kapulkin-maehara}}]\label{op-monoidal}
  The functors $(-)^\co , (-)^\op \colon \cSet \to \cSet$ are strong anti-monoidal, while $(-)^\coop  \colon \cSet \to \cSet$ is strong monoidal. \qed
\end{prop}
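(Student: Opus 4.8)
The plan is to build the three structure isomorphisms directly from the explicit description of the geometric product in \cite[Prop.~1.20]{doherty-kapulkin-lindsey-sattler}, and then to check naturality and the coherence axioms by routine index bookkeeping; no new idea is needed. (Conceptually this is inevitable: the geometric product is the Day convolution of $(\Box,\otimes)$, and precomposition with a strong monoidal, resp.\ strong anti-monoidal, autoequivalence of the base is strong monoidal, resp.\ strong anti-monoidal, for Day convolution, since $\phi^{*}(X\otimes Y)(c)=\int^{a,b}\Box(\phi c,a\otimes b)\times Xa\times Yb$ may be reindexed along the equivalence $\phi$ and then simplified using $\phi a\otimes\phi b\cong\phi(a\otimes b)$ or $\phi a\otimes\phi b\cong\phi(b\otimes a)$ together with full faithfulness of $\phi$. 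The statement thus reduces to the corresponding straightforward assertions about the involutions of $\Box$; I will sketch the hands-on version.)

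The case of $(-)^{\coop}$ is immediate. This involution fixes every degeneracy and sends $\partial_{i,\varepsilon}\mapsto\partial_{i,1-\varepsilon}$ and $\gamma_{i,\varepsilon}\mapsto\gamma_{i,1-\varepsilon}$ \emph{without changing any index}. Hence the identity on cubes defines a map $(X\otimes Y)^{\coop}\to X^{\coop}\otimes Y^{\coop}$: the identification $(x\sigma_{k+1})\otimes y=x\otimes(y\sigma_{1})$ defining the cubes of a geometric product is insensitive to $(-)^{\coop}$, and in each of the three formulas of \cite[Prop.~1.20]{doherty-kapulkin-lindsey-sattler} for the action of an operator on $x\otimes y$, applying $(-)^{\coop}$ on the left agrees with applying the $\varepsilon$-reversed operator in the relevant tensor factor on the right. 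This map is a natural isomorphism, and the unit and associativity coherence cells are identities, so $(-)^{\coop}$ is strong monoidal.

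For $(-)^{\co}$ the structure isomorphism must both reverse the order of the two tensor factors and reverse the coordinate order within each factor --- the latter being exactly the effect of $(-)^{\co}$, since $(-)^{\co}$ is conjugation by the automorphism $r_{k}\colon[1]^{k}\to[1]^{k}$ reversing coordinate order (so, e.g., it sends $\partial^{n}_{i,\varepsilon}$ to $\partial^{n}_{n+1-i,\varepsilon}$). I will define $\Omega_{X,Y}\colon(X\otimes Y)^{\co}\to Y^{\co}\otimes X^{\co}$ on $n$-cubes by $x\otimes y\mapsto y\otimes x$, regarding $x\in X_{k}$ as a cube of $X^{\co}$ and $y\in Y_{\ell}$ as a cube of $Y^{\co}$. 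The crucial point --- and the reason the bare block-swap $x\otimes y\mapsto y\otimes x$ does \emph{not} define a map $X\otimes Y\to Y\otimes X$ in general --- is that $(-)^{\co}$ interchanges $\sigma_{k+1}$ (on a $(k+1)$-cube) with $\sigma_{1}$, so that the defining identification of $(X\otimes Y)^{\co}$ is carried precisely onto that of $Y^{\co}\otimes X^{\co}$; hence $\Omega_{X,Y}$ is well defined. Compatibility with faces, degeneracies, and connections is then a short case analysis: an operator index $i$ acting on an $n$-cube becomes, after $(-)^{\co}$, the index $n+1-i$, which the formulas of \cite[Prop.~1.20]{doherty-kapulkin-lindsey-sattler} send to the correct index on whichever factor it lands in, using $k+\ell=n$. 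Thus $\Omega_{X,Y}$ is an isomorphism of cubical sets, visibly natural in $X$ and $Y$. Since the geometric product is strictly associative and unital with unit $\Box^{0}=(\Box^{0})^{\co}$, the anti-monoidal coherence diagrams for $\Omega$ reduce to the evident compatibility of "reverse the factors, reverse within each factor" with regrouping, which is immediate; so $(-)^{\co}$ is strong anti-monoidal.

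Finally, $(-)^{\op}$ is by definition the composite of $(-)^{\co}$ and $(-)^{\coop}$, and these commute because one acts only on operator indices while the other acts only on signs. A composite of a strong monoidal functor with a strong anti-monoidal functor is strong anti-monoidal, so $(-)^{\op}$ is strong anti-monoidal, with structure isomorphism $\Omega_{X,Y}$ followed by the identity-on-cubes map witnessing $(-)^{\coop}$. I expect the only real friction to be in the $(-)^{\co}$ case: keeping the index arithmetic consistent across the block swap so that both the well-definedness check against the degeneracy identification and the compatibility with each class of operators go through, and then confirming the anti-monoidal coherence axioms rather than merely the pointwise isomorphism. None of it is deep, but it is where a sign or index slip would occur.
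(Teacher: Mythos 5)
Your argument is correct, but note that the paper does not prove this statement at all: it is quoted verbatim from \cite[Prop.~1.17]{campion-kapulkin-maehara} and closed with a \qed, so the only ``proof'' in the paper is the citation. What you supply is a self-contained verification, and its key points are right where they need to be: the $(-)^{\coop}$ case really is the identity on cubes, since that involution fixes all indices; and for $(-)^{\co}$ you correctly identify the only delicate issue, namely that the block swap $x\otimes y\mapsto y\otimes x$ is well defined on the quotient defining the geometric product precisely because $(-)^{\co}$ exchanges $\sigma_{k+1}$ with $\sigma_{1}$, so the identification $(x\sigma_{k+1})\otimes y=x\otimes(y\sigma_{1})$ in $(X\otimes Y)^{\co}$ is carried onto the corresponding identification in $Y^{\co}\otimes X^{\co}$ (this is also why no such map exists for the untwisted product). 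The index bookkeeping for faces, degeneracies and connections then goes through exactly as you say, using $k+\ell=n$, and the coherence axioms are trivial because $\otimes$ is strictly associative and unital on representables. The Day-convolution remark at the start gives the same result more abstractly (the involutions of $\cSet$ are induced by the (anti-)monoidal involutions of $\Box$, and left Kan extension along Yoneda preserves this structure), and is essentially how the cited reference argues; either route is fine. One small caution on the hands-on version: for the connection $\gamma^{n}_{i,\varepsilon}\colon[1]^{n}\to[1]^{n-1}$ the conjugation by coordinate reversal sends the index $i$ to $n-i$ (not $n+1-i$ as for faces), so keep that case separate when you do the operator-by-operator check.
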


One may also consider various other notions of cubical sets, defined as presheaves on different box categories; see \cite{buchholtz-morehouse:varieties-of-cubes} for an overview. In particular, we may consider the following categories of cubical sets, defined as presheaves on certain wide subcategories of $\Box$:

\begin{itemize}
\item $\cSet_{\varnothing}$, the category of \emph{minimal cubical sets}, defined on the subcategory $\Box_{\varnothing} \subseteq \Box$ generated under composition by face and degeneracy maps;
\item $\cSet_0$, the category of \emph{cubical sets with negative connections}, defined on the subcategory $\Box_0 \subseteq \Box$ generated by faces, degeneracies, and negative connections;
\item $\cSet_1$, the category of \emph{cubical sets with positive connections}, defined on the subcategory $\Box_1 \subseteq \Box$ generated by faces, degeneracies, and positive connections.
\end{itemize}

The definitions of the geometric product and triangulation, as well as \cref{cube-standard-form,geo-prod-description}, generalize to these categories. The involutions $(-)^{\co}, (-)^{\coop}, (-)^{\op}$ restrict to involutions of $\Box_{\varnothing}$, which in turn induce involutions of $\cSet_{\varnothing}$. For $\Box_0$ and $\Box_1$, $(-)^\co$ again restricts to an involution on each of these categories, while the restrictions of $(-)^\coop$ and $(-)^\op$ define inverse pairs of isomorphisms between them, from which we obtain isomorphisms $\cSet_0 \cong \cSet_1$. \cref{op-monoidal} then generalizes to these categories as well, but must now be interpreted as referring to these isomorphisms.

\subsection{Marked cubical sets}\label{cSet-marked}

The objects of $\Cube^+$ consist of: $[1]^n$ for every $n \geq 0$ and $[1]^n_e$ for every $n \geq 1$.
The morphisms of $\Cube^+$ are generated by the maps
\begin{itemize}
	\item[-] $\partial^n_{i, \varepsilon} \colon [1]^{n-1} \to [1]^n$ for every $n \geq 1$, $i = 1, \ldots, n$, and $\varepsilon = 0, 1$,
	\item[-] $\sigma^n_i \colon [1]^n \to [1]^{n-1}$ for $n \geq 1$ and $i = 1, \ldots, n$,
	\item[-] $\gamma^n_i \colon [1]^n \to [1]^{n-1}$ for $n \geq 2$, $i = 1, \ldots, n-1$, and $\varepsilon = 0, 1$,
	\item[-] $\varphi^n \colon [1]^n \to [1]^n_e$ for $n \geq 1$,
	\item[-] $\zeta^n_i \colon [1]^n_e \to [1]^{n-1}$ for $n \geq 1$ and $i = 1, \ldots, n$,
	\item[-] $\xi^n_{i, \varepsilon} \colon [1]^n_e \to [1]^{n-1}$ for $n \geq 1$, $i = 1, \ldots, n$, and $\varepsilon = 0, 1$,
\end{itemize}
subject to the usual cubical identities and the following additional relations:
\begin{multicols}{2}
  $\zeta_i \varphi = \sigma_i$;

  $\xi_{i, \varepsilon} \varphi = \gamma_{i, \varepsilon}$;

  $\sigma_i \zeta_j = \sigma_j \zeta_{i+1}$ for $j \leq i$;

  $\gamma_{j, \varepsilon} \xi_{i, \delta} = \left\{ \begin{array}{ll}
	\gamma_{i, \delta} \xi_{j, \varepsilon} & \text{for } j > i\text{;} \\
	\gamma_{i, \delta} \xi_{i+1, \delta} & \text{for } j=i \text{ and } \delta = \varepsilon\text{;}
	\end{array}\right.$	

 $\sigma_j \xi_{i, \delta} =  \left\{ \begin{array}{ll}
	\gamma_{i-1, \delta} \zeta_j    & \text{for } j < i \text{;} \\
	\sigma_i \zeta_i                        & \text{for } j = i \text{;} \\
	\gamma_{i, \delta} \zeta_{j+1} & \text{for } j > i \text{.} 
	\end{array}\right.$
\end{multicols}

A \emph{structurally marked cubical set} is a functor $(\Cube^+)^{\op} \to \Set$ and a map of structurally marked cubical sets is a natural transformation of such functors.
We think of a structurally marked cubical set as a cubical set in which $n$-cubes can carry markings.

A \emph{marked cubical set} is a structurally marked cubical set $X \colon (\Cube^+)^\op \to \Set$ such that the maps $\varphi^* \colon eX_n \to X_n$ are monomorphisms for all $n \geq 1$.
We write $\mcSet$ for the full subcategory of $\Set^{(\Cube^+)^\op}$ spanned by the marked cubical sets.

In other words, an $n$-cube of a marked cubical set can be marked at most once, and hence marking is property of a cube, not additional structure on it.
The canonical inclusion $\mcSet \hookrightarrow \Set^{(\Box^+)^{\op}}$ admits a left adjoint obtained by factoring all the functions $\varphi^* \colon eX_n \to X_n$ via their image into a surjection followed by an injection.
This establishes $\mcSet$ as a locally presentable category.

When representing cubical sets visually, we will label marked 1-cubes with a tilde:
\[
\begin{tikzpicture}
	\filldraw
	(0,0) circle [radius = 1pt]
	(1,0) circle [radius = 1pt];
	
	\draw[->]
	(0.2,0) -- (0.8,0) node[midway, above] {$\sim$};
\end{tikzpicture}
\]
Marked 2-cubes will be shaded, and sometimes labelled with tildes in their interiors:
\[
\begin{tikzpicture}
	\foreach \x in {0,1,2,3,4,5}
	\filldraw
	(\x,0) circle [radius = 1pt]
	(\x,1) circle [radius = 1pt];
	
	\foreach \x in {0,1,2,3,4,5}
	\draw[->] (\x,0.8) -- (\x,0.2);
	
	\foreach \x in {0,2,4}
	\draw[->] (\x+0.2,0) -- (\x+0.8,0);
	
	\foreach \x in {0,2,4}
	\draw[->] (\x+0.2,1) -- (\x+0.8,1);
	
	\foreach \x in {0,2,4}
	\filldraw[shaded, rounded corners]
	(\x+0.1,0.1) -- (\x+0.9,0.1) -- (\x+0.9,0.9) -- (\x+0.1,0.9) -- cycle;
	
	\node at (0.5,0.5) {$\sim$};
	
	\draw[double = shaded, ->]
	(2.3,0.3) -- (2.7,0.7);
	\draw[double = shaded, ->]
	(4.7,0.7) -- (4.3,0.3);
\end{tikzpicture}
\]
We will describe the markings of 3-cubes in accompanying text.

As in the simplicial case, for each $n \geq 0$ we have a left adjoint functor $\tau_n \colon \mcSet \to \mcSet$ which marks all cubes above dimension $n$. Likewise, as with $(-)^\op \colon \sSet \to \sSet$, the involutions $(-)^\co, (-)^\coop, (-)^\op \colon \cSet \to \cSet$ extend naturally to define involutions of $\cSet^+$.

Similarly, the forgetful functor $\mcSet \to \cSet$ admits both the left and the right adjoint.
The left adjoint, denoted $X \mapsto X^\flat$ equips the cubical set $X$ with the \defterm{minimal marking}, i.e., only the degenerate cubes of $X$ are marked in $X^\flat$.
The right adjoint, denoted $X \mapsto X^\sharp$ equips the cubical set $X$ with the \defterm{maximal marking}, i.e., all cubes are marked in $X^\sharp$.

We again abuse the notation by writing $\Cube^n$ for $(\Cube^n)^\flat$.
The marked $n$-cube $\tau_{n-1}(\Cube^n)$ will be denoted $\mcube n$.

\begin{prop}[{\cite[Prop.~2.10]{campion-kapulkin-maehara}}] \label{cset-cellular-model}
  The monomorphisms of $\mcSet$ (and $\Set^{(\Cube^+)^\op}$) are the cellular closure of the set
  \[ I = \{ \partial \Cube^n \hookrightarrow \Cube^n \ | \ n \geq 0 \}
  \cup \{ \Cube^n \hookrightarrow \mcube n \ | \ n \geq 1 \}\text{.} \]
  
\end{prop}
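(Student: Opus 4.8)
The plan is to prove the two inclusions between $\cell(I)$ and the class of monomorphisms, working mainly in $\mcSet$; the argument for $\Set^{(\Cube^+)^\op}$ is parallel, with one modification noted at the end. For the inclusion $\cell(I) \subseteq \{\text{monomorphisms}\}$ one observes that every map in $I$ is a monomorphism and that in $\mcSet$ monomorphisms are closed under pushout along arbitrary maps and under transfinite composition; the latter is routine once one recalls that colimits in $\mcSet$ are computed by reflecting colimits in $\Set^{(\Cube^+)^\op}$ and that this reflection merely identifies multiple markings of a cube, hence cannot destroy monomorphicity of the maps arising here.

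For the reverse inclusion, fix a monomorphism $f \colon X \to Y$. The key step is to factor $f$ as $X \xrightarrow{g} Z \xrightarrow{h} Y$, where $Z$ is the marked cubical set with the same underlying cubical set as $Y$ in which a cube is marked precisely when it is a degeneracy, a connection, or lies in the image of $X$ and is marked there. Then $g$ is a monomorphism with the property that a cube of $X$ is marked if and only if its image in $Z$ is, while $h$ is an entire monomorphism --- an isomorphism on underlying cubical sets that merely adjoins markings. It therefore suffices to show $g, h \in \cell(I)$.

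For $g$: since $\Cube$ is an EZ-Reedy category, the underlying cubical map is a transfinite composite of pushouts of boundary inclusions $\partial\Cube^n \hookrightarrow \Cube^n$ attaching the non-degenerate cubes of $Y$ not lying in $X$ in order of increasing dimension. Each attached cube is non-degenerate, hence unmarked in $Z$, so the attachment lifts to a pushout in $\mcSet$ of $\partial\Cube^n \hookrightarrow \Cube^n$ (the representable $\Cube^n$ with its minimal marking) along the boundary of that cube; the only new markings are the forced ones on the newly created degeneracies and connections, so the colimit of these attachments is exactly $Z$. For $h$: for each non-degenerate cube $y$ of dimension $n \ge 1$ that is marked in $Y$ but not in $Z$, adjoin its marking by taking the pushout of the marker $\Cube^n \hookrightarrow \mcube n$ along the map $\Cube^n \to (\text{current stage})$ classifying $y$; this is well-defined because the degeneracies and connections of $y$ are already marked, and since distinct such cubes $y$ are non-degenerate, marking one never marks another, so iterating over all of them exhibits $h$ as a transfinite composite of pushouts of markers.

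The point requiring the most care is verifying that each of these cell attachments is computed the same way in $\mcSet$ as in the ambient presheaf category: one must check that no doubly marked cube is ever created (so that the reflection acts trivially) and that the cubes attached at each stage are genuinely new, with no unexpected identifications among their faces. Both facts follow from the Eilenberg--Zilber / EZ-Reedy structure of $\Cube$ applied to the underlying cubical sets, together with the observation that the only new markings produced at any stage are forced ones. Finally, for $\Set^{(\Cube^+)^\op}$ the same argument applies, except that in the analogue of $h$ a single cube may need to be marked several times; each additional marking is again a pushout of $\Cube^n \hookrightarrow \mcube n$, now along a non-regular attaching map, so the conclusion is unchanged.
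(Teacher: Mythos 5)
The paper does not actually prove this statement---it is quoted from Campion--Kapulkin--Maehara, Prop.~2.10---and your argument is the standard one behind that result: factor a monomorphism as a regular monomorphism followed by an entire one, build the regular part by EZ-Reedy skeletal induction from (minimally marked) boundary inclusions $\partial\Cube^n \hookrightarrow \Cube^n$, build the entire part from pushouts of the markers $\Cube^n \hookrightarrow \mcube{n}$, and observe that no stage creates a doubly marked cube, so these cell attachments agree in $\mcSet$ and in $\Set^{(\Cube^+)^\op}$. Your proposal is correct, including the adjustment for the structurally marked case where a single cube may receive several markings.
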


The geometric product of cubical sets can be extended to the marked setting in two ways, yielding either lax or pseudo Gray tensor product.
In this paper, we will work exclusively with the former.

	The \emph{lax Gray tensor product} $X \otimes Y$ of two marked cubical sets $X,Y \in \mcSet$ is given by:
	\begin{itemize}
	  \item the underlying cubical set is given by the geometric product of $X$ and $Y$;
	  \item a non-degenerate cube $x \otimes y$ is marked if and only if either $x$ is marked in $X$ or $y$ is marked in $Y$.
    \end{itemize}
	This extends to a functor $\otimes \colon \mcSet \times \mcSet \to \mcSet$, equipping $\mcSet$ with a biclosed monoidal structure (cf.~\cite[Thm.~2.16]{campion-kapulkin-maehara}). For $X \in \mcSet$, we denote the right adjoints of the functors $- \otimes X$ and $X \otimes -$ by $\uhom_{L}(X,-)$ and $\uhom_{R}(X,-)$, respectively. Explicitly, these functors are given by $\uhom_L(X,Y)_{n} = \mcSet(\Box^n \otimes X,Y)$, $ \uhom_R(X,Y)_{n} = \mcSet(X \otimes \Box^n, Y)$, with an $n$-cube of $\uhom_{L}(X,Y)$ being marked if the corresponding map factors through $\widetilde{\Box}^n \otimes X$, and similarly for $\uhom_{R}(X,Y)$.
	
	Finally, recall that a map $f \colon X \to Y$ of marked cubical sets is:
  \begin{itemize}
    \item \emph{regular} if it creates markings, i.e., for an $n$-cube $x$ of $X$ we have: $x \in eX_n$ if and only if $f(x) \in eY_n$;
    \item \emph{entire} if the induced map between the underlying cubical sets is invertible.
\end{itemize}

We then have the following closure properties of regular/entire morphisms.

\begin{lem}[cf.~{\cite[Lem.~2.17]{campion-kapulkin-maehara}}]\label{Gray-tensor-of-monos}
	Let $f$ and $g$ be monomorphisms in $\mcSet$.
	\begin{enumerate}
		\item If both $f$ and $g$ are regular, then so is $f \hat \otimes g$.
		\item If either $f$ or $g$ is entire, then so is $f \hat \otimes g$.
		\item If both $f$ and $g$ are entire, then $f \hat \otimes g$ is an isomorphism. \qed
  \end{enumerate}
\end{lem}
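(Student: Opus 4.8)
The plan is to reduce all three statements to the explicit description of the lax Gray tensor product, together with the fact that the forgetful functor $U\colon\mcSet\to\cSet$ preserves colimits (it has both adjoints $(-)^\flat\dashv U\dashv(-)^\sharp$), so that the underlying cubical map of $f\hat\odot g$ is the Leibniz geometric product $\bar f\hat\otimes\bar g$ of the underlying cubical maps, where I write $\bar f,\bar g$ for the underlying maps of $f,g$ and $\bar A,\bar B,\ldots$ for underlying cubical sets. Write $f\colon A\to B$, $g\colon C\to D$, and let $P=(B\otimes C)\cup_{A\otimes C}(A\otimes D)$ be the domain of $f\hat\odot g$. I will use repeatedly that in any lax Gray tensor $X\otimes Y$ a non-degenerate cube $x\otimes y$ is marked precisely when $x$ is marked in $X$ or $y$ is marked in $Y$ (degenerate cubes being marked everywhere), and that in the pushout $P$ a cube is marked exactly when it is the image of a marked cube of $B\otimes C$ or of $A\otimes D$: indeed a colimit in $\mcSet$ is obtained from the colimit in $\Set^{(\Box^+)^\op}$ by reflecting into $\mcSet$, and neither forming the ambient colimit nor applying the reflection alters the underlying cubical set or the set of cubes admitting a marked preimage.

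For (1), assume $f$ and $g$ regular. Take a cube $z$ of $P$, realized as the image of some $b\otimes c\in B\otimes C$ or of some $a\otimes d\in A\otimes D$; its image under $f\hat\odot g$ in $B\otimes D$ is then $b\otimes g(c)$, respectively $f(a)\otimes d$. In the first case $b\otimes c$ is marked in $B\otimes C$ iff $b$ is marked in $B$ or $c$ is marked in $C$, and regularity of $g$ makes the second disjunct equivalent to $g(c)$ being marked in $D$; so $b\otimes c$ is marked iff $b\otimes g(c)$ is marked in $B\otimes D$. The second case is identical using regularity of $f$, and if $z$ admits preimages on both sides the two computations agree. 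Hence $z$ is marked in $P$ iff its image under $f\hat\odot g$ is marked, i.e.\ $f\hat\odot g$ is regular.

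For (2), suppose $f$ entire, so $\bar f$ is an isomorphism. Then in the pushout computing $\bar f\hat\otimes\bar g$ the leg $\bar A\otimes\bar C\to\bar B\otimes\bar C$ induced by $\bar f$ is an isomorphism, so the pushout maps isomorphically onto $\bar B\otimes\bar D$ along the leg induced by $\bar f$; thus $U(f\hat\odot g)$ is an isomorphism and $f\hat\odot g$ is entire. If instead $g$ is entire, the mirror argument — now the leg $\bar A\otimes\bar C\to\bar A\otimes\bar D$ induced by $\bar g$ is an isomorphism — gives the same conclusion; note there is no symmetry to invoke, since the lax Gray tensor product is not symmetric. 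For (3), assume $f$ and $g$ both entire. By (2) the underlying map of $f\hat\odot g$ is an isomorphism, so it remains only to check that every marked cube of $B\otimes D$ is the image of a marked cube of $P$. If $b\otimes d$ is marked in $B\otimes D$ because $b$ is marked in $B$, then $b\otimes\bar g^{-1}(d)$ is a marked cube of $B\otimes C$ mapping to $b\otimes d$, so its image in $P$ is a marked cube over $b\otimes d$; symmetrically, if $d$ is marked in $D$ one uses the marked cube $\bar f^{-1}(b)\otimes d$ of $A\otimes D$. Together with the underlying isomorphism, this shows $f\hat\odot g$ is an isomorphism.

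None of this is deep. The points that need a little care are the description of the markings of a pushout in $\mcSet$ (used in (1) and (3)) and the degeneracy identification $x\sigma_{k+1}\otimes y=x\otimes y\sigma_1$ in the geometric product — which is why the case analyses above are phrased in terms of possibly non-unique preimages rather than a fixed normal form — together with the fact that the two subcases of (2) must be argued separately because the lax Gray tensor product is not symmetric.
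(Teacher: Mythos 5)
Your proof is correct. The paper itself omits the proof of this lemma (the \texttt{\textbackslash qed} in the statement, together with the ``cf.'' citation to \cite[Lem.~2.17]{campion-kapulkin-maehara}, indicates it is quoted without argument), so there is nothing in this paper to compare against; but what you have written is the natural direct verification, and it is complete. You correctly reduce to the fact that the forgetful functor $\mcSet \to \cSet$ is cocontinuous so that the underlying map of $f \hat\odot g$ is the Leibniz geometric product; your characterization of markings in the pushout $P$ (a cube is marked iff it has a marked preimage in $B\otimes C$ or $A\otimes D$) is the right thing to extract from the reflection $\Set^{(\Box^+)^\op} \to \mcSet$, and your handling of the case where a cube of $P$ has preimages on both sides (showing the two marking computations agree via regularity of $f$ and $g$) correctly discharges the only subtle point in part (1). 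Parts (2) and (3) are handled cleanly, including the explicit observation that the two subcases of (2) require separate arguments since $\otimes$ is not symmetric, and the use of $\bar g^{-1}$ and $\bar f^{-1}$ in (3) to produce marked preimages.
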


Lastly, we may extend the triangulation functor $T \colon \cSet \to \sSet$ to the marked setting, following \cite{campion-kapulkin-maehara}. To do this, we first need an explicit description of the simplices of $T\Box^n = (\Delta^1)^n = N[1]^n$. For $r \geq 0$, observe that since $\Delta^r = N[r]$ and the nerve functor is fully faithful, $r$-simplices $\Delta^r \to (\Delta^1)^n$ can be identified with order-preserving maps $\phi \colon [r] \to [1]^n$. Such a map $\phi$ can be identified with a unique function $\{1, \ldots ,n\} \to \{1, \ldots ,r,\pm \infty\}$, defined as follows:

\[
i \mapsto \left\{\begin{array}{cl}
+\infty, & \pi_i \circ \phi(r) = 0,\\
p, & \pi_i \circ \phi(p-1) = 0~\text{and}~\pi_i \circ \phi(p) = 1,\\
-\infty, & \pi_i \circ \phi(0) = 1.
\end{array}\right.
\]
  
It will typically be convenient to represent such functions as strings of length $n$ with entries drawn from the set $\{1, \ldots ,r,\pm \infty\}$ (for brevity, we will write $+$ for $+\infty$ and $-$ for $-\infty$). We let $\iota_n$ denote the inclusion $\{1, \ldots ,n\} \to \{1, \ldots ,n,\pm \infty\}$, viewed as an $n$-simplex of $T \Box^n$; represented as a string, this is $1 \ldots n$.
  
Under this identification, a simplicial operator $\alpha \colon [q] \to [r]$ sends an $r$-simplex $\phi$ to the $q$-simplex $\alpha$ defined as follows:

\[
	(\phi   \alpha)(i) = \left\{\begin{array}{cl}
	+\infty, & \phi(i) > \alpha(q),\\
	p, & \alpha(p-1) < \phi(i) \le \alpha(p),\\
	-\infty, & \phi(i) \le \alpha(0).
	\end{array}\right.
	\]

In particular, when representing simplices as strings, face maps of an $m$-simplex $\phi$ can be computed as follows:

\begin{itemize}
\item The face $\phi \bd_0$ is computed by replacing every 1 in $\phi$ by $-$, and reducing all other entries by 1. For instance, $(1\,2\,3+-) \bd_0 = -1\,2+-$.
\item For $0 < i < m$, the face $\phi \bd_i$ is computed by reducing every entry of $\phi$ which is greater than $i$ by 1. For instance, $(1\,2\,3+-) \bd_1 = 1\,1\,2+-$, while $(1\,2\,3+-) \bd_2 = 1\,2\,2+-$.
\item The face $\phi \bd_n$ is computed by replacing every $n$ in $\phi$ by $+$. For instance, $(1\,2\,3+-) \bd_3 = 1\,2++\,-$.
\end{itemize}

Alternatively, we may view every face map $\bd_i$ as being computed by reducing all entries of $\phi$ which are greater than $i$ by 1, identifying entries less than $1$ with $-$ and entries greater than $n-1$ with $+$ when dealing with $(n-1)$-simplices.

Likewise, the degeneracy $\phi \sigma_i$ can be computed by raising all entries of $\phi$ greater than $i$ by 1; from this we obtain the following result.

\begin{lem}\label{T-cube-degen}
An $r$-simplex $\phi \colon \{1, \ldots ,n\} \to \{1, \ldots ,r,\pm \infty\}$ of $(\Delta^1)^n$ is degenerate if and only if there is some $i \in \{1, \ldots ,r\}$ for which $\phi^{-1}(i) = \varnothing$. \qed
\end{lem}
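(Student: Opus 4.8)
The plan is to reduce the statement to an elementary observation about order-preserving maps out of $[r]$, exploiting that $(\Delta^1)^n = N[1]^n$ is a nerve.

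First I would recall that, under the identification of $r$-simplices of $(\Delta^1)^n$ with order-preserving maps $\phi\colon [r] \to [1]^n$, such a simplex is degenerate precisely when $\phi$ factors through one of the codegeneracy maps $[r] \twoheadrightarrow [r-1]$ — equivalently, by fully faithfulness of the nerve, when the corresponding functor sends some generating morphism $j-1 \to j$ of $[r]$ to an identity — and, since $[1]^n$ is a poset, this happens if and only if $\phi(j-1) = \phi(j)$ for some $j \in \{1,\dots,r\}$. (Concretely: if $\phi(j-1) = \phi(j)$ one builds the required $(r-1)$-simplex $\psi$ by deleting the repeated value, and conversely any $\psi\sigma_{j-1}$ repeats its $(j-1)$-st value.)

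Next I would translate this condition through the string encoding. By construction of the correspondence, the $i$-th entry of the string representing $\phi$ equals a finite number $p \in \{1,\dots,r\}$ exactly when $\pi_i \circ \phi(p-1) = 0$ and $\pi_i \circ \phi(p) = 1$, i.e.\ exactly when the $i$-th coordinate of $\phi$ ``jumps'' between stages $p-1$ and $p$; it is $\pm\infty$ when that coordinate never jumps. Consequently, for $j \in \{1,\dots,r\}$ we have $\phi(j-1) = \phi(j)$ if and only if no coordinate jumps between stages $j-1$ and $j$, which is to say that the value $j$ does not occur among the entries of the string, i.e.\ $\phi^{-1}(j) = \varnothing$. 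Combining this with the previous paragraph yields the claim.

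Alternatively — and this is presumably what the phrase ``from this we obtain'' in the text refers to — one can argue directly from the displayed degeneracy formula: $\psi\sigma_i$ is obtained from an $(r-1)$-simplex $\psi$ by raising every entry greater than $i$ by one, so the value $i+1$ is necessarily absent from $\psi\sigma_i$; conversely, if $\phi^{-1}(j) = \varnothing$ then lowering every entry of $\phi$ greater than $j$ by one produces a valid $(r-1)$-simplex $\psi$ with $\psi\sigma_{j-1} = \phi$. Either way, I expect the only delicate points to be notational: keeping straight which codegeneracy identifies which pair of adjacent stages (hence whether it is $j$ or $j\pm 1$ that is omitted) and confirming that the index $j$ ranges over $\{1,\dots,r\}$ rather than $\{0,\dots,r\}$. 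There is no genuine mathematical obstacle; the content lies entirely in the bookkeeping relating order-preserving maps, their coordinatewise jumps, and the string notation.
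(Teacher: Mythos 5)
Your proposal is correct and matches the paper's (implicit) argument: the lemma is left as an immediate consequence of the string formula for degeneracies, which is exactly your final paragraph — $\psi\sigma_j$ omits the value $j+1$, and conversely an omitted value $j$ lets you lower the larger entries to exhibit $\phi$ as $\psi\sigma_{j-1}$. Your first route via order-preserving maps $[r]\to[1]^n$ and coordinate jumps is just an equivalent repackaging of the same bookkeeping, and both versions are sound.
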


\begin{Def} \label{marked-triangulation}
We define the functor $T \colon \Box^+ \to \sSet^{+}$ as follows:
\begin{itemize}
\item $T[1]^n$ has $(\Delta^1)^n$ as its underlying simplicial set, with an $r$-simplex $\phi \colon \{1, \ldots,n\} \to \{1, \ldots ,r,\pm \infty\}$ unmarked if and only if there exists a sequence $i_1 < \cdots < i_r$ in $\{1, \ldots ,n\}$ such that $\phi(i_p) = p$ for all $p \in \{1, \ldots ,r\}$;
\item $T[1]^n_e$ is obtained from $T[1]^n$ by marking the $n$-simplex $\iota_n$.
\end{itemize}
\end{Def}  

For instance, $T \Box^0 = \Delta^0$ and $T \Box^1 = \Delta^1$; similarly, $T \widetilde{\Box}^1 = \widetilde{\Delta}^1$.
The simplicial set $T \Box^2$ can be depicted as
	\[
	\begin{tikzpicture}
		\filldraw
		(0,0) circle [radius = 1pt]
		(2,0) circle [radius = 1pt]
		(0,2) circle [radius = 1pt]
		(2,2) circle [radius = 1pt];
		
		\draw[->] (0.2,0) -- (1.8,0);
		\draw[->] (0.2,2) -- (1.8,2);
		\draw[->] (0,1.8) -- (0,0.2);
		\draw[->] (2,1.8) -- (2,0.2);
		\draw[->] (0.2,1.8) -- (1.8,0.2);
		
		\filldraw[shaded, rounded corners]
		(0.1,0.1) -- (1.75,0.1) -- (0.1,1.75) -- cycle;
		
		\draw[->, double] (1.2,1.2) -- (1.7,1.7);
		\draw[->, double = shaded] (0.8,0.8) -- (0.3,0.3);
	\end{tikzpicture}
	\]
In $T \widetilde{\Box}^2$, the other non-degenerate 2-simplex would also be marked.
	
The triangulation of $\Box^3$ has six non-degenerate $3$-simplices, corresponding to the permutations of the set $\{1, 2, 3\}$, five of which are marked, with the unmarked simplex corresponding to the identity permutation.
Furthermore, there are six interior $2$-simplices, four of which are unmarked: $112$, $121$, $122$, and $212$; and two of which are marked: $211$ and $221$.
The boundary of the triangulation of $\Box^3$ consists of six copies of $T \Box^2$, and hence six marked and six unmarked $2$-simplices.

In $T \widetilde{\Box}^3$, we would additionally mark the $3$-simplex corresponding to the identity permutation, but the marking of the $2$-simplices would remain unchanged.
  
By left Kan extension, this definition extends to a colimit-preserving functor $T \colon \Set^{(\Box^+)^\op} \to \sSet^{+}$, with a right adjoint  $U \colon \sSet^{+} \to \Set^{(\Box^+)^\op}$. Once again, it is clear that these functors restrict to an adjunction $T : \cSet^{+} \rightleftarrows \sSet^{+} : U$. From the definition, we can see that the only unmarked $n$-simplex of $T\Box^n$ is $\iota_n$, while all $n$-simplices of $T\widetilde{\Box}^n$ are marked.

The following results will be of use in comparing the model structures of \cref{complicial-model-structure} with model structures on $\cSet^+$ via the adjunction $T \dashv U$; the first two describe useful properties of the marked triangulation functor, while the third straightforwardly extends \cref{op-monoidal} to the marked setting.

\begin{prop}[{\cite[Prop.~5.8]{campion-kapulkin-maehara}}]\label{T-op}
For $X \in \cSet^+$, there is a natural isomorphism $(TX)^\op \cong T(X^\op)$. \qed
\end{prop}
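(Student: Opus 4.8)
\textit{Proof strategy.} Since $T$ is cocontinuous and is (by construction) the left Kan extension of its restriction along $\Cube^+ \to \cSet^+$, and since $(-)^\op$ is cocontinuous on both $\cSet^+$ and $\sSet^+$, it suffices to establish the statement on $\Cube^+$: concretely, to show that the restriction $T \colon \Cube^+ \to \sSet^+$ intertwines the $\op$-involution of $\Cube^+$ with that of $\sSet^+$. Thus the plan is to produce, for each $n$, an isomorphism $\theta_n \colon (T[1]^n)^\op \xrightarrow{\ \sim\ } T[1]^n$ in $\sSet^+$ (and the evident variant for $[1]^n_e$, on which the $\op$-involution of $\Cube^+$ acts as the identity), and then to verify naturality against the generating morphisms of $\Cube^+$ together with compatibility with markings.

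\textit{The underlying isomorphism and naturality for cubical generators.} Recall $T[1]^n = N([1]^n)$ as simplicial sets, and that the nerve carries opposite posets to opposite simplicial sets: $(NP)^\op \cong N(P^\op)$, naturally in $P$. I would define $\theta_n$ on underlying simplicial sets as the composite of this canonical isomorphism with $N\psi_n$, where $\psi_n \colon ([1]^n)^\op \xrightarrow{\sim} [1]^n$ is the poset isomorphism $(x_1,\dots,x_n) \mapsto (1-x_n,\dots,1-x_1)$ — that is, reverse the order of the coordinates and complement each. The reason for using $\psi_n$ rather than the coordinatewise complement is that a direct check on the generators $\partial_{i,\varepsilon}, \sigma_i, \gamma_{i,\varepsilon}$ shows $\psi_n \circ f = f^{\op} \circ \psi_m$ for every $f \colon [1]^m \to [1]^n$ in $\Cube$, where $f^{\op}$ denotes the image of $f$ under the involution $(-)^\co \circ (-)^\coop$; reversing the coordinate order is exactly what accounts for the $(-)^\co$-part of the $\op$-involution. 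Applying $N$ and invoking naturality of $(NP)^\op \cong N(P^\op)$ then gives naturality of $\theta$ against every morphism of $\Cube$.

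\textit{Markings and the generators $\varphi, \zeta, \xi$.} Representing an $r$-simplex of $(\Delta^1)^n$ as a string of length $n$ over $\{1,\dots,r,\pm\infty\}$ as in the text, one computes that $\theta_n$ acts on such strings by reversing the order of the entries, replacing a finite entry $p$ with $r+1-p$, and interchanging $+\infty$ with $-\infty$. An $r$-simplex of $T[1]^n$ is unmarked precisely when its string admits a strictly increasing choice of positions realising all of $1,2,\dots,r$ in order, and this property is preserved — and, the operation being an involution, reflected — by the above transformation; hence $\theta_n$ restricts to an isomorphism of marked simplicial sets $(T[1]^n)^\op \cong T[1]^n$. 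Since the transformation moreover fixes the string $1\,2\,\cdots\,n$ of $\iota_n$, the same map yields $(T[1]^n_e)^\op \cong T[1]^n_e$. Finally, naturality against $\varphi^n$ is immediate because $T\varphi^n$ is entire and $\theta$ is a map of marked simplicial sets, while naturality against $\zeta^n_i$ and $\xi^n_{i,\varepsilon}$ reduces to the already-settled cases of $\sigma_i$ and $\gamma_{i,\varepsilon}$ via the relations $\zeta_i\varphi = \sigma_i$ and $\xi_{i,\varepsilon}\varphi = \gamma_{i,\varepsilon}$, again using that $T\varphi^n$ is an isomorphism on underlying simplicial sets.

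\textit{Main obstacle.} I expect the crux to be the marking computation in the third step: one must first pin down exactly how the dualised simplicial structure, transported through $\theta_n$, acts on the string presentation of the simplices of $(\Delta^1)^n$, and then check that the combinatorial condition defining the marked simplices of $T[1]^n$ survives the resulting reverse-and-complement operation. A subsidiary point to get right is the choice of $\psi_n$ (the coordinate-reversing complement, not the plain complement), which is what makes $\theta$ equivariant for the full $\op$-involution and not merely for $(-)^\coop$.
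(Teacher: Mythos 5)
The paper offers no proof of this proposition; it is stated as a citation of [campion-kapulkin-maehara, Prop.~5.8] with the $\qed$ marking the citation as the proof, so there is no in-paper argument to compare against. Your proof is correct as written: the reduction to $\Cube^+$ by cocontinuity is valid, and your choice of $\psi_n$ as the coordinate-reversing complement is exactly what is needed to intertwine with the full involution $(-)^\op = (-)^\co\circ(-)^\coop$ rather than only with $(-)^\coop$ (a point worth flagging, as you do — a quick check on $\partial_{i,\varepsilon}\colon[1]^{n-1}\to[1]^n$ confirms $\psi_n\circ\partial_{i,\varepsilon}=\partial_{n+1-i,1-\varepsilon}\circ\psi_{n-1}$). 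The marking verification is also sound: on the string presentation $\theta_n$ acts by reversing, replacing each finite $p$ with $r+1-p$, and swapping $\pm\infty$, which carries a complete substring $i_1<\dots<i_r$ of $\phi$ to the complete substring $n+1-i_r<\dots<n+1-i_1$ of $\theta_n(\phi)$, so unmarkedness is preserved and (by involutivity of the string operation) reflected; and $\theta_n$ fixes $\iota_n$, handling $T[1]^n_e$. The reduction of naturality for $\zeta^n_i$ and $\xi^n_{i,\varepsilon}$ to $\sigma_i$ and $\gamma_{i,\varepsilon}$ via the entire (hence underlying-iso) map $T\varphi^n$ also goes through.
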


\begin{prop}\label{T-monoidal}
Let $i$ and $j$ be cofibrations in $\cSet^+$, and let $\sSet^+$ be equipped with any of the model structures of \cref{complicial-model-structure}. If either $Ti$ or $Tj$ is a trivial cofibration, then $T$ sends the pushout lax Gray tensor product $i \hat{\otimes} j$ to a trivial cofibration as well.
\end{prop}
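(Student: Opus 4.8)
The plan is to reduce the statement to a "generators times generators" computation using the cellular model from \cref{cset-cellular-model}, the cocontinuity of $T$, and the fact that $T$ is a left adjoint hence preserves pushout Gray tensor products of cofibrations as cofibrations (using \cref{Gray-tensor-of-monos} to control entireness/regularity, and the compatibility of $T$ with $\otimes$, which follows from $T$ being defined so that $T(X \otimes Y)$ agrees with the lax Gray tensor product on the nose by construction of the cocubical object). First I would observe that since $T$ is colimit-preserving and $(-)\hat{\otimes}(-)$ is cocontinuous in each variable, it suffices by \cref{cell-tensor} (applied in $\cSet^+$) together with standard closure properties of trivial cofibrations under pushout and transfinite composition to verify the claim when $i$ and $j$ each range over the cellular model $I = \{\partial\Cube^n \incl \Cube^n\} \cup \{\Cube^n \incl \mcube n\}$, and when (say) $Ti$ is a trivial cofibration. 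So the real content is: for each generator $i \in I$ with $Ti$ a trivial cofibration in the chosen model structure of \cref{complicial-model-structure}, and each generator $j \in I$, the map $T(i \hat{\otimes} j)$ is a trivial cofibration.

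Next I would analyze which generators $i$ have $Ti$ a trivial cofibration. For $i = (\partial\Cube^n \incl \Cube^n)$, the map $Ti$ is the inclusion $T\partial\Cube^n \incl T\Cube^n$, i.e.\ (on underlying simplicial sets) a subcomplex of the prism $(\Delta^1)^n$ together with an inherited marking; this is generally \emph{not} a weak equivalence, so these contribute nothing to the hypothesis side. For $i = (\Cube^n \incl \mcube n)$, the map $Ti \colon T\Cube^n \to T\mcube n$ is the entire map that marks the single unmarked top simplex $\iota_n$ of $T\Cube^n$; I would argue this is a trivial cofibration by \cref{more-markings}, after checking that $\iota_n$ becomes marked in the pre-complicial reflection $(T\Cube^n)^{\precomp}$ — this is where one uses that $\iota_n$ is (the image of) a composite of complicial-horn-type fillers inside the prism, or more directly that $T\Cube^n$ is already sufficiently structured that its pre-complicial reflection marks the diagonal. (Alternatively one invokes the known fact, from \cite{campion-kapulkin-maehara}, that $T$ sends the cubical markers to trivial cofibrations; I would cite or reprove this.) So the hypothesis "$Ti$ is a trivial cofibration" really only needs to be handled for $i$ a marker.

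Then the main computation: fixing $i = (\Cube^n \incl \mcube n)$, I need $T(i \hat{\otimes} j)$ to be a trivial cofibration for $j \in \{\partial\Cube^m \incl \Cube^m,\ \Cube^m \incl \mcube m\}$. By \cref{Gray-tensor-of-monos}(2), $i \hat{\otimes} j$ is entire in both cases, so $T(i\hat{\otimes}j)$ is an entire monomorphism of marked simplicial sets — i.e.\ it only adds markings. I would then identify exactly which simplices get newly marked: $i \hat{\otimes} j$ marks those cubes of the relevant domain all of whose components forced by the marker become marked, and under $T$ this translates into marking a specific family of simplices of a prism $(\Delta^1)^{n+m}$. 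To conclude these are trivial cofibrations I would again appeal to \cref{more-markings}: it suffices to show each newly-marked simplex is already marked in the pre-complicial reflection of the domain. The cleanest route is to factor $i \hat{\otimes} j$ as a composite of entire maps each adding one marking, realize each such one-step marking as a pushout of an elementary complicial marking extension $\adeltap m k \incl \trunc{m-2}\adelta m k$ along a suitable face of the prism (using the explicit face formulas for simplices-as-strings recalled before \cref{T-cube-degen}), and invoke \cref{precomp-tcof}. The main obstacle I anticipate is precisely this bookkeeping: pinning down, in the string model for simplices of $(\Delta^1)^{n+m}$, which simplices the pushout Gray tensor with a marker marks, and exhibiting each as arising from a complicial marking extension so that \cref{more-markings}/\cref{precomp-tcof} applies — this is combinatorially delicate but routine in character, and the symmetry afforded by \cref{T-op} (reducing, say, the $\varepsilon=0$ and $\varepsilon=1$ cases to one another) should cut the case analysis roughly in half.
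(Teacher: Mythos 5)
There is a genuine gap, and it lies in your opening reduction. The hypothesis ``$Ti$ is a trivial cofibration'' is a property of the particular cofibration $i$, and it is not inherited along a cellular decomposition of $i$: the class of cofibrations whose triangulation is trivial is saturated, but it is \emph{not} the cellular closure of those members of the cellular model $I$ that happen to lie in it. For example, in the saturated structures the cubical Rezk map $L_{x,y}\to L_{x,y}'$ (and likewise the comical open box inclusions) triangulates to a trivial cofibration, yet it is built as a composite of pushouts of markers and boundary inclusions whose individual triangulations are \emph{not} weak equivalences. So ``it suffices to check $i,j$ in $I$ with $Ti$ trivial'' is false, and after that step you are only proving the proposition for $i$ a (cell complex of) marker(s) --- which is strictly weaker than the statement and, in particular, not enough for the way \cref{T-Quillen} uses it (there the ``trivial'' factor is $L_{x,y}\to L_{x,y}'$, not a marker). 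A second, subsidiary error: your claim that $T(\Box^n\to\widetilde\Box^n)$ is a trivial cofibration via \cref{more-markings} fails, because $\iota_n$ is not marked in the pre-complicial reflection of $T\Box^n$ (already for $n=1$, the marker $\Delta^1\to\widetilde\Delta^1$ is not a weak equivalence in the complicial model structure; cubical markers only triangulate to trivial cofibrations in the $n$-trivial structures in dimensions above $n$). This does not by itself contradict the proposition, but it invalidates the case analysis you build on it.

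The intended argument is much shorter and is the route you mention only in passing and then set aside: $T$ is compatible with the lax Gray tensor products, so that $T(i\hat\otimes j)$ is identified with the simplicial pushout Gray tensor $Ti\hat\otimes Tj$ of the cofibrations $Ti$, $Tj$; then \cref{gray-homotopical} gives triviality as soon as one of $Ti$, $Tj$ is trivial, with no cellular induction at all. Note, however, that this compatibility is not ``on the nose by construction'' in the marked setting --- comparing the markings of $T(X\otimes Y)$ with those of $TX\otimes TY$ is exactly the content of the cited result \cite[Thm.~6.5]{campion-kapulkin-maehara}, so treating it as automatic skips the only genuinely nontrivial input. If you want to salvage your approach, you must either prove that monoidal comparison or find an argument that handles an arbitrary cofibration $i$ with $Ti$ trivial; reducing the hypothesis side to generators cannot work.
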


\begin{proof}
This follows from \cref{gray-homotopical} together with \cite[Thm.~6.5]{campion-kapulkin-maehara}.
\end{proof}

\begin{prop}[{\cite[Prop.~1.17]{campion-kapulkin-maehara}}]\label{op-monoidal-marked}
  The functors $(-)^\co , (-)^\op \colon \cSet^+ \to \cSet^+$ are strong anti-monoidal, while $(-)^\coop  \colon \cSet \to \cSet$ is strong monoidal. \qed
\end{prop}

Just as we can consider alternate versions of cubical sets having only one kind of connection or none, we may likewise consider marked cubical sets having similarly restricted structure maps. Specifically, we have the following wide subcategories of $\Box^+$:

\begin{itemize}
\item $\Box^+_{\varnothing}$, generated by the maps $\bd_{i,\varepsilon}, \sigma_i$, and $\zeta_i$;
\item $\Box^+_0$, generated by $\bd_{i,\varepsilon}, \sigma_i, \gamma_{i,0}, \zeta_i$, and $\xi_{i,0}$;
\item $\Box^+_1$, generated by $\bd_{i,\varepsilon}, \sigma_i, \gamma_{i,1}, \zeta_i$, and $\xi_{i,1}$.
\end{itemize}

As with $\Box^+$, presheaves on each of these categories may be seen as presheaves on the corresponding cube category equipped with markings on some cubes, including all degenerate cubes. By restricting our attention to those presheaves for which each cube has at most one marking, we obtain alternative categories of marked cubical sets, denoted $\cSet^+_{\varnothing}, \cSet^+_0,$, and $\cSet^+_1$. As in \cref{cSet-unmarked}, all of the definitions and results of this section generalize to these categories as well (note that none of the proofs provided in \cite{campion-kapulkin-maehara} for the cited results from that paper use connections in any way). Once again, the only complication which arises is that $(-)^{\coop}$ and $(-)^{\op}$ do not define automorphisms of $\cSet_{0}$ and $\cSet_{1}$, but instead each defines an inverse pair of isomorphisms between these categories. Once again, our results about these functors, such as \cref{op-monoidal-marked}, remain true when interpreted as referring to these isomorphisms.

In subsequent sections, similarly to the approach taken in \cite{doherty-kapulkin-lindsey-sattler}, we will work in $\cSet^+$ for the sake of concreteness, but will write our proofs in such a way as to be generalizable to as many of the other marked cubical set categories mentioned as possible, either verbatim or using dual arguments. The construction of the comical model structures in \cref{sec:model} and proof that the triangulation functor is left Quillen in \cref{Quillen-functor} can both be done for any of our marked cubical set categories, while the proof that triangulation is a Quillen equivalence, covered in \cref{sec:cones,sec:equivalence}, requires at least one connection. At the beginning of each section, we will comment in further detail on which results can be adapted to which marked cubical set category.

\section{Model structure for comical sets}\label{sec:model}

In this section, we introduce the notion of a comical set and construct a model structure on $\mcSet$ whose fibrant-cofibrant objects are precisely the comical sets.
As indicated in the introduction, our definition differs slightly from the corresponding definition given in \cite{campion-kapulkin-maehara}. None of our definitions or proofs will require connections, and thus the results of this section are applicable in any of the categories $\cSet^+_{\varnothing}, \cSet^+_{0}, \cSet^+_{1}, \cSet^+$.

We begin by defining the (co)domains of our tentative anodyne maps.

\begin{Def} \leavevmode
\begin{enumerate}
\item For $n \geq 1$, $\varepsilon \in \{0,1\}$, and $1 \leq i \leq n$, the \emph{$(i,\varepsilon)$-comical cube} in dimension $n$, denoted $\Box^n_{i,\varepsilon}$, is the marked cubical set with underlying cubical set $\Box^n$ in which a non-degenerate $m$-cube $\delta \colon \Box^m \to \Box^n$ is unmarked if and only if at least one of the following three conditions holds:
\begin{enumerate}
\item\label{comical-middle} the standard form of $\delta$ contains $\bd_{i,\varepsilon}$ or $\bd_{i,1-\varepsilon}$;
\item\label{comical-high} for some $j > i$, the standard form of $\delta$ contains $\bd_{j,\varepsilon}$, as well as $\bd_{k, 1-\varepsilon}$ for all $j > k > i$;
\item\label{comical-low} for some $j < i$, the standard form of $\delta$ contains $\bd_{j,\varepsilon}$, as well as $\bd_{k, 1-\varepsilon}$ for all $j < k < i$;
\end{enumerate}

\item For $n \geq 1$, $\varepsilon \in \{0,1\}$, and $1 \leq i \leq n$, an $n$-cube $\Cube^n \to X$ of a marked cubical set $X$ is \defterm{$(i, \varepsilon)$-comical} if it factors through $\Box^n \to \Box^n_{i,\varepsilon}$. 

\item The \emph{$(i,\varepsilon)$-comical open box} in dimension $n$, denoted $\sqcap^n_{i,\varepsilon}$, is the regular subcomplex of $\Box^n_{i,\varepsilon}$ whose underlying cubical set is the $n$-dimensional $(i,\varepsilon)$-open box. The marked cubical set $(\Box^n_{i,\varepsilon})'$ is obtained from $\Box^n_{i,\varepsilon}$ by marking all $(n-1)$-cubes except for $\bd_{i,\varepsilon}$.

\item An $(i, \varepsilon)$-open box $(\sqcap^n_{i,\varepsilon})^\flat \to X$ in a marked cubical set $X$ is \defterm{$(i, \varepsilon)$-comical}, or just \defterm{comical}, if it factors through $(\sqcap^n_{i,\varepsilon})^\flat \to \sqcap^n_{i,\varepsilon}$.
\end{enumerate}
\end{Def}

Note that we can rephrase condition (1) above as follows. A non-degenerate cube of $\Box^n_{i,\varepsilon}$ is marked if and only if its standard form does not contain any ``excluded strings'' of face maps: namely $\bd_{i,\varepsilon}$; $\bd_{i,1-\varepsilon}$; $\bd_{i-1,\varepsilon}$; $\bd_{i+1,\varepsilon}$; any string of the form $\bd_{j,\varepsilon} \bd_{j-1,1-\varepsilon} \ldots \bd_{i+1,1-\varepsilon}$ for $j > i + 1$; or any string of the form $\bd_{i-1,1-\varepsilon} \ldots \bd_{j+1,1-\varepsilon} \bd_{j,\varepsilon}$ for $j < i - 1$.

For example, both the $(1,0)$- and $(1,1)$-comical cubes are the marked $1$-cube $\mcube 1$.
We depict below the comical cubes $\Box^n_{i,1}$ for $n = 2,3$ and $1 \le i \le n$ (recall the drawing convention described in \cref{cSet-unmarked}); the non-degenerate marked cubes in these comical cubes are listed in \cref{comical-table}.
\[
\begin{tikzpicture}
\filldraw
(0,0) circle [radius = 1pt]
(0,1) circle [radius = 1pt]
(1,0) circle [radius = 1pt]
(1,1) circle [radius = 1pt];

\draw[->] (0.2,0) -- (0.8,0);
\draw[->] (0.2,1) -- (0.8,1) node [midway, above] {$\sim$};
\draw[->] (0,0.8) -- (0,0.2);
\draw[->] (1,0.8) -- (1,0.2);

\filldraw[shaded, rounded corners]
(0.1,0.1) -- (0.9,0.1) -- (0.9,0.9) -- (0.1,0.9) -- cycle;

\draw[->, double = shaded] (0.3,0.3) -- (0.7,0.7);

\node at (0.5,-1) {$\Box^2_{1,1}$};
\end{tikzpicture}
\hspace{5em}
\begin{tikzpicture}
	\filldraw
	(0,0) circle [radius = 1pt]
	(0,1) circle [radius = 1pt]
	(1,0) circle [radius = 1pt]
	(1,1) circle [radius = 1pt];
	
	\draw[->] (0.2,0) -- (0.8,0);
	\draw[->] (0.2,1) -- (0.8,1);
	\draw[->] (0,0.8) -- (0,0.2) node [midway, left] {$\sim$};
	\draw[->] (1,0.8) -- (1,0.2);
	
	\filldraw[shaded, rounded corners]
	(0.1,0.1) -- (0.9,0.1) -- (0.9,0.9) -- (0.1,0.9) -- cycle;
	
	\draw[->, double = shaded] (0.3,0.3) -- (0.7,0.7);
	
	\node at (0.5,-1) {$\Box^2_{2,1}$};
\end{tikzpicture}
\]

\[
\begin{tikzpicture}
\begin{scope}[blend group = multiply]
\filldraw[shaded, rounded corners] (0.3,2.9) -- (1.9,2.9) -- (2.7,2.1) -- (1.1,2.1) -- cycle; 
\filldraw[shaded, rounded corners] (0.2,1.2) -- (1.8,1.2) -- (1.8,2.8) -- (0.2,2.8) -- cycle; 
\filldraw[shaded, rounded corners] (1.2,0.2) -- (2.8,0.2) -- (2.8,1.8) -- (1.2,1.8) -- cycle; 
\end{scope}
	
\filldraw
(1,0) circle [radius = 1pt]	
(3,0) circle [radius = 1pt]	
(1,2) circle [radius = 1pt]	
(3,2) circle [radius = 1pt]	
(0,1) circle [radius = 1pt]	
(2,1) circle [radius = 1pt]	
(0,3) circle [radius = 1pt]	
(2,3) circle [radius = 1pt];

\draw[->] (1.2,0) -- (2.8,0);
\draw[->] (1.2,2) -- (2.8,2);
\draw[->] (0.2,1) -- (1.8,1);
\draw[->] (0.2,3) -- (1.8,3) node [midway, above] {$\sim$};

\draw[->] (0,2.8) -- (0,1.2);
\draw[->] (2,2.8) -- (2,1.2);
\draw[->] (1,1.8) -- (1,0.2);
\draw[->] (3,1.8) -- (3,0.2);

\draw[->] (0.1,0.9) -- (0.9,0.1);
\draw[->] (2.1,0.9) -- (2.9,0.1);
\draw[->] (0.1,2.9) -- (0.9,2.1);
\draw[->] (2.1,2.9) -- (2.9,2.1);

\node at (1.5,-1) {$\Box^3_{1,1}$};
\end{tikzpicture}
\hspace{5em}
\begin{tikzpicture}
	\begin{scope}[blend group = multiply]
		\filldraw[shaded, rounded corners] (0.2,1.2) -- (1.8,1.2) -- (1.8,2.8) -- (0.2,2.8) -- cycle; 
		\filldraw[shaded, rounded corners] (0.1,1.1) -- (0.9,0.3) -- (0.9,1.9) -- (0.1,2.7) -- cycle; 
	\end{scope}
	
	\filldraw
	(1,0) circle [radius = 1pt]	
	(3,0) circle [radius = 1pt]	
	(1,2) circle [radius = 1pt]	
	(3,2) circle [radius = 1pt]	
	(0,1) circle [radius = 1pt]	
	(2,1) circle [radius = 1pt]	
	(0,3) circle [radius = 1pt]	
	(2,3) circle [radius = 1pt];
	
	\draw[->] (1.2,0) -- (2.8,0);
	\draw[->] (1.2,2) -- (2.8,2);
	\draw[->] (0.2,1) -- (1.8,1);
	\draw[->] (0.2,3) -- (1.8,3);
	
	\draw[->] (0,2.8) -- (0,1.2) node [midway, left] {$\sim$};
	\draw[->] (2,2.8) -- (2,1.2);
	\draw[->] (1,1.8) -- (1,0.2);
	\draw[->] (3,1.8) -- (3,0.2);
	
	\draw[->] (0.1,0.9) -- (0.9,0.1);
	\draw[->] (2.1,0.9) -- (2.9,0.1);
	\draw[->] (0.1,2.9) -- (0.9,2.1);
	\draw[->] (2.1,2.9) -- (2.9,2.1);
	
	\node at (1.5,-1) {$\Box^3_{2,1}$};
\end{tikzpicture}
\hspace{5em}
\begin{tikzpicture}
	\begin{scope}[blend group = multiply]
		\filldraw[shaded, rounded corners] (0.3,2.9) -- (1.9,2.9) -- (2.7,2.1) -- (1.1,2.1) -- cycle; 
		\filldraw[shaded, rounded corners] (0.1,1.1) -- (0.9,0.3) -- (0.9,1.9) -- (0.1,2.7) -- cycle; 
		\filldraw[shaded, rounded corners] (2.1,1.1) -- (2.9,0.3) -- (2.9,1.9) -- (2.1,2.7) -- cycle; 
	\end{scope}
	
	\filldraw
	(1,0) circle [radius = 1pt]	
	(3,0) circle [radius = 1pt]	
	(1,2) circle [radius = 1pt]	
	(3,2) circle [radius = 1pt]	
	(0,1) circle [radius = 1pt]	
	(2,1) circle [radius = 1pt]	
	(0,3) circle [radius = 1pt]	
	(2,3) circle [radius = 1pt];
	
	\draw[->] (1.2,0) -- (2.8,0);
	\draw[->] (1.2,2) -- (2.8,2);
	\draw[->] (0.2,1) -- (1.8,1);
	\draw[->] (0.2,3) -- (1.8,3);
	
	\draw[->] (0,2.8) -- (0,1.2);
	\draw[->] (2,2.8) -- (2,1.2);
	\draw[->] (1,1.8) -- (1,0.2);
	\draw[->] (3,1.8) -- (3,0.2);
	
	\draw[->] (0.1,0.9) -- (0.9,0.1);
	\draw[->] (2.1,0.9) -- (2.9,0.1);
	\draw[->] (0.1,2.9) -- (0.9,2.1) node [midway, below] {$\sim$};
	\draw[->] (2.1,2.9) -- (2.9,2.1);
	
	\node at (1.5,-1) {$\Box^3_{3,1}$};
\end{tikzpicture}
\]

\begin{table}
\begin{center}
	\begin{tikzpicture}
		\matrix (magic) [matrix of nodes,nodes={minimum width=3cm,minimum height=1cm,draw,very thin},draw,inner sep=0]
		{
			 & marked $3$-cubes & marked $2$-cubes  & marked $1$-cubes \\
			 $\Box^2_{1,1}$ & - & $\id$ & $\bd_{2,0}$ \\
			 $\Box^2_{2,1}$ & - & $\id$ & $\bd_{1,0}$ \\
			$\Box^3_{1,1}$ & $\id$ & $\bd_{2,0},~\bd_{3,0},~\bd_{3,1}$ & $\bd_{3,0}\bd_{2,0}$\\
			$\Box^3_{2,1}$ & $\id$ & $\bd_{1,0},~\bd_{3,0}$ & $\bd_{3,0}\bd_{1,0}$ \\
			$\Box^3_{3,1}$ & $\id$ & $\bd_{1,0},~\bd_{1,1},~\bd_{2,0}$ &  $\bd_{2,0}\bd_{1,0}$ \\
		};
	\end{tikzpicture}
\caption{Marked cubes in $\Box^n_{i,1}$}
\label{comical-table}
\end{center}
\end{table}

\begin{Def} \leavevmode
\begin{itemize}
\item The \emph{$(i,\varepsilon)$-comical open box inclusion} is the inclusion $\sqcap^n_{i,\varepsilon} \hookrightarrow \Box^n_{i,\varepsilon}$. 
\item For $n \geq 2$, the \emph{elementary $(i,\varepsilon)$-comical marking extension} is the entire map $(\Box^n_{i,\varepsilon})' \to \tau_{n-2} \Box^n_{i,\varepsilon}$.
		\item We say a map in $\mcSet$ is \defterm{comical} if it is in
		\[
		\cell\bigl(\{\sqcap^n_{i,\varepsilon} \hookrightarrow \Box^n_{i,\varepsilon} | \ n \ge 1,~1 \le i \le n,~\varepsilon \in \{0,1\}\} \cup \{(\Box^n_{i,\varepsilon})' \to \tau_{n-2} \Box^n_{i,\varepsilon} | \ n \ge 2,~1 \le i \le n,~\varepsilon \in \{0,1\}\}\bigr).
		\]
\end{itemize}
\end{Def}   
  
\begin{rmk}
Note that in general, this definition of comical cubes and open boxes does not coincide with that given in \cite{campion-kapulkin-maehara}, as that definition only includes conditions \ref{comical-high} and \ref{comical-low} in the cases $j = i + 1$ and $j = i -1$, respectively. 
This modification was necessary to align the comical model structure with the functor $Q$ of \cite{doherty-kapulkin-lindsey-sattler} and make the key technical part of our proof (\cref{counit-anodyne}) work.

While it is not currently known whether the model structures of \cref{comical-model-structure} coincide with those developed in \cite[Thms.~3.3 \& 3.6]{campion-kapulkin-maehara}, the comical open box inclusions and marking extensions of \cite{campion-kapulkin-maehara} are pushouts of those defined above.
As a result, we can apply closure results for anodyne maps from \cite{campion-kapulkin-maehara} in our setting, although it will take additional work to establish such closure results in our setting.
\end{rmk}  
  
We next define our cubical analogues of the Rezk maps; this definition is somewhat more involved than its simplicial counterpart.  
  
\begin{Def}
For $x \in \{1,2\}$, let $L_x$ denote the marked cubical set whose underlying cubical set is $\Box^2$, with non-degenerate marked cubes consisting of the 2-cube $\id_{[1]^2}$, together with the 1-dimensional faces whose parity matches that of $x$. In other words, $L_1$ is obtained from $\Box^2$ by marking $\id_{[1]^2}, \bd_{1,0}$ and $\bd_{2,1}$, while $L_2$ is similarly obtained by marking $\id_{[1]^2}, \bd_{1,1}$ and $\bd_{2,0}$.

For $x,y \in \{1,2\}$, we define $L_{x,y}$ by the following pushout in $\cSet^+$:
	\[
	\begin{tikzcd}
		\Box^1
		\arrow [r, "\bd_{3-y,0}"]
		\arrow [d, swap, "\bd_{x,1}"]
		\pushout &
		 L_y
		\arrow [d] \\
		L_x
		\arrow [r] &
		L_{x,y}
	\end{tikzcd}
	\]
In other words, $L_{x,y}$ is obtained by identifying the unmarked positive face of $L_x$ with the unmarked negative face of $L_y$.	
	

Let $L_{x,y}' = \tau_0 L_{x,y}$, i.e. the simplicial set obtained by marking the three unmarked edges of $L_{x,y}$. The \emph{$(x,y)$-elementary Rezk map} is the entire map $L_{x,y} \to L_{x,y}'$.

In general, a \emph{Rezk map} is any map of the form
\[
(\bd \Box^m \hookrightarrow \Box^m) \hat{\otimes} (L_{x,y} \to L_{x,y}') \hat{\otimes} (\bd \Box^n \hookrightarrow \Box^n)
\]
for $x, y \in \{1,2\}$, $m, n \geq 0$.
\end{Def}

 To better understand the definition of the Rezk maps, we illustrate the marked cubical sets $L_{1}$ and $L_{2}$, depicted below on the left and right, respectively:
	\[
	\begin{tikzpicture}
		\foreach \x in {0,1,2,3}
		\filldraw
		(\x,0) circle [radius = 1pt]
		(\x,1) circle [radius = 1pt];
		
		\foreach \x in {1,2}
		\draw[->] (\x,0.8) -- (\x,0.2);
		
		\draw[->] (2.2,0) -- (2.8,0);
		\draw[->] (0.2,1) -- (0.8,1);
		\draw[->] (0,0.8) -- (0,0.2) node[midway,left]{$\sim$};
		\draw[->] (3,0.8) -- (3,0.2) node[midway,right]{$\sim$};
		\draw[->] (0.2,0) -- (0.8,0) node[midway,below]{$\sim$};
		\draw[->] (2.2,1) -- (2.8,1) node[midway,above]{$\sim$};
		
		\foreach \x in {0,2}
		\filldraw[shaded, rounded corners]
		(\x+0.1,0.1) -- (\x+0.9,0.1) -- (\x+0.9,0.9) -- (\x+0.1,0.9) -- cycle;
		
		\draw[->, double = shaded] (0.3,0.3) -- (0.7,0.7);
		\draw[->, double = shaded] (2.3,0.3) -- (2.7,0.7);
		
	\end{tikzpicture}
	\]

To depict the full set of objects $L_{x.y}$ and $L_{x,y}'$, we must modify our drawing conventions slightly; in each of the marked cubical sets depicted below, both non-degenerate 2-cubes are marked, and the thick arrow inside each 2-cube points from its odd faces ($\bd_{1,0}$ and $\bd_{2,1}$) towards its even faces ($\bd_{1,1}$ and $\bd_{2,0})$.

\begin{align*}
L_{1,1} &= 
\left\{\begin{tikzpicture}[baseline = 12]
\filldraw[shaded, rounded corners]
(0.1,0.1) -- (0.1,0.9) -- (0.9,0.9) -- (0.9,0.1) -- cycle
(1.1,0.1) -- (1.1,0.9) -- (1.9,0.9) -- (1.9,0.1) -- cycle;
\foreach \x in {0,1,2}
\filldraw (\x,0) circle [radius = 1pt] (\x,1) circle [radius = 1pt];
\draw[->] (0.2,1) -- (0.8,1);
\draw[->] (1,0.8) -- (1,0.2);
\draw[->] (1.2,0) -- (1.8,0);
\draw[->] (0,0.8) -- (0,0.2) node[midway,left]{$\sim$};
\draw[->] (0.2,0) -- (0.8,0) node[midway,below]{$\sim$};
\draw[->] (1.2,1) -- (1.8,1) node[midway,above]{$\sim$};
\draw[->] (2,0.8) -- (2,0.2) node[midway,right]{$\sim$};
\draw[->, double = shaded] (0.3,0.3) -- (0.7,0.7);
\draw[->, double = shaded] (1.7,0.7) -- (1.3,0.3);
\end{tikzpicture}\right\} &
L_{1,1}' &= 
\left\{\begin{tikzpicture}[baseline = 12]
	\filldraw[shaded, rounded corners]
	(0.1,0.1) -- (0.1,0.9) -- (0.9,0.9) -- (0.9,0.1) -- cycle
	(1.1,0.1) -- (1.1,0.9) -- (1.9,0.9) -- (1.9,0.1) -- cycle;
\foreach \x in {0,1,2}
\filldraw (\x,0) circle [radius = 1pt] (\x,1) circle [radius = 1pt];
\draw[->] (0.2,1) -- (0.8,1) node[midway,above]{$\sim$};
\draw[->] (1,0.8) -- (1,0.2) node[midway]{$\sim$};
\draw[->] (1.2,0) -- (1.8,0) node[midway,below]{$\sim$};
\draw[->] (0,0.8) -- (0,0.2) node[midway,left]{$\sim$};
\draw[->] (0.2,0) -- (0.8,0) node[midway,below]{$\sim$};
\draw[->] (1.2,1) -- (1.8,1) node[midway,above]{$\sim$};
\draw[->] (2,0.8) -- (2,0.2) node[midway,right]{$\sim$};
\draw[->, double = shaded] (0.3,0.3) -- (0.7,0.7);
\draw[->, double = shaded] (1.7,0.7) -- (1.3,0.3);
\end{tikzpicture}\right\}\\
L_{1,2} &= 
\left\{\begin{tikzpicture}[baseline = 12]
	\filldraw[shaded, rounded corners]
	(0.1,0.1) -- (0.1,0.9) -- (0.9,0.9) -- (0.9,0.1) -- cycle
	(1.1,0.1) -- (1.1,0.9) -- (1.9,0.9) -- (1.9,0.1) -- cycle;
\foreach \x in {0,1,2}
\filldraw (\x,0) circle [radius = 1pt] (\x,1) circle [radius = 1pt];
\draw[->] (0.2,1) -- (0.8,1);
\draw[->] (1,0.8) -- (1,0.2);
\draw[->] (1.2,0) -- (1.8,0);
\draw[->] (0,0.8) -- (0,0.2) node[midway,left]{$\sim$};
\draw[->] (0.2,0) -- (0.8,0) node[midway,below]{$\sim$};
\draw[->] (1.2,1) -- (1.8,1) node[midway,above]{$\sim$};
\draw[->] (2,0.8) -- (2,0.2) node[midway,right]{$\sim$};
\draw[->, double = shaded] (0.3,0.3) -- (0.7,0.7);
\draw[->, double = shaded] (1.3,0.3) -- (1.7,0.7);
\end{tikzpicture}\right\} &
L_{1,2}' &= 
\left\{\begin{tikzpicture}[baseline = 12]
	\filldraw[shaded, rounded corners]
	(0.1,0.1) -- (0.1,0.9) -- (0.9,0.9) -- (0.9,0.1) -- cycle
	(1.1,0.1) -- (1.1,0.9) -- (1.9,0.9) -- (1.9,0.1) -- cycle;
\foreach \x in {0,1,2}
\filldraw (\x,0) circle [radius = 1pt] (\x,1) circle [radius = 1pt];
\draw[->] (0.2,1) -- (0.8,1) node[midway,above]{$\sim$};
\draw[->] (1,0.8) -- (1,0.2) node[midway]{$\sim$};
\draw[->] (1.2,0) -- (1.8,0) node[midway,below]{$\sim$};
\draw[->] (0,0.8) -- (0,0.2) node[midway,left]{$\sim$};
\draw[->] (0.2,0) -- (0.8,0) node[midway,below]{$\sim$};
\draw[->] (1.2,1) -- (1.8,1) node[midway,above]{$\sim$};
\draw[->] (2,0.8) -- (2,0.2) node[midway,right]{$\sim$};
\draw[->, double = shaded] (0.3,0.3) -- (0.7,0.7);
\draw[->, double = shaded] (1.3,0.3) -- (1.7,0.7);
\end{tikzpicture}\right\}\\
L_{2,1} &= 
\left\{\begin{tikzpicture}[baseline = 12]
	\filldraw[shaded, rounded corners]
	(0.1,0.1) -- (0.1,0.9) -- (0.9,0.9) -- (0.9,0.1) -- cycle
	(1.1,0.1) -- (1.1,0.9) -- (1.9,0.9) -- (1.9,0.1) -- cycle;
\foreach \x in {0,1,2}
\filldraw (\x,0) circle [radius = 1pt] (\x,1) circle [radius = 1pt];
\draw[->] (0.2,1) -- (0.8,1);
\draw[->] (1,0.8) -- (1,0.2);
\draw[->] (1.2,0) -- (1.8,0);
\draw[->] (0,0.8) -- (0,0.2) node[midway,left]{$\sim$};
\draw[->] (0.2,0) -- (0.8,0) node[midway,below]{$\sim$};
\draw[->] (1.2,1) -- (1.8,1) node[midway,above]{$\sim$};
\draw[->] (2,0.8) -- (2,0.2) node[midway,right]{$\sim$};
\draw[->, double = shaded] (0.7,0.7) -- (0.3,0.3);
\draw[->, double = shaded] (1.7,0.7) -- (1.3,0.3);
\end{tikzpicture}\right\} &
L_{2,1}' &= 
\left\{\begin{tikzpicture}[baseline = 12]
	\filldraw[shaded, rounded corners]
	(0.1,0.1) -- (0.1,0.9) -- (0.9,0.9) -- (0.9,0.1) -- cycle
	(1.1,0.1) -- (1.1,0.9) -- (1.9,0.9) -- (1.9,0.1) -- cycle;
\foreach \x in {0,1,2}
\filldraw (\x,0) circle [radius = 1pt] (\x,1) circle [radius = 1pt];
\draw[->] (0.2,1) -- (0.8,1) node[midway,above]{$\sim$};
\draw[->] (1,0.8) -- (1,0.2) node[midway]{$\sim$};
\draw[->] (1.2,0) -- (1.8,0) node[midway,below]{$\sim$};
\draw[->] (0,0.8) -- (0,0.2) node[midway,left]{$\sim$};
\draw[->] (0.2,0) -- (0.8,0) node[midway,below]{$\sim$};
\draw[->] (1.2,1) -- (1.8,1) node[midway,above]{$\sim$};
\draw[->] (2,0.8) -- (2,0.2) node[midway,right]{$\sim$};
\draw[->, double = shaded] (0.7,0.7) -- (0.3,0.3);
\draw[->, double = shaded] (1.7,0.7) -- (1.3,0.3);
\end{tikzpicture}\right\}\\
L_{2,2} &= 
\left\{\begin{tikzpicture}[baseline = 12]
	\filldraw[shaded, rounded corners]
	(0.1,0.1) -- (0.1,0.9) -- (0.9,0.9) -- (0.9,0.1) -- cycle
	(1.1,0.1) -- (1.1,0.9) -- (1.9,0.9) -- (1.9,0.1) -- cycle;
\foreach \x in {0,1,2}
\filldraw (\x,0) circle [radius = 1pt] (\x,1) circle [radius = 1pt];
\draw[->] (0.2,1) -- (0.8,1);
\draw[->] (1,0.8) -- (1,0.2);
\draw[->] (1.2,0) -- (1.8,0);
\draw[->] (0,0.8) -- (0,0.2) node[midway,left]{$\sim$};
\draw[->] (0.2,0) -- (0.8,0) node[midway,below]{$\sim$};
\draw[->] (1.2,1) -- (1.8,1) node[midway,above]{$\sim$};
\draw[->] (2,0.8) -- (2,0.2) node[midway,right]{$\sim$};
\draw[->, double = shaded] (0.7,0.7) -- (0.3,0.3);
\draw[->, double = shaded] (1.3,0.3) -- (1.7,0.7);
\end{tikzpicture}\right\} &
L_{2,2}' &= 
\left\{\begin{tikzpicture}[baseline = 12]
	\filldraw[shaded, rounded corners]
	(0.1,0.1) -- (0.1,0.9) -- (0.9,0.9) -- (0.9,0.1) -- cycle
	(1.1,0.1) -- (1.1,0.9) -- (1.9,0.9) -- (1.9,0.1) -- cycle;
\foreach \x in {0,1,2}
\filldraw (\x,0) circle [radius = 1pt] (\x,1) circle [radius = 1pt];
\draw[->] (0.2,1) -- (0.8,1) node[midway,above]{$\sim$};
\draw[->] (1,0.8) -- (1,0.2) node[midway]{$\sim$};
\draw[->] (1.2,0) -- (1.8,0) node[midway,below]{$\sim$};
\draw[->] (0,0.8) -- (0,0.2) node[midway,left]{$\sim$};
\draw[->] (0.2,0) -- (0.8,0) node[midway,below]{$\sim$};
\draw[->] (1.2,1) -- (1.8,1) node[midway,above]{$\sim$};
\draw[->] (2,0.8) -- (2,0.2) node[midway,right]{$\sim$};
\draw[->, double = shaded] (0.7,0.7) -- (0.3,0.3);
\draw[->, double = shaded] (1.3,0.3) -- (1.7,0.7);
\end{tikzpicture}\right\}
\end{align*}

As in the simplicial case, the Rezk maps capture the principle that a cube representing an invertible higher morphism should be marked. Each of the objects $L_{x}$ represents the composition of a pair of edges to obtain an equivalence, i.e. the appropriate notion in this setting of a section-retraction pair. The objects $L_{x,y}$ are then obtained by gluing together a pair of these objects along an unmarked edge, identifying the ``retraction'' in $L_x$ with the ``section'' in $L_y$. Thus the elementary Rezk maps represent the marking of invertible 1-morphisms; the more general Rezk maps extend this intuition to higher dimension.
We can now define comical sets, which will be the fibrant objects of our model structure.

\begin{Def}
A \emph{comical set} is a marked cubical set having the right lifting property with respect to all comical open-box fillings and elementary comical marking extensions. 
\end{Def}

\begin{Def}
A comical set is:
\begin{itemize}
\item \emph{saturated} if it has the right lifting property with respect to all Rezk maps;
\item \emph{$n$-trivial}, for $n \geq 0$, if it has the right lifting property with respect to all markings $\Box^m \to \widetilde{\Box}^m$ for $m > n$ (in other words, if all of its cubes of dimension greater than $n$ are marked).
\end{itemize}
\end{Def}
  
We are now ready to construct the desired model structures on $\cSet^+$.

\begin{thm}\label{comical-model-structure}
The category $\mcSet$ carries the following model structures:

\begin{enumerate}
  \item \label{model-struct-comical} A \emph{model structure for comical sets} in which
    \begin{itemize}
      \item cofibrations are monomorphisms;
      \item fibrant objects are comical sets;
      \item fibrations with fibrant codomain are characterized by the right lifting property with respect to comical
      open box inclusions and comical marking extensions.
    \end{itemize}
    
    \item \label{model-struct-comical-sat} A \emph{model structure for saturated comical sets} in which
    \begin{itemize}
      \item cofibrations are monomorphisms;
      \item fibrant objects are saturated comical sets;
      \item fibrations with fibrant codomain are characterized by the right lifting property with respect to comical
      open box inclusions, comical marking extensions, and the Rezk maps.
    \end{itemize}
    
      \item \label{model-struct-comical-triv} A \emph{model structure for $n$-trivial comical sets} for $n \geq 0$ in which
    \begin{itemize}
      \item cofibrations are monomorphisms;
      \item fibrant objects are $n$-trivial comical sets;
      \item fibrations with fibrant codomain are characterized by the right lifting property with respect to comical
      open box inclusions, comical marking extensions, and markings $\Box^m \to \widetilde{\Box}^m$ for $m > n$.
    \end{itemize}
    
      \item \label{model-struct-comical-sat-triv} A \emph{model structure for $n$-trivial saturated comical sets} in which
    \begin{itemize}
      \item cofibrations are monomorphisms;
      \item fibrant objects are $n$-trivial saturated comical sets;
      \item fibrations with fibrant codomain are characterized by the right lifting property with respect to comical
      open box inclusions, comical marking extensions, Rezk maps, and markings $\Box^m \to \widetilde{\Box}^m$ for $m > n$.
    \end{itemize}
\end{enumerate}
All of these model structures are monoidal with respect to the lax Gray tensor product.
\end{thm}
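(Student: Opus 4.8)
The plan is to deduce all four model structures from \cref{CO-with-monoidal}, one application per case. We take $\scrK = \mcSet$, the monoidal structure $\otimes$ to be the lax Gray tensor product --- which is biclosed with unit the terminal object $\Box^0$ --- the set $\calI$ to be the cellular model of \cref{cset-cellular-model}, and the bipointed object $1 \amalg 1 \to I$ to be the pair of vertices $(\partial_{1,0},\partial_{1,1}) \colon \Box^0 \amalg \Box^0 \to \widetilde{\Box}^1$. For $\calJ$ we take the comical open box inclusions together with the elementary comical marking extensions in all four cases, together additionally with the Rezk maps in the two saturated cases and with the markers $\Box^m \to \widetilde{\Box}^m$ for $m > n$ in the two $n$-trivial cases. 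Each such $\calJ$ is a set of monomorphisms; the only nonobvious point is that the Rezk maps are monomorphisms, which follows from \cref{cell-tensor} together with hypothesis (4) below (the Leibniz lax Gray tensor product of monomorphisms is a monomorphism). Since the terminal object is always fibrant, conclusion (ii) of \cref{CO-with-monoidal} identifies the fibrant objects with those having the right lifting property against $\calJ$, i.e.\ precisely the (resp.\ saturated, $n$-trivial) comical sets, and the remaining assertions of the theorem --- including monoidality --- are then part of the conclusion of \cref{CO-with-monoidal}.

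I expect hypotheses (1)--(4) to be routine. Hypothesis (1) is \cref{cset-cellular-model}, and (2) is immediate since $\partial_{1,0}$ and $\partial_{1,1}$ are distinct vertices of $\widetilde{\Box}^1$. For (3) I would compute directly from the definition that in dimension $1$ one has $\Box^1_{1,\varepsilon} = \widetilde{\Box}^1$ and $\sqcap^1_{1,\varepsilon} = \{\partial_{1,1-\varepsilon}\}$, so that $\partial_{1,0}$ and $\partial_{1,1}$ are themselves comical open box inclusions (for $\varepsilon=1$ and $\varepsilon=0$ respectively) and hence lie in $\calJ \subseteq \cell(\calJ)$. For (4), the Leibniz lax Gray tensor product of two boundary inclusions is, by the standard combinatorics of the geometric product, the boundary inclusion $\partial\Box^{m+k} \hookrightarrow \Box^{m+k}$, while any Leibniz product in which one factor is a marker is entire --- or an isomorphism if both are --- by \cref{Gray-tensor-of-monos}; in every case it is a monomorphism, hence lies in $\cell(\calI)$.

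The content of the theorem is concentrated in hypotheses (5) and (6), that $\calI \hat{\otimes} \calJ \subseteq \cell(\calJ)$ and $\calJ \hat{\otimes} \calI \subseteq \cell(\calJ)$. I would first dispose of the cases not involving a comical open box inclusion or marking extension. Using associativity of the Leibniz product and the identities $(\partial\Box^a \hookrightarrow \Box^a) \hat{\otimes} (\partial\Box^b \hookrightarrow \Box^b) = (\partial\Box^{a+b} \hookrightarrow \Box^{a+b})$ and $(\Box^a \to \widetilde{\Box}^a) \hat{\otimes} (\partial\Box^b \hookrightarrow \Box^b) = (\Box^{a+b} \to \widetilde{\Box}^{a+b})$ (each checked by inspecting which cubes acquire a marking), one sees that the Leibniz product of a boundary inclusion with a Rezk map is again a Rezk map, that the Leibniz product of a boundary inclusion with a marker in $\calJ$ is again such a marker, and --- after these reductions and by \cref{Gray-tensor-of-monos} --- that the Leibniz product of a marker with a Rezk map, or with another marker, is an isomorphism. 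What remains is the Leibniz lax Gray tensor product of a boundary inclusion, or a marker, with a comical open box inclusion or an elementary comical marking extension. These are the cubical analogues of the pushout-product closure results for anodyne maps in \cite{campion-kapulkin-maehara}; but, because conditions \ref{comical-high} and \ref{comical-low} leave far more faces unmarked than the corresponding notions there, they must be re-established in the present setting. The plan is to observe that on underlying cubical sets the Leibniz product is a geometric-product Leibniz product (or, when a marker is involved, an entire map), to filter it by attaching its non-degenerate cubes in order of decreasing dimension, and to check, using the normal forms of \cref{cube-standard-form} and the explicit action of the lax Gray tensor product on faces, that each stage of the filtration is a pushout of a comical open box inclusion, a comical marking extension, or --- in the $n$-trivial cases --- a marker. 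The enlarged family of unmarked faces in $\Box^n_{i,\varepsilon}$ should be exactly what makes this marking bookkeeping close up.

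The hard part is precisely this last combinatorial analysis: organising the cells of the Leibniz product into a filtration compatible with our modified marking convention, and matching the markings introduced by the lax Gray tensor product against those produced by comical open box inclusions and marking extensions. Once this is carried out for the finitely many shapes of generators, all six hypotheses of \cref{CO-with-monoidal} are in place and the theorem follows.
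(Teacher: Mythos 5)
Your overall strategy is the same as the paper's: an application of \cref{CO-with-monoidal} with the cellular model of \cref{cset-cellular-model}, the lax Gray tensor product, the bipointed object $\Box^0 \amalg \Box^0 \to \widetilde{\Box}^1$, and the pseudo-generating set $\calJ$ varying with the case; your verification of hypotheses (1)--(4) and your disposal of the easy Leibniz products (boundary $\hat\otimes$ Rezk is again a Rezk map by associativity; anything involving two entire maps is an isomorphism by \cref{Gray-tensor-of-monos}; boundary $\hat\otimes$ marker is a \emph{pushout} of a high-dimensional marker --- not literally a marker as you state, but this does not affect membership in $\cell(\calJ)$) matches the paper's cases 5--8. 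The genuine gap is that the remaining cases, which are where the content of the theorem lies, are only announced as a plan: you never verify that the Leibniz products of a comical open box inclusion or an elementary comical marking extension with a boundary inclusion or a marker lie in $\cell(\calJ)$. The paper carries this out, and not by your proposed dimension-by-dimension cell attachment: it identifies each such map as a pushout of a \emph{single} generator. For instance, $(\sqcap^m_{i,\varepsilon}\hookrightarrow\Box^m_{i,\varepsilon})\hat\otimes(\partial\Box^n\hookrightarrow\Box^n)$ is shown to be a pushout of $\sqcap^{m+n}_{i,\varepsilon}\hookrightarrow\Box^{m+n}_{i,\varepsilon}$, the key observation being that if the normal form of a face of $\Box^m_{i,\varepsilon}\otimes\Box^n$ contains none of the strings excluded in the definition of $\Box^{m+n}_{i,\varepsilon}$, then neither does its terminal segment, so every cube marked in $\Box^{m+n}_{i,\varepsilon}$ is already marked in the tensor; and the products with markers, as well as the marking-extension cases, are pushouts of single comical marking extensions $(\Box^{m+n}_{i,\varepsilon})'\to\tau_{m+n-2}\Box^{m+n}_{i,\varepsilon}$, because the only $(m+n-1)$-face left unmarked in the domain is $\partial_{i,\varepsilon}$. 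Until this marking bookkeeping is actually executed (your filtration would in any case reduce to exactly this analysis), hypotheses (5) and (6) of \cref{CO-with-monoidal} are unproved, and with them the theorem.

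A second point you leave open is the two-sidedness of (5) and (6). You propose to check $\calI\hat\otimes\calJ$ and $\calJ\hat\otimes\calI$ directly, but the definition of $\Box^n_{i,\varepsilon}$ is not symmetric under exchanging tensor factors (tensoring on the left shifts the distinguished index $i$, and the excluded strings then straddle both factors), so the second computation is not a verbatim repetition of the first. The paper avoids this by a duality reduction: the involution $(-)^\co$ is strong anti-monoidal (\cref{op-monoidal}) and carries comical open box inclusions, marking extensions, Rezk maps, and markers to maps of the same type, so checking one side suffices. You would need either this reduction or a genuinely separate direct argument for the other side.
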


\begin{proof}
In all four cases, we will apply the Cisinski--Olschok theory (cf.~\cref{CO-with-monoidal}) with the set $I$ of \cref{cset-cellular-model} with the cylinder functor given by $\widetilde{\Box}^1 \otimes -$ and the natural transformations $\bd^0, \bd^1$ induced by face inclusions $\varphi \partial_{1,0}, \varphi \partial_{1, 1} \colon \Box_0 \to  \widetilde{\Box}^1$. 
In each case, the generating set $S$ of anodyne maps is chosen differently.

We check that for any maps $f \in S$ and $g \in I$, the pushout product $f \hat{\otimes} g$ is again in the saturation of $S$.
The case when $f \in I$ and $g \in S$ is analogous.
Thus we need to address the following eight cases:
\begin{center}
  \begin{tikzpicture}
  \matrix (magic) [matrix of nodes,nodes={minimum width=3cm,minimum height=1cm,draw,very thin},draw,inner sep=0]
  {
   $f \hat{\otimes} g$  & $\partial \Box^n \to \Box^n$ & $\Box^n \to \widetilde{\Box}^n$  \\
  $\sqcap^m_{i,\varepsilon} \hookrightarrow \Box^n_{i,\varepsilon}$ & 1 & 2 \\
  $(\Box^m_{i,\varepsilon})' \to \tau_{n-2} \Box^n_{i,\varepsilon}$ & 3 & 4  \\
    $- \hat{\otimes} (L_{x,y} \to L_{x,y}') \hat{\otimes} -$ & 5 & 6 \\
  $\Box^k \to \widetilde{\Box}^k, k > m$ & 7 & 8 \\
  };
  \end{tikzpicture}
\end{center}

For case 5, we may note that for any $m, n \geq 0$, the pushout product of boundary inclusions $(\bd \Box^m \hookrightarrow \Box^m) \hat{\otimes} (\bd \Box^n \hookrightarrow \Box^n)$ is isomorphic to the boundary inclusion $\bd \Box^{m+n} \hookrightarrow \Box^{m+n}$. By associativity of the pushout product, it thus follows that the pushout product of a Rezk map and a boundary inclusion is again a Rezk map.

Cases 4, 6, and 8 are pushout products of two entire maps, and hence isomorphisms by \cref{Gray-tensor-of-monos}.
Case 7 is clear since it is an entire map with no markings added in dimension $m$ or below, and hence a pushout of markers in dimension above $m$.

For case 1, we consider the pushout product $(\sqcap^m_{i,\varepsilon} \hookrightarrow \Box^m_{i,\varepsilon}) \hat{\otimes} (\partial \Box^n \to \Box^n)$, which is regular by \cref{Gray-tensor-of-monos} and an open box inclusion on the underlying cubical sets.
It therefore suffices to show that $\Box^m_{i,\varepsilon} \otimes \Box^n$ is a pushout of $\sqcap^{m+n}_{i,\varepsilon} \hookrightarrow \Box^{m+n}_{i,\varepsilon}$.
For that, we show that all of the marked faces of $\sqcap^{m+n}_{i,\varepsilon}$ are marked in $\Box^m_{i,\varepsilon} \otimes \Box^n$.
To see this, we consider the standard form of one of the faces of $\Box^m_{i,\varepsilon} \otimes \Box^n$, say given by $\partial_{a_1, \varepsilon_1} \ldots \partial_{a_p, \varepsilon_p} \partial_{b_1, \varepsilon'_1} \ldots \partial_{b_q, \varepsilon'_q}$, with $a_p \geq m+1$ and $b_q \leq m$.
By the characterization of cubes in the geometric product, we obtain that this corresponds to a pair
\[ (\partial_{b_1, \varepsilon'_1} \ldots \partial_{b_q, \varepsilon'_q}, \partial_{a_1 - m, \varepsilon_1} \ldots \partial_{a_p - m,  \varepsilon_p}) \in (\Box^m_{i,\varepsilon})_{m-p} \times (\Box^n)_{n-q}\text{.}   \]
If the standard form $\partial_{a_1, \varepsilon_1} \ldots \partial_{a_p, \varepsilon_p} \partial_{b_1, \varepsilon'_1} \ldots \partial_{b_q, \varepsilon'_q}$ does not include any of the strings excluded by the definition of a comical cube, then neither does its terminal segment $\partial_{b_1, \varepsilon'_1} \ldots \partial_{b_q, \varepsilon'_q}$.
Hence this terminal segment corresponds to a marked cube of $\Box_{i, \varepsilon}$, and thus the cube corresponding to the pair above is marked in $\Box^m_{i,\varepsilon} \otimes \Box^n$.

For case 2, we consider the pushout product $(\sqcap^m_{i,\varepsilon} \hookrightarrow \Box^m_{i,\varepsilon}) \hat{\otimes} (\Box^n \to \widetilde{\Box}^n)$, which is entire by \cref{Gray-tensor-of-monos}.
By definition, this is $ \Box^m_{i,\varepsilon} \otimes \Box^n \cup \sqcap^m_{i,\varepsilon} \otimes \widetilde{\Box}^n \to \Box^m_{i,\varepsilon} \otimes \widetilde{\Box}^n$.
A face of $\Box^m_{i,\varepsilon} \otimes \widetilde{\Box}^n$ is marked either if its standard form does not contain any strings excluded by the definition of a comical $m$-cube or it does not contain any face maps with indices greater than $m$.
Cubes satisfying the first condition are marked in the domain as well, but of the cubes satisfying the second condition, the face $\partial_{i, \varepsilon}$ is unmarked.
Thus this map is a pushout of the comical marking extension $(\Box^{m+n}_{i,\varepsilon})' \to \tau_{m+n-2} \Box^{m+n}_{i,\varepsilon}$.

Finally,  for case 3, we consider the pushout product $((\Box^m_{i,\varepsilon})' \to \tau_{m-2} \Box^m_{i,\varepsilon}) \hat{\otimes} (\partial \Box^n \to \Box^n)$, which is once again entire by \cref{Gray-tensor-of-monos}.
By definition, this is the map $(\Box^m_{i,\varepsilon})' \otimes \Box^n \cup \tau_{m-2} \Box^m_{i,\varepsilon} \otimes \partial \Box^n \to \tau_{m-2} \Box^m_{i,\varepsilon} \otimes \Box_n$.
A face is marked in the codomain if its standard form does not contain any of the strings excluded by the definition of an  $(i,\varepsilon)$-comical cube or it contains at most one face map $\partial_{j, \mu}$ for $j \leq m$.
The only one of these maps to be unmarked in the domain is $\partial_{i, \varepsilon}$, and hence the desired map is a pushout of a comical marking extension.
\end{proof}

By construction,  the weak equivalences of these model structures can be characterized by inducing a bijection on sets of homotopy classes of maps, denoted $[-, -]$, where the notion of homotopy is induced by the cylinder $\widetilde{\Box}^1 \otimes -$.

\begin{cor}
The weak equivalences of the model structure for ($n$-trivial, saturated) comical sets are maps $X \to Y$ inducing bijections $[Y,Z] \to [X,Z]$ for all ($n$-trivial, saturated) comical sets $Z$. \qed
\end{cor}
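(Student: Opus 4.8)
The plan is to recognize the statement as a formal consequence of the construction of the model structures in \cref{comical-model-structure}, together with the fact that every object of $\cSet^+$ is cofibrant. I would first record the two structural inputs: every object $X$ of $\cSet^+$ is cofibrant, since $\varnothing \hookrightarrow X$ is a monomorphism; and by \cref{comical-model-structure} the fibrant objects of the model structure under consideration are exactly the ($n$-trivial, saturated) comical sets.

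Next I would verify that the functorial cylinder used in the proof of \cref{comical-model-structure}, namely $X \amalg X \to \widetilde{\Box}^1 \otimes X \to X$, is a cylinder object for each $X$ in the model-categorical sense. The left-hand map is $(\Box^0 \amalg \Box^0 \hookrightarrow \widetilde{\Box}^1) \hat{\otimes} (\varnothing \to X)$, hence a cofibration. For the right-hand map, observe that $\Box^0 \xrightarrow{\varphi\partial_{1,\varepsilon}} \widetilde{\Box}^1$ is exactly the comical open box inclusion $\sqcap^1_{1,1-\varepsilon} \hookrightarrow \Box^1_{1,1-\varepsilon}$, hence a trivial cofibration in each of the four model structures; by two-out-of-three $\widetilde{\Box}^1 \to \Box^0$ is then a weak equivalence, and since these model structures are monoidal and $X$ is cofibrant, applying the left Quillen functor $(-)\otimes X$ and Ken Brown's lemma shows that $\widetilde{\Box}^1 \otimes X \to \Box^0 \otimes X \cong X$ is a weak equivalence.

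With these facts in place the corollary reduces to standard model-category theory. For $X$ cofibrant and $Z$ fibrant, left homotopy with respect to the cylinder $\widetilde{\Box}^1 \otimes X$ is an equivalence relation on $\cSet^+(X,Z)$, is independent of the chosen cylinder object, and the quotient $[X,Z]$ is naturally isomorphic to $\mathrm{Ho}(\cSet^+)(X,Z)$. A map $f \colon X \to Y$ between cofibrant objects is a weak equivalence if and only if it is sent to an isomorphism by the localization functor $\cSet^+ \to \mathrm{Ho}(\cSet^+)$; since every object of $\mathrm{Ho}(\cSet^+)$ is isomorphic to $Z$ for some fibrant $Z$, the Yoneda lemma identifies this condition with the requirement that $f^* \colon [Y,Z] \to [X,Z]$ be a bijection for every ($n$-trivial, saturated) comical set $Z$. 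The only step that is not purely formal is the verification that $\widetilde{\Box}^1 \otimes -$ is a genuine cylinder-object functor, and as explained this comes down to the observation that $\Box^0 \hookrightarrow \widetilde{\Box}^1$ is one of the generating anodyne maps, so I do not anticipate any real obstacle.
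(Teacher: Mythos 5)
Your proof is correct and fills in exactly what the paper's ``by construction'' (and the bare \verb|\qed|) leave implicit: you verify that $\widetilde{\Box}^1 \otimes -$ is a genuine model-categorical cylinder functor — in particular identifying $\Box^0 \hookrightarrow \widetilde{\Box}^1$ with a $1$-dimensional comical open box inclusion to get the weak-equivalence leg — and then invoke the standard characterization of weak equivalences between cofibrant objects via homotopy classes into fibrant objects. This is the same argument the paper has in mind, just spelled out.
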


We next consider a few basic lemmas which describe the interactions of the distinguished maps in the comical model structures with the canonical endofunctors on $\cSet^+$, and which will be useful in analyzing these model structures.

\begin{lem}\label{tau-comical}
For each $k \geq 0$, the functor $\tau_k \colon \mcSet \to \mcSet$ preserves comical maps.
\end{lem}

\begin{proof}
Each $\tau_k$ preserves colimits as a left adjoint, so it suffices to consider the cases of comical open box inclusions and comical marking extensions.

First, consider the image under $\tau_k$ of a comical open box inclusion, $\tau_k \sqcap^n_{i,\varepsilon} \hookrightarrow \tau_k \Box^n_{i,\varepsilon}$. This map adds two non-degenerate cubes to $\tau_k \sqcap^n_{i,\varepsilon}$: the interior $n$-cube $\id_{[1]^n}$, which is marked, and the composite face, the $(n-1)$-cube $\bd_{i,\varepsilon}$. If $k \geq n - 1$, then this map is isomorphic to $\sqcap^n_{i,\varepsilon} \hookrightarrow \Box^n_{i,\varepsilon}$, as neither the domain nor the codomain of an $n$-dimensional comical open box inclusion contain any unmarked non-degenerate cubes in dimension $n$ or higher. On the other hand, if $k < n - 1$, then the composite face is marked in $\tau_k \Box^n_{i,\varepsilon}$, despite being unmarked in $\Box^n_{i,\varepsilon}$. In this case, all other $(n-1)$-cubes of $\tau_k \Box^n_{i,\varepsilon}$ are marked as well. Thus we may write this map as a composite of a pushout of $\sqcap^n_{i,\varepsilon} \hookrightarrow \Box^n_{i,\varepsilon}$, which adds $\id_{[1]^n}$ (marked) and $\bd_{i,\varepsilon}$ (unmarked), with a pushout of the comical marking extension $(\Box^n_{i,\varepsilon})' \to \tau_{n-2} \Box^n_{i,\varepsilon}$, which marks the composite face.

Next we consider the image under $\tau_k$ of an elementary comical marking extension. Once again, if $k \geq n - 1$ then $\tau_k (\Box^n_{i,\varepsilon})' \to \tau_k \tau_{n-2} \Box^n_{i,\varepsilon}$ is isomorphic to the original map $(\Box^n_{i,\varepsilon})' \to \tau_{n-2} \Box^n_{i,\varepsilon}$. On the other hand, if $k < n - 1$ then $\tau_k (\Box^n_{i,\varepsilon})' \to \tau_k \tau_{n-2} \Box^n_{i,\varepsilon}$ is a pushout of $(\Box^n_{i,\varepsilon})' \to \tau_{n-2} \Box^n_{i,\varepsilon}$, again similarly to the previous case.
\end{proof}

\begin{lem}\label{involutions-comical}
The involutions $(-)^\co, (-)^\coop, (-)^\op \colon \cSet^+ \to \cSet^+$ preserve comical maps.
\end{lem}

\begin{proof}
It suffices to show that the image under any of these involutions of a comical open box inclusion or comical marking extension is comical; this follows immediately from the definitions of these classes of maps, from which we can see that each involution sends each such map to another of the same class.
\end{proof}

\begin{lem}\label{involutions-Rezk}
For any $x,y \in \{1,2\}$, we have the following isomorphisms of maps in $\mcSet$:

\begin{itemize}
\item $(L_{x,y} \to L_{x,y}')^\co \cong (L_{3-x,3-y} \to L_{3-x,3-y}')$;
\item $(L_{x,y} \to L_{x,y}')^\coop \cong (L_{3-y,3-x} \to L_{3-y,3-x}')$;
\item $(L_{x,y} \to L_{x,y}')^\op \cong (L_{y,x} \to L_{y,x}')$.
\end{itemize}
\end{lem}

\begin{proof}
It is immediate from the definitions that for $x \in \{1,2\}$, both $(-)^\co$ and $(-)^\coop$ send $L_x$ to $L_{3-x}$. The stated result then follows from considering the effects of these involutions on the pushout diagram defining $L_{x,y}$.
\end{proof}

\begin{prop}\label{cubical-op-equiv}
For each of the model structures of \cref{comical-model-structure}, the self-adjunctions arising from the involutions $(-)^\co , (-)^\coop, (-)^\op \colon \cSet^+ \to \cSet^+$ are Quillen self-equivalences.
\end{prop}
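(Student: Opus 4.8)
The plan is to mirror the proof of \cref{simplicial-op-equiv}. By \cref{involution-equiv}, since each of $(-)^\co$, $(-)^\coop$, $(-)^\op$ is its own adjoint, it suffices to show that each of them is a left Quillen endofunctor of each of the model structures of \cref{comical-model-structure}. Each involution is an isomorphism of categories --- it is induced by an involution of $\Box^+$ --- and hence preserves monomorphisms, i.e.\ cofibrations. Since it is moreover cocontinuous, it will preserve trivial cofibrations as soon as it sends every member of the pseudo-generating set of anodyne maps used in the proof of \cref{comical-model-structure} to a trivial cofibration; I would in fact establish the stronger statement that each involution permutes, up to isomorphism, the classes of comical open box inclusions, elementary comical marking extensions, Rezk maps, and markers $\Box^m \to \widetilde{\Box}^m$ among themselves.

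The markers are immediate: each involution fixes $\Box^m$ and $\widetilde{\Box}^m$ up to isomorphism and preserves the top-dimensional cube, so it fixes $\Box^m \to \widetilde{\Box}^m$. For the comical data, I would track the action of the involutions on the normal forms of face maps (\cref{cube-standard-form}). The involution $(-)^\co$ relabels the faces occurring in a normal form by $\partial_{j,\delta} \mapsto \partial_{n+1-j,\delta}$ (then re-sorting into normal form), reversing the order of the indices; this interchanges conditions \ref{comical-high} and \ref{comical-low} in the definition of $\Box^n_{i,\varepsilon}$ and hence produces an isomorphism $(\Box^n_{i,\varepsilon})^\co \cong \Box^n_{n+1-i,\varepsilon}$ carrying $\sqcap^n_{i,\varepsilon}$ to $\sqcap^n_{n+1-i,\varepsilon}$ and $(\Box^n_{i,\varepsilon})'$ to $(\Box^n_{n+1-i,\varepsilon})'$. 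Likewise $(-)^\coop$ replaces $\partial_{j,\delta}$ by $\partial_{j,1-\delta}$ and gives $(\Box^n_{i,\varepsilon})^\coop \cong \Box^n_{i,1-\varepsilon}$, and $(-)^\op$ gives $(\Box^n_{i,\varepsilon})^\op \cong \Box^n_{n+1-i,1-\varepsilon}$, in each case compatibly with the open-box and marking-extension structure. Thus the comical open box inclusions and the elementary comical marking extensions are each permuted among themselves.

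For the Rezk maps I would invoke the marked version of \cref{op-monoidal}: $(-)^\coop$ is strong monoidal and $(-)^\co$, $(-)^\op$ are strong anti-monoidal for the lax Gray tensor product, and, being isomorphisms of categories, all three commute with the pushouts defining the pushout tensor product. Consequently a Rezk map $(\bd \Box^m \incl \Box^m) \hat{\otimes} (L_{x,y} \to L'_{x,y}) \hat{\otimes} (\bd \Box^n \incl \Box^n)$ is sent to an iterated pushout tensor product of the same shape, with the two boundary inclusions unchanged (their order reversed in the anti-monoidal cases), so it remains only to identify the image of $L_{x,y} \to L'_{x,y}$. By chasing the defining pushout square of $\overline{L}_{x,y}$ and its prescribed marked cells through the involutions --- which act on the faces of $\Box^2$ by $\partial_{j,\delta} \mapsto \partial_{3-j,\delta}$, $\partial_{j,1-\delta}$, and $\partial_{3-j,1-\delta}$ respectively --- one obtains $(L_{x,y})^\coop \cong L_{y,x}$, $(L_{x,y})^\co \cong L_{3-x,3-y}$, and $(L_{x,y})^\op \cong L_{3-y,3-x}$, compatibly with the passage to $L'$. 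Hence each involution carries every Rezk map to a Rezk map, and is therefore left Quillen, so the desired Quillen self-equivalences follow from \cref{involution-equiv}.

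I expect the only genuine work to be bookkeeping rather than conceptual: matching up the three defining conditions of the comical cube after the index relabelling, and --- for the Rezk maps --- keeping straight which of the two glued squares of $L_{x,y}$ and which of its marked $1$-cells goes where, bearing in mind that the anti-monoidal involutions additionally interchange the two $\bd\Box$-factors and the two squares. One small point to confirm along the way is that \cref{op-monoidal}, stated for $\cSet$, upgrades to the marked lax Gray tensor product; this holds because the marking on a lax Gray tensor product is defined symmetrically in its two factors.
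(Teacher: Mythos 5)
Your proposal is correct and follows essentially the same route as the paper's proof: reduce via \cref{involution-equiv} to showing each involution is left Quillen, check that it permutes the comical open box inclusions, marking extensions and markers among themselves (the explicit identifications $(\Box^n_{i,\varepsilon})^\co \cong \Box^n_{n+1-i,\varepsilon}$, etc., which the paper leaves as "immediate from the definitions," are right), and handle the Rezk maps by identifying the image of the elementary Rezk maps and then invoking the (anti-)monoidality of \cref{op-monoidal}. Your extra bookkeeping, including the marked upgrade of \cref{op-monoidal} and the factor-reversal in the anti-monoidal cases, is consistent with what the paper tacitly assumes.
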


\begin{proof}
For each involution $F$, it suffices to show that $F$ is left Quillen, so that the adjunction $F \dashv F$ satisfies the conditions of \cref{involution-equiv}. For this, it suffices to show that they preserve the classes of comical open box inclusions, comical marking extensions, Rezk maps, and markers. For comical open box inclusions and comical marking extensions, this follows from \cref{involutions-comical}. For the elementary Rezk maps, this follows from \cref{involutions-Rezk}; the result for general Rezk maps then follows from \cref{op-monoidal}. For markers, it is immediate from the definition. 
\end{proof}

We conclude this section with some further discussion of the Rezk maps. One might reasonably wonder whether it is necessary to use all four objects $L_{x,y}$ in defining the Rezk maps, or whether a simpler presentation would suffice. In fact, the results which we will now prove demonstrate that we could indeed define the saturated comical model structures using only the Rezk maps of the form $(\bd \Box^m \hookrightarrow \Box^m) \hat{\otimes} (L_{x,y} \to L_{x,y}') \hat{\otimes} (\bd \Box^n \hookrightarrow \Box^n)$ for some fixed choice of $x$ and $y$. 

Our reason for defining the model structures as we did in the statement of \cref{comical-model-structure} is that any such choice would be arbitrary and inelegant; in particular, only the Rezk maps defined in terms of the chosen $L_{x,y}$ would be anodyne according to the standard definition of that term in Cisinski model structures, while the others would merely be trivial cofibrations. Nevertheless, it is useful to know that these simpler presentations exist, as they may be of use in identifying fibrations between fibrant objects, or in proving that functors out of these model categories are left Quillen.

We begin by introducing an equivalence relation on the maps in $\mcSet$ which will be of use in establishing our simpler presentation of the comical model structures.

\begin{Def}
Let $f \colon X \to Y, f' \colon X' \to Y'$ be maps in $\mcSet$. An \emph{elementary comical equivalence} from $f$ to $f'$ is a commuting diagram of the form
\[
\xymatrix{
X \ar[d]_{f} \ar[r] & X' \ar[d]^{f'} \\
Y \ar[r] & Y' \\
}
\]
in which the vertical maps $X \to X', Y \to Y'$ are comical. A pair of maps are \emph{comically equivalent} if they are connected by a zigzag of elementary comical equivalences.
\end{Def}

\begin{lem}\label{comical-2-of-3}
In any model structure on $\mcSet$ in which all comical maps are weak equivalences, if $f$ and $f'$ are comically equivalent maps, then $f$ is a weak equivalence if and only if $f'$ is a weak equivalence.
\end{lem}

\begin{proof}
This is immediate from the two-out-of-three property.
\end{proof}

\begin{lem}\label{comical-equiv-involutions}
If $f$ and $f'$ are comically equivalent maps in $\cSet^+$, then so are each of the following pairs of maps:
\begin{itemize}
\item $f^\co$ and $(f')^\co$;
\item $f^\coop$ and $(f')^\coop$;
\item $f^\op$ and $(f')^\op$.
\end{itemize}
\end{lem}

\begin{proof}
By \cref{involutions-comical}, it follows that the involutions $(-)^\co, (-)^\coop, (-)^\op$ preserve elementary comical equivalences, and hence preserve the relation of comical equivalence.
\end{proof}

\begin{lem}\label{Rezk-comic-equiv}
All elementary Rezk maps are comically equivalent.
\end{lem}

\begin{proof}
We first show that the maps $L_{1,1} \to L_{1,1}'$ and $L_{1,2} \to L_{1,2}'$ are comically equivalent. Let $M$ denote the marked cubical set depicted below, in which the interior 3-cube and all 2-cubes are marked. Note that the top and bottom faces of the cube are non-degenerate; indeed, their $(1,0)$- and $(2,0)$-faces are distinct, so that their boundaries do not agree with those of a connection, despite appearances. 
(Note that we could have alternatively used a variant of $M$ where these faces are connections on $p$ and $g$ respectively, giving a perhaps simpler construction.
We chose not to do so to avoid the use of connections in the construction and characterization of the model structure.)
The right and front faces, however, are degeneracies.
\[
\begin{tikzpicture}
	\begin{scope}[blend group = multiply]
		\filldraw[shaded, rounded corners] (0.3,2.9) -- (1.9,2.9) -- (2.7,2.1) -- (1.1,2.1) -- cycle; 
		\filldraw[shaded, rounded corners] (0.3,0.9) -- (1.9,0.9) -- (2.7,0.1) -- (1.1,0.1) -- cycle; 
		\filldraw[shaded, rounded corners] (0.2,1.2) -- (1.8,1.2) -- (1.8,2.8) -- (0.2,2.8) -- cycle; 
		\filldraw[shaded, rounded corners] (1.2,0.2) -- (2.8,0.2) -- (2.8,1.8) -- (1.2,1.8) -- cycle; 
		\filldraw[shaded, rounded corners] (0.1,1.1) -- (0.9,0.3) -- (0.9,1.9) -- (0.1,2.7) -- cycle; 
		\filldraw[shaded, rounded corners] (2.1,1.1) -- (2.9,0.3) -- (2.9,1.9) -- (2.1,2.7) -- cycle; 
	\end{scope}
	
	\filldraw
	(1,0) circle [radius = 1pt]	
	(3,0) circle [radius = 1pt]	
	(1,2) circle [radius = 1pt]	
	(3,2) circle [radius = 1pt]	
	(0,1) circle [radius = 1pt]	
	(2,1) circle [radius = 1pt]	
	(0,3) circle [radius = 1pt]	
	(2,3) circle [radius = 1pt];
	
	\draw[double] (1.2,0) -- (2.8,0);
	\draw[double] (1.2,2) -- (2.8,2);
	\draw[->] (0.2,1) -- (1.8,1) node [near end, below, scale = 0.7] {$g'$};
	\draw[->] (0.2,3) -- (1.8,3) node [near end, above] {$\sim$} node [near end, below, scale = 0.7] {$p'$};
	
	\draw[->] (0,2.8) -- (0,1.2) node [midway, left, scale = 0.7] {$f$};
	\draw[->] (2,2.8) -- (2,1.2) node [near start, right] {$\sim$} node [near start, left, scale = 0.7] {$q$};
	\draw[->] (1,1.8) -- (1,0.2) node [near start, right] {$\sim$} node [near start, left, scale = 0.7] {$q$};
	\draw[->] (3,1.8) -- (3,0.2) node [midway, right] {$\sim$} node [midway, left, scale = 0.7] {$q$};
	
	\draw[->] (0.1,0.9) -- (0.9,0.1) node [midway, below, scale = 0.7] {$g$};
	\draw[double] (2.1,0.9) -- (2.9,0.1);
	\draw[->] (0.1,2.9) -- (0.9,2.1) node [midway, right] {$\sim$} node [midway, below, scale = 0.7] {$f$};
	\draw[double] (2.1,2.9) -- (2.9,2.1);
\end{tikzpicture}
\]

We note that $L_{1}$ and $L_{2}$ embed into $M$ as the left and back faces of the cube, respectively. In fact, each of these inclusions is a comical map. To see that the inclusion of $L_1$ as the left face is comical, observe that we may construct $M$ from this face by the following sequence of steps.
\begin{itemize}
\item We begin with the left face; the front and right faces are defined to be degeneracies on the marked edge $q$.
\item We can construct the top face via a $(2,0)$-comical open box filling, obtaining $p'$ as the composite edge.
As the interior and the other three edges ($p$ and two degenerate edges) are marked, we may mark $p'$ via a comical marking extension. 
\item Similarly to the previous step, we may construct the bottom face, together with the edge $g'$, via $(2,0)$-comical open box filling.
\item We may then construct the interior of the 3-cube, together with the back face, via $(3,0)$-comical open box filling, as all 2-cubes and the edge $\bd_{2,1}\bd_{1,1}$ are marked. We may then mark the back face via a comical marking extension, again using the fact that all other 2-cubes are marked.
\end{itemize}

Thus we see that the inclusion $L_1 \hookrightarrow M$ is comical, as a composite of pushouts of comical open box inclusions and comical marking extensions. Now consider the following composite of pushout squares:
\[
\xymatrix{
\Box^1 \ar[r]^{\bd_{2,0}} \ar[d]_{\bd_{1,1}} & L_1 \ar[d] \ar[r]^{\bd_{1,0}} & M \ar[d] \\
L_1 \ar[r] & L_{1,1} \pushoutcorner \ar[r] & L_{1,1} \cup_{L_1} M \pushoutcorner \\
}
\]

The left square is the pushout square defining $L_{1,1}$. Examining the right square, we see that the inclusion of $L_{1,1}$ into the pushout object $L_{1,1} \cup_{L_1} M$ is comical, as a pushout of the comical map $L_1 \hookrightarrow M$. Furthermore, by composition of pushout squares, we may note that $L_{1,1} \cup_{L_1} M$ is the pushout object of $\bd_{1,1} \colon \Box^1 \to L_1$ and $\bd_{1,0} \bd_{2,0} = \bd_{3,0} \bd_{1,0} \colon \Box^1 \to M$. In other words, $L_{1,1} \cup_{L_1} M$ is obtained by identifying the unmarked edge $\bd_{1,1}$ of $L_1$ with the edge $f$ of $M$. 

A similar proof shows that $L_2 \hookrightarrow M$ is comical as well. We may then consider the following composite of pushout diagrams:

\[
\xymatrix{
\Box^1 \ar[r]^{\bd_{1,0}} \ar[d]_{\bd_{1,1}} & L_2 \ar[d] \ar[r]^{\bd_{3,0}} & M \ar[d] \\
L_1 \ar[r] & L_{1,2} \pushoutcorner \ar[r] & L_{1,2} \cup_{L_2} M \pushoutcorner \\
}
\]

As in the previous case, we see that $L_{1,2} \hookrightarrow L_{1,2} \cup_{L_2} M$ is comical, as a pushout of the comical map $\bd_{3,0} \colon L_2 \hookrightarrow M$. Furthermore, the composite of these two pushout squares agrees with the composite of the two squares above; thus the two pushout objects coincide. We denote this common pushout object by $LM$.

Now let $LM' = \tau_0 LM$; by \cref{tau-comical}, the inclusions of $L_{1,1}'$ and $L_{1,2}'$ into $LM'$ are comical. Thus we have the following zigzag of commuting squares in which the horizontal maps are comical, witnessing the comical equivalence of the maps $L_{1,1} \to L_{1,1}'$ and $L_{1,2} \to L_{1,2}'$.
\[
\xymatrix{
L_{1,1} \ar[r] \ar[d] & LM \ar[d] & L_{1,2} \ar[l] \ar[d] \\
L_{1,1}' \ar[r] & LM' & L_{1,2}' \ar[l] \\
}
\]

Applying \cref{involutions-Rezk,comical-equiv-involutions}, we obtain further comical equivalences between the following pairs of elementary Rezk maps:

\begin{itemize}
\item $L_{2,2} \to L_{2,2}'$ and $L_{2,1} \to L_{2,1}'$;
\item $L_{2,2} \to L_{2,2}'$ and $L_{1,2} \to L_{1,2}'$'
\item $L_{1,1} \to L_{1,1}'$ and $L_{2,1} \to L_{2,1}'$. 
\end{itemize}

By the transitivity of comical equivalence, it follows that all four elementary Rezk maps are comically equivalent.
\end{proof}

We are now able to prove our claim regarding the sufficiency of any one choice of elementary Rezk map in defining the pseudo-generating trivial cofibrations of the saturated comical model structures.

\begin{thm}\label{one-Rezk-map}
In the ($n$-trivial) saturated comical model structure on $\cSet^+$, for any $x, y \in \{0,1\}$, a map with fibrant codomain is a fibration if and only if it has the right lifting property with respect to the following classes of maps:
\begin{itemize}
\item comical open box inclusions;
\item comical marking extensions;
\item Rezk maps of the form $(\bd \Box^m \hookrightarrow \Box^m) \hat{\otimes} (L_{x,y} \to L_{x,y}') \hat{\otimes} (\bd \Box^n \hookrightarrow \Box^n)$;
\item in the $n$-trivial case, markers of dimension greater than $n$.
\end{itemize}
\end{thm}

\begin{proof}
For any choice of $x$ and $y$, we may apply \cref{CO-with-monoidal} to construct a model structure $\mathcal{M}$ on $\mcSet$, monoidal with respect to the Gray tensor product, in which the cofibrations are the monomorphisms and the fibrations with fibrant codomain are defined by the given lifting property. The proof that the hypotheses of \cref{CO-with-monoidal} are satisfied is effectively identical to the proof of \cref{comical-model-structure}. 

We will show that $\mathcal{M}$ coincides with the ($n$-trivial) saturated comical model structure. As the two model structures have the same cofibrations, it suffices to show that they have the same fibrations with fibrant codomain. 

It is clear that any fibration with fibrant codomain in the ($n$-trivial) saturated comical model structure is a fibration with fibrant codomain in $\mathcal{M}$. To prove the converse, observe that by \cref{comical-2-of-3,Rezk-comic-equiv}, all elementary Rezk maps are trivial cofibrations in $\mathcal{M}$. By monoidality, it then follows that all Rezk maps are trivial cofibrations in $\mathcal{M}$. Thus the fibrations with fibrant codomain in $\mathcal{M}$ have the right lifting property with respect to all Rezk maps; it follows that these maps are fibrations with fibrant codomain in the ($n$-trivial) saturated comical model structure as well.
\end{proof}


\section{Triangulation is a Quillen functor}\label{Quillen-functor}


The main result of this section is \cref{T-Quillen} asserting that the adjunction $T : \cSet^+ \rightleftarrows \sSet^+ : U$ is Quillen.
The proofs in this section will not make use of connections, and thus our results are valid for all of the marked cubical set categories under consideration.

Before proceeding with the proof, we explain the strategy.
It is clear that $T$ preserves monomorphisms, and hence the key difficulty lies in showing that it takes pseudo-generating trivial cofibrations to trivial cofibrations between marked simplicial sets.
Of those, the majority of the work goes into the cases of comical open box inclusions and comical marking extensions, as the preservation of the other ones by $T$ is simple enough that it can be treated directly in the proof of \cref{T-Quillen} at the end of the section.

To show that $T$ takes comical open box inclusions and comical marking extensions to acyclic cofibrations, we will analyze the simplices of $T \Box^n$, defining a partial ordering relation on them and showing that, e.g., $T\Box^n_{i, \varepsilon}$ can be built out of $T(\sqcap^n_{i,\varepsilon})$ via trivial cofibrations, by induction on this partial order.

We will use $(\sqcap^n_{i,\varepsilon})^\flat$ to denote the minimal marking of the $(i,\varepsilon)$-open box. 
Note that although $(\sqcap^n_{i,\varepsilon})^\flat$ is minimally marked, its triangulation $T(\sqcap^n_{i,\varepsilon})^\flat$ is not in general.
Indeed, by the definition of triangulation $T \colon \cSet^+ \to \sSet^+$ (\cref{marked-triangulation}), for $n \geq 3$, there are non-degenerate marked simplices in $T \Box^{n-1}$, although there are no non-degenerate marked cubes in $\Box^{n-1}$, and $(\sqcap^n_{i,\varepsilon})^\flat$ is a colimit of those.
In particular, the triangulation of each of the $2n-1$ faces of $(\sqcap^n_{i,\varepsilon})^\flat$ contains a unique unmarked $(n-1)$-simplex (and hence $(n-1)!-1$ marked).
This will be important in the proof of \cref{open-box-xi-anodyne}.

Recall from \cref{sec:background} that $r$-simplices of $T \Box^n$ can be identified with strings $\{1, 2, \ldots, n\} \to \{1, 2, \ldots, r, \pm \infty\}$ and we write $\phi_i$ for the $i^{\text{th}}$ entry of the string associated to a simplex $\phi \colon \Delta^r \to T \Box^n$.

\begin{lem}\label{face-preserve-order}
Let $\phi \colon \Delta^m \to T \Box^n$, and suppose that for some $i, j \in \{1,\ldots,n\}$ we have $\phi_i \leq \phi_j$. Then for any $l \leq m$ and any (composite) face map $\delta \colon \Delta^l \to \Delta^m$ we have $(\phi \delta)_i \leq (\phi \delta)_j$.
\end{lem}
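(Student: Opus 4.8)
The plan is to reduce to a single elementary face map and then observe that every such map acts on the string associated to a simplex \emph{entrywise}, by an order-preserving function.

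First, any composite face map $\delta \colon \Delta^l \to \Delta^m$ is a composite of elementary face maps $\bd_k$, and the order relation on $\{1,\dots,q,\pm\infty\}$ is transitive; so a straightforward induction on the length of such a composite reduces the claim to the case $\delta = \bd_k \colon \Delta^{m-1} \to \Delta^m$ for a single $k$ with $0 \le k \le m$. (Writing $\psi = \phi\bd_{k_1}$ for the first elementary face in the composite, the base case gives $\psi_i \le \psi_j$, and one then repeats the argument with $\psi$ in place of $\phi$.)

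For the base case, recall the explicit description of the face maps on strings given in \cref{sec:background}. In each of the three cases, $\phi \mapsto \phi\bd_k$ replaces every entry $\phi_a$ by $g_k(\phi_a)$ for a fixed function $g_k \colon \{1,\dots,m,\pm\infty\} \to \{1,\dots,m-1,\pm\infty\}$: for $k = 0$, $g_0$ sends $1 \mapsto -\infty$, sends $t \mapsto t-1$ for $2 \le t \le m$, and fixes $\pm\infty$; for $0 < k < m$, $g_k$ fixes $\pm\infty$ and every $t \le k$, and sends $t \mapsto t-1$ for $t > k$; and for $k = m$, $g_m$ sends $m \mapsto +\infty$ and fixes everything else. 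Inspecting these formulas shows that each $g_k$ is order-preserving for the order $-\infty < 1 < \dots < m < +\infty$. Hence $\phi_i \le \phi_j$ gives $(\phi\bd_k)_i = g_k(\phi_i) \le g_k(\phi_j) = (\phi\bd_k)_j$, as desired. (One could instead avoid the case analysis altogether by checking once that, for an arbitrary simplicial operator $\alpha$, the formula for $\phi\alpha$ in \cref{sec:background} exhibits $\phi \mapsto \phi\alpha$ as the entrywise application of a monotone function determined by $\alpha$.)

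There is no real obstacle here. The only point demanding a bit of care is the bookkeeping of the boundary entries — those equal to $1$ when $k = 0$, and those equal to $m$ when $k = m$ — together with the $\pm\infty$ entries; but all of these are handled transparently by the explicit form of $g_k$.
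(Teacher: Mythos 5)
Your proof is correct and follows essentially the same route as the paper: reduce by transitivity to a single elementary face $\bd_k$, then observe that the induced operation on string entries is order-preserving. The paper does this via a three-way case split on where $k$ lies relative to $\phi_i$ and $\phi_j$, whereas you package the same fact more cleanly by noting that $\bd_k$ acts entrywise by a fixed monotone function $g_k$ (and your parenthetical observation that the general formula for $\phi\alpha$ already exhibits this for any simplicial operator is, if anything, a tidier version of the argument).
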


\begin{proof}
It suffices to consider the case $l = m - 1, \delta = \bd_k$ for some $1 \leq k \leq m$. If $k < \phi_i$, then both $\phi_i$ and $\phi_j$ are lowered by 1 in computing $\phi \bd_k$, thus the inequality is preserved. Likewise, if $k \geq \phi_j$ then both $\phi_i$ and $\phi_j$ are unchanged in $\phi \bd_k$. On the other hand, if $\phi_i \leq k < \phi_j$, then $\bd_k$ lowers $\phi_j$ by 1 while leaving $\phi_i$ unchanged. But in this case $\phi_i < \phi_j$, implying $\phi_i \leq \phi_j - 1$.
\end{proof}

\begin{Def}\label{complete-substring-def}
Given a simplex $\phi \colon \Delta^m \to T\Box^n$, a \emph{complete substring} of $\phi$ is an order-preserving map $\rho \colon \{1, \ldots ,m\} \to \{1, \ldots ,n\}$ such that the composite $\phi \rho$ is equal to the inclusion $\{1, \ldots ,m\} \hookrightarrow \{1, \ldots ,m,\pm \infty\}$.
\end{Def}

Intuitively, a complete substring $\rho$ of a simplex $\phi$ is an increasing sequence of positions $\rho_i$ such that $\phi_{\rho_i} = i$ for all $i$. For instance, the string $1\,3\,3\,2\,3$, representing a $3$-simplex of $T\Box^5$, has one complete substring $\rho$, given by taking its first, fourth and fifth entries. Viewing $\rho$ formally as a function $\{1,2,3\} \to \{1,2,3,4,5\}$, as in \cref{complete-substring-def}, we have $\rho(1) = 1, \rho(2) = 4, \rho(3) = 5$. On the other hand, the string $1\,4\,2\,3\,3$, representing a $4$-simplex of $T\Box^5$, has no complete substrings.

A simplex of $T \Box^n$ is marked if and only if it has no complete substrings.

We will also have occasion to consider the images of simplices of triangulated cubes under cubical face maps $T\bd_{i,\varepsilon} \colon T\Box^{n-1} \to T\Box^n$.
To this end, for an $n$-simplex $\phi \colon \Delta^m \to T\Box^{n-1}$, we will slightly abuse notation by writing $\bd_{i,\varepsilon}\phi$ for the composite $(T \bd_{i, \varepsilon}) \phi$.
Explicitly, $\bd_{i,\varepsilon}\phi$ can be described as follows:
\[
	(\bd_{i,\varepsilon}\phi)_j = \left\{\begin{array}{cl}
	\phi_i, & j < i \\
	+\infty & j = i, \varepsilon = 0 \\
	-\infty & j = i, \varepsilon = 1 \\
	\phi_{j-1}, & j > i \end{array}\right.
	\]

Generalizing this description to an arbitrary (composite) face map $\delta \colon \Box^m \to \Box^n$ and a simplex $\phi \colon \Delta^k \to T\Box^m$, we denote the composite $(T \delta) \phi$ by $\delta \phi$. Then the entries of the string representing the $k$-simplex $\delta \phi$ of $T \Box^n$ may be described as follows.

\begin{itemize}
\item If $\bd_{i,0}$ appears in the standard form of $\delta$, then $(\delta \phi)_{i} = + \infty$.
\item If $\bd_{i,1}$ appears in the standard form of $\delta$, then $(\delta \phi)_{i} = - \infty$.
\item The remaining entries of $\delta \phi$ consist of the entries of $\phi$, in the same order in which they appear in $\phi$. More precisely, denote the $m$ values between $1$ and $n$ which do not appear as indices of any map in the standard form of $\delta$ as $i_{1} < \ldots < i_{m}$; then for each $1 \leq j \leq m$ we have $(\delta \phi)_{i_{j}} = \phi_{j}$.
\end{itemize}

In other words, $\delta \phi$ is represented by the string of length $n$ obtained by inserting $+ \infty$ to the string representing $\phi$ in each position $i$ for which $\bd_{i,0}$ appears in the standard form of $\delta$, and inserting $- \infty$ in each position $i$ for which $\bd_{i,1}$ appears in the standard form of $\delta$.

In particular, we will have considerable occasion to study simplices of the form $\delta \phi$ in the case where $\phi = \iota_m$ (recall that this is defined to be the $m$-simplex of $T \Box^m$ represented by the string $1\,\ldots\,m$). Thus we define special terminology and notation for these simplices.

\begin{Def} \label{def:linear-simplices}
For $1 \leq m \leq n$, given a (composite) face map $\delta \colon \Box^m \to \Box^n$, the \emph{linear simplex} of $T \Box^n$ associated to $\delta$, denoted $\iota_\delta$, is the image under $\delta$ of the $m$-simplex $\iota_m$.
\end{Def}

\begin{egs}
We consider some examples of linear simplices to better illustrate the concept.

\begin{itemize}
\item For $\delta = \bd_{2,0} \colon \Box^2 \to \Box^3$, $\iota_\delta = 1+2$.
\item For $\delta = \bd_{5,0}\bd_{2,1}\bd_{1,0} \colon \Box^3 \to \Box^6$, $\iota_\delta = +-1\,2+3$.
\item For any $n$, $\iota_{\mathrm{id}_{[1]^n}} = \iota_n$.
\end{itemize}
\end{egs}

The concept of a linear simplex is useful in studying the triangulations of marked cubical sets obtained by marking certain faces of a standard cube, such as those of the form $\Box^n_{i,\varepsilon}$ and $(\Box^n_{i,\varepsilon})'$.

\begin{lem}\label{linear-simplex-marked}
Let $X$ denote a marked cubical set whose underlying cubical set is $\Box^n$ for some $n \geq 0$. Then $TX$ is obtained from $T \Box^n$ by marking the linear simplex $\iota_\delta$ associated to each face map $\delta \colon \Box^m \to \Box^n$ such $\delta$ is marked when viewed as an $m$-cube of $X$.
\end{lem}

\begin{proof}
We first note that each linear simplex $\iota_\delta$ contains a complete substring, and is thus unmarked in $T \Box^n$. Next we observe that $X$ may be constructed from $\Box^n$ by the following pushout, in which the coproducts range over all non-degenerate marked cubes of $X$.
\[
\begin{tikzcd}
\bigsqcup \Box^m \arrow[d] \arrow[r] \pushout & \Box^n \arrow[d] &  \\
\bigsqcup \widetilde{\Box}^m \arrow[r] & X \\
\end{tikzcd}
\]
Since $T$ preserves pushouts as a left adjoint, we have a corresponding pushout diagram in $\cSet^+$:
\[
\begin{tikzcd}
\bigsqcup T\Box^m \arrow[d] \arrow[r] \pushout & T\Box^n \arrow[d] &  \\
\bigsqcup T\widetilde{\Box}^m \arrow[r] & TX \\
\end{tikzcd}
\]
For each $m$, the map $T \Box^m \to T \widetilde{\Box}^m$ is the entire map which marks the $m$-simplex $\iota_m$. It thus follows that $T X$ is obtained from $T \Box^n$ by marking $\iota_\delta$ for each non-degenerate $\delta \colon \Box^m \to \Box^n$ which is marked in $X$.
\end{proof}

The following results are immediate from our earlier characterization of the actions of cubical face maps.

\begin{lem}\label{linear-characterization}
An $m$-simplex $\phi \colon \Delta^m \to T\Box^n$ is linear if and only if it has a unique complete substring $\rho$, and for any $1 \leq i \leq n$ not in the image of $\rho$, $\phi_i \in \{\pm \infty\}$. \qed
\end{lem}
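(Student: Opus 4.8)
The plan is to prove both implications by directly unwinding the definition of a linear simplex, using the string representation of simplices of $T\Box^n$ and the recalled formula for the action of the cubical face maps $T\bd_{i,\varepsilon}$.

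For the ``only if'' direction, I would start from $\phi = \iota_\delta$ with $\delta \colon \Box^m \to \Box^n$ a face map, write $\delta = \bd_{i_1,\varepsilon_1}\cdots\bd_{i_{n-m},\varepsilon_{n-m}}$ in normal form ($i_1 > \cdots > i_{n-m}$), and compute $\iota_\delta = (T\delta)(\iota_m)$ by applying $T\bd_{i_{n-m},\varepsilon_{n-m}}, \dots, T\bd_{i_1,\varepsilon_1}$ to the string $1\,2\,\cdots\,m$ one at a time. By induction on $n-m$, using the displayed formula for $\bd_{i,\varepsilon}(-)$, this shows $\iota_\delta$ is the string obtained from $1\,2\,\cdots\,m$ by inserting at each position $i_l$ the entry $+\infty$ if $\varepsilon_l = 0$ and $-\infty$ if $\varepsilon_l = 1$. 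Consequently each of $1,\dots,m$ appears exactly once in $\iota_\delta$, in increasing order of position, and every other entry lies in $\{\pm\infty\}$; so the assignment sending each $k$ to the position at which the entry $k$ occurs is a complete substring $\rho$, it is the unique one (any complete substring must enumerate exactly those positions), and all positions outside $\im(\rho)$ carry values in $\{\pm\infty\}$.

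For the ``if'' direction, I would take $\phi$ admitting a complete substring $\rho$ with $\phi_i \in \{\pm\infty\}$ for every $i \notin \im(\rho)$. Since $\phi_{\rho_k} = k$ for $k = 1, \dots, m$, and any position carrying a value in $\{1,\dots,m\}$ must lie in $\im(\rho)$ (a position outside would have a value in $\{\pm\infty\}$ by hypothesis), each value $1,\dots,m$ occurs exactly once and all other entries are $\pm\infty$; in particular $\rho$ is automatically the unique complete substring. I would then reconstruct $\delta$: list $\{1,\dots,n\}\setminus\im(\rho)$ in decreasing order as $j_1 > \cdots > j_{n-m}$, set $\varepsilon_l = 0$ or $1$ according to whether $\phi_{j_l} = +\infty$ or $-\infty$, and take $\delta = \bd_{j_1,\varepsilon_1}\cdots\bd_{j_{n-m},\varepsilon_{n-m}}$, which is in normal form since $j_1 > \cdots > j_{n-m}$. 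The computation of the first paragraph, run in reverse, gives $\iota_\delta = \phi$, so $\phi$ is linear.

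I expect the only real friction to be the positional bookkeeping inside the induction of the first paragraph: verifying that processing the insertions from the smallest index $i_{n-m}$ up to the largest $i_1$ leaves the $\pm\infty$-entries in exactly the positions $i_1,\dots,i_{n-m}$ of the final length-$n$ string. This is precisely the ``earlier characterization'' of the action of cubical face maps invoked just before the statement, and reduces to a short induction on $n-m$ using the explicit description of $\bd_{i,\varepsilon}(-)$; everything else is a bare unwinding of definitions.
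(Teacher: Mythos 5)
Your argument is correct and is exactly the route the paper intends: the paper records this lemma with no written proof, declaring it immediate from the explicit string-level description of the cubical face maps $T\bd_{i,\varepsilon}$, and your two directions (computing $\iota_\delta$ by successive insertions of $\pm\infty$ at the positions of the normal form, and reconstructing $\delta$ from the positions outside $\im\rho$) just spell out that observation, matching the companion statement \cref{linear-simplex-recover}. The positional bookkeeping you flag does go through, since the normal form is applied from the smallest index up, so later insertions occur at strictly larger positions and never shift earlier ones.
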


\begin{lem}\label{linear-simplex-recover}
Let $\phi = \iota_\delta \colon \Delta^m \to T\Box^n$ be the linear simplex associated to a cubical face map $\delta \colon \Box^m \to \Box^n$, and let $\rho$ denote the unique complete substring of $\phi$. Then $\delta = \bd_{i_1,\varepsilon_1} \ldots \bd_{i_{n-m},\varepsilon_{n-m}}$, where:

\begin{itemize}
\item the indices $i_1 > \ldots > i_{n-m}$ range over $\{1,\ldots,n\} \setminus \Im \rho$;
\item for $1 \leq i \leq n - m$, $\varepsilon_i = 0$ if $\phi_i = + \infty$, while $\varepsilon _i = 1$ if $\phi_i = - \infty$. \qed
\end{itemize}
\end{lem}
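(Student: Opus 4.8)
The plan is to recover $\delta$ from $\phi = \iota_\delta$ by passing through the normal form of \cref{cube-standard-form}. First I would write $\delta$ uniquely as $\bd_{i_1,\varepsilon_1}\cdots\bd_{i_{n-m},\varepsilon_{n-m}}$ with $i_1 > \cdots > i_{n-m}$, so that the lemma amounts to the claim that these indices and signs are exactly the data displayed in the statement. (When $m = n$ there are no face maps and the claim is vacuous, so assume $m < n$.)

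Next I would compute $\phi = \iota_\delta$, i.e.\ the image of $\iota_m$ under $T\delta \colon T\Box^m \to T\Box^n$, explicitly. Viewing $\delta \colon [1]^m \to [1]^n$ as an order-preserving map, a short induction on $n - m$ using the coordinate formulas for the $\partial_{i,\varepsilon}$ shows that $\delta$ inserts the constant $\varepsilon_k$ into coordinate $i_k$ and places the $m$ input coordinates, in order, into the remaining positions $\{1,\ldots,n\}\setminus\{i_1,\ldots,i_{n-m}\}$. Composing with $\iota_m$ (whose $j$-th coordinate switches from $0$ to $1$ at step $j$) then shows that the string $\phi$ has entry $+$ at position $i_k$ when $\varepsilon_k = 0$, entry $-$ at position $i_k$ when $\varepsilon_k = 1$, and the entries $1, 2, \ldots, m$, in increasing order of position, at the complementary positions. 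Equivalently one may iterate the displayed formula for $\bd_{i,\varepsilon}$ acting on strings, starting from $\iota_m = 1\,2\,\ldots\,m$ and inserting $\pm\infty$ at the positions $i_{n-m}, i_{n-m-1}, \ldots, i_1$ in turn; the strict inequalities $i_1 > \cdots > i_{n-m}$ guarantee that each later insertion is at a position larger than all previous ones, so it never disturbs the relative order of the entries already placed.

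Finally I would read off the conclusion. By \cref{linear-characterization}, $\phi$ has a unique complete substring $\rho$, and by the computation above its image is precisely the set of positions carrying the entries $1, \ldots, m$, namely $\{1,\ldots,n\}\setminus\{i_1,\ldots,i_{n-m}\}$. Hence $\{i_1 > \cdots > i_{n-m}\} = \{1,\ldots,n\}\setminus\Im\rho$, which is the first bullet; and for each $k$ one has $\varepsilon_k = 0$ exactly when the entry of $\phi$ at position $i_k$ equals $+$, and $\varepsilon_k = 1$ exactly when it equals $-$, which is the second bullet.

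I do not expect a genuine obstacle here: the statement is a direct unwinding of the definition of $T$ on face maps together with the uniqueness of normal forms. The only point requiring care is the position bookkeeping — checking that inserting constants at the strictly decreasing indices $i_1 > \cdots > i_{n-m}$ leaves the remaining coordinates in their original relative order, so that they genuinely receive the values $1, \ldots, m$ consecutively — and this is exactly where \cref{cube-standard-form} enters.
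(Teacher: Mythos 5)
Your proposal is correct and follows exactly the route the paper has in mind: the paper states this lemma (together with \cref{linear-characterization}) as immediate from the displayed formula for how cubical face maps $\bd_{i,\varepsilon}$ act on strings, and your argument is just a careful unwinding of that formula applied to $\iota_m$ along the normal form of $\delta$, with the bookkeeping about strictly decreasing indices handled correctly.
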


\begin{lem}\label{comical-triangulation-marking}
For $n \geq 1$ and $1 \leq i \leq n$, let $\phi$ be a linear simplex of $T\Box^n$, and let $\rho$ denote its unique complete substring. Suppose that $i$ is in the image of $\rho$. If for all $\rho_{\phi_{i}-1} < k < \rho_{\phi_{i}+1}$ such that $k \neq i$ we have $\phi_k = - \infty$ (resp. $\phi_k = + \infty$), then $\phi$ is marked in $T \Box^n_{i,0}$ (resp.~$T \Box^n_{i,1}$). (If $\phi_i = 1$ then we interpret $\rho_0$ to be 0; likewise if $\phi_i = n$ then we interpret $\rho_{n+1}$ to be $n+1$.)
\end{lem}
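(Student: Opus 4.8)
The plan is to translate the statement into a combinatorial fact about the normal form of the cubical face map underlying $\phi$, and then check, one at a time, the three clauses in the definition of the comical cube. First I would write $\phi = \iota_\delta$ for the face map $\delta \colon \Box^m \to \Box^n$ recovered by \cref{linear-simplex-recover}: in normal form $\delta = \bd_{i_1,\varepsilon_1}\cdots\bd_{i_{n-m},\varepsilon_{n-m}}$ with $i_1 > \cdots > i_{n-m}$ enumerating $\{1,\dots,n\}\setminus\Im\rho$, and with $\varepsilon_\ell = 0$ exactly when $\phi_{i_\ell} = +$ (and $\varepsilon_\ell = 1$ when $\phi_{i_\ell} = -$). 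Note $m\ge 1$ since $i\in\Im\rho$, so $\delta$ is a non-degenerate $m$-cube. By \cref{linear-simplex-marked} it suffices to show that $\delta$, viewed as an $m$-cube of $\Box^n_{i,0}$ (resp.\ $\Box^n_{i,1}$), is marked, i.e.\ that none of conditions \ref{comical-middle}, \ref{comical-high}, \ref{comical-low} holds for its normal form. I treat $\varepsilon = 0$; the case $\varepsilon = 1$ follows by the symmetric argument interchanging $+$ with $-$ and $0$ with $1$. Write $p := \phi_i$, so $\rho_p = i$, and adopt the boundary conventions $\rho_0 = 0$ (when $p=1$) and $\rho_{p+1} = n+1$ (when $p$ is the largest value of $\rho$) from the statement.

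Clause \ref{comical-middle} is immediate: since $i\in\Im\rho$, the index $i$ does not occur among $i_1,\dots,i_{n-m}$, so neither $\bd_{i,0}$ nor $\bd_{i,1}$ appears in the normal form of $\delta$. For clause \ref{comical-high}, suppose for contradiction that some $j>i$ has $\bd_{j,0}$ in the normal form together with $\bd_{k,1}$ for every $i<k<j$; then $j\notin\Im\rho$ and $\phi_j = +$. Comparing $j$ with $\rho_{p+1}$ (which, when $p<m$, satisfies $i<\rho_{p+1}$ and $\phi_{\rho_{p+1}} = p+1$): if $j < \rho_{p+1}$ the hypothesis of the lemma forces $\phi_j = -$, a contradiction; if $j = \rho_{p+1}$ then $\phi_j = p+1 \neq +$; and if $j > \rho_{p+1}$ then $i < \rho_{p+1} < j$, so the normal form would have to contain $\bd_{\rho_{p+1},1}$, impossible since $\rho_{p+1}\in\Im\rho$. (When $p=m$, so $\rho_{p+1}=n+1$, the first case covers every $j$ with $i<j\le n$.) Clause \ref{comical-low} is handled the same way with $\rho_{p-1}$ in place of $\rho_{p+1}$: a witnessing $j<i$ with $\phi_j=+$ must satisfy $j>\rho_{p-1}$ (otherwise $\bd_{\rho_{p-1},1}$ would lie between positions $j$ and $i$ in the normal form, and $\rho_{p-1}\in\Im\rho$; the boundary value $j=\rho_{p-1}$ gives $\phi_j = p-1\neq +$), hence $\rho_{p-1}<j<i$, and then the hypothesis forces $\phi_j = -$, a contradiction; when $p=1$ the hypothesis directly gives $\phi_j=-$ for all $1\le j<i$.

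Since all three clauses fail, $\delta$ is marked in $\Box^n_{i,0}$, whence $\phi = \iota_\delta$ is marked in $T\Box^n_{i,0}$ by \cref{linear-simplex-marked}; the $\varepsilon=1$ statement is symmetric. I expect the only real care to be needed in the bookkeeping of the boundary conventions for $\rho_0$ and $\rho_{p+1}$, and in being precise that "the standard form of $\delta$ contains $\bd_{j,\varepsilon}$" refers exactly to the normal form produced by \cref{linear-simplex-recover}; beyond that the argument involves no computation more serious than comparing $j$ with the neighbouring entries $\rho_{p\pm 1}$ of the complete substring.
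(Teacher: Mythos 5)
Your proof is correct and follows essentially the same route as the paper: use \cref{linear-simplex-recover} to translate the hypothesis into the statement that the normal form of the underlying face map avoids the faces/strings excluded in the definition of $\Box^n_{i,\varepsilon}$, and then conclude via \cref{linear-simplex-marked}. Your explicit case analysis comparing a witnessing index $j$ with $\rho_{p\pm1}$ simply spells out what the paper's proof asserts in one sentence, so no further changes are needed.
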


\begin{proof}
We prove the case for $T \Box^n_{i,0}$; the case for $T \Box^n_{i,1}$ is similar. Let $\phi = \iota_\delta$ for some face map $\delta \colon \Box^m \to \Box^n$. Using \cref{linear-simplex-recover}, we may convert the given conditions on the unique complete substring of $\phi$ into a set of conditions on the standard form of $\delta$, as follows.
\begin{itemize}
\item The condition that $i$ is in the image of $\rho$ is equivalent to the condition that neither $\bd_{i,0}$ nor $\bd_{i,1}$ appears in the standard form of $\delta$.
\item The values $\rho_{\phi_i - 1}$ and $\rho_{\phi_i + 1}$ are, respectively, the largest value below $i$ and the smallest value above $i$ which do not appear as indices in the standard form of $\delta$. Thus the condition that $\phi_k = - \infty$ for all $\rho_{\phi_i - 1} < k < \rho_{\phi_i + 1}$ is equivalent to the condition that all face maps in the standard form of $\delta$ with indices in between $i$ and the next higher and lower missing values have $\varepsilon = 1$ -- in other words, that there is no $j > i$ such that the standard form of $\delta$ contains $\bd_{j,0}$ as well as $\bd_{k,1}$ for all $i < k < j$, and similarly there is no $j < i$ such that the standard form of $\delta$ contains $\bd_{j,0}$ as well as $\bd_{k,1}$ for all $i > k > j$.
\end{itemize}

Thus we see that the linear simplices satisfying these conditions are precisely those associated to faces of $\Box^n$ which are marked in $\Box^n_{i,0}$. The stated result thus follows by \cref{linear-simplex-marked}.
\end{proof}

Although the conditions of \cref{comical-triangulation-marking} may seem technical and obscure, simplices which satisfy them may easily be recognized by visual examination of the corresponding strings. Specifically, linear $m$-simplices satisfying the criteria of the lemma correspond to strings consisting of the sequence $1\,\ldots\,m$ interspersed with occurrences of the values $+ \infty$ and $-\infty$, such that an entry of $1\,\ldots\,m$ appears in position $i$, and all values appearing in between this and the previous and following entries of the sequence are equal to $-\infty$ (in the case $\varepsilon = 0$) or $+ \infty$ (in the case $\varepsilon = 1$). To illustrate, we consider various simplices of $\Box^6_{4,0}$.

\begin{itemize}
\item The $3$-simplex $1\,-\,-\,2\,-\,3$ satisfies the conditions of the lemma.
\item The $3$-simplex $+\,1\,-\,2\,3\,-$ satisfies the conditions of the lemma.
\item The $2$-simplex $1\,-\,-\,-\,2\,-$ does not satisfy the conditions of the lemma, as it has the value $-\infty$ in position $4$, indicating the presence of $\bd_{4,1}$ in the corresponding face map.
\item The $3$-simplex $1\,+\,-\,2\,-\,3$ does not satisfy the conditions of the lemma, as it contains the value $+\infty$ in between position $4$ and the previous position occupied by an entry of the sequence $1\,2\,3$ (specifically, we have $+\infty$ in position $2$, in between positions $1$ and $4$). This, together with the appearance of $-\infty$ in position $3$, indicates the presence in the standard form of $\delta$ of $\bd_{3,1} \bd_{2,0}$.
\end{itemize}

The non-degenerate $m$-simplices of $T \Box^n$ are those for which the corresponding string includes all of the values $1, \ldots ,m$; the interior simplices, i.e. those not contained in $T \bd \Box^n$, are those for which the corresponding string does not include the values $+$ or $-$.

\begin{Def}
The \emph{essential} simplices of $T \Box^n$ are those which are both non-degenerate and interior. For $1 \leq m \leq n$, the set of essential $m$-simplices is denoted $K_m$.
\end{Def}

\begin{Def}
Given an essential $m$-simplex $\phi$ in $T \Box^n$, we define the following data:

\begin{itemize}
\item $P(\phi)$ is the largest value $1 \leq r \leq m$ such that for all $1 \leq i \leq r$, $\phi_j = i$ if and only if $j = i$, or $0$ if no such $r$ exists. $\Pi(\phi)$, the \emph{preamble} of $\phi$, is the initial segment of $\phi$ defining $P(\phi)$, i.e. the substring $1\,\ldots\,r$, or the empty string if $P(\phi) = 0$.
\item $Q(\phi) = P(\phi) + 1$. If $Q(\phi) \leq n$, then $q(\phi)$ is the value $\phi_{Q(\phi)}$; otherwise, $q(\phi) = n + 1$.
\end{itemize}
\end{Def}

More intuitively, $P(\phi)$ is the largest $r$ such that $\phi$ begins with a string of the form $\Pi(\phi) = 1  \ldots  r$, none of whose entries appear in any later position of $\phi$, or $0$ if no such string exists. $Q(\phi)$ is the first position $i$ such that $\phi_i$ is not part of such a string, either because its value $\phi_i = q(\phi)$ is greater than $i$ itself, or because this value is repeated later on. The case $P(\phi) = n$, in which $Q(\phi) = q(\phi) = n + 1$, occurs if and only if $\phi$ is the $n$-simplex $\iota_n$.  

\begin{egs}
We compute $\Pi(\phi), P(\phi), Q(\phi)$, and $q(\phi)$ for various essential simplices of $T \Box^5$ in order to better illustrate the concepts.
\begin{itemize}
\item For the $5$-simplex represented by a function $\phi \colon \{1, 2, 3, 4, 5\} \to \{ 1, 2, 3, 4, 5, \pm \infty \}$ given by $\phi = 1\,2\,3\,5\,4$, we have $\Pi(\phi) = 1\,2\,3, P(\phi) = 3$, $Q(\phi) = 4, q(\phi) = 5$.
\item For the $4$-simplex represented by a function $\phi \colon \{1, 2, 3, 4, 5\} \to \{ 1, 2, 3, 4, \pm \infty \}$ given by $\phi = 1\,2\,3\,4\,3$, $\Pi(\phi) = 1\,2, P(\phi) = 2$, $Q(\phi) = 3, q(\phi) = 3$.
\item For the $3$-simplex represented by a function $\phi \colon \{1, 2, 3, 4, 5\} \to \{ 1, 2, 3, \pm \infty \}$ given by $\phi = 2\,3\,1\,1\,1$, $\Pi(\phi) = \varnothing, P(\phi) = 0$, $Q(\phi) = 1, q(\phi) = 2$.
\end{itemize}
\end{egs}

\begin{lem}
For $m \leq n$ and $\phi \in K_m$ we have $q(\phi) \geq Q(\phi)$.
\end{lem}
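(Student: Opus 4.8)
The plan is to unwind the definitions and argue directly. Since $\phi \in K_m$ is essential, it is in particular interior, so every entry of the string $\phi$ lies in $\{1,\ldots,m\}$ (no entry is $\pm\infty$); non-degeneracy means that each value $1,\ldots,m$ occurs somewhere in $\phi$. Recall that $P(\phi) = r$ means precisely that $r$ is the largest element of $\{0,1,\ldots,m\}$ with the property that for every $i \in \{1,\ldots,r\}$ the value $i$ occurs in $\phi$ exactly once, namely in position $i$ (this is the content of ``$\phi_j = i$ if and only if $j = i$''); and $Q(\phi) = r+1$, with $q(\phi) = \phi_{r+1}$ when $r+1 \le n$ and $q(\phi) = n+1$ otherwise.

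First I would dispose of the boundary case $Q(\phi) > n$. Since $m \le n$ we have $P(\phi) \le m \le n$, so $Q(\phi) > n$ forces $P(\phi) = m = n$; the defining property of $P(\phi) = n$ then says $\phi_i = i$ for all $i$, i.e.\ $\phi = \iota_n$, whence $q(\phi) = n+1 = Q(\phi)$ and the inequality holds with equality. (Note this also shows that whenever $m < n$ we are automatically in the other case, since then $P(\phi) \le m - 1$, as no value in $\{1,\ldots,m\}$ can occupy a position beyond $m$ without repeating.)

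In the remaining case $Q(\phi) = r+1 \le n$, we have $q(\phi) = \phi_{r+1}$, and by interiority $\phi_{r+1} \in \{1,\ldots,m\}$. I would suppose for contradiction that $\phi_{r+1} \le r$ and set $i_0 := \phi_{r+1}$, so $1 \le i_0 \le r$. Then the defining property of $P(\phi) = r$ applies to $i_0$: the value $i_0$ occurs in $\phi$ only at position $i_0$. But $\phi_{r+1} = i_0$ while $r+1 \ne i_0$, a contradiction. Hence $q(\phi) = \phi_{r+1} \ge r+1 = Q(\phi)$, as desired.

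There is no substantial obstacle here: the statement is an immediate consequence of the maximality clause built into the definition of $P(\phi)$. The only points requiring a little care are the boundary case $Q(\phi) = n+1$ (where one must identify $\phi$ with $\iota_n$) and the minor use of the hypothesis $m \le n$, which is what ensures $P(\phi) \le n$ and hence that $Q(\phi)$ is either a genuine position of the length-$n$ string $\phi$ or equals $n+1$.
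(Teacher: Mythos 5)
Your argument is correct and is essentially the same as the paper's one-line proof: the entry at position $Q(\phi)$ cannot equal any value $i \le P(\phi)$, since the defining property of the preamble forces such a value to occur only in position $i$. Your separate treatment of the boundary case $Q(\phi) = n+1$ (where $\phi = \iota_n$ and $q(\phi) = n+1$) is a harmless elaboration of what the paper leaves implicit.
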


\begin{proof}
The value of $\phi$ at position $Q(\phi)$ cannot be less than $Q(\phi)$, as it would then be a repetition of some value in the preamble of $\phi$.
\end{proof}

\begin{Def}
For $1 \leq m \leq n$, we define the following subsets of $K_m$:

\begin{itemize}
\item $K_m^\ast$, the set of \emph{normal} essential $m$-simplices, consists of all simplices $\phi \in K_m$ such that the value $q(\phi)$ appears exactly once in $\phi$.
\item $K_m'$, the set of \emph{abnormal} essential $m$-simplices, is $K_m \setminus K_m^\ast$.
\end{itemize}
\end{Def}

Intuitively, when building $T \Box^n_{i, \varepsilon}$ from $T \sqcap^n_{i, \varepsilon}$, the normal simplices will appear as interiors of the fillers for complicial horns, while the abnormal simplices will appear as the composite faces of those fillers.
This is not entirely accurate, as the situation is more complicated when filling horns of dimension $n$.

The following characterization of $K_m'$ is immediate from the definition.

\begin{lem}\label{K-prime}
For $m \leq n-1$, $K_m'$ consists of those $\phi$ for which the value $q(\phi)$ appears at least twice. For $m = n$, $K_m'$ consists of the single $n$-simplex $\iota_n$. \qed
\end{lem}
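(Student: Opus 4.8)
The plan is to unwind the definitions of $P$, $Q$, $q$, and $K_m^\ast$ directly, treating the two cases $m \le n-1$ and $m = n$ separately; the point of the lemma is precisely to rule out, in each regime, the possibility that $q(\phi)$ occurs \emph{zero} times in $\phi$ (so that ``not exactly once'' becomes ``at least twice'') or to isolate the one simplex where it does.

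For the case $m \le n-1$, I would begin by noting that $P(\phi) \le m \le n-1$, whence $Q(\phi) = P(\phi) + 1 \le n$, so that $q(\phi) = \phi_{Q(\phi)}$ is a genuine entry of the string $\phi$. Since $\phi$ is essential, hence interior, this entry lies in $\{1, \ldots, m\}$ and in particular $q(\phi)$ occurs at least once in $\phi$, namely at position $Q(\phi)$. Therefore the defining condition of $K_m^\ast$ — that $q(\phi)$ occur exactly once — fails exactly when $q(\phi)$ occurs at least twice, which is the asserted description of $K_m' = K_m \setminus K_m^\ast$.

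For the case $m = n$, I would first observe that by \cref{T-cube-degen} together with interiority, an essential $n$-simplex of $T\Box^n$ is a surjection $\{1, \ldots, n\} \to \{1, \ldots, n\}$, hence a permutation, so that every value it takes occurs exactly once. If $\phi = \iota_n$, then $\phi_i = i$ for all $i$, so $P(\phi) = n$, $Q(\phi) = n+1 > n$, and $q(\phi) = n+1$, a value not taken by $\phi$ at all; as ``zero times'' is not ``exactly once'', $\iota_n \in K_n'$. Conversely, if $\phi \ne \iota_n$ then $P(\phi) < n$ (the condition $P(\phi) = n$ forces $\phi = \iota_n$), so $Q(\phi) = P(\phi) + 1 \le n$ and $q(\phi) = \phi_{Q(\phi)}$ is one of the values actually taken by the permutation $\phi$, hence occurs exactly once; thus $\phi \in K_n^\ast$. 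Combining the two, $K_n' = \{\iota_n\}$.

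I do not expect any real obstacle: the argument is a direct case analysis on whether $Q(\phi)$ exceeds $n$. The only steps that warrant a moment's care are remembering that $q(\phi) = n+1$ (rather than a value in range) occurs exactly on $\iota_n$, and recording that essentiality in top dimension forces the simplices to be permutations, which is what makes the ``exactly once'' clause automatic for every $\phi \ne \iota_n$ in $K_n$.
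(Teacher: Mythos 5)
Your proof is correct and is exactly the unwinding the paper has in mind when it labels the lemma "immediate from the definition" and omits an argument. The two key observations — that for $m \le n-1$ the bound $Q(\phi) = P(\phi)+1 \le m+1 \le n$ forces $q(\phi)$ to be a genuine (interior) entry of $\phi$, and that for $m = n$ essentiality makes $\phi$ a permutation so every in-range value occurs exactly once — are precisely what the paper intends the reader to supply.
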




We next define a construction relating normal and abnormal essential simplices, which will be of significant use in proving that $T \dashv U$ is a Quillen adjunction. 

\begin{Def}
For $1 \leq m \leq n - 1$,  the \emph{normalization} of an abnormal essential $m$-simplex $\phi \colon \Delta^m \to T\Box^n$, denoted $N(\phi)$, is the $(m+1)$-simplex obtained from $\phi$ by raising the value of $\phi$ at $Q(\phi)$, and at all $i$ such that $\phi_i > q(\phi)$, by 1.
\end{Def}

Note that, in constructing $N(\phi)$, occurrences of the value $q(\phi)$ in positions other than $Q(\phi)$ are unchanged. For instance, $N(1\,2\,3\,2) = 1\,3\,4\,2$.

\begin{lem}\label{N-bijection}
For $1 \leq m \leq n - 1$, normalization defines a bijection $N \colon K_{m}' \to K_{m+1}^\ast$, with its inverse given by taking the $(q(\psi)-1)$-face of a simplex $\psi \in K_{m+1}^\ast$.
\end{lem}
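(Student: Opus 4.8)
The plan is to verify directly, by manipulating the string representations, that $N$ and the assignment $M\colon \psi \mapsto \psi\bd_{q(\psi)-1}$ are mutually inverse maps between $K_m'$ and $K_{m+1}^\ast$. First I would record the numerical facts that make everything run. For $\phi \in K_m'$ with $m \le n-1$: since $\phi$ is essential of degree $m$ and of length $n>m$, its preamble cannot exhaust $\{1,\dots,m\}$, so $P(\phi)\le m-1$, hence $Q(\phi)\le m\le n$ and $q(\phi)=\phi_{Q(\phi)}\in\{1,\dots,m\}$; by \cref{K-prime} the value $q(\phi)$ occurs at least twice. Dually, for $\psi \in K_{m+1}^\ast$ one checks $P(\psi)\le m-1$ (otherwise the preamble would force repetitions of values, or $\psi=\iota_n$, contradicting $\psi\in K_{m+1}^\ast$), so $Q(\psi)\le m\le n$ and $q(\psi)\in\{1,\dots,m+1\}$; moreover the argument that $q(\psi)\ge Q(\psi)=P(\psi)+1$ sharpens, using normality, to the strict inequality $q(\psi)\ge P(\psi)+2$. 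This last point is what guarantees $q(\psi)-1\ge 1$, so that $\psi\bd_{q(\psi)-1}$ is computed by the ``middle'' face rule (reduce every entry $\ge q(\psi)$ by $1$) and therefore stays interior.

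Next I would show $N$ is well-defined. Writing $c=q(\phi)$, the string $N(\phi)$ carries $c+1$ at position $Q(\phi)$, has all entries $>c$ raised by $1$, and leaves every other entry unchanged (in particular the remaining occurrences of $c$ are untouched); a short case check on which values appear shows $N(\phi)$ is surjective onto $\{1,\dots,m+1\}$, hence an essential $(m+1)$-simplex of $T\Box^n$. I would then verify $P(N(\phi))=P(\phi)$: positions $1,\dots,P(\phi)$ carry values $\le P(\phi)<c$, so neither they nor their uniqueness is affected, while position $Q(\phi)$ now carries $c+1\ge P(\phi)+2\ne P(\phi)+1$, so the preamble cannot grow. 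Consequently $Q(N(\phi))=Q(\phi)$ and $q(N(\phi))=c+1$; since the only way to produce the value $c+1$ in $N(\phi)$ is at position $Q(\phi)$ itself, $c+1$ occurs exactly once, so $N(\phi)\in K_{m+1}^\ast$.

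The mirror-image computation shows $M(\psi)=\psi\bd_{q(\psi)-1}\in K_m'$: with $c=q(\psi)$, this face sends the (unique) entry $c$ to $c-1$, shifts all larger entries down by one, and fixes everything below $c$; the resulting string is surjective onto $\{1,\dots,m\}$, so essential, and one gets $P(M(\psi))=P(\psi)$ — the only delicate case, $c=P(\psi)+2$, being handled by noting that the value $P(\psi)+1$ already occurred in $\psi$ away from position $Q(\psi)$ and hence still occurs twice in $M(\psi)$. Thus $q(M(\psi))=c-1$, and $c-1$ occurs at least twice (at position $Q(\psi)$, and at the position where $\psi$ already had a $c-1$, which exists by surjectivity), so $M(\psi)\in K_m'$. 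It is then immediate that the composites are identities: the face map $\bd_{q(N(\phi))-1}$ has index $q(\phi)$ and undoes exactly the single raise-by-one performed by $N$, so $M(N(\phi))=\phi$; conversely $N$ applied to $M(\psi)$, whose $q$-value is $q(\psi)-1$, re-raises precisely the entries the face map lowered, so $N(M(\psi))=\psi$. Hence $N$ is a bijection with inverse $M$.

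I expect the only real work to be the bookkeeping in the two middle steps — tracking $P$, $Q$, $q$ through the normalization and the face map, and correctly handling the boundary cases $q(\phi)=m$ and $q(\psi)=P(\psi)+2$ — together with the routine checks, via \cref{T-cube-degen}, that neither construction collapses to a degenerate or to a boundary simplex. Everything else is formal.
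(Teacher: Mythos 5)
Your proposal is correct and follows essentially the same route as the paper's proof: direct manipulation of the string representations, tracking $P$, $Q$, $q$ through normalization and through the face $\bd_{q(\psi)-1}$, using \cref{K-prime} for the at-least-twice occurrence of $q(\phi)$ and the normality-forced inequality $q(\psi)\geq P(\psi)+2$ to handle the delicate preamble case. The extra bookkeeping you include (e.g.\ $P\leq m-1$ and the explicit check that $\psi\bd_{q(\psi)-1}\in K_m'$) is correct and only makes explicit what the paper leaves implicit.
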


\begin{proof}
Let $\phi \in K_{m}'$. We first show that $N(\phi)$ is essential, i.e. that it is non-degenerate and interior. It is clear from the construction of $N(\phi)$ that it does not contain any entries of the form $+$ or $-$. To see that every element of $\{1, \ldots ,m+1\}$ appears in $N(\phi)$ at least once, consider the following:

\begin{itemize}
\item for $1 \leq i \leq q(\phi) - 1$, $i$ appears at least once in $\phi$, and these entries are unchanged in constructing $N(\phi)$;
\item by \cref{K-prime}, the value $q(\phi)$ appears at least twice in $\phi$, and only one of these entries is altered in constructing $N(\phi)$;
\item for $q(\phi) + 1 \leq i \leq m + 1$, $\phi$ contains some instance of the value $i - 1$ which is raised by 1 in constructing $N(\phi)$.
\end{itemize}

To see that $N(\phi)$ is in $K_{m+1}^\ast$, observe that $P(N(\phi)) = P(\phi)$, as the preamble of $\phi$ is unchanged in constructing $N(\phi)$; as $q(\phi) \geq Q(\phi)$, and this value is raised in constructing $N(\phi)$, the entry in position $Q(\phi)$ is not part of the preamble of $N(\phi)$. Thus $Q(N(\phi)) = Q(\phi)$, and $q(N(\phi)) = q(\phi) + 1$. Moreover, any entries having the value $q(\phi) + 1$ in $\phi$ are raised by 1 in constructing $N(\phi)$; thus $q(\phi) + 1$ appears exactly once in $N(\phi)$.

To see that this function is a bijection with the stated inverse, first let $\phi \in K_{m}'$, and consider $N(\phi) \bd_{q(N(\phi))-1} = N(\phi) \bd_{q(\phi)}$. This face is computed by lowering all entries of $N(\phi)$ greater than $q(\phi)$ by 1; as these are precisely the entries that were raised by 1 in order to obtain $N(\phi)$, this recovers the original simplex $\phi$.

Now let $\psi \in K_{m+1}^\ast$. Since $q(\psi)$ appears exactly once in $\psi$ by assumption, it must be greater than or equal to $P(\psi) + 2$, or else it would be part of the preamble of $\psi$. This implies that for some $i > Q(\psi)$ we have $\psi_i = q(\psi) - 1$. Now consider the face $\psi \bd_{q(\psi) - 1}$. This face is computed by lowering every entry of $\psi$ which is greater than or equal to $q(\psi)$ by 1. In particular, $\psi_{Q(\psi)}$ is reduced to $q(\psi) - 1$, while the preamble of $\psi$ is unaffected. As $\psi$ contains at least one other entry having the value $q(\psi) - 1$, which is not changed in computing this face, we see that $P(\psi \bd_{q(\psi)-1}) = P(\psi)$, $Q(\psi \bd_{q(\psi)-1}) = Q(\psi)$, and $q(\psi \bd_{q(\psi) -1}) = q(\psi) - 1$. Therefore, to compute $N(\psi \bd_{q(\psi) -1})$, we raise the entry in position $Q(\psi)$, and all entries greater than or equal to $q(\psi)$, by 1 -- but these were precisely the entries of $\psi$ that were lowered to obtain $\psi \bd_{q(\psi) -1}$. Thus $N(\psi \bd_{q(\psi) -1}) = \psi$.
\end{proof}

In order to establish certain horns as being complicial, we will need to know that they have specific faces marked.
A large class of marked simplices in $T \Box^n_{i, \varepsilon}$ is given by the following characterization.

\begin{Def}
For $2 \leq i \leq m$, a simplex $\phi \colon \Delta^m \to T\Box^n$ is \emph{$i$-disordered} if it has exactly one entry with the value $i$, and none of its preceding entries have the value $i-1$.
\end{Def}

\begin{lem}\label{disordered-marked}
Every simplex of $T \Box^n$ which is $i$-disordered for some $i$ is marked.
\end{lem}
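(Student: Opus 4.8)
The plan is to argue directly from the characterization of marked simplices of $T\Box^n$ recalled just before \cref{linear-characterization}: a simplex $\phi \colon \Delta^m \to T\Box^n$ is marked if and only if it admits no complete substring, i.e.\ no order-preserving $\rho \colon \{1,\ldots,m\} \to \{1,\ldots,n\}$ with $\phi_{\rho_k} = k$ for all $k$. So it suffices to show that an $i$-disordered simplex has no complete substring.

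Fix $i$ with $2 \le i \le m$ and suppose $\phi$ is $i$-disordered; let $p$ be the unique position with $\phi_p = i$. Suppose for contradiction that $\rho$ is a complete substring of $\phi$. Then $\phi_{\rho_i} = i$, and since $p$ is the only position of $\phi$ carrying the value $i$, we must have $\rho_i = p$. Since $\rho$ is order-preserving (strictly increasing), $\rho_{i-1} < \rho_i = p$, and $\phi_{\rho_{i-1}} = i-1$ by the defining property of a complete substring. Thus $\rho_{i-1}$ is a position preceding $p$ whose entry has value $i-1$, contradicting the hypothesis that no entry of $\phi$ preceding the (unique) value-$i$ entry has value $i-1$. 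Hence $\phi$ has no complete substring and is therefore marked.

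This argument is essentially immediate from the definitions, so I do not anticipate any real obstacle; the only point requiring care is the bookkeeping around indices, namely using $2 \le i \le m$ to guarantee that the indices $i$ and $i-1$ both lie in the range $\{1,\ldots,m\}$ over which a complete substring is defined, and using the uniqueness clause of $i$-disorderedness to pin down $\rho_i = p$.
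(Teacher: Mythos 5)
Your proof is correct and is exactly the argument the paper intends: the paper simply declares it ``immediate from the definition'' that an $i$-disordered simplex has no complete substring, and your write-up supplies the (straightforward) details of that same observation, using uniqueness of the value-$i$ entry to force $\rho_i = p$ and then deriving a preceding value-$(i-1)$ entry for a contradiction.
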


\begin{proof}
It is immediate from the definition that an $i$-disordered simplex cannot contain any complete substring.
\end{proof}

\begin{lem}\label{disordered-face}
Let $\phi$ be an $i$-disordered $m$-simplex of $T \Box^n$ for some $2 \leq i \leq m$, and consider a face $\phi \bd_{j}$. If $j \geq i + 1$ then $\phi \bd_j$ is $i$-disordered. If $i \geq 3$ and $j \leq i - 3$, then $\phi \bd_j$ is $(i-1)$-disordered. 
\end{lem}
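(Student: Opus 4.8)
The plan is to compute directly with the string description of face maps from \cref{sec:background}. Write $m$ for the dimension of $\phi$, so that $\phi\bd_j$ is an $(m-1)$-simplex, and let $p_0 \in \{1,\ldots,n\}$ be the unique position with $\phi_{p_0} = i$. The observation to record at the outset is that $\bd_j$ fixes every position of the string and acts on its entries by leaving those of value $\leq j$ unchanged and lowering those of value $> j$ by one (with $\pm\infty$ fixed, a resulting value $0$ read as $-$, and a resulting value $m$ read as $+$). Consequently both claims reduce to the same bookkeeping: identify which entries of $\phi\bd_j$ carry the two values relevant to the target disorderedness condition, trace each such entry back to $\phi$, and invoke the two defining properties of $i$-disorderedness of $\phi$ (exactly one entry of value $i$; no entry of value $i-1$ preceding position $p_0$).

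For the first claim I would assume $j \geq i+1$, note that this forces $i \leq m-1$ so that ``$i$-disordered'' is meaningful for an $(m-1)$-simplex, and then observe that since $i \leq j$ every entry of value $\leq i$ is untouched by $\bd_j$. An entry of $\phi\bd_j$ can have value $i$ (respectively $i-1$) only if the corresponding entry of $\phi$ had value $i$ (respectively $i-1$): the only other preimage under the value-shift would be $i+1$ (respectively $i$), which is $\leq j$ and hence not lowered to $i$. Hence $\phi\bd_j$ has a single entry of value $i$, still at position $p_0$, and its entries of value $i-1$ are precisely those of $\phi$, none preceding $p_0$; so $\phi\bd_j$ is $i$-disordered.

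For the second claim I would assume $i \geq 3$ and $j \leq i-3$, so that $3 \leq i \leq m$ and hence $2 \leq i-1 \leq m-1$. Since $j < i-2 < i-1 < i$, the values $i$ and $i-1$ are both $> j$ and are lowered by $\bd_j$ to $i-1$ and $i-2$ respectively, whereas every entry that $\bd_j$ fixes has value $\leq j < i-2$. Therefore an entry of $\phi\bd_j$ of value $i-1$ must come from a value-$i$ entry of $\phi$, and an entry of value $i-2$ must come from a value-$(i-1)$ entry of $\phi$. This yields exactly one entry of value $i-1$ in $\phi\bd_j$, at position $p_0$, together with the fact that every value-$(i-2)$ entry of $\phi\bd_j$ is the image of a value-$(i-1)$ entry of $\phi$ and hence does not precede $p_0$; so $\phi\bd_j$ is $(i-1)$-disordered.

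I do not expect a real obstacle here — it is essentially a case check on strings — but two points deserve care in the write-up. First, the boundary case $j = 0$ of the second claim: the face-map rule there is ``replace every $1$ by $-$'', which differs in shape from ``lower entries $> j$ by one''; however, since $i \geq 3$ forces $i-2 \geq 1$, turning a value-$1$ entry into $-$ does not affect the counts of value-$(i-1)$ or value-$(i-2)$ entries, so the argument is unchanged. Second, it is worth confirming that the hypothesis $j \leq i-3$ (rather than $j \leq i-2$) is genuinely needed: when $j = i-2$ an original value-$(i-2)$ entry of $\phi$ survives with value $i-2$ in $\phi\bd_j$ and nothing prevents it from preceding $p_0$, so the conclusion can fail — which both justifies the stated bound and pinpoints exactly where the argument above uses it.
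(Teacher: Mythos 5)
Your proof is correct and takes essentially the same approach as the paper's, namely direct bookkeeping with the string description of face maps: track how $\bd_j$ shifts values and use the two defining conditions of $i$-disorderedness to control the entries of value $i,i-1$ (resp.\ $i-1,i-2$) in $\phi\bd_j$. The paper's version is terser but makes the identical observations; your two closing remarks (the $j=0$ boundary case, which in fact falls under the unified ``lower entries $>j$ by one'' rule without special treatment, and the sharpness of $j\le i-3$) are accurate side notes rather than deviations.
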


\begin{proof}
For the case $j \geq i + 1$, the face $\bd_j$ only lowers (or replaces with $+$) entries with values greater than or equal to $i + 2$. Thus $\phi \bd_j$ will still have a unique entry with value $i$, and will no have no new entries with value $i-1$.

Now consider the case $j \leq i - 3$. In this case, $\bd_j$ lowers all entries having the value $i$, $i-1$, or $i-2$. Thus $\phi \bd_j$ has a unique entry with the value $i - 1$, namely that whose position coincides with that of the unique $i$ in $\phi$. Moreover, any entry having the value $i-2$ in $\phi \bd_j$ must have the value $i-1$ in $\phi$; thus there is no entry preceding the unique $i$ in $\phi \bd_j$  whose value is $i-2$.
\end{proof}

\begin{lem}\label{disordered-complicial}
For $2 \leq i \leq m$, if $\phi$ is an $i$-disordered $m$-simplex of $T \Box^n$, then $\phi$ is $(i-1)$-complicial. 
\end{lem}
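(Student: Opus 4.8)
The plan is to reduce the statement to the markedness of the relevant faces of $\phi$ and then to invoke \cref{disordered-marked,disordered-face}. Recalling what it means for a simplex to be $(i-1)$-complicial, it suffices to verify that $\phi$ itself is marked in $T\Box^n$ and that every $(i-1)$-admissible face of $\phi$ is marked --- i.e.\ every face $\phi\alpha$ along a simplicial operator $\alpha\colon[p]\to[m]$ whose image contains $\{i-2,i-1,i\}$ --- since the attachment of $\phi$ along its $(i-1)$-horn is then a pushout of the complicial $(i-1)$-horn inclusion $\horn m {i-1}\incl\adelta m {i-1}$.

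First, $\phi$ is marked by \cref{disordered-marked}, which takes care of $\phi$ itself. For an $(i-1)$-admissible face $\phi\alpha$, the point is that $\phi\alpha$ is obtained from $\phi$ by deleting a set $S$ of vertices with $S\cap\{i-2,i-1,i\}=\varnothing$, so that $S$ splits into a ``high'' part $S^{+}$ with all entries $\geq i+1$ and a ``low'' part $S^{-}$ with all entries $\leq i-3$. Deleting the vertices of $S$ from the top downward --- first those of $S^{+}$ in decreasing order, then those of $S^{-}$ in decreasing order --- exhibits $\phi\alpha$ as an iterated codimension-one face of $\phi$, and deleting a larger vertex never disturbs the labels of the remaining vertices. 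By \cref{disordered-face}, each deletion coming from $S^{+}$ preserves $i$-disorderedness, while each subsequent deletion coming from $S^{-}$ lowers the disorder index by exactly $1$; a short arithmetic check --- using that $|S^{-}|=k$ forces $i\geq k+2$, as the $k$ low vertices are distinct and lie in $\{0,\ldots,i-3\}$ --- shows that the hypotheses of \cref{disordered-face} are met at every stage. Hence $\phi\alpha$ is disordered, and therefore marked by \cref{disordered-marked}.

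The only substantive work is this final bookkeeping: one must check that, as the simplex and its disorder index change along the iterated face maps, the index being deleted always lies in the range required by \cref{disordered-face} (either at least the current disorder index plus one, or at most the current disorder index minus three). Processing the deletions from the largest vertex downward is exactly what keeps every step within range; apart from this ordering argument, no ingredient beyond \cref{disordered-marked,disordered-face} is needed.
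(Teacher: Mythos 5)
Your proof is correct and follows essentially the same route as the paper's: the admissible faces are exactly those obtained by deleting vertices $\geq i+1$ and $\leq i-3$, and processing these deletions in decreasing order lets you apply \cref{disordered-face} at each step (preserving disorderedness for high deletions, dropping the index by one for low ones) and conclude with \cref{disordered-marked}. The paper phrases this by writing each such face in the standard form $\phi\bd_{j_1}\ldots\bd_{j_a}\bd_{k_1}\ldots\bd_{k_b}$ with $j_1>\ldots>j_a\geq i+1$ and $i-3\geq k_1>\ldots>k_b$, which is precisely your ordering, so no further changes are needed.
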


\begin{proof}
We must show that each simplex of the form $\phi\bd_{j_1} \ldots \bd_{j_a}\bd_{k_1} \ldots \bd_{k_b}$, where $j_1 >  \ldots  > j_a \geq i + 1$ and $i-3 \geq k_1 >  \ldots  > k_b$, is marked. (Note that either or both of the strings $j_1, \ldots ,j_a$ and $k_1, \ldots ,k_b$ may be empty.) By repeatedly applying \cref{disordered-face}, we can see that this simplex is $(i-b)$-disordered; thus it is marked by \cref{disordered-marked}.
\end{proof}

\begin{cor}\label{N-complicial}
For $\phi \in K_{m}'$, the $(m+1)$-simplex $N(\phi)$ is $q(\phi)$-complicial.
\end{cor}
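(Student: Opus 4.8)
The plan is to deduce the statement directly from \cref{disordered-complicial} by showing that $N(\phi)$ is $(q(\phi)+1)$-disordered; then, since a $(q(\phi)+1)$-disordered simplex is $q(\phi)$-complicial, we are done.

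First I would collect the facts about $N(\phi)$ established in the proof of \cref{N-bijection}: we have $N(\phi) \in K_{m+1}^\ast$ with $P(N(\phi)) = P(\phi)$, $Q(N(\phi)) = Q(\phi)$ and $q(N(\phi)) = q(\phi) + 1$, and the value $q(\phi)+1$ occurs exactly once in $N(\phi)$. Reading off the construction of $N(\phi)$, this unique occurrence is in position $Q(\phi)$: the entry there was $q(\phi)$ in $\phi$ and is raised by $1$, whereas every entry of $\phi$ already equal to $q(\phi)+1$ is raised to $q(\phi)+2$. Next I would note the numerical bounds: since $\phi$ is essential, every value in $\{1, \ldots, m\}$ appears in $\phi$, so $q(\phi) \le m$ and hence $2 \le q(\phi)+1 \le m+1$; also $q(\phi) \ge Q(\phi) \ge 1$.

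It then remains to check the second clause of the definition of $(q(\phi)+1)$-disorderedness, namely that no entry of $N(\phi)$ preceding the position $Q(\phi)$ has the value $q(\phi)$. But the entries of $N(\phi)$ in positions $1, \ldots, Q(\phi)-1$ coincide with the preamble $\Pi(\phi) = 1\,2\,\ldots\,P(\phi)$ of $\phi$, since normalization does not alter the preamble; hence these entries are all at most $P(\phi) = Q(\phi) - 1 < q(\phi)$, so none of them equals $q(\phi)$. Thus $N(\phi)$ is $(q(\phi)+1)$-disordered, and \cref{disordered-complicial} applied with $i = q(\phi)+1$ gives that $N(\phi)$ is $q(\phi)$-complicial. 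There is no substantial obstacle here; the only point demanding care is the positional bookkeeping, i.e.\ verifying that the unique entry of $N(\phi)$ with value $q(\phi)+1$ sits at position $Q(\phi)$ and that the strictly earlier entries form precisely the untouched preamble of $\phi$ — both of which are read off directly from the definition of normalization and the facts recorded in \cref{N-bijection}.
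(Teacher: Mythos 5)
Your proposal is correct and follows the paper's own argument: the paper also observes that $N(\phi)$ is $(q(\phi)+1)$-disordered directly from the definition of normalization and then cites \cref{disordered-complicial}. You simply spell out the verification (unique occurrence of $q(\phi)+1$ at position $Q(\phi)$, and the untouched preamble giving no earlier entry equal to $q(\phi)$), which is accurate.
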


\begin{proof}
From the definition of $N$, we can see that $N(\phi)$ is $(q(\phi)+1)$-disordered. The statement thus follows from \cref{disordered-complicial}.
\end{proof}

Our construction of $T \Box^n_{i, \varepsilon}$ from $T \sqcap^n_{i, \varepsilon}$ will involve applying \cref{more-markings} to some essential simplices, and hence we need to identify (some of) those simplices that become marked in the precomplicial reflection of $T \Box^n_{i, \varepsilon}$.
For this, we introduce the concept of linearization and show that the simplices whose linearizations are marked in $T \Box^n_{i, \varepsilon}$ are marked in $(T \Box^n_{i, \varepsilon})^\precomp$.

\begin{Def}
Let $\phi$ be a simplex of $T \Box^n$, and let $\rho$ be a complete substring of $\phi$. The \emph{linearization} of $\phi$ associated to $\rho$ is the $m$-simplex $\phi^\rho$ defined as follows:

\[
	\phi^\rho_{i} = \left\{\begin{array}{cl}
	+\infty, & \phi_i = +\infty, \,\, \text{or} \,\,\, \phi_i \in \{1,\ldots,m\} \,\,\, \text{and} \,\,\, i < \rho_{\phi_i} \\
	\phi_i &  \phi_i \in \{1,\ldots,m\} \,\,\, \text{and} \,\,\, i = \rho_{\phi_i} \\
	-\infty, & \phi_i = -\infty, \,\, \text{or} \,\,\, \phi_i \in \{1,\ldots,m\} \,\,\, \text{and} \,\,\, i > \rho_{\phi_i} \end{array}\right.
	\]

\end{Def}

Note that all linearizations are linear simplices.
Indeed, from the definition of linearization, we see that a linearization of an $m$-simplex consists of the sequence $1 \, \ldots \, m$ interspersed with occurrences of $+\infty$ and $- \infty$.
By comparison with the discussion preceding \cref{def:linear-simplices}, we see that linear simplices are precisely the ones fitting this description.
Thus we may, for instance, apply \cref{comical-triangulation-marking} to show that a linearization is marked.

\begin{egs}
To illustrate the concept of a linearization, we consider the linearizations of various simplices:
\begin{itemize}
\item The unique linearization of $2\,1\,2\,1\,3$ is $+\,1\,2-\,3$.
\item The linearizations of $1\,2-2$ are $1\,2--$ and $1+-\,2$.
\item The linearizations of $1\,1\,1$ are $1--$, $+\,1-$, and $++1$.
\item The linearizations of $1\,2\,3\,2\,3$ are $1\,2\,3--$, $1\,2+-3$, and $1++\,2\,3$.
\item Every linear simplex is its own unique linearization.
\item A simplex of $T \Box^n$ is marked if and only if it has no complete substrings, and hence no linearizations.
\end{itemize}
\end{egs}

\begin{lem}\label{cube-face-linearization}
Cubical face maps preserve linearizations. That is, for $\phi \colon \Delta^m \to T \Box^{n-1}$, for any face map $\bd_{i,\varepsilon} \colon \Box^{n-1} \to \Box^n$, the linearizations of $\bd_{i,\varepsilon} \phi$ are precisely the images under $\bd_{i,\varepsilon}$ of the linearizations of $\phi$.
\end{lem}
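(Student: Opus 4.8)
\section*{Proof proposal}

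The plan is to exhibit an explicit bijection between the complete substrings of $\phi$ and those of $\bd_{i,\varepsilon}\phi$, and then to check that the linearization operation intertwines this bijection with the face map $\bd_{i,\varepsilon}$.

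First I would record the way $\bd_{i,\varepsilon}$ acts on strings: it inserts the symbol $+\infty$ (when $\varepsilon = 0$) or $-\infty$ (when $\varepsilon = 1$) in position $i$ and shifts all later entries one step to the right. Writing $\delta \colon \{1,\ldots,n-1\} \to \{1,\ldots,n\}$ for the order-embedding whose image omits $i$, this says $(\bd_{i,\varepsilon}\phi)_{\delta(j)} = \phi_j$ for all $j$, while $(\bd_{i,\varepsilon}\phi)_i \in \{\pm\infty\}$. Since position $i$ of $\bd_{i,\varepsilon}\phi$ carries $\pm\infty$, no complete substring of $\bd_{i,\varepsilon}\phi$ can take a value in $\{1,\ldots,m\}$ there, so every complete substring of $\bd_{i,\varepsilon}\phi$ factors uniquely through $\delta$; using $(\bd_{i,\varepsilon}\phi)_{\delta(j)} = \phi_j$ one sees immediately that $\rho \colon \{1,\ldots,m\} \to \{1,\ldots,n-1\}$ is a complete substring of $\phi$ if and only if $\delta\rho$ is a complete substring of $\bd_{i,\varepsilon}\phi$. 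Thus $\rho \mapsto \delta\rho$ is the desired bijection.

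The core step is then the identity $(\bd_{i,\varepsilon}\phi)^{\delta\rho} = \bd_{i,\varepsilon}(\phi^\rho)$ for each complete substring $\rho$ of $\phi$, which I would verify entry by entry. In position $i$ both sides equal $+\infty$ or $-\infty$ according to $\varepsilon$, since linearization fixes $\pm\infty$ entries. For a position of the form $\delta(j)$, the entry $(\bd_{i,\varepsilon}\phi)_{\delta(j)}$ equals $\phi_j$; if this is $\pm\infty$ then both sides copy it, and if it is a value $v \in \{1,\ldots,m\}$ then the linearized entry is governed by the trichotomy of $\delta(j)$ against $(\delta\rho)_v = \delta(\rho_v)$, which coincides with the trichotomy of $j$ against $\rho_v$ because $\delta$ is strictly monotone; and the latter is exactly the rule defining $(\phi^\rho)_j = (\bd_{i,\varepsilon}(\phi^\rho))_{\delta(j)}$. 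Combining this identity with the bijection of the previous paragraph, the linearizations of $\bd_{i,\varepsilon}\phi$ are precisely the simplices $\bd_{i,\varepsilon}(\phi^\rho)$ as $\rho$ ranges over the complete substrings of $\phi$, i.e.\ exactly the images under $\bd_{i,\varepsilon}$ of the linearizations of $\phi$. The only real obstacle is keeping the two index shifts straight in the case analysis; there is no conceptual difficulty, as everything reduces to $\delta$ being an order-embedding that preserves $\pm\infty$ entries.
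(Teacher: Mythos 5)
Your argument is correct and is exactly the unwinding of definitions that the paper has in mind: the paper's proof simply declares the lemma ``immediate from the definitions of linearization and the actions of cubical face maps,'' and your bijection $\rho \mapsto \delta\rho$ on complete substrings together with the entry-by-entry check of $(\bd_{i,\varepsilon}\phi)^{\delta\rho} = \bd_{i,\varepsilon}(\phi^\rho)$ is a faithful (and more detailed) rendering of that. No gap; same approach, just written out.
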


\begin{proof}
This is immediate from the definitions of linearization and the actions of cubical face maps.
\end{proof}

We next consider the interaction between linearization and normalization, and demonstrate a relationship between the linearizations of an abnormal essential simplex $\phi$ and those of the faces of its normalization $N(\phi)$. Before proving the general result (\cref{N-linearization}), we consider an illustrative example.

Let $\phi$ denote the $3$-simplex of $T\Box^5$ represented by the string $1\,2\,1\,2\,3$; then $q(\phi) = 2$ and $N(\phi) = 1\,3\,1\,2\,4$. Moreover, we may observe that $\phi$ has three linearizations: $1\,2\,-\,-\,3$, $1\,+-\,2\,3$, and $+\,+\,1\,2\,3$.
By \cref{N-bijection} we have $N(\phi) \bd_{2} = \phi$; the remaining faces of $N(\phi)$ and their linearizations are listed below.

\begin{itemize}
\item $N(\phi) \bd_0 = -\,2\,-\,1\,3$, with no linearizations.
\item $N(\phi) \bd_1 = 1\,2\,1\,1\,3$, with linearization $1\,2\,-\,-\,3$.
\item $N(\phi) \bd_3 = 1\,3\,1\,2\,3$, with linearizations $1\,+\,-\,2\,3$ and $+\,+\,1\,2\,3$.
\item $N(\phi) \bd_4 = 1\,3\,1\,2\,+$, with no linearizations.
\end{itemize}

Thus we see that the linearizations of $\phi$ are ``distributed'' between the other faces of $N(\phi)$; the one linearization corresponding to a complete substring which includes position 2 is the unique linearization of $N(\phi) \bd_1$, while those corresponding to complete substrings which do not include position 2 are the linearizations of $N(\phi) \bd_3$.

This is an instance of a general pattern, which we now prove.

\begin{lem}\label{N-linearization}
For an abnormal essential simplex $\phi \colon \Delta^m \to T\Box^n$, the linearizations of the faces of $N(\phi)$ (other than $N(\phi) \bd_{q(\phi)} = \phi$) are as follows:

\begin{enumerate}
\item\label{N-linearization-minus} The linearizations of $N(\phi) \bd_{q(\phi)-1}$ are the linearizations of $\phi$ corresponding to complete substrings which include $Q(\phi)$;
\item\label{N-linearization-plus} The linearizations of $N(\phi) \bd_{q(\phi)+1}$ are the linearizations of $\phi$ corresponding to complete substrings which do not include $Q(\phi)$;
\item\label{N-linearization-other} For $i < q(\phi) - 1$ or $i > q(\phi) + 1$, $N(\phi)\bd_i$ has no linearizations.
\end{enumerate}
\end{lem}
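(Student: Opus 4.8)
The plan is to prove the three assertions by direct computation with the string description of the simplices of $T\Box^n$, after first identifying the relevant faces of $N(\phi)$ explicitly. Write $q = q(\phi)$, $Q = Q(\phi)$, $P = P(\phi)$, so that $Q = P+1$ and (recall) $q \ge Q$; since $N(\phi)$ is only relevant for $\phi \in K_m'$, the value $q$ occurs at least twice in $\phi$, and since $m \le n-1$ forces $P \le m-1$ we have $Q \le m$ and $q \le m$, so $\bd_{q-1}$, $\bd_q$, $\bd_{q+1}$ are all faces of the $(m+1)$-simplex $N(\phi)$.

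I would settle part (3) first, reusing the observation from the proof of \cref{N-complicial} that $N(\phi)$ is $(q+1)$-disordered, its unique entry of value $q+1$ being in position $Q$. By \cref{disordered-face}, the face $N(\phi)\bd_j$ is $(q+1)$-disordered when $j \ge q+2$, and is $q$-disordered when $q \ge 2$ and $j \le q-2$; in either case \cref{disordered-marked} shows it is marked, hence admits no complete substring and therefore no linearization. (When $q = 1$ there is no index $j < q-1$, so nothing is asserted there.)

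For parts (1) and (2), I would first compute, from the definition of $N(\phi)$ together with the action of the simplicial face operators on strings, that $N(\phi)\bd_{q+1}$ differs from $\phi$ only in position $Q$, where it carries $q+1$ in place of $q$ (becoming $+\infty$ in the boundary case $q = m$), while $N(\phi)\bd_{q-1}$ is obtained from $\phi$ by lowering by $1$ precisely the occurrences of the value $q$ outside position $Q$ (to $q-1$, or to $-\infty$ when $q = 1$) and leaving all other entries unchanged. Two structural features of $\phi \in K_m'$ then carry the argument: every entry of $\phi$ in a position $< Q$ has value $< q$ (the preamble fills positions $1, \ldots, P$ with the values $1, \ldots, P$), and every occurrence of $q$ in $\phi$ other than the one in position $Q$ lies in a position $> Q$. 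From these I would verify that a complete substring $\rho$ of $\phi$ with $Q \notin \Im \rho$ is also a complete substring of $N(\phi)\bd_{q+1}$, and conversely that every complete substring of $N(\phi)\bd_{q+1}$ is of this form --- a complete substring cannot realize the value $q+1$ in position $Q$, since positions $\le P$ carry only values $\le P < q$, so it must avoid $Q$. Dually, a complete substring $\rho$ of $\phi$ with $Q \in \Im \rho$ necessarily has $\rho_q = Q$ (and, when $q \ge 2$, $\rho_{q-1} = q-1$, which forces $q = Q$); such $\rho$ are exactly the complete substrings of $N(\phi)\bd_{q-1}$, since $Q$ is the only position of $N(\phi)\bd_{q-1}$ carrying the value $q$.

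Finally I would check that these two bijections match associated linearizations, arguing entry by entry against the definition of $\phi^\rho$: at positions where the two strings coincide the linearization entries coincide; in the comparison for $N(\phi)\bd_{q+1}$, position $Q$ lies strictly to the left of $\rho_q$ --- an occurrence of $q$ outside $Q$ --- hence also of $\rho_{q+1}$, so the linearization entry there is $+\infty$ for both strings; in the comparison for $N(\phi)\bd_{q-1}$, each altered position lies strictly to the right of $\rho_q = Q$ and of $\rho_{q-1} = q-1$, so the linearization entry there is $-\infty$ for both strings. This yields parts (1) and (2). I expect the main obstacle to be purely organizational: writing out $N(\phi)\bd_{q\pm 1}$ correctly as strings and threading the two structural features through the complete-substring correspondence and the linearization comparison, with the boundary cases $q = 1$ (where $\bd_{q-1} = \bd_0$ sends the $1$'s to $-\infty$) and $q = m$ (where $\bd_{q+1} = \bd_{m+1}$ sends the top value to $+\infty$) handled as minor variants of the generic argument.
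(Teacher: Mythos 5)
Your proposal is correct and takes essentially the same route as the paper's proof: item (3) via the $(q(\phi)+1)$-disorderedness of $N(\phi)$ together with \cref{disordered-face,disordered-marked}, and items (1)--(2) by computing $N(\phi)\bd_{q(\phi)\pm1}$ explicitly as modifications of $\phi$, matching complete substrings according to whether they pass through $Q(\phi)$, and checking that the associated linearizations agree entry by entry.
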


\begin{proof}
For \cref{N-linearization-minus}, observe that $N(\phi) \bd_{q(\phi)-1}$ is obtained from $\phi$ by lowering all entries of $\phi$ having the value $q(\phi)$, other than that in position $Q(\phi)$. Thus any linearization of $N(\phi) \bd_{q(\phi)-1}$ must include $Q(\phi)$. Moreover, it cannot include any of the entries which were changed to obtain $N(\phi) \bd_{q(\phi)-1}$, as these all appear after position $Q(\phi)$ and have value $q(\phi) - 1$. Thus we see that the complete substrings of $N(\phi) \bd_{q(\phi)-1}$ are those of $\phi$ which do not include any entries which are changed in $N(\phi) \bd_{q(\phi) - 1}$, and these are precisely those which include $Q(\phi)$. Furthermore, note that for any such complete substring $\rho$, those entries of $\phi$ which are lowered to obtain $N(\phi) \bd_{q(\phi)-1}$ are replaced by $-$ in the linearization $\phi^\rho$, as they have value $q(\phi)$ and appear after position $Q(\phi) = \rho_{q(\phi)}$; in the corresponding linearization of $N(\phi) \bd_{q(\phi)-1}$ they will still be replaced with $-$, as they now have value $q(\phi) - 1$, and appear after position $\rho_{q(\phi) - 1}$.

The proof of \cref{N-linearization-plus} is similar. Observe that $N(\phi) \bd_{q(\phi) + 1}$ is obtained from $\phi$ by raising the value in position $Q(\phi)$ to $q(\phi) + 1$ and leaving all other entries unchanged. As there are no preceding entries having the value $q(\phi)$, there can be no complete substring of $N(\phi) \bd_{q(\phi) + 1}$ including $Q(\phi)$. Therefore, as in the previous case, we see that the complete substrings of $N(\phi) \bd_{q(\phi) + 1}$ are those of $\phi$ which involve only the positions whose values are unchanged in $N(\phi) \bd_{q(\phi) + 1}$ -- in this case, these are the positions other than $Q(\phi)$. In the linearizations of both $\phi$ and $N(\phi) \bd_{q(\phi) + 1}$ corresponding to these complete substrings, the entry in position $Q(\phi)$ is replaced with $+$, as its value is greater than or equal to $q(\phi)$ and its position is earlier than $\rho_{q(\phi)}$.

Item \ref{N-linearization-other} is immediate from \cref{disordered-face}. 
\end{proof}

We are now ready to define the partial order on the non-degenerate $m$-simplices of $T \Box^n$.

\begin{Def}\label{simplex-order}
For $m \geq 1$, we define a partial order on the non-degenerate $m$-simplices of $T \Box^n$ as follows:
\begin{itemize}
\item if $\phi$ is contained in $T \bd \Box^n$ or $\phi \in K_m^\ast$, and $\psi \in K_m'$ then $\phi < \psi$;
\item for distinct $\phi, \psi \in K_{m}'$, we have $\phi < \psi$ if either $P(\phi) < P(\psi)$, or $P(\phi) = P(\psi)$ and $q(\phi) > q(\psi)$.
\end{itemize}
The relation $<$ defined above is easily seen to be transitive and anti-symmetric; the partial order $\leq$ is defined to be its reflexive closure.
\end{Def}

\begin{lem}\label{N-order}
For $\phi \in K_{m}'$ and $0 \leq i \leq m+1$, $i \neq q(\phi)$, we have $\phi > N(\phi) \bd_i$.
\end{lem}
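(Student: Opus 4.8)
The plan is to classify, for each index $i \in \{0, \ldots, m+1\} \setminus \{q(\phi)\}$, which of the two clauses of \cref{simplex-order} witnesses $N(\phi)\bd_i < \phi$, by reading off where the face $N(\phi)\bd_i$ lands among the non-degenerate $m$-simplices of $T\Box^n$. I would first record from the proof of \cref{N-bijection} the landmarks of $N(\phi)$, namely $P(N(\phi)) = P(\phi)$, $Q(N(\phi)) = Q(\phi)$, $q(N(\phi)) = q(\phi)+1$ and $N(\phi)\bd_{q(\phi)} = \phi$; and note that, since we are in the range $m \leq n-1$, \cref{K-prime} gives that $q(\phi)$ occurs at least twice in $\phi$, so that $0 \leq P(\phi) < Q(\phi) \leq q(\phi) \leq m$ and in $N(\phi)$ the entry $q(\phi)+1$ occupies position $Q(\phi)$ and occurs nowhere else. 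Everything after this is string bookkeeping with the face maps $\bd_i$.

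The boundary indices $i = 0$ and $i = m+1$ are immediate: $N(\phi)$ is essential of dimension $m+1$, hence contains a $1$ and an $m+1$, so $N(\phi)\bd_0$ acquires a $-$ and $N(\phi)\bd_{m+1}$ acquires a $+$; both therefore lie in $T\bd\Box^n$, and the first clause of \cref{simplex-order} (using $\phi \in K_m'$) yields the conclusion. For $0 < i < m+1$ with $i \neq q(\phi)$, the operator $\bd_i$ introduces no $\pm\infty$ and keeps all of $1, \ldots, m$ in play (it merges the values $i$ and $i+1$), so $N(\phi)\bd_i$ is again essential; if it happens to lie in $K_m^\ast$ we are done by the first clause, so the real work is the case $N(\phi)\bd_i \in K_m'$, which I would split according to the position of $i$ relative to $P(\phi) < Q(\phi) \leq q(\phi)$.

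For $1 \leq i \leq P(\phi)$, the value $i+1$ of $N(\phi)$ is lowered onto the (unique, preamble) copy of $i$, so $i$ is repeated while $1, \ldots, i-1$ stay unique; hence $P(N(\phi)\bd_i) = i-1 < P(\phi)$ and the second clause applies. For $i = q(\phi)+1$ (when $\leq m$), the entry at $Q(\phi)$ is untouched but the value $q(\phi)+2$ is lowered onto it, so the repeated value at $Q(\phi)$ becomes $q(\phi)+1$ with the preamble unchanged; thus $P(N(\phi)\bd_i) = P(\phi)$ and $q(N(\phi)\bd_i) = q(\phi)+1 > q(\phi)$, and again the second clause applies. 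Finally, in the ranges $Q(\phi) \leq i < q(\phi)$ and $q(\phi)+1 < i \leq m$ I would argue that $N(\phi)\bd_i$ is in fact normal, so these cases never arise: in the first range the lone entry $q(\phi)+1$ at position $Q(\phi)$ descends to $q(\phi)$ while every genuine copy of $q(\phi)$ descends past it to $q(\phi)-1$, leaving $q(\phi)$ with a single occurrence; in the second range $q(\phi)+1$ is fixed and nothing else descends onto it.

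The one delicate point, where I expect to spend the most care, is this last claim that the ranges $Q(\phi) \leq i < q(\phi)$ and $q(\phi)+1 < i \leq m$ produce normal faces. It is genuinely needed rather than cosmetic: a direct computation in the first range gives $P(N(\phi)\bd_i) = P(\phi)$ \emph{and} $q(N(\phi)\bd_i) = q(\phi)$, so $N(\phi)\bd_i$ is not strictly below $\phi$ via the second clause, and only the appeal to normality (first clause) saves it. The resolution is exactly the bookkeeping fact singled out in the first paragraph --- that in $N(\phi)$ the value $q(\phi)+1$ sits only at position $Q(\phi)$ while all the $\geq 2$ repeated copies of $q(\phi)$ lie strictly beyond the preamble --- so that after applying $\bd_i$ the copies of $q(\phi)$ collapse down and the one at $Q(\phi)$ becomes unique. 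Tracking this correctly through all sub-ranges is the crux; the remaining manipulations are routine.
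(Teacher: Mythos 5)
Your proof is correct and takes essentially the same route as the paper's: reduce to the landmark data $P(N(\phi))=P(\phi)$, $Q(N(\phi))=Q(\phi)$, $q(N(\phi))=q(\phi)+1$, and then classify $N(\phi)\bd_i$ case by case against the two clauses of \cref{simplex-order}. The only differences are in how the index ranges are bundled: you merge the paper's separate sub-cases $1 \leq i \leq P(\phi)-1$ and $i = P(\phi)$ into one (the single argument ``some copy of $i+1$ descends onto the preamble's $i$'' does cover both), and you split the paper's range $q(\phi)+1 \leq i \leq m$ into $i=q(\phi)+1$ (abnormal, second clause) versus $i>q(\phi)+1$ (normal, first clause), whereas the paper reports the $(P,q)$ data once and lets either clause absorb it; both bundlings are sound and neither buys anything substantive.
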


\begin{proof}
We proceed by case analysis on $i$.

\begin{itemize}
\item If $i = 0$ or $i = m+1$, then $N(\phi) \bd_i$ is part of $T \bd \Box^n$, while $\phi \in K_{m}'$ by assumption.
\item If $1 \leq i \leq P(\phi) - 1$, then $(N(\phi)\bd_i)_i = (N(\phi)\bd_i)_{i+1} = i$, while entries before position $i$ are the same as in $\phi$. Thus $P(N(\phi)\bd_i) = i-1 < i < P(\phi)$. (Note that this case is vacuous if $P(\phi) = 0$ or $P(\phi) = 1$.)
\item If $i = P(\phi)$, then there is some $j > Q(\phi)$ such that $\phi_j = i + 1$, and this value is unchanged in $N(\phi)$, as it is less than or equal to $q(\phi)$ and not in position $Q(\phi)$. Therefore, in $N(\phi)\bd_i$ this value is lowered to $i$, creating a repetition. As values less than or equal to $i$ are unchanged, we can see that $P(N(\phi)\bd_i) = i - 1 = P(\phi) - 1 < P(\phi)$.
\item If $P(\phi) + 1 = Q(\phi) \leq i \leq q(\phi) - 1$, then we first note that $q(\phi) \geq P(\phi) + 2$. In computing $N(\phi)\bd_i$ from $N(\phi)$, we lower the value in position $Q(\phi)$ from $q(\phi) + 1$ to $q(\phi)$, and we also lower every occurrence of the value $q(\phi)$ to $q(\phi) - 1$. Thus $q(\phi)$ appears only once in $N(\phi)\bd_i$, in position $Q(\phi)$. Since entries less than or equal to $P(\phi)$ are unchanged, we have $P(N(\phi)\bd_i) = P(\phi)$ and $q(N(\phi)\bd_i) = q(\phi)$; thus $N(\phi)\bd_i \in K_{m}^\ast$. (Note that this case is vacuous if $q(\phi) = Q(\phi)$.)
\item If $q(\phi) + 1 \leq i \leq m$, then in computing $N(\phi)\bd_i$ from $N(\phi)$ we do not change any values less than or equal to $q(\phi) + 1$. Thus $P(N(\phi)\bd_i) = P(\phi)$ and $q(N(\phi)\bd_i) = q(\phi) + 1 > q(\phi)$. \qedhere
\end{itemize}
\end{proof}

We will decompose $T \sqcap^n_{i, \varepsilon} \to T \Box^n_{i, \varepsilon}$ as
\[ T \sqcap^n_{i, \varepsilon} \to \Xi^n_i \to T \Box^n_{i, \varepsilon} \]
and show that the two maps are acyclic cofibrations.
In the step $T \sqcap^n_{i, \varepsilon} \to \Xi^n_i$, we will add all simplices of dimension $\leq n-2$ and some of those in dimensions $n-1$ and $n$, while the remaining $(n-1)$- and $n$-simplices will be added in the step $ \Xi^n_i \to T \Box^n_{i, \varepsilon}$.
To define this decomposition, we begin by identifying the simplices that will be added in the latter step.

\begin{Def}\label{omega-def}
For $n \geq 1, 1 \leq i \leq j \leq n$, we let $\omega^{n,i,j}$ denote the $(n-1)$-simplex of $T \Box^n$ defined as follows:

\[
	\omega^{n,i,j}_{k} = \left\{\begin{array}{ll}
	k, & k < i \\
	j, & k = i, j \leq n - 1 \\
	+ \infty, & k = i, j = n \\
	k - 1, & k > i \end{array}\right.
	\]

(Note that $\omega^{n,i,n} = \iota_{\bd_{i,0}}$.) For $j \leq n-1$, we let $\Omega^{n,i,j}$ denote the $n$-simplex obtained from $\omega^{n,i,j}$ by raising the value of the entry in the $(j+1)$-position (i.e. the second occurrence of $j$ in $\omega^{n,i,j}$), and all entries greater than $j$, by 1. We let $\Omega^{n,i,n}$ denote the $n$-simplex obtained from $\omega^{n,i,+}$ by changing the unique occurrence of $+$ in $\omega^{n,i,n}$ to $n$. More explicitly, we may define $\Omega^{n,i,j}$ for all $i \leq j \leq n$ as follows:

\[
	\Omega^{n,i,j}_{k} = \left\{\begin{array}{ll}
	k, & k < i \\
	j, & k = i\\
	k - 1, & i < k < j + 1 \\
	k, & i \geq j + 1 \end{array}\right.
	\]
\end{Def}

We will suppress the superscript $n$ from the notation above where there is no risk of ambiguity, and simply write $\omega^{i,j}$ and $\Omega^{i,j}$.

\begin{egs}
To clarify the definition of $\omega^{i,j}$ and $\Omega^{i,j}$, we state their definitions in the case $n = 5, i = 3$.

\begin{itemize}
\item $\omega^{3,3} = 1\,2\,3\,3\,4; \Omega^{3,3} = 1\,2\,3\,4\,5$.
\item $\omega^{3,4} = 1\,2\,4\,3\,4; \Omega^{3,4} = 1\,2\,4\,3\,5$.
\item $\omega^{3,5} = 1\,2+3\,4; \Omega^{3,5} = 1\,2\,5\,3\,4$.
\end{itemize}

In general, we always have $\Omega^{i,i} = \iota_n$.
\end{egs}

\begin{lem}\label{N-omega}
For $n \geq 2$ and $1 \leq i \leq n - 1$, we have $N(\omega^{i,j}) = \Omega^{i,j+1}$.
\end{lem}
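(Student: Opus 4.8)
The plan is to prove the identity by a direct computation with the string representations, using the explicit formula for $\omega^{i,j}$, the recipe defining the normalization operator $N$, and the explicit formula for $\Omega^{i,j+1}$. Throughout I take $i \le j \le n-1$, the range in which both sides are defined (recall $\omega^{i,n}$ is not interior, so not essential, and $\Omega^{i,j+1}$ requires $j+1 \le n$).

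First I would read off the invariants $P$, $Q$, $q$ of $\omega^{i,j}$. From its defining formula the values $1, \dots, i-1$ occur exactly at positions $1, \dots, i-1$ and nowhere else, since every later position carries a value $\ge i$; the entry at position $i$ is $j$, which differs from $i$ when $j>i$, while when $j=i$ the value $i$ is also carried by position $i+1$ because $\omega^{i,j}_{i+1} = i$. In either case the preamble is $1\,\dots\,(i-1)$, so $P(\omega^{i,j}) = i-1$, $Q(\omega^{i,j}) = i$, and $q(\omega^{i,j}) = \omega^{i,j}_i = j$. I would also note that $\omega^{i,j}$ is essential (it has no $\pm\infty$ entries and every value $1, \dots, n-1$ occurs) and abnormal, since $q = j$ is carried both by position $i$ and by position $j+1$ (where $\omega^{i,j}_{j+1} = j$, using $j \le n-1$); hence $\omega^{i,j} \in K'_{n-1}$ by \cref{K-prime}, and $N(\omega^{i,j})$ is a well-defined $n$-simplex.

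Next, applying the definition of normalization, $N(\omega^{i,j})$ is obtained from $\omega^{i,j}$ by raising the entry at position $Q = i$ from $j$ to $j+1$ and raising by $1$ every entry whose value exceeds $q = j$. Since $\omega^{i,j}_k = k-1$ for $k > i$, the latter entries are precisely those at positions $k > j+1$, and a short case split on $k$ yields
\[
N(\omega^{i,j})_k =
\begin{cases}
k & k < i,\\
j+1 & k = i,\\
k-1 & i < k \le j+1,\\
k & k > j+1.
\end{cases}
\]
Comparing with the explicit description of $\Omega^{n,i,-}$ evaluated at third parameter $j+1$ (correcting its final clause to read $k \ge (\text{third parameter})+1$, i.e. here $k > j+1$) gives exactly this string, so $N(\omega^{i,j}) = \Omega^{i,j+1}$. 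As a cross-check one can confirm agreement with the alternative description of $\Omega^{i,j+1}$ as $\omega^{i,j+1}$ with its second occurrence of $j+1$ and all larger entries raised by $1$, and with the worked examples at $n = 5$, $i = 3$.

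There is no deep obstacle here — the whole argument is bookkeeping with strings. The only points requiring care are the boundary cases ($i = 1$, $i = j$, $j = n-1$, where some clauses of the case split become empty), checking that the four clauses partition $\{1, \dots, n\}$ with no overlap or gap, and fixing the intended reading of the slightly ambiguously typeset defining formula for $\Omega^{n,i,j}$; the worked examples pin this reading down.
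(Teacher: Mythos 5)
Your proof is correct and follows essentially the same route as the paper's: compute $Q(\omega^{i,j})=i$ and $q(\omega^{i,j})=j$, apply the normalization recipe, and match the resulting string against $\Omega^{i,j+1}$. Your version is just somewhat more careful than the paper's (explicitly checking $P$, essentiality and membership in $K'_{n-1}$, and the typo in the displayed formula for $\Omega^{n,i,j}$, whose last clause should indeed read $k\geq j+1$), so there is nothing to add.
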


\begin{proof}
From the definition of $\omega^{i,j}$ for $1 \leq j \leq n - 1$, we can see that the value $j$ appears in position $i$ and in position $j + 1$, and that all other entries appear exactly once and in order; thus $Q(\omega^{i,j}) = i$ and $q(\omega^{i,j}) = j$. To construct $N(\omega^{i,j})$, we first raise the value in position $i$ to $j+1$, thereby obtaining $\omega^{i,j+1}$ (or $\omega^{i,j+1}$ with the $+$ in position $i$ replaced by $n$, in the case $j = n$). We then raise every entry which is greater than $j$, aside from this first occurrence of $j+1$, by 1, thereby obtaining $\Omega^{i,j+1}$.
\end{proof}

\begin{lem}\label{omega-face}
For $1 \leq i \leq j \leq n$ as above, $\Omega^{i,j}\bd_{j} = \omega^{i,j}$. Moreover, if $j \geq i + 1$ then $\Omega^{i,j} \bd_{j-1} = \omega^{i,j-1}$.
\end{lem}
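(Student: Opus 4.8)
The plan is to verify both identities by direct computation, starting from the explicit string formula for $\Omega^{i,j}$ in \cref{omega-def} and the rules for evaluating face maps on strings recorded in \cref{sec:background}. Recall those rules: for an $n$-simplex $\phi$ and an index $l$ with $0 < l < n$, the face $\phi\bd_l$ is obtained by lowering every entry of $\phi$ that exceeds $l$ by $1$ and leaving all other entries fixed, whereas $\phi\bd_n$ is obtained by replacing every entry equal to $n$ by $+\infty$. Since $\Omega^{i,j}$ is an $n$-simplex and the faces in question are $\bd_j$ and $\bd_{j-1}$, I would split according to whether $j = n$: when $j = n$ the face $\bd_j$ is the top face and uses the second rule, while for $j < n$ it is an inner face governed by the first rule. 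The face $\bd_{j-1}$, in the situation of the second claim where $j \geq i+1$, always has $1 \leq j-1 \leq n-1$ and so is always inner.

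For the first identity I would argue as follows. Suppose first $j \leq n-1$. By the explicit formula for $\Omega^{i,j}$, the entries exceeding $j$ are exactly those in positions $k \geq j+1$, each carrying the value $k$; lowering precisely these by $1$ turns the value $j$ at position $i$, the values $k-1$ in positions $i < k \leq j$, and the former values $k$ in positions $k \geq j+1$ into the pattern ``$k$ for $k < i$, $j$ at $k = i$, $k-1$ for $k > i$'', which is exactly $\omega^{i,j}$. If instead $j = n$, then the only entry of $\Omega^{i,n}$ equal to $n$ is the one in position $i$ (positions $k > i$ carry $k-1 \leq n-1$, and positions $k < i$ carry $k < i \leq n$), so replacing it by $+\infty$ produces the string ``$k$ for $k < i$, $+\infty$ at $k = i$, $k-1$ for $k > i$'', which is $\omega^{i,n}$ by definition.

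For the second identity, assume $j \geq i+1$. Then $\bd_{j-1}$ is inner and is computed by lowering every entry exceeding $j-1$, i.e.\ every entry that is $\geq j$, by $1$. In $\Omega^{i,j}$ the entries that are $\geq j$ are the value $j$ at position $i$ together with the values $k$ in positions $k \geq j+1$; lowering exactly these yields the value $j-1$ at position $i$ and the values $k-1$ in positions $k \geq j+1$, while the positions $k < i$ and $i < k \leq j$ are unchanged. Collecting cases, the resulting string has value $k$ for $k < i$, value $j-1$ at $k = i$, and value $k-1$ for $k > i$; this is exactly $\omega^{i,j-1}$, which is well-defined since $i \leq j-1 \leq n-1$. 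There is no genuine obstacle in this lemma: the computation is entirely mechanical, and the only point requiring a little attention is the case split at $j = n$ in the first identity, together with keeping careful track of which entries straddle the thresholds $j$ and $j-1$ so as to confirm that no further entries are affected.
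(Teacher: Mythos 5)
Your computation is correct, and for the first identity it coincides with the paper's argument: the same case split at $j=n$, with the inner face $\bd_j$ lowering exactly the entries in positions $k\geq j+1$ (which are the ones raised in passing from $\omega^{i,j}$ to $\Omega^{i,j}$), and the top face $\bd_n$ replacing the unique occurrence of $n$ by $+$. Where you diverge is the second identity: the paper does not recompute anything there, but instead observes that it is immediate from \cref{N-omega} ($\Omega^{i,j}=N(\omega^{i,j-1})$) together with \cref{N-bijection}, whose proof already established that $N(\phi)\bd_{q(\phi)}=\phi$, applied with $q(\omega^{i,j-1})=j-1$. Your direct string manipulation is an equally short, self-contained substitute, and your bookkeeping is accurate: you correctly verify that the only entries of $\Omega^{i,j}$ that are $\geq j$ are the value $j$ in position $i$ and the values $k$ in positions $k\geq j+1$ (using $j\geq i+1$ to see that positions $k<i$ carry values $<j$), and you note that $j-1\leq n-1$ keeps you in the ``finite value'' clause of the definition of $\omega^{i,j-1}$. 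The paper's route buys consistency with how this pair of simplices is used elsewhere (everything about $\Omega^{i,j}$ is funneled through the normalization machinery), while yours buys independence from \cref{N-bijection,N-omega}; either is acceptable.
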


\begin{proof}
We begin by considering the first statement. For $j \leq n - 1$, observe that we compute $\Omega^{i,j} \bd_{j}$ by lowering those entries of $\Omega^{i,j}$ which are greater than $j$, and these are precisely the entries of $\omega^{i,j}$ which were  raised to obtain $\Omega^{i,j}$. For $j = n$, we compute $\Omega^{i,n} \bd_{n}$ by replacing the one occurrence of $n$ in $\Omega^{i,n}$ by $+$, thereby obtaining $\omega^{i,n}$.
The second statement is immediate from \cref{N-bijection,N-omega}.
\end{proof}

\begin{lem}\label{omega-linearization}
For $1 \leq i \leq n$, an $(n-1)$-simplex $\phi \colon \Delta^{n-1} \to T \Box^n$ has $\omega^{i,n}$ as a linearization if and only if $\phi = \omega^{i,j}$ for some $j \geq i$.
\end{lem}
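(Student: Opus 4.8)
The plan is to prove both implications by directly unwinding the definition of linearization against the explicit string for $\omega^{i,n}$, which (as a function $\{1,\ldots,n\}\to\{1,\ldots,n-1,\pm\infty\}$) sends $k$ to $k$ for $k<i$, sends $i$ to $+\infty$, and sends $k$ to $k-1$ for $k>i$.

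For the ``if'' direction I would take $\phi=\omega^{i,j}$ with $i\le j\le n$ and exhibit the complete substring $\rho\colon\{1,\ldots,n-1\}\to\{1,\ldots,n\}$ given by $\rho_p=p$ for $p<i$ and $\rho_p=p+1$ for $p\ge i$, so that $\Im\rho=\{1,\ldots,n\}\setminus\{i\}$; a short check from the definition of $\omega^{i,j}$ confirms $\phi_{\rho_p}=p$ for all $p$ (note that when $j\le n-1$ the value $j$ occurs both at position $i$ and at position $j+1=\rho_j$, and $\rho$ selects the latter). Then I would compute $\phi^\rho$ position by position: for $k\ne i$ the entry $\phi_k$ is a finite value with $\rho_{\phi_k}=k$, so $\phi^\rho_k=\phi_k$; while in position $i$ we have either $\phi_i=+\infty$ (if $j=n$) or $\phi_i=j$ with $\rho_{\phi_i}=j+1>i$ (if $j\le n-1$), so $\phi^\rho_i=+\infty$ in both cases. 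Matching against the string above gives $\phi^\rho=\omega^{i,n}$.

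For the ``only if'' direction I would suppose $\rho$ is a complete substring of $\phi$ with $\phi^\rho=\omega^{i,n}$ and read the three-case definition of linearization backwards. For $k\ne i$ the entry $\omega^{i,n}_k$ is finite, which forces $\phi$ into the middle case there, hence $\phi_k=\omega^{i,n}_k$ and $\rho_{\phi_k}=k$; this determines both $\phi_k$ for all $k\ne i$ ($\phi_k=k$ for $k<i$, $\phi_k=k-1$ for $k>i$) and $\rho$ itself ($\rho_p=p$ for $p<i$, $\rho_p=p+1$ for $p\ge i$). In position $i$, the equality $\phi^\rho_i=+\infty$ forces either $\phi_i=+\infty$, giving $\phi=\omega^{i,n}$, or $\phi_i$ finite with $\rho_{\phi_i}>i$; since $\rho$ is already pinned down, $\rho_{\phi_i}>i$ holds exactly when $\phi_i\ge i$, and then $\phi$ is visibly $\omega^{i,\phi_i}$. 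In every case $\phi=\omega^{i,j}$ for some $i\le j\le n$, as desired.

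I expect the only real obstacle to be bookkeeping: keeping the $n$ cube-directions distinct from the $n-1$ values available to an $(n-1)$-simplex, verifying that the candidate $\rho$ really is order-preserving with image $\{1,\ldots,n\}\setminus\{i\}$, and handling the $j=n$ endpoint (where position $i$ carries $+\infty$ rather than the finite value $n$). None of this is conceptually hard, and it is essentially the only place where an off-by-one could creep in. One could instead try to leverage $\omega^{i,n}=\iota_{\bd_{i,0}}$ together with \cref{cube-face-linearization}, but that lemma computes the linearizations of $\bd_{i,\varepsilon}\phi$ from those of $\phi$ rather than conversely, so the direct computation seems the cleaner route.
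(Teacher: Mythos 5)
Your proof is correct and takes essentially the same approach as the paper: both arguments directly unwind the definition of linearization, observe that matching $\omega^{i,n}$ at the finite positions $k\ne i$ pins down both $\phi_k$ and the complete substring $\rho$, and then use the $+\infty$ in position $i$ to deduce $\phi_i=+\infty$ or $\phi_i\ge i$. The paper's version is merely terser and treats only the ``only if'' direction explicitly, leaving the converse implicit in the phrase ``these criteria are satisfied if and only if.''
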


\begin{proof}
For $\phi$ to have $\omega^{i,n}$ as a linearization, the entries of $\phi$ other than $\phi_i$ must form a complete substring $\rho$; in other words we must have $\phi_j = j$ for $j < i$ and $\phi_j = j - 1$ for $j > i$. For $\phi_j$ to be replaced by $+$ rather than $-$ in the linearization associated to this substring, we must have $i < \rho_{\phi_i}$; in other words, the other occurrence of $\phi_i$ in $\phi$ must come after position $i$. For this to be the case, we must have $\phi_i \geq i$. We can see that these criteria are satisfied if and only if $\phi = \omega^{i,j}$ for some $j \geq i$.
\end{proof}

\begin{Def}
For $n \geq 1$ and $1 \leq i \leq n$, let $\Xi^n_i$ denote the regular subcomplex of $T\Box^n$ containing all of its non-degenerate simplices except for those of the form $\omega^{i,j}$ or $\Omega^{i,j}$. Let $\bd \Xi^n_i$ denote the intersection of $T \bd \Box^n$ with $\Xi^n_i$, i.e. the regular subcomplex of $T\Box^n$ containing all non-degenerate boundary simplices except for $\omega^{i,n}$. Let $\widehat{\Xi}^n_i$ denote the regular subcomplex of $T\Box^n_{i,0}$ whose underlying simplicial set coincides with that of $\Xi^n_i$, i.e. $\Xi^n_i$ with simplices marked whenever they are marked in $T\Box^n_{i,0}$.
\end{Def}

\begin{prop}\label{open-box-xi-anodyne}
For $n \geq 1, 1 \leq i \leq n$, all maps in the following diagram of inclusions are anodyne:
\[
\begin{tikzcd}
T(\sqcap^n_{i,0})^\flat \arrow[dr] \arrow[d] & \\
\bd \Xi^n_i \arrow[r] & \Xi^n_i \\
\end{tikzcd}
\]
\end{prop}

\begin{proof}
We proceed by induction on $n$. For the base case $n = 1$, all objects in the diagram consist of the single vertex $-$, so all of the inclusions in the diagram are the identity.

Now let $n \geq 2$ and assume the statement holds for $n-1$. We first show that $T(\sqcap^{n}_{i,0})^\flat \hookrightarrow \bd \Xi^n_i$ is anodyne. Observe that the triangulation of the missing face of the cube, $\bd_{i,0}$, consists of all simplices $\phi$ for which $\phi_i = + \infty$; the non-degenerate $m$-simplices in the interior of this face are those in which every value between 1 and $m$ occurs at least once, and there is no $+$ or $-$ in any position besides position $i$. Therefore, to construct $\bd \Xi^n_i$ from $T(\sqcap^{n}_{i,0})^\flat$, we must add all such simplices except for $\omega^{i,n}$. 



Identifying $\bd \Xi^{n-1}_{n-1}$ and $\Xi^{n-1}_{n-1}$ with their images under the face map $T \bd_{i,0} \colon T\Box^{n-1} \to T\Box^n$, we can characterize certain subcomplexes of $T \Box^n$ as follows.

\begin{itemize}
\item $\bd \Xi^{n-1}_{n-1}$ consists of all simplices having a $+$ in position $i$ and a $+$ or $-$ in at least one other position, except for $\bd_{i,0}\omega^{n-1,n-1}$.
\item $\Xi^{n-1}_{n-1}$ consists of all simplices having a $+$ in position $i$, except for $\bd_{i,0}\omega^{n-1,n-1}$ and $\bd_{i,0}\Omega^{n-1,n-1} = \omega^{i,n}$.
\item $T(\sqcap^n_{i,0})^\flat$ consists of all simplices either a $-$ in position $i$, or a $+$ or $-$ in some position other than $i$.
\item $\bd \Xi^n_i$ consists of all simplices having a $+$ or $-$ in any position, other than $\bd_{i,0}\Omega^{n-1,n-1} = \omega^{i,n}$.
\end{itemize}

From this characterization, we can see that $\bd \Xi^{n-1}_{n-1}$ is the intersection of $\Xi^{n-1}_{n-1}$ with $T(\sqcap^n_{i,0})^\flat$, while $\bd \Xi^n_i$ is their union. Thus we have the following pushout square:
	\[
	\begin{tikzcd}
		\bd \Xi^{n-1}_{n-1}
		\arrow [r]
		\arrow [d]
		\pushout &
		 T(\sqcap^n_{i,0})^\flat
		\arrow [d] \\
		\Xi^{n-1}_{n-1}
		\arrow [r] &
		\bd \Xi^n_i
	\end{tikzcd}
	\]
%
Since $\bd \Xi^{n-1}_{n-1} \hookrightarrow \Xi^{n-1}_{n-1}$ is anodyne by the induction hypothesis, this implies $T(\sqcap^n_{i,0})^\flat \hookrightarrow \bd \Xi^n_i$ is anodyne as well.

Next we will show that $\bd \Xi^n_i \hookrightarrow \Xi^n_i$ is anodyne. To do this, we will add to $\bd \Xi^n_i$ every essential simplex of $T \Box^n$, except for those of the form $\omega^{i,j}$ or $\Omega^{i,j}$, via a series of complicial horn fillings. We proceed by induction on dimension; for $1 \leq m \leq n - 1$, let $\Xi^{n,m}_i$ denote the subcomplex of $T \Box^n$ consisting of $\bd \Xi^n_i$ together with all essential simplices of dimension less than $m$ and all normal essential simplices of dimension $m$. As there are no essential simplices of dimension 0 and no normal essential simplices of dimension 1, $\Xi^{n,1}_i = \bd \Xi^n_i$. We will show that for $1 \leq m \leq n-2$, the inclusion $\Xi^{n,m}_i \hookrightarrow \Xi^{n,m+1}_i$ is anodyne.

To construct $\Xi^{n,m+1}_i$ from $\Xi^{n,m}_i$, we must add all non-degenerate simplices of $K_m'$ and $K_{m+1}^\ast$ via complicial horn-filling, marking those which are marked in $T\Box^n$. We proceed by induction on the partial order of \cref{simplex-order}. For the base case, note that the minimal $m$-simplices in this partial order are those which are either on the boundary or normal, thus all minimal $m$-simplices are already present in $\Xi^{n,m}_i$, and this is a regular subcomplex of $T\Box^n$ by definition. Now let $\phi \in K_{m}'$, and suppose that we have added all non-degenerate $m$-simplices less than $\phi$, and marked those which are marked in $T\Box^n$. By \cref{N-order}, this includes all faces of $N(\phi)$ except for $N(\phi)\bd_{q(\phi)} = \phi$ itself; by \cref{N-complicial} these faces define a $q(\phi)$-complicial horn which we can fill to add $N(\phi)$ and $\phi$. By induction, therefore, we can add  $\phi$ and $N(\phi)$ to $\Xi^{n,m}_i$ for all $\phi \in K_{m}'$ via complicial horn-filling; by \cref{N-bijection} these are all the additional simplices of $\Xi^{n,m+1}_i$. Moreover, if $\phi$ is marked in $T\Box^n$, i.e. has no linearizations, then by \cref{N-linearization} the same is true of all other faces of $N(\phi)$. Thus these faces are marked in $\Xi^{n,m}_i$ by the induction hypothesis; therefore, we can mark $\phi$ by taking a pushout of an elementary complicial marking extension. Thus we see that the inclusion $\Xi^{n,m}_i \hookrightarrow \Xi^{n,m+1}_i$ is anodyne.

Composing these anodyne maps, we see that $\bd \Xi^n_i \hookrightarrow \Xi^{n,n-1}_i$ is anodyne. To complete the proof, we must show that $\Xi^{n,n-1}_i \hookrightarrow \Xi^n_i$ is anodyne. To do this, we will add via complicial horn-filling (and mark via complicial marking extension where necessary) all remaining simplices of $\Xi^n_i$ to $\Xi^{n,n-1}_i$  -- namely, the essential simplices of dimensions $n-1$ and $n$, other than those of the form $\omega^{i,j}$ or $\Omega^{i,j}$.

We consider all simplices of $K'_{n-1}$ and $K^\ast_{n}$ not of the forms described above, once again proceeding by induction on the partial order of \cref{simplex-order} on $K_{n-1}$. Again, for our base case we note that all minimal non-degenerate $(n-1)$-simplices, except for $\omega^{i,n}$, are already present in $\Xi^{n,n-1}_i$, and those which are marked in $T\Box^n$ are marked in $\Xi^{n,n-1}$. Now let $\phi \in K_{n-1}'$, not equal to any $\omega^{i,j}$, assume we have added all simplices less than $\phi$ and marked those which are marked in $T\Box^n$, and consider the faces of $N(\phi)$ other than $\phi$ itself. By \cref{N-order}, all of these faces are less than $\phi$. Furthermore, by \cref{omega-linearization}, $\phi$ does not have $\omega^{i,n}$ as a linearization; therefore, none of the faces of $N(\phi)$ have $\omega^{i,n}$ as a linearization by \cref{N-linearization}. This implies that none of these faces are of the form $\omega^{i,j}$ by \cref{omega-linearization}; thus all faces of $N(\phi)$ are present except for $\phi$ itself. Therefore, by \cref{N-complicial}, we have a $q(\phi)$-complicial horn which we can fill to obtain $N(\phi)$ and $\phi$. Moreover, as in the previous case, if $\phi$ is marked in $T\Box^n$ then all other faces of $N(\phi)$ are marked in $T\Box^n$ by \cref{N-linearization}. Thus, in this case we can mark $\phi$ via complicial marking extension.

By induction, then, we can add to $\Xi^{n,n-1}_i$ via complicial horn filling all essential $(n-1)$-simplices $\phi$ not of the form $\omega^{i,j}$, together with their associated $n$-simplices $N(\phi)$, and mark those which are marked in $T\Box^n$ via complicial marking extension. By \cref{N-bijection,N-omega}, we see that we have therefore added all simplices of $\Xi^n_i$, except for the normal simplices $N(\omega^{i,j}) = \Omega^{i,j+1}$ for $i \leq j \leq n-1$, and the lone abnormal $n$-simplex $\iota_n = \Omega^{i,i}$. Thus $\Xi^{n,n-1}_i \hookrightarrow \Xi^n_i$ is anodyne, as a composite of complicial horn fillings and complicial marking extensions.

Thus we see that $T(\sqcap^n_{i,0})^\flat \hookrightarrow \Xi^n_i$ is anodyne, as a composite of anodyne maps.
\end{proof}

Next we consider how the comical marking conditions affect the simplices of $T \Box^n$.

\begin{lem}\label{linearization-marking}
For $n \geq 1$, let $X$ be a marked simplicial set admitting an entire map $Y \to X$, where $Y$ is a regular subcomplex of $T \Box^n$, closed under normalization. Let $\phi$ be a non-degenerate $m$-simplex of $X$. Then:

\begin{itemize}
\item all linearizations of $\phi$ are contained in $X$;
\item if all linearizations of $\phi$ are marked in $X$, then $\phi$ is marked in the precomplicial reflection $\overline{X}$.
\end{itemize}
\end{lem}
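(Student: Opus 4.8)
The plan is to establish the two assertions simultaneously, by induction on $n$ and, for fixed $n$, by induction on the partial order of \cref{simplex-order} on the non-degenerate simplices of $T\Box^n$. Since $Y \to X$ is entire, $X$ and $Y$ have the same underlying simplicial set and $X$ carries at least the markings of $Y$; hence the first assertion is equivalent to saying that each linearization $\phi^\rho$ lies in $Y$, and in the second we may pass freely between the simplices of $Y$ and those of $X$. Recall that a non-degenerate $\phi$ is either contained in $T\bd\Box^n$ or essential, and in the latter case either normal or abnormal.

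The minimal elements of \cref{simplex-order} are the boundary simplices and the normal essential simplices, and these form the base of the inner induction. If $\phi \in K_m^\ast$, then $\phi$ admits no complete substring at all: a complete substring would be forced to pass through position $Q(\phi)$, and this would force $q(\phi) = Q(\phi) = P(\phi)+1$, contradicting the maximality of the preamble. So $\phi$ has no linearization, the first assertion is vacuous, and $\phi$ is already marked in $T\Box^n$, hence in $Y$, hence in $X$ and in $\overline{X}$. If instead $\phi$ is a boundary simplex, write $\phi = \bd_{i,\varepsilon}\psi$ with $\psi$ a non-degenerate simplex of $T\Box^{n-1}$, set $Y' = (T\bd_{i,\varepsilon})^{-1}(Y)$, and equip its underlying complex with the markings pulled back from $X$ to obtain $X'$; then $Y' \to X'$ is entire and $\psi$ is a non-degenerate simplex of $X'$. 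Granting that $Y'$ is again closed under normalization, the inductive hypothesis at level $n-1$ applies to $\psi$; pushing the conclusion forward along $T\bd_{i,\varepsilon}$, using \cref{cube-face-linearization}, the fact that cubical face maps preserve markings, and functoriality of the pre-complicial reflection, yields both assertions for $\phi$.

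For the inductive step let $\phi$ be abnormal essential, $\phi \in K_m'$. If $m = n$ then $\phi = \iota_n$ is its own unique linearization and both assertions are immediate, so assume $m \leq n-1$. Since $\phi$ is essential and lies in $Y$, closure under normalization gives $N(\phi) \in Y$. By \cref{N-order}, each face $N(\phi)\bd_j$ with $j \neq q(\phi)$ is a non-degenerate $m$-simplex strictly below $\phi$, so the inductive hypothesis applies to it, and by \cref{N-linearization} the linearizations of $\phi$ are exactly those of $N(\phi)\bd_{q(\phi)-1}$ together with those of $N(\phi)\bd_{q(\phi)+1}$; this gives the first assertion. For the second, assume every linearization of $\phi$ is marked in $X$. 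By \cref{N-complicial}, $N(\phi)$ is $q(\phi)$-complicial, so every composite face of $N(\phi)$ whose image contains $\{q(\phi)-1,q(\phi),q(\phi)+1\}$ is marked in $T\Box^n$ and hence in $X$; thus $N(\phi)$ defines a map $\adelta{m+1}{q(\phi)} \to X$. Again by \cref{N-linearization}, the linearizations of $N(\phi)\bd_{q(\phi)-1}$ and of $N(\phi)\bd_{q(\phi)+1}$ are among those of $\phi$ and hence marked in $X$, so by the inductive hypothesis these two faces are marked in $\overline{X}$; therefore $N(\phi)$ refines to a map $\adeltap{m+1}{q(\phi)} \to \overline{X}$. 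Since $\overline{X}$ is pre-complicial it has the right lifting property against the elementary $q(\phi)$-complicial marking extension $\adeltap{m+1}{q(\phi)} \incl \trunc{m-1}\adelta{m+1}{q(\phi)}$, and the induced lift marks the remaining face $N(\phi)\bd_{q(\phi)} = \phi$ in $\overline{X}$.

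The point I expect to require the most care, and the main obstacle, is the boundary base case: verifying that $Y' = (T\bd_{i,\varepsilon})^{-1}(Y)$ inherits closure under normalization. This reduces to a compatibility statement between normalization and cubical face maps, roughly of the form $N(\bd_{i,\varepsilon}\psi') = \bd_{i,\varepsilon}N(\psi')$ (normalizing commutes with re-inserting a constant coordinate), the analogue for linearizations being exactly \cref{cube-face-linearization}. Everything else is bookkeeping with \cref{N-linearization}, \cref{N-order}, \cref{N-complicial}, and the definition of the pre-complicial reflection.
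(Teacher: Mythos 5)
Your proof follows the paper's argument essentially step for step: the same double induction (on $n$ and, for fixed $n$, on the partial order of \cref{simplex-order}), the same base cases, and the same inductive step via $N(\phi)$, \cref{N-order}, \cref{N-linearization} and \cref{N-complicial}, with the second bullet concluded from the lifting property of $\overline{X}$ against the elementary $q(\phi)$-complicial marking extension. Your added justifications (that a normal essential simplex admits no complete substring, and the separate case $\phi=\iota_n$, where $N$ is undefined) are correct and are in fact left implicit in the paper.

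The step you leave open --- that $Y'=(T\bd_{i,\varepsilon})^{-1}(Y)$ is again closed under normalization --- is exactly the point the paper also passes over (``the stated results thus follow by the induction hypothesis''), so you have isolated the genuinely delicate spot; but your proposed reduction to a commutation $N(\bd_{i,\varepsilon}\psi')=\bd_{i,\varepsilon}N(\psi')$ cannot work as stated. Normalization is only defined for essential (interior) simplices, so $N(\bd_{i,\varepsilon}\psi')$ does not parse; and, more seriously, closure of the preimage does not follow formally from the hypothesis, which constrains only the essential simplices of $T\Box^n$. For instance, the regular subcomplex of $T\Box^3$ generated by the boundary $1$-simplex $1\,1\,-$ is vacuously closed under normalization, yet its preimage under $T\bd_{3,1}$ contains $1\,1$ but not $N(1\,1)=2\,1$; equivalently, the linearization $1\,-\,-$ of $1\,1\,-$ does not lie in that subcomplex, so the first bullet itself forces one to read the hypothesis hereditarily (closure under normalization in every iterated cubical face). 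In practice this is harmless: in every application of the lemma the underlying complex of $Y$ is either all of $T\Box^n$ or $\Xi^n_i$, whose face preimages are all of $T\Box^{n-1}$ or $T\Box^{n-1}$ minus its top linear simplex $\iota_{n-1}$, and these are closed under normalization because, by \cref{N-bijection}, an abnormal simplex such as $\iota_{n-1}$ is never in the image of $N$. So to make your induction airtight you should either strengthen the inductive hypothesis to this hereditary closure or verify the preimage condition directly for the $Y$'s at hand, rather than hoping for a general compatibility of $N$ with cubical face maps.
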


\begin{proof}
We proceed by induction on $n$. The case $n = 1$ is trivial, as no simplex of $T\Box^1$ has a linearization other than itself.

Now let $n \geq 2$, and suppose the statement holds for $n-1$. We will prove the statement for $n$ by induction on the partial order of \cref{simplex-order} for non-degenerate $m$-simplices $\phi$. First we consider the case where $\phi$ is minimal. If $\phi$ is contained in $T \bd \Box^n$, then $\phi = \bd_{i,\varepsilon} \psi$ for some $\psi \colon \Delta^{m} \to T\Box^{n-1}$ and some cubical face map $T\bd_{i,\varepsilon} \colon T \Box^{n-1} \to T\Box^n$. By \cref{cube-face-linearization}, the linearizations of $\phi$ are precisely the images under $T\bd_{i,\varepsilon}$ of the linearizations of $\psi$; the stated results thus follow by the induction hypothesis. On the other hand, if $\phi \in K_m^\ast$, then $\phi$ has no linearizations and is marked, so both statements are vacuously true.

Now suppose the statement has been proven for all $m$-simplices less than $\phi$. By assumption, $N(\phi)$ and all of its faces are contained in $X$. As all faces of $N(\phi)$ besides $\phi$ are less than $\phi$ by \cref{N-order}, we can apply the induction hypothesis and \cref{N-linearization} to see that all linearizations of $\phi$ are contained in $X$. Similarly, if all linearizations of $\phi$ are marked in $\overline{X}$, then the induction hypothesis and \cref{N-linearization} show that all faces of $N(\phi)$ besides $\phi$ are marked in $\overline{X}$. As $N(\phi)$ is $q(\phi)$-complicial by \cref{N-complicial}, this implies that we may mark $\phi$ via complicial marking extension; more precisely, there is an entire map $X \to X'$ which is a pushout of an elementary complicial marking extension, such that $\phi$ is marked in $X'$. Now consider the following commuting diagram:
\[
\begin{tikzcd}
X \arrow[d] \arrow[r] & \overline{X} \arrow[d] \\
X' \arrow[r] & \Box^0 \\
\end{tikzcd}
\]
As $X \to X'$ is a pushout of an elementary complicial marking extension, and $\overline{X}$ is pre-complicial, this diagram admits a lift. This lift must be an entire map, as $X \to X'$ and $X \to \overline{X}$ are entire; it thus follows that $\phi$ is marked in $\overline{X}$.
\end{proof}

\begin{cor}\label{Box-marking}
Let $\phi$ be an $m$-simplex of $X \in \{\widehat{\Xi}^n_{i}, T\Box^n_{i,0}\}$. Then:

\begin{itemize}
\item all linearizations of $\phi$ are contained in $X$;
\item if all linearizations of $\phi$ are marked in $X$, then $\phi$ is marked in the precomplicial reflection of $X$.
\end{itemize}
\end{cor}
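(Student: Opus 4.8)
The plan is to derive this from \cref{linearization-marking}: for each $X \in \{\widehat{\Xi}^n_i,\, T\Box^n_{i,0}\}$ I would exhibit a regular subcomplex $Y$ of $T\Box^n$ which is closed under normalization, together with an entire map $Y \to X$. \cref{linearization-marking} then yields both bullet points for every \emph{non-degenerate} simplex of $X$, and the degenerate simplices are handled separately at the end.

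For $X = T\Box^n_{i,0}$, I would simply take $Y = T\Box^n$. This is a regular subcomplex of itself, it contains every simplex of $T\Box^n$ and so is vacuously closed under normalization, and the canonical entire inclusion $T\Box^n \to T\Box^n_{i,0}$ is a map in $\sSet^+$, being the image under the functor $T$ of the entire inclusion $\Box^n \hookrightarrow \Box^n_{i,0}$ in $\cSet^+$. For $X = \widehat{\Xi}^n_i$, I would take $Y = \Xi^n_i$ (with the marking creating that of $T\Box^n$), which is a regular subcomplex of $T\Box^n$ by definition; the entire map $\Xi^n_i \to \widehat{\Xi}^n_i$ exists because both have underlying simplicial set $(\Xi^n_i)^\flat$ and a simplex marked in $\Xi^n_i$, i.e.\ in $T\Box^n$, is marked in $T\Box^n_{i,0}$ by the previous case, hence in $\widehat{\Xi}^n_i$. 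The only substantive point is that $\Xi^n_i$ is closed under normalization; I expect this to be the one genuine obstacle, the rest being an unwinding of definitions. Since normalization is invoked only on abnormal simplices, it suffices to check that for $\phi \in K_m' \cap \Xi^n_i$ with $m \le n-1$ the simplex $N(\phi)$ again lies in $\Xi^n_i$, i.e.\ is not of the form $\omega^{i,j}$ or $\Omega^{i,j}$. By \cref{N-bijection}, $N(\phi)$ is a \emph{normal} essential simplex; but each $\omega^{i,j}$ with $j \le n-1$ is abnormal by \cref{K-prime}, $\omega^{i,n}$ lies on the boundary, and $\Omega^{i,i} = \iota_n$ is abnormal, so the only remaining possibility is $N(\phi) = \Omega^{i,j}$ with $i+1 \le j \le n$. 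In that case $N(\phi)$ is $n$-dimensional, forcing $m = n-1$, and then \cref{N-omega} together with the injectivity of $N$ from \cref{N-bijection} gives $\phi = \omega^{i,j-1}$, which is excluded from $\Xi^n_i$ --- a contradiction. This completes the verification and hence the non-degenerate case.

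Finally, let $\phi$ be a degenerate $m$-simplex of $X$. By \cref{T-cube-degen} its string omits some value in $\{1,\dots,m\}$, so $\phi$ has no complete substring and hence no linearizations at all: the first assertion is vacuous, and for the second, $\phi$ is already marked in $X$ and therefore in the precomplicial reflection of $X$, so both assertions hold trivially.
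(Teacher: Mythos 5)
Your proposal is correct and follows essentially the same route as the paper: reduce to \cref{linearization-marking} and verify that the relevant underlying complexes ($T\Box^n$ trivially, and $\Xi^n_i$ via \cref{N-bijection,N-omega}) are closed under normalization. Your write-up merely spells out the details the paper leaves implicit (the entire maps, the case analysis excluding $\omega^{i,j}$ and $\Omega^{i,j}$ as normalizations, and the degenerate simplices), all of which are handled correctly.
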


\begin{proof}
By \cref{linearization-marking}, it suffices to show that $\hxi^n_{i}$ and $T\Box^n_{i,0}$ are closed under normalization. For $T\Box^n_{i,0}$ this is trivial; for $\hxi^n_{i}$ it follows from \cref{N-bijection,N-omega}.
\end{proof}

\begin{lem}\label{Omega-complicial}
For $n \geq 1$ and $1 \leq i \leq j \leq n$, the $n$-simplex $\Omega^{i,j}$ is $j$-complicial in $T\Box^n_{i,0}$.
\end{lem}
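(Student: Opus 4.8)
The plan is to reduce the claim, via the linearization machinery, to an application of \cref{comical-triangulation-marking}. Concretely, to say that $\Omega^{i,j}$ is $j$-complicial is to ask that $\Omega^{i,j}$ and each of its proper faces $\psi = \Omega^{i,j}\alpha$, indexed by a face operator $\alpha\colon [m]\hookrightarrow [n]$ ($m<n$) with $\im\alpha \supseteq \{j-1,j,j+1\}\cap[n]$, be marked; I will show that each such $\psi$ is marked in the precomplicial reflection $(T\Box^n_{i,0})^\precomp$, which is what is needed in the later horn-filling arguments (by \cref{more-markings}). Since $\Omega^{i,j}$ itself is marked in $T\Box^n_{i,0}$ — it is $\iota_n$ when $i=j$, and otherwise has no complete substring — and since a degenerate, or complete-substring-free, face is automatically marked in $T\Box^n_{i,0}$, the content is to treat a non-degenerate face $\psi$ possessing a complete substring. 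By \cref{Box-marking} it is then enough to show that every linearization $\psi^\rho$ of $\psi$ — necessarily a linear simplex — is marked in $T\Box^n_{i,0}$, and for this I will verify the hypotheses of \cref{comical-triangulation-marking}.

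So fix $\psi = \Omega^{i,j}\alpha$ and a complete substring $\rho$ of $\psi$, and write $V=\im\alpha=\{v_0<\dots<v_{n'}\}$. Since $\{j-1,j\}\subseteq V$ (and $j+1\in V$ when $j<n$) and these are consecutive integers, we have $v_s=j-1$, $v_{s+1}=j$ (and $v_{s+2}=j+1$ when $j<n$) for $s=|\{v\in V: v<j-1\}|$. Reading the string action of $\alpha$ off the definition of $\Omega^{i,j}$, the first key point is that $\psi$ attains the value $s+1$ only at position $i$ — because $\psi_l=s+1$ forces $\Omega^{i,j}_l=j$, which occurs only at $l=i$ — and, when $j<n$, attains the value $s+2$ only at position $j+1$. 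Hence $\rho$ must pass through $i$ at index $s+1$, i.e.\ $\rho_{s+1}=i$; in particular $i\in\im\rho$, the entry of $\psi^\rho$ at position $i$ is $s+1$, and $\rho_{s+2}=j+1$ (or, when $j=n$, $s+1$ is the top value of $\psi$).

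The second key point is that $\psi^\rho_k=-$ for every position $k$ with $\rho_s<k<\rho_{s+2}$ (respectively $\rho_s<k\le n$ when $j=n$) and $k\ne i$: such a $k$ lies in $(\rho_s,i)\cup\{i+1,\dots,j\}$, and in either range the definition of $\Omega^{i,j}$ gives $\Omega^{i,j}_k\le j-1=v_s$, so $\psi$ takes at $k$ either the value $-$ or a value $\le s$; since $k\notin\im\rho$ and $k>\rho_s$, the linearization replaces this entry by $-$. With these two facts in hand, \cref{comical-triangulation-marking} — invoking its stated boundary conventions in the extreme cases where $s+1$ is $1$ or the top value of $\psi$ — shows that the linear simplex $\psi^\rho$ is marked in $T\Box^n_{i,0}$. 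As $\rho$ was an arbitrary complete substring of $\psi$, \cref{Box-marking} yields that $\psi$ is marked in $(T\Box^n_{i,0})^\precomp$, as required. The degenerate cases $j=i$ (where $\Omega^{i,j}=\iota_n$), $j=n$, and $i=1$ need no separate treatment: they are all covered by the same argument together with the conventions of \cref{comical-triangulation-marking}.

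The only genuine work, and the one place to be careful, is the bookkeeping in the two highlighted points: correctly tracking how the composite face operator $\alpha$ acts, as a string operation, on $\Omega^{i,j}$ in the coordinates near $i$, $j$, and $j+1$, and invoking the edge conventions of \cref{comical-triangulation-marking} correctly at the ends of the admissible interval.
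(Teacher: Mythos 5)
Your proof is correct and follows essentially the same route as the paper's: both arguments show that in every admissible face the positions $i$ and $j+1$ carry the unique occurrences of two consecutive values (your $s+1$ is the paper's $j'=j-b$), deduce that every linearization has $-$ at all positions in the window between $\rho_{s}$ and $\rho_{s+2}$ other than $i$, and then conclude by \cref{comical-triangulation-marking}. The only divergence is that you explicitly conclude markedness of the faces in the precomplicial reflection via \cref{Box-marking} rather than in $T\Box^n_{i,0}$ itself; this is in fact all that the paper's own argument establishes (its appeal to \cref{comical-triangulation-marking} only marks the linearizations) and is exactly what the later constructions, which work inside the daggered complexes, actually use, so your reformulation is harmless and, if anything, slightly more careful.
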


\begin{proof}
We must show that any simplex of the form $\Omega^{i,j} \bd_{k_1} \ldots \bd_{k_a}\bd_{r_1} \ldots \bd_{r_b}$, where $k_1 >  \ldots  > k_a \geq j + 2$ and $j-2 \geq r_1 >  \ldots  > r_b$, is marked. (Note that either or both of the strings $k_1, \ldots, k_a$ and $r_1, \ldots, r_b$ may be empty.) Fix a particular face map $\delta$ having this form, and consider the face $\Omega^{i,j} \delta$.

We first note that when represented as a string, $\Omega^{i,j}$ contains each of the values $1$ through $n$ exactly once. In particular, $(\Omega^{i,j})_i$ is the unique entry of $\Omega^{i,j}$ having the value $j$, and if $j \leq n - 1$ then $(\Omega^{i,j})_{j+1}$ is the unique entry having the value $j+1$. As the maps $\bd_{k_t}$ only lower entries greater than $j + 2$, this will still be true of the corresponding entries in $\Omega^{i,j} \bd_{k_1} \ldots \bd_{k_a}$. 

Now  let $1 \leq t \leq b$, and suppose that in $\Omega^{i,j} \bd_{k_1} \ldots \bd_{k_a}\bd_{r_1} \ldots \bd_{r_{t-1}}$, the entries in positions $i$ and $j+1$ are the unique entries having the values $j - t + 1$ and $j - t + 2$, respectively. Since $r_t \leq j - t - 1$, the face map $\bd_{r_t}$ lowers these entries, along with any entries having the value $j - t$. Thus the entries in positions $i$ and $j + 1$ are the only entries of $\Omega^{i,j} \bd_{k_1} \ldots \bd_{k_a}\bd_{r_1} \ldots \bd_{r_t}$ having the values $j - (t + 1) + 1$ and $j - (t + 1) + 2$, respectively. By induction, we see that the entries in positions $i$ and $j$ of $\Omega^{i,j} \delta$ are the unique entries having the values $j - b$ and $j - b + 1$, respectively. 

Let $\rho$ be a complete substring of $\Omega^{i,j} \delta$; for notational convenience let $j' = j - b$ and $m = n - a - b$. The discussion above shows that we must have $\rho_{j'} = i$ and $\rho_{j'+1} = j + 1$. (As in the statement of \cref{comical-triangulation-marking}, we interpret $\rho_0$ and $\rho_{m+1}$ to be 0 and $m+1$, respectively.) Now consider $k$ such that $\rho_{j'-1} < k < \rho_{j'+1}, k \neq i$.  Observe that for such $k$ we have $(\Omega^{i,j})_k  < i  \leq j = (\Omega^{i,j})_i$.


By \cref{face-preserve-order}, this implies $(\Omega^{i,j} \delta)_k \leq (\Omega^{i,j} \delta)_i = j'$; as $(\Omega^{i,j} \delta)_i$ is the unique entry with the value $j'$, we in fact have $(\Omega^{i,j} \delta)_k \leq j' - 1$. If $j' = 1$ then this implies $(\Omega^{i,j} \delta)_k = - \infty$. Otherwise, we see that $\rho_{(\Omega^{i,j} \delta)_k} \leq \rho_{j'-1} = w < k$. Either way, we have $(\Omega^{i,j} \delta)^\rho_k = - \infty$. Thus $\Omega^{i,j} \delta$ is marked by \cref{comical-triangulation-marking}.
\end{proof}

\begin{prop}\label{xi-cube-anodyne}
For $n \geq 1$ and $1 \leq i \leq n$, the inclusion $\hxi^n_i \hookrightarrow T\Box^n_{i,0}$ is a trivial cofibration. 
\end{prop}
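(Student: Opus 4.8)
The plan is to exhibit $\hxi^n_i \hookrightarrow T\Box^n_{i,0}$ as a finite composite of pushouts of complicial horn inclusions. Since it is a monomorphism it is automatically a cofibration, so this will suffice, and it will moreover show the map is a trivial cofibration in every model structure of \cref{complicial-model-structure} simultaneously.

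First I would identify exactly which simplices and markings are being adjoined. By \cref{omega-face,N-omega,N-bijection}, the non-degenerate simplices of $T\Box^n_{i,0}$ that do not already lie in $\hxi^n_i$ are precisely the $(n-1)$-simplices $\omega^{i,j}$ and the $n$-simplices $\Omega^{i,j}$ for $i \le j \le n$; moreover $\Omega^{i,j}\bd_j = \omega^{i,j}$ for all such $j$, and $\Omega^{i,j}\bd_{j-1} = \omega^{i,j-1}$ when $j \ge i+1$, while every remaining face of $\Omega^{i,j}$ already lies in $\Xi^n_i$. This last point I would extract from the case analysis in the proof of \cref{N-order}, using that each $\omega^{i,j}$ lies in $K'_{n-1}$ with $P(\omega^{i,j}) = i-1$ and $q(\omega^{i,j}) = j$: the faces $\Omega^{i,j}\bd_k$ with $k<i$ have strictly smaller preamble length, those with $i\le k\le j-2$ land in $K_{n-1}^\ast$, and $\bd_0,\bd_n$ land in $T\bd\Box^n$ with a $-$ (resp.\ with a $+$ at the wrong position), so none of them is of the form $\omega^{i,j'}$. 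Then I would record the markings: each $\omega^{i,j}$ has a unique complete substring and is not a linear simplex by \cref{linear-characterization} (its entry in the omitted position is not $\pm\infty$ when $j\le n-1$), while $\omega^{i,n}=\iota_{\bd_{i,0}}$ is linear but unmarked since $\bd_{i,0}$ is unmarked in $\Box^n_{i,0}$ (cf.~\cref{linear-simplex-marked}); hence every $\omega^{i,j}$ is unmarked in $T\Box^n_{i,0}$. On the other hand $\Omega^{i,j}$ with $j\ge i+1$ is marked because $\iota_n$ is the only unmarked $n$-simplex of $T\Box^n$, and $\Omega^{i,i}=\iota_n=\iota_{\mathrm{id}_{[1]^n}}$ is marked because the top cube of $\Box^n_{i,0}$ is marked. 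So passing from $\hxi^n_i$ to $T\Box^n_{i,0}$ adjoins the $\Omega^{i,j}$ marked and the $\omega^{i,j}$ unmarked.

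Next I would set up the filtration $\hxi^n_i = Y_{i-1} \subseteq Y_i \subseteq \cdots \subseteq Y_n = T\Box^n_{i,0}$, where $Y_j$ is obtained from $Y_{j-1}$ by adjoining $\Omega^{i,j}$ together with $\omega^{i,j}=\Omega^{i,j}\bd_j$, and show that each inclusion $Y_{j-1}\hookrightarrow Y_j$ is a pushout of the $j$-complicial horn inclusion $\horn{n}{j}\incl\adelta{n}{j}$. All faces of $\Omega^{i,j}$ other than $\bd_j$ lie in $Y_{j-1}$: for $j=i$ they all lie in $\Xi^n_i$, and for $j>i$ the face $\bd_{j-1}=\omega^{i,j-1}$ was adjoined at the previous stage while the remaining faces lie in $\Xi^n_i$, by the face analysis above. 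Since $\Omega^{i,j}$ is $j$-complicial in $T\Box^n_{i,0}$ by \cref{Omega-complicial}, the iterated faces of $\Omega^{i,j}$ that are required to be marked in $\adelta{n}{j}$ are marked in $T\Box^n_{i,0}$, hence in $Y_{j-1}$; so the faces of $\Omega^{i,j}$ assemble into a well-defined map $\horn{n}{j}\to Y_{j-1}$. Attaching $\adelta{n}{j}$ along this map adjoins a single new $n$-simplex, marked (its image is $[n]\supseteq\{j-1,j,j+1\}\cap[n]$), together with its $j$-th face, unmarked (the image of $\bd_j$ omits $j$) --- that is, it adjoins $\Omega^{i,j}$ marked and $\omega^{i,j}$ unmarked, exactly as required. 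Composing these $n-i+1$ pushouts exhibits $\hxi^n_i\hookrightarrow T\Box^n_{i,0}$ as a member of the cellular closure of the complicial horn inclusions, hence a trivial cofibration in each of the model structures of \cref{complicial-model-structure}.

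The main obstacle is the combinatorial bookkeeping concentrated in the first two steps: verifying that the missing simplices are exactly the $\omega^{i,j}$ and $\Omega^{i,j}$ with the asserted markings, and that each $\Omega^{i,j}$ has precisely the two ``new'' faces $\bd_{j-1}$ and $\bd_j$, so that the attaching maps $\horn{n}{j}\to Y_{j-1}$ actually land in the subcomplex constructed so far. This requires careful attention to the extreme faces $\bd_0$ and $\bd_n$ of $\Omega^{i,j}$ and to the unique new simplex lying in $T\bd\Box^n$, namely $\omega^{i,n}=\iota_{\bd_{i,0}}$, which gets adjoined only at the final stage $j=n$ as the face $\Omega^{i,n}\bd_n$.
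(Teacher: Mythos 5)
Your identification of the missing simplices, their markings in $T\Box^n_{i,0}$, and the face structure of the $\Omega^{i,j}$ is correct, but the horn-filling step has a genuine gap: the attaching maps $\horn{n}{j} \to Y_{j-1}$ are not in general maps of \emph{marked} simplicial sets. In $\adelta{n}{j}$ every face whose image contains $\{j-1,j,j+1\}\cap[n]$ is marked, so the attaching map must send all such faces of $\Omega^{i,j}$ (other than $\bd_j$) to marked simplices of $Y_{j-1}\subseteq T\Box^n_{i,0}$. These faces are \emph{not} marked in $T\Box^n_{i,0}$ in general: the markings of $T\Box^n_{i,0}$ consist only of the simplices with no complete substring together with the linear simplices $\iota_\delta$ of marked cubes $\delta$. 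For instance, at the first stage $j=i$ the horn $\horn{n}{i}$ on $\iota_n=\Omega^{i,i}$ requires $\iota_n\bd_k=\omega^{k,k}$ to be marked for all $k\le i-2$ and $i+2\le k\le n-1$, yet $\omega^{k,k}$ has a complete substring and is not linear, hence is unmarked in $T\Box^n_{i,0}$ (concretely, $\iota_5\bd_1 = 1\,1\,2\,3\,4$ is unmarked in $T\Box^5_{3,0}$); similarly $\Omega^{2,3}\bd_1 = 1\,2\,1\,3\,4\,5$ is unmarked in $T\Box^6_{2,0}$. So your filtration cannot be realized by pushouts of complicial horn inclusions inside $T\Box^n_{i,0}$, and in particular the conclusion that the inclusion is \emph{anodyne} (rather than merely a trivial cofibration, as stated) is not obtained this way.

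The step where you invoke \cref{Omega-complicial} to guarantee these markings is exactly where the subtlety lies: what the proof of that lemma actually establishes is that every \emph{linearization} of the relevant faces of $\Omega^{i,j}$ is marked in $T\Box^n_{i,0}$, i.e.\ that those faces are marked after passing to the precomplicial reflection. This is why the paper's proof first replaces $\hxi^n_i \hookrightarrow T\Box^n_{i,0}$ by the entire extensions $(\hxi^n_i)^\dag \hookrightarrow (T\Box^n_{i,0})^\dag$ (marking every simplex all of whose linearizations are marked), which are trivial cofibrations by \cref{more-markings,linearization-marking}, and only then performs your horn fillings --- for $j=i$ on $\iota_n$ and for $j\ge i+1$ on $\Omega^{i,j}$ with missing face $\omega^{i,j}$ --- inside the daggered objects, where the required faces genuinely are marked. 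To repair your argument you would need to add this reduction (or some substitute for it); with it, your filtration is essentially the paper's second half, and the conclusion is a trivial cofibration rather than an anodyne map.
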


\begin{proof}
Let $(T\Box^n_{i,0})^\dag$ denote the marked simplicial set with underlying simplicial set $T\Box^n$, and a simplex $\phi$ marked if and only if all of its linearizations are marked in $T\Box^n_{i,0}$; define $(\hxi^n_i)^\dag$ similarly. We have a commuting diagram:
	\[
	\begin{tikzcd}
		\hxi^n_i 
		\arrow [r]
		\arrow [d]
          &
		 (\hxi^n_i)^\dag
		\arrow [d] \\
		T\Box^n_{i,0} 
		\arrow [r] &
		(T\Box^n_{i,0})^\dag
	\end{tikzcd}
	\]
To see that the two horizontal maps exist, recall that by \cref{linear-simplex-marked}, the non-degenerate marked simplices of $T \Box^n_{i,\varepsilon}$ consist of the non-degenerate marked simplices of $T\Box^n$ (which have no linearizations) plus the linear simplices associated to marked faces of $\Box^n_{i,\varepsilon}$ (which have only themselves as linearizations); thus every marked simplex of $T \Box^n_{i,0}$ is marked in $(T \Box^n_{i,0})^\dagger$, and similarly for $\hxi^n_i$ and $(\hxi^n_i)^\dagger$. The two horizontal maps are trivial cofibrations by \cref{more-markings,linearization-marking}. Therefore, to prove the stated result it suffices to prove that $(\hxi^n_i)^\dag \hookrightarrow (T\Box^n_{i,0})^\dag$ is anodyne.

For $i \leq j \leq n + 1$, let $(\hxi^n_{i,j})^\dag$ denote the regular subcomplex of $(T\Box^n_{i,0})^\dag$ consisting of $(\hxi^n_{i})^\dag$ together with all simplices of the form $\omega^{i,j'}$ or $\Omega^{i,j'}$ for $i \leq j' < j$. We can see that $(\hxi^n_{i,i})^\dag = (\hxi^n_i)^\dag$, while $(\hxi^n_{i,n+1})^\dag  = (T\Box^n)^\dag$; thus it suffices to show that each map $(\hxi^n_{i,j})^\dag \hookrightarrow (\hxi^n_{i,j+1})^\dag$ for $i \leq j \leq n$ is anodyne.

For the case $j = i$, we will show that we can add $\Omega^{i,i} = \iota_n$ and $\omega^{i,i}$ to $(\hxi^{n}_i)^\dag$ via complicial horn-filling. Observe that for all $1 \leq k \leq n$ we have $\iota_n \bd_k = \Omega^{k,k} \bd_k = \omega^{k,k}$ by \cref{omega-face}. From the definition of $\omega^{i,j}$ it is clear that the simplices $\omega^{k,k}$ are all distinct, thus all of these faces besides $\omega^{i,i}$ are present in $(\hxi^{n}_i)^\dag$. Furthermore, we have $\iota_n \bd_0 = -1\,2\,...\,(n-1)$, which is contained in $T \sqcap^n_{i,0} \subseteq (\hxi^n_i)^\dag$. By \cref{Omega-complicial}, these faces define an $i$-complicial horn in $(\hxi^n_i)^\dag$, which we can fill to obtain $\iota_n$ and its missing face $\omega^{i,i}$. Thus the inclusion $(\hxi^n_i)^\dag = (\hxi^n_{i,i})^\dag \hookrightarrow (\hxi^n_{i,i+1})^\dag$ is anodyne.

Now consider the case $j \geq i + 1$. By \cref{omega-face} we have $\Omega^{i,j} \bd_{j-1} = \omega^{i,j-1}$, while $\Omega^{i,j} \bd_j = \omega^{i,j}$. Furthermore, by \cref{N-linearization}, these are the only faces of $\Omega^{i,j}$ having $\omega^{i,n}$ as a linearization; therefore, by \cref{omega-linearization}, no other face of $\Omega^{i,j}$ is of the form $\omega^{i,j'}$ for any $j'$. Thus we see that all faces of $\Omega^{i,j}$ besides $\omega^{i,j}$ are present in $(\hxi^n_{i,j})^\dag$. By \cref{Omega-complicial}, we therefore have a $j$-complicial horn in $(\hxi^n_{i,j})^\dag$ which we can fill to obtain $(\hxi^n_{i,j+1})^\dag$. Thus the inclusion $(\hxi^n_{i,j})^\dag \hookrightarrow (\hxi^n_{i,j+1})^\dag$ is anodyne.

Thus we see that $(\hxi^n_i)^\dag \hookrightarrow (T\Box^n_{i,0})^\dag$ is anodyne, as a composite of anodyne maps.
\end{proof}

\begin{cor}\label{open-box-cube-anodyne}
For $n \geq 1, l \leq i \leq n$, the inclusion $T\sqcap^n_{i,0} \hookrightarrow T\Box^n_{i,0}$ is a trivial cofibration.
\end{cor}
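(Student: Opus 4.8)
The plan is to combine \cref{open-box-xi-anodyne,xi-cube-anodyne} through the factorization
\[
T\sqcap^n_{i,0} \hookrightarrow \hxi^n_i \hookrightarrow T\Box^n_{i,0},
\]
where the second map is a trivial cofibration by \cref{xi-cube-anodyne}. Since trivial cofibrations are closed under composition, it then suffices to show that the first map is a trivial cofibration, and for this I would exhibit it as a pushout of the anodyne map $T(\sqcap^n_{i,0})^\flat \hookrightarrow \Xi^n_i$ of \cref{open-box-xi-anodyne} along the entire map $T(\sqcap^n_{i,0})^\flat \to T\sqcap^n_{i,0}$.

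The first thing to pin down is what $T\sqcap^n_{i,0}$ looks like inside $T\Box^n_{i,0}$. Because $T$ is a left adjoint, the marked simplices of $TX$ for any $X \in \cSet^+$ are generated by the images of the marked simplices of $T\Box^m$ and $T\mcube{m}$ along the cubes and marked cubes of $X$; unwinding, a simplex of $TX$ is marked precisely when it has no complete substring or is a linear simplex $\iota_\delta$ for some marked cube $\delta$ of $X$ (the cube case of this is \cref{linear-simplex-marked}). Now the non-degenerate cubes of $\Box^n$ that fail to lie in $\sqcap^n_{i,0}$ are exactly $\id_{[1]^n}$ and $\bd_{i,0}$, using the cubical identities to rewrite normal forms; of these, only $\id_{[1]^n}$ is marked in $\Box^n_{i,0}$, and its linear simplex is $\iota_n = \Omega^{i,i}$, while $\iota_{\bd_{i,0}} = \omega^{i,n}$. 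Since $\Xi^n_i$ by construction omits all the simplices $\omega^{i,j}$ and $\Omega^{i,j}$, neither $\iota_n$ nor $\omega^{i,n}$ belongs to $\Xi^n_i$. Consequently the marked cubes of $\sqcap^n_{i,0}$ are exactly the marked cubes of $\Box^n_{i,0}$ whose linear simplex lies in $\Xi^n_i$, so $T\sqcap^n_{i,0}$ is the regular subcomplex of $T\Box^n_{i,0}$ supported on the triangulation of the open box, and in particular it includes into $\hxi^n_i$ as a regular inclusion.

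Next I would check that the square with top row $T(\sqcap^n_{i,0})^\flat \hookrightarrow \Xi^n_i$, bottom row $T\sqcap^n_{i,0} \hookrightarrow \hxi^n_i$, and vertical entire maps adjoining markings is a pushout in $\sSet^+$. On underlying simplicial sets this is immediate, since $\Xi^n_i$ and $\hxi^n_i$ share an underlying simplicial set obtained from the open box by adjoining essential simplices. On markings, the description above identifies the marked simplices of $\hxi^n_i$ as the union of those of $\Xi^n_i$ (the no‑complete‑substring simplices) and those of $T\sqcap^n_{i,0}$ (the additional linear simplices $\iota_\delta$ with $\delta$ marked in $\sqcap^n_{i,0}$), with $T(\sqcap^n_{i,0})^\flat$ contributing exactly the intersection; this is precisely the pushout of markings. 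Hence $T\sqcap^n_{i,0} \hookrightarrow \hxi^n_i$ is a pushout of the anodyne map $T(\sqcap^n_{i,0})^\flat \hookrightarrow \Xi^n_i$, so it is anodyne, and composing with $\hxi^n_i \hookrightarrow T\Box^n_{i,0}$ gives the claim.

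The only real work is the markings bookkeeping in the middle paragraph: one must verify that the essential simplices of $\Xi^n_i$ absent from the open box create no new marked simplices in $T\Box^n_{i,0}$ beyond those already accounted for, and that the single extra marked cube $\id_{[1]^n}$ of $\Box^n_{i,0}$ (absent from $\sqcap^n_{i,0}$) causes no discrepancy — which works out exactly because its linear simplex $\iota_n = \Omega^{i,i}$ was deliberately excluded from $\Xi^n_i$. Everything else is a formal consequence of \cref{open-box-xi-anodyne,xi-cube-anodyne} together with closure of trivial cofibrations under pushout and composition.
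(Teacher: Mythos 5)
Your proposal is correct and is essentially the paper's own argument: factor through $\hxi^n_i$, observe that $T\sqcap^n_{i,0} \hookrightarrow \hxi^n_i$ is a pushout of the anodyne map $T(\sqcap^n_{i,0})^\flat \hookrightarrow \Xi^n_i$ of \cref{open-box-xi-anodyne}, and compose with the trivial cofibration $\hxi^n_i \hookrightarrow T\Box^n_{i,0}$ of \cref{xi-cube-anodyne}. The marking bookkeeping you carry out (in particular that the only marked cube of $\Box^n_{i,0}$ outside the open box is $\id_{[1]^n}$, whose linear simplex $\iota_n = \Omega^{i,i}$ is excluded from $\Xi^n_i$) just makes explicit the pushout claim the paper asserts without proof.
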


\begin{proof}
The inclusion $T\sqcap^n_{i,0} \hookrightarrow \hxi^n_i$ is anodyne by \cref{open-box-xi-anodyne}, as it is a pushout of $\fcap{n}{i} \hookrightarrow \Xi^n_i$. The inclusion $\hxi^n_i \hookrightarrow T\Box^n_{i,0}$ is a trivial cofibration by \cref{xi-cube-anodyne}. Thus $T\sqcap^n_{i,0} \hookrightarrow T\Box^n_{i,0}$ is a trivial cofibration as a composite of trivial cofibrations.
\end{proof}

\begin{prop}\label{marking-extension-anodyne}
For $n \geq 2,$ $1 \leq i \leq n$, the map $T(\Box^n_{i,0})' \to  T\tau_{n-2} \Box^n_{i,0}$ is a trivial cofibration.
\end{prop}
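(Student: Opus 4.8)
The plan is to reduce, via \cref{more-markings}, to checking that a single simplex becomes marked in a precomplicial reflection, and then to exhibit that marking --- together with a chain of auxiliary ones --- by means of complicial marking extensions built on the $n$-simplices $\Omega^{i,j}$.

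First I would observe that the map is entire, hence a cofibration, and that it marks exactly one new non-degenerate simplex. Indeed, $(\Box^n_{i,0})'$ and $\tau_{n-2}\Box^n_{i,0}$ differ only in that the $(n-1)$-cube $\bd_{i,0}$ is marked in the latter; so, by the colimit formula for triangulation, the only new marked simplices of $T\tau_{n-2}\Box^n_{i,0}$ lie in the image of $T\widetilde{\Box}^{n-1}$ under the triangulation of the (now marked) cube $\bd_{i,0}$, and of these the ones other than $\iota_{\bd_{i,0}} = \omega^{i,n}$ have no complete substring and hence are already marked in $T(\Box^n_{i,0})'$, since the top cube of $\Box^n_{i,0}$ is marked. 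Writing $X = T(\Box^n_{i,0})'$, the map in question is therefore the entire map $X \to X^\dag$ that marks $\omega^{i,n}$, and by \cref{more-markings} it suffices to show that $\omega^{i,n}$ is marked in $X^\precomp$.

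Next I would record which linear $(n-1)$-simplices are marked in $X$: by \cref{linear-simplex-marked}, $\iota_\delta$ is marked in $X$ exactly for the $(n-1)$-faces $\delta \ne \bd_{i,0}$, since $(\Box^n_{i,0})'$ marks every $(n-1)$-cube except $\bd_{i,0}$. Using \cref{linear-simplex-recover,omega-linearization} one sees that for $i' \ne i$ no linearization of $\omega^{i',i'}$ equals $\iota_{\bd_{i,0}}$, so all of its linearizations are marked in $X$; then \cref{linearization-marking}, applied with $Y = T\Box^n$, shows that $\omega^{i',i'}$ is marked in $X^\precomp$ --- in particular this holds for $\omega^{i\pm 1, i\pm 1}$, and, when $i = 1$, the same argument applies to $\iota_{\bd_{1,1}}$. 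With this in hand I would prove by induction on $j$, for $i \le j \le n$, that $\omega^{i,j}$ is marked in $R := X^\precomp$; the case $j = n$ is what is wanted. By \cref{omega-face} we have $\omega^{i,j} = \Omega^{i,j}\bd_j$; by \cref{Omega-complicial} the $n$-simplex $\Omega^{i,j}$ is $j$-complicial in $T\Box^n_{i,0}$, hence in $R$; by \cref{N-omega}, $\Omega^{i,j} = N(\omega^{i,j-1})$ with $q(\omega^{i,j-1}) = j - 1$, so by \cref{N-linearization} the face $\Omega^{i,j}\bd_{j+1}$, when it exists, has no linearizations and is marked in $R$; and $\Omega^{i,j}\bd_{j-1}$ equals $\omega^{i,j-1}$ for $j > i$ (\cref{omega-face}), marked by the inductive hypothesis, while for $j = i$ we have $\Omega^{i,i} = \iota_n$, $\Omega^{i,i}\bd_{i-1} = \omega^{i-1,i-1}$ (or $\iota_{\bd_{1,1}}$ if $i = 1$), and $\Omega^{i,i}\bd_{i+1} = \omega^{i+1,i+1}$, all of which are marked by the previous paragraph. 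Thus $\Omega^{i,j}$ defines a map $\adeltap n j \to R$, and since $R$ is precomplicial this extends along the elementary $j$-complicial marking extension $\adeltap n j \hookrightarrow \tau_{n-2}\adelta n j$, which forces $\omega^{i,j} = \Omega^{i,j}\bd_j$ to be marked in $R$.

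The step I expect to be the main obstacle is the combinatorial bookkeeping: confirming the claim that triangulating ``mark $\bd_{i,0}$'' produces only the single new marking on $\omega^{i,n}$, and verifying at each stage of the induction that the faces of $\Omega^{i,j}$ are as asserted. Both come down to unwinding the explicit string descriptions of $\omega^{i,j}$, $\Omega^{i,j}$, the normalization $N$, the linearizations, and the simplicial face operators (\cref{omega-face,N-omega,N-linearization,omega-linearization}), with the boundary values $i = 1$ and $i = n$ needing slightly separate treatment, since then one of $\bd_{i-1}$, $\bd_{i+1}$ is either absent or not of the form $\omega^{i',i'}$.
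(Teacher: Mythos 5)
Your proof is correct and follows essentially the same approach as the paper's: the same reduction to marking the single simplex $\omega^{i,n}$, the same induction on $j$ to mark each $\omega^{i,j}$ using that $\Omega^{i,j}$ is $j$-complicial (\cref{Omega-complicial}) together with \cref{omega-face,N-omega,N-linearization,omega-linearization} to identify the relevant faces. The only organizational difference is that you reason directly inside the precomplicial reflection $X^\precomp$ and invoke \cref{more-markings} once, whereas the paper constructs the intermediate $\dag$-marked objects and concludes by two-out-of-three after exhibiting an explicit anodyne map between them --- a minor repackaging of the same combinatorial argument.
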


\begin{proof}
As in the proof of \cref{xi-cube-anodyne}, let $(T(\Box^n_{i,0})')^\dag$ denote the marked simplicial set obtained from $T(\Box^n_{i,0})'$ by marking all simplices whose linearizations are all marked in $T(\Box^n_{i,0})'$, and define $(T\tau_{n-2} \Box^n_{i,0})^\dag$ similarly. Note that \cref{omega-linearization} shows that the only unmarked essential simplices of $(T(\Box^n_{i,0})')^\dag$ are those of the form $\omega^{i,j}$, while all $(n-1)$-simplices of $(T\tau_{n-2} \Box^n_{i,0})^\dag$ are marked (in fact, $(T\tau_{n-2} \Box^n_{i,0})^\dag = \tau_{n-2} T \Box^n_{i,0}$. Once again, we have a commuting diagram:
	\[
	\begin{tikzcd}
		T(\Box^n_{i,0})' 
		\arrow [r]
		\arrow [d]
          &
		 (T(\Box^n_{i,0})')^\dag
		\arrow [d] \\
		T\tau_{n-2} \Box^n_{i,0}  
		\arrow [r] &
		(T\tau_{n-2} \Box^n_{i,0})^\dag 
	\end{tikzcd}
	\]
As in the proof of \cref{xi-cube-anodyne}, the existence of the horizontal maps follows from \cref{linear-simplex-marked}, and they are trivial cofibrations by \cref{more-markings,linearization-marking}. Thus it suffices to show that $(T(\Box^n_{i,0})')^\dag \hookrightarrow (T\tau_{n-2} \Box^n_{i,0})^\dag$ is anodyne. To this end, we will show that we may mark every simplex $\omega^{i,j}$ via complicial marking extensions, proceding by induction on $j$. 


For the base case $j = i$, recall that $\Omega^{i,i} = \iota_n$ is $i$-complicial by \cref{Omega-complicial}. We will show that $\iota_n \bd_{i-1}$, and $\iota_n \bd_{i+1}$ in the case $i \neq n$, are both marked in $(T(\Box^n_{i,0})')^\dag$. We begin with $\iota_n \bd_{i-1}$. First consider the case $i = 1$; then $\iota_n \bd_0 = -\,1\,\ldots\,(n-1) = \iota_{\bd_{1,1}}$. This $(n-1)$-simplex is contained in $T\sqcap^n_{1,0}$, hence it is marked. Next consider the case $i \geq 2$; then $\iota_n \bd_{i} = \Omega^{i-1,i-1} \bd_{i-1} = \omega^{i-1,i-1}$. The only repeated entry of this simplex is $i-1$, which appears in positions $i-1$ and $i$. Thus every entry besides these two must appear in any complete substring of $\rho$ of this simplex. Such a complete substring may have $\rho_{i-1}$ equal to either $i-1$ or $i$. In the former case, the associated linearization is $\iota_{\bd_{i,1}}$, while in the latter case it is $\iota_{\bd_{i-1,0}}$. Either way, it is an $(n-1)$-simplex of $T \sqcap^n_{i,0}$, and is therefore marked in $(T(\Box^n_{i,0})')^\dag$. Thus both linearizations of $\iota_n \bd_{i-1}$ are marked, hence so is $\iota_n \bd_{i-1}$ itself.

Next, assume that $i \neq n$, and consider $\Omega^{i,i} \bd_{i+1} = \iota_n \bd_{i+1}$. Similarly to the previous case, we observe that $\iota_n \bd_{i+1,i+1} = \omega^{i+1,i+1}$. If $i = n - 1$ then this is $\omega^{n,n} = \iota_{\bd_{n,0}}$, hence it is marked as an $(n-1)$-simplex of $T \sqcap^n_{n-1,0}$. Otherwise, an argument similar to the above shows that it has two linearizations, namely $\iota_{\bd_{i+1,0}}$ and $\iota_{\bd_{i+2,1}}$. Again, both of these are marked as $(n-1)$-simplices of $T \sqcap^n_{n-1,0}$, hence $\iota_n \bd_{i+1}$ is marked. Thus we see that the $n$-simplex $\iota_n \colon \Delta^n \to T(\Box^n_{i,0})'$ factors through $(\Delta^{n})'_{i}$, hence  we may mark its $i$-face $\omega^{i,i}$ via a complicial marking extension.

Now let $i + 1 \leq j \leq n$, and assume that we have marked $\omega^{i,j-1}$. Once again, \cref{Omega-complicial} shows that $\Omega^{i,j}$ is $j$-complicial, so we will show that the faces $\Omega^{i,j} \bd_{j-1}$ and $\Omega^{i,j} \bd_{j+1}$ (in the case $j \neq n$) are marked. For $\bd_{j-1}$, recall that $\Omega^{i,j} \bd_{j-1} =  \omega^{i,j-1}$ by \cref{omega-face}, thus it is marked by the induction hypothesis. For $\bd_{j+1}$, recall that $\Omega^{i,j} = N(\omega^{i,j-1})$ by \cref{N-omega}. Since $q(\omega^{i,j-1}) = j-1$, this implies that $\Omega^{i,j} \bd_{j+1}$ has no linearizations by \cref{N-linearization}, and is therefore marked. Once again, therefore, we see that $\Omega^{i,j} \colon \Delta^n \to T(\Box^n_{i,0})'$ factors through $(\Delta^n_j)'$, thus we may mark its $j$-face $\omega^{i,j}$ via complicial marking extension.

By induction, we see that we may mark all simplices $\omega^{i,j}$ via complicial marking extensions, thus the inclusion $(T(\Box^n_{i,0})')^\dag \hookrightarrow (T \tau_{n-2} \Box^n_{i,0})^\dag$ is anodyne. 
\end{proof}

We are now able to prove the main result of this section.

\begin{thm}\label{T-Quillen}
The adjunction $T : \cSet^+ \rightleftarrows \sSet^+ : U$ is Quillen, where $\cSet^+$ is equipped with any of the (saturated) (n-trivial) comical model structures, and $\sSet^+$ is equipped with the corresponding complicial model structure.
\end{thm}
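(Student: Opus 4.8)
The plan is to prove the adjunction is Quillen by showing that $T$ is left Quillen for each of the four pairs of model structures in play. Preservation of cofibrations is immediate: $T$ is cocontinuous, and (as already noted in this section) it carries the cellular model $I$ of \cref{cset-cellular-model} to monomorphisms, hence preserves all monomorphisms. Since each of the comical model structures is built, via the Cisinski--Olschok machinery of \cref{CO-with-monoidal}, so that its trivial cofibrations are exactly the saturation of the corresponding set of pseudo-generating trivial cofibrations, and $T$ preserves such saturations, it suffices to check that $T$ sends each pseudo-generating trivial cofibration to a trivial cofibration of the matching complicial model structure. This amounts to four families of maps: the comical open box inclusions, the elementary comical marking extensions, the Rezk maps, and the markers $\Box^m \to \widetilde{\Box}^m$ for $m > n$.

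For the open box inclusions and marking extensions in the case $\varepsilon = 0$ there is nothing to prove: these are precisely \cref{open-box-cube-anodyne} and \cref{marking-extension-anodyne}, and a trivial cofibration for complicial sets is also a trivial cofibration in the ($n$-trivial) (saturated) complicial model structures, which are left Bousfield localizations of it. For $\varepsilon = 1$, I would invoke duality: the cubical involution $(-)^\op$ sends $\sqcap^n_{i,1} \hookrightarrow \Box^n_{i,1}$ and $(\Box^n_{i,1})' \to \tau_{n-2}\Box^n_{i,1}$ isomorphically to the corresponding $\varepsilon = 0$ maps indexed by $n+1-i$, so by \cref{T-op} their $T$-images are, up to isomorphism, the images of the $\varepsilon = 0$ maps under the simplicial involution $(-)^\op$, which is left Quillen by \cref{simplicial-op-equiv}. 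For the markers, $T(\Box^m \to \widetilde{\Box}^m)$ is the entire map marking the single simplex $\iota_m$ of $T\Box^m$; it is therefore the pushout of the simplicial marker $\Delta^m \to \widetilde{\Delta}^m$ along $\iota_m \colon \Delta^m \to T\Box^m$, and for $m > n$ this marker is a pseudo-generating trivial cofibration of the $n$-trivial (saturated) complicial model structure, so the pushout is a trivial cofibration there.

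The real work is the Rezk maps, which occur only in the saturated cases. Applying \cref{T-monoidal} twice, together with associativity of the pushout lax Gray tensor product, reduces the problem to showing that $T$ sends the elementary Rezk map $L_{x,y} \to L'_{x,y}$ to a trivial cofibration of the model structure for saturated complicial sets. For this I would analyze the triangulation explicitly. The underlying simplicial set $T\overline{L}_{x,y}$ is a union of two copies of $\Delta^1 \times \Delta^1$ along an edge, with four non-degenerate $2$-simplices, all of which are marked in $T(L_{x,y})$; furthermore the markings on $L_{x,y}$ are chosen exactly so that, in each of the two triangulated squares, one of those $2$-simplices has both of its non-``diagonal'' edges marked. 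Hence one may first mark the two ``diagonal'' $1$-simplices by pushouts of the elementary complicial marking extension; once these are marked, the two diagonals together with the ``strip'' consisting of one triangle from each square form the image of a copy of the simplicial set $L$, so a pushout of the elementary simplicial Rezk map marks the three remaining unmarked edges. Writing $Z$ for $T(L_{x,y})$ with all of its originally unmarked $1$-simplices marked, this exhibits $T(L_{x,y}) \to Z$ as a composite of trivial cofibrations (using \cref{Rezk-tcof} for the step that is a simplicial Rezk map). On the other hand, $Z$ is obtained from $T(L'_{x,y})$ by two further pushouts of elementary complicial marking extensions -- the ones marking the two diagonals -- so $T(L'_{x,y}) \to Z$ is a trivial cofibration as well; since $T(L_{x,y}) \to T(L'_{x,y})$ is a cofibration (it is entire), two-out-of-three applied to $T(L_{x,y}) \to T(L'_{x,y}) \to Z$ shows it is a trivial cofibration. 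A trivial cofibration for saturated complicial sets is also one in the $n$-trivial saturated model structure, so this covers both saturated cases.

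I expect the bookkeeping in the Rezk case to be the main obstacle: one must fix conventions for the triangulated two-square complex, determine for each of the four choices of $(x,y)$ exactly which simplices of $T(L_{x,y})$ carry which markings, and then check in each instance that the relevant $2$-simplices and their faces satisfy the hypotheses of the elementary complicial marking extensions and of the elementary simplicial Rezk map. The remaining three families reduce quickly to results already in hand.
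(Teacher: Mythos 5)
Your overall strategy matches the paper's: reduce to the pseudo-generating trivial cofibrations; invoke \cref{open-box-cube-anodyne} and \cref{marking-extension-anodyne} for the $\varepsilon = 0$ comical open box inclusions and marking extensions; transfer to $\varepsilon = 1$ via the involutions of \cref{simplicial-op-equiv,cubical-op-equiv,T-op}; dispatch the markers $\Box^m \to \widetilde{\Box}^m$ as a pushout of the simplicial $m$-marker along $\iota_m$; and reduce the general Rezk maps to the elementary ones using \cref{T-monoidal}. The one place where you diverge is the elementary Rezk map, and here your extra care is actually warranted. The paper asserts in a single sentence that $T(L_{x,y} \to L'_{x,y})$ is \emph{a pushout of} the simplicial elementary Rezk map $L \to L'$, but this is not literally so: tracing through the only candidate map $\overline{L} \to T\overline{L}_{x,y}$ with the right image, the two marked $\bd_1$-edges of $L$ are forced to land on the diagonals of the two triangulated squares, and those diagonals are unmarked in $T(L_{x,y})$, so no map in $\sSet^+$ exists and the pushout square cannot be formed directly. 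Your repair --- first mark both diagonals by pushouts of the elementary $1$-complicial marking extension in dimension $2$ along the triangles whose non-diagonal faces are marked (this is exactly what the choice of markings in $L_{x,y}$ arranges in each of the four $(x,y)$ cases), then perform the Rezk pushout onto the resulting ``strip'', then invoke two-out-of-three against two further marking extensions comparing the result with $T(L'_{x,y})$ --- is correct and in the spirit of \cref{Rezk-diagram} and \cref{Rezk-lift}. As you say yourself, what remains is the case-by-case bookkeeping: one should verify for each $(x,y) \in \{1,2\}^2$ that the shared edge appears as the $\bd_0$-face of the relevant triangle in $X$'s square and the $\bd_2$-face in $Y$'s, so that the identity map $\Delta^2 \to \Delta^2$ on each triangle assembles to a well-defined map $L \to T(L_{x,y})$ with diagonals marked; this is straightforward but should be recorded.
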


\begin{proof}
We must show that $T$ sends the following maps to trivial cofibrations in $\sSet^+$:

\begin{enumerate}
\item\label{QA-open-box} Comical open box fillings $\sqcap^n_{i,\varepsilon} \hookrightarrow \Box^n_{i,\varepsilon}$;
\item\label{QA-extension} Comical marking extensions $(\Box^n_{i,\varepsilon})' \to \tau_{n-2} \Box^n_{i,\varepsilon}$;
\item\label{QA-Rezk} Rezk maps, in the case where the model structures are saturated;
\item\label{QA-marking} $k$-markings $\Delta^k \to \widetilde{\Delta}^k$ for $k > n$ in the case where the model structures are $n$-trivial for some $n \geq 0$.
\end{enumerate}

By \cref{simplicial-op-equiv,cubical-op-equiv,T-op}, it suffices to show \cref{QA-open-box,QA-extension} in the case $\varepsilon = 0$. For item \ref{QA-open-box} this is \cref{open-box-cube-anodyne}, while for item \ref{QA-extension} this is \cref{marking-extension-anodyne}. For \cref{QA-Rezk}, it is easy to see that $T$ sends each elementary Rezk map to a pushout of the simplicial elementary  Rezk map; the general result then follows from \cref{gray-homotopical,T-monoidal}. For \cref{QA-marking}, we may observe that $T$ sends each (cubical) $k$-marker to a pushout of the (simplicial) $k$-marker, as in \cite[Thm.~7.2]{campion-kapulkin-maehara}.
\end{proof}



\section{Cones in marked cubical sets} \label{sec:cones}

In this section, we extend the theory of cones in cubical sets, developed in \cite[Sec.~5]{doherty-kapulkin-lindsey-sattler}, to the marked case. 
We will develop our theory using what are referred to in \cite{doherty-kapulkin-lindsey-sattler} as \emph{left negative cones}; thus all of our constructions will differ in the value of $\varepsilon$ from their counterparts in that reference, but the combinatorics involved will be effectively identical. In particular, we will regularly cite \cite{doherty-kapulkin-lindsey-sattler} for results whose statements and proofs are the same in the marked and unmarked cases.

In \cref{sec:cones-basic-theory}, we define cones and review some of their combinatorial properties, following \cite{doherty-kapulkin-lindsey-sattler}.
Then in \cref{sec:Q}, we define the functor $Q \colon \msSet \to \mcSet$ both treating it as an instance of the cone construction (cf.~\cite{doherty-kapulkin-lindsey-sattler}) and giving an explicit description in terms of quotients of standard cubes (cf.~\cite{kapulkin-lindsey-wong}).
Finally, in \cref{sec:Q-is-Quillen}, we prove that $Q$ is a left Quillen functor from the model structure for ($n$-trivial, saturated) complicial sets to (resp.~$n$-trivial, saturated) comical sets.

The basic theory of cones developed in \cref{sec:cones-basic-theory} can be formulated in all of our marked cubical set categories, save that those results which concern connections, such as \cref{ConeFaceDeg}.(\ref{Low0ConOfCone}), can only be formulated in categories in which the relevant connections exist. The constructions and results of \cref{sec:Q,sec:Q-is-Quillen} can only be formulated in $\cSet^+_0, \cSet^+_1$, and $\cSet^+$, as the definition of the functor $Q$ requires at least one kind of connection. For the sake of concreteness, we will work in $\cSet^+$, but our proofs will only require positive connections. Thus our proofs work verbatim in $\cSet^+_1$, and may be easily dualized to $\cSet^+_0$ by interchanging the roles of positive and negative faces and connections. (In terms of the definitions given in \cite{doherty-kapulkin-lindsey-sattler}, this means substituting $C_{L,1}$ for $C_{L,0}$ and $Q_{L,1}$ for $Q_{L,0}$.)

\subsection{Basic theory of cones} \label{sec:cones-basic-theory}

We begin by defining cones and recalling some of their basic properties from \cite{doherty-kapulkin-lindsey-sattler}.

\begin{Def}\label{left-negative-def}
The \emph{cone functor} $C \colon \mcSet \to \mcSet$ is defined by the following pushout diagram in $\mathrm{End} \, \mcSet$:
	\[
	\begin{tikzcd}
		\id
		\arrow [r]
		\arrow [d, swap, "\bd_{1,0} \otimes -"]
		\pushout
          &
		 \Box^{0}
		\arrow [d] \\
		\Box^{1} \otimes -  
		\arrow [r] &
		C
	\end{tikzcd}
	\]

For $m, n \geq 0$, the \emph{standard $(m,n)$-cone} is the object $C^{m,n} = C^{n}\Box^m$. In particular, for all $m$, $C^{m,0} = \Box^m$. We refer to the natural map $\Box^0 \Rightarrow C$ appearing in this diagram as the \emph{cone point}. The cubical set obtained by marking the image of the $(m+n)$-simplex $\id_{[1]^{m+n}}$ in $C^{m,n}$ is denoted $\mC^{m,n}$.
\end{Def}

Note that the marked cubical set $C^{m,n}$ defined above is the minimal marking of the cubical set $C^{m,n}_{L,0}$ defined in \cite{doherty-kapulkin-lindsey-sattler}.


\begin{lem}[{\cite[Lem.~5.2]{doherty-kapulkin-lindsey-sattler}}]\label{Qcone}
For all $n \geq 1$, $C^{0,n} \cong C^{1,n-1}$. \qed
\end{lem}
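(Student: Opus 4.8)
The statement $C^{0,n} \cong C^{1,n-1}$ should follow by unwinding the cone construction applied to $\Box^0$ and observing that the very first application of $C$ to the point already produces something of the form $\Box^1 \otimes (-)$ applied to $\Box^0$, up to the identification introduced by the cone-point pushout. More precisely, the first step would be to compute $C^{0,1} = C\Box^0$ directly from the defining pushout square: with $X = \Box^0$, the map $\bd_{1,0} \otimes - $ becomes $\bd_{1,0} \colon \Box^0 \to \Box^1$, and $\id \Rightarrow \Box^0$ evaluated at $\Box^0$ is the identity $\Box^0 \to \Box^0$, so the pushout is $\Box^1 \amalg_{\Box^0} \Box^0 \cong \Box^1$, i.e. $C^{0,1} \cong \Box^1 = C^{1,0}$. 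This is the base identification; it also shows the cone point of $C\Box^0$ is the vertex $\bd_{1,1}$ (the endpoint not hit by $\bd_{1,0}$).

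The second step is to promote this to $C^{0,n} \cong C^{1,n-1}$ for all $n \geq 1$ by applying the functor $C^{n-1}$ to the isomorphism $C^{0,1} \cong C^{1,0}$ established above. Since $C^{0,n} = C^{n}\Box^0 = C^{n-1}(C\Box^0) = C^{n-1}(C^{0,1})$ and $C^{1,n-1} = C^{n-1}\Box^1 = C^{n-1}(C^{1,0})$, and $C^{n-1}$ is a functor, applying it to $C^{0,1} \cong C^{1,0}$ immediately yields $C^{0,n} \cong C^{1,n-1}$. One should double-check that the bookkeeping of which copy of $C$ is applied "first" is consistent — here $C^n\Box^0$ means $C$ applied $n$ times, so peeling off the innermost (first-applied) $C$ is exactly what is wanted, and $C$ applied to $\Box^0$ is what we computed. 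One might also want to remark that under this isomorphism the cone points match up: the cone point of $C^{0,n}$ (the natural map $\Box^0 \Rightarrow C^n$ at $\Box^0$) corresponds to the cone point of $C^{1,n-1}$, since the natural transformation $\Box^0 \Rightarrow C$ is whiskered identically on both sides.

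I do not anticipate a serious obstacle here — this is essentially a formal consequence of the definition, and the excerpt itself flags it as a lemma cited from \cite{doherty-kapulkin-lindsey-sattler}. The only mild subtlety is making sure the pushout computation for $C\Box^0$ is done correctly in $\mcSet$ rather than merely in $\cSet$: one should note that all the cubical sets involved carry the minimal marking (as recorded in the remark following \cref{left-negative-def} that $C^{m,n}$ is minimally marked), so the isomorphism $\Box^1 \amalg_{\Box^0}\Box^0 \cong \Box^1$ of underlying cubical sets automatically upgrades to an isomorphism of marked cubical sets. If one wanted the marked-point version $\mC^{0,n} \cong \mC^{1,n-1}$ as well, the same argument applies since the distinguished top cube $\id_{[1]^{m+n}}$ is preserved by $C^{n-1}$ and matches on both sides. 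So the proof is: (i) compute $C\Box^0 \cong \Box^1$ from the pushout, (ii) apply $C^{n-1}$.
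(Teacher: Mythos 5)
Your proof is correct and takes the natural (and essentially the only) approach: compute $C\Box^0 \cong \Box^1$ directly from the defining pushout (a pushout along the identity $\Box^0 \to \Box^0$), then apply the functor $C^{n-1}$, noting that $C^n\Box^0 = C^{n-1}(C\Box^0)$; the paper itself cites this from \cite{doherty-kapulkin-lindsey-sattler} rather than reproving it. One small side remark is off: the cone point of $C\Box^0 \cong \Box^1$ is the vertex $\bd_{1,0}$, not $\bd_{1,1}$, since the right vertical map in the pushout square at $X = \Box^0$ composes with the top identity to agree with the left map $\bd_{1,0}\otimes\Box^0 = \bd_{1,0}$ — the collapsed end of the cylinder, not the free one — but this has no bearing on the validity of the isomorphism being proved.
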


For $X \in \mcSet$, and \emph{$(m,n)$-cone in $X$} is a map $C^{m,n} \to X$. We may observe that each standard cone $C^{m,n}$ is a quotient of $\Box^{m+n}$; by pre-composition with the quotient map, we can identify each cone in $X$ with a unique cube of $X$. 

We now consider the combinatorics of the faces of $C^{m,n}$. Recall that $\Box^{m+n}_{k}$ is the set of maps $[1]^{k} \to [1]^{m+n}$ in the box category $\Box$; thus we may write such a $k$-cube $f$ as $(f_{1},...,f_{m+n})$ where each $f_{i}$ is a map $[1]^{k} \to [1]$.

\begin{lem}[{\cite[Lem.~5.8]{doherty-kapulkin-lindsey-sattler}}]\label{ConeDesc}
For all $m, n \geq 0, C^{m,n}$ is the quotient of $\Box^{m+n}$ obtained by identifying two $k$-cubes $f, g$ if there exists $j$ with $1 \leq j \leq n$ such that $f_{i} = g_{i}$ for $i \leq j$ and $f_{j} = g_{j} = \mathrm{const}_{0}$ (the constant map $[1]^{k} \to [1]$ with value 0). \qed
\end{lem}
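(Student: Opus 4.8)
The plan is to prove this by induction on $n$, unwinding the pushout of \cref{left-negative-def} defining the cone functor and using that the geometric product, being biclosed monoidal, is cocontinuous in each variable. The base case $n=0$ is immediate: $C^{m,0}=\Box^m$, and since the index range $1\le j\le n$ is then empty the prescribed quotient of $\Box^m$ is $\Box^m$ itself.

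For the inductive step I would assume the description holds for $C^{m,n-1}$, so that there is a quotient map $q\colon\Box^{m+n-1}\twoheadrightarrow C^{m,n-1}$ whose kernel pair identifies two $k$-cubes $f,g$ precisely when there is some $1\le j\le n-1$ with $f_i=g_i$ for $i\le j$ and $f_j=g_j=\const_0$. Since $C^{m,n}=C(C^{m,n-1})$, the defining pushout exhibits it as $(\Box^1\otimes C^{m,n-1})\cup_{C^{m,n-1}}\Box^0$, with left leg $\bd_{1,0}\otimes\id$ and right leg the terminal map. As $\Box^1\otimes-$ preserves colimits, $\Box^1\otimes C^{m,n-1}$ is the quotient of $\Box^1\otimes\Box^{m+n-1}=\Box^{m+n}$ by the kernel pair of $q$ tensored with $\Box^1$; via the explicit formulas for the geometric product recalled in \cref{sec:background}, this identifies two $k$-cubes of $\Box^{m+n}$ exactly when their first coordinates agree and the tuples of remaining coordinates are identified by $q$ — that is, after shifting coordinate labels up by one, exactly when there is some $2\le j\le n$ with $f_i=g_i$ for $i\le j$ and $f_j=g_j=\const_0$. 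The remaining pushout along the terminal map $C^{m,n-1}\to\Box^0$ then collapses the image of $\bd_{1,0}\otimes\id$ — the $k$-cubes whose first coordinate is $\const_0$ — onto the single cone-point cube in each dimension, which adjoins precisely the identifications with $f_1=g_1=\const_0$, i.e. the $j=1$ instance of the relation in the statement. The union of these two relations is exactly the asserted quotient.

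The step I expect to be the main obstacle is checking carefully that performing these two successive quotients really does yield the quotient by the single relation $R$ displayed in the statement, with nothing further identified. Concretely, one must verify that $R$ together with the diagonal is already an equivalence relation on the $k$-cubes of $\Box^{m+n}$ for each $k$, and is compatible with precomposition by cubical operators, hence a congruence: transitivity comes from retaining the smaller of the two witnessing indices $j$, and operator-compatibility from the fact that $\const_0$ precomposed with any cubical operator is again $\const_0$. Granting this, the two quotient maps compose to the coequalizer presenting $\Box^{m+n}/R$, completing the induction. A lesser bookkeeping nuisance is tracking the identification $x\sigma_{k+1}\otimes y=x\otimes y\sigma_1$ intrinsic to the geometric product when transporting the kernel pair of $q$ across $\Box^1\otimes-$, but this is routine given the explicit description of $X\otimes Y$.
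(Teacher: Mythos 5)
This lemma is not proved in the paper at all: it is imported verbatim from \cite[Lem.~5.8]{doherty-kapulkin-lindsey-sattler} (as remarked after \cref{left-negative-def}, the marked $C^{m,n}$ is just the minimal marking of the cone constructed there, so the statement concerns only the underlying cubical set), so there is no in-paper argument to compare against. Your inductive unwinding of the defining pushout is correct and is essentially the standard argument from the cited source: the base case is trivial, cocontinuity of $\Box^1\otimes-$ presents $\Box^1\otimes(\Box^{m+n-1}/R_{n-1})$ as a quotient of $\Box^{m+n}$, and the pushout along $C^{m,n-1}\to\Box^0$ adjoins exactly the $j=1$ identifications. The two checks you flag are indeed where the content lies and they do go through: the displayed relation together with the diagonal is a congruence (transitivity by keeping the smaller witness $j$, operator-stability because $\const_0\alpha=\const_0$ for every cubical operator $\alpha$), and $\id_{\Box^1}\otimes q$ identifies $f,g$ precisely when $f_1=g_1$ and $q$ identifies the tails --- for the ``only if'' direction one observes that the first component and the tail are well defined on formal-product classes (they are unchanged under the shift $(x\sigma_{a+1})\otimes y=x\otimes(y\sigma_1)$), and for the ``if'' direction that two cubes with the same first component admit a common cut, so they can be written as $x\otimes y$ and $x\otimes y'$ with $(y,y')$ in the relation for $C^{m,n-1}$. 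With those verifications spelled out, your proof is complete and matches the argument the paper delegates to the reference.
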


\begin{lem}\label{cone-desc-faces}
A face $\delta \colon \Box^k \to \Box^{m+n}$ is mapped to a degenerate cube by the quotient map $\Box^{m+n} \to C^{m,n}$ if and only if there is some $1 \leq i \leq n$ such that both of the following conditions are satisfied: 
\begin{itemize}
\item the standard form of $\delta$ contains $\bd_{i,0}$;
\item for some $i < j \leq m + n$, the standard form of $\delta$ does not contain any map $\bd_{j,\varepsilon}$.
\end{itemize}
\end{lem}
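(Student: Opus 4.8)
The plan is to translate the statement into the coordinatewise language of cubes and then analyse how the quotient map $q\colon\Box^{m+n}\to C^{m,n}$ of \cref{ConeDesc} interacts with degeneracies. The first step is an elementary observation on normal forms: if a face map $\delta\colon\Box^k\to\Box^{m+n}$ has normal form $\partial_{i_1,\varepsilon_1}\cdots\partial_{i_r,\varepsilon_r}$ with $i_1>\cdots>i_r$ and $r=m+n-k$, then, writing $\delta=(\delta_1,\dots,\delta_{m+n})$ with $\delta_p\colon[1]^k\to[1]$, one has $\delta_{i_\ell}=\const_{\varepsilon_\ell}$ for each $\ell$, while for $p\notin\{i_1,\dots,i_r\}$ the component $\delta_p$ is one of the coordinate projections $\mathrm{pr}_1,\dots,\mathrm{pr}_k\colon[1]^k\to[1]$, and these $k$ projections occur pairwise distinctly. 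This follows by a one-line induction on $r$ from the defining formula for $\partial^n_{i,\varepsilon}$. Under this translation, ``the normal form of $\delta$ contains $\partial_{i,0}$'' becomes ``$\delta_i=\const_0$'', and ``the normal form of $\delta$ contains no $\partial_{j,\varepsilon}$'' becomes ``$\delta_j$ is a coordinate projection''; I will call such $j$ the \emph{free} positions of $\delta$, of which there are exactly $k$.

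Next I would record the following reduction. Since $q$ is degreewise surjective and commutes with cubical operators, $q(\delta)$ is degenerate in $C^{m,n}$ if and only if $q(\delta)=q(\delta')$ for some $\delta'\colon[1]^k\to[1]^{m+n}$ that is degenerate in $\Box^{m+n}$. As $\delta$ is a face map, hence non-degenerate, we have $\delta\neq\delta'$, so by \cref{ConeDesc} this holds if and only if there exist $1\le i\le n$ and a degenerate $\delta'$ with $\delta_i=\const_0$ and $\delta_p=\delta'_p$ for all $p\le i$. It then remains to prove that this condition is equivalent to the one in the statement.

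For the ``if'' direction, suppose $\delta_i=\const_0$ for some $i\le n$ and that some free position $j>i$ exists. Let $\delta'=\delta\cdot\pi$, where $\pi\colon[1]^k\to[1]^k$ is the monotone map (a composite of degeneracies and $0$-faces) which fixes every variable carried by a free position below $i$ and sends all remaining variables to $0$. Then $\delta'_p=\delta_p$ for $p\le i$ (each such $\delta_p$ being either constant or a projection onto a variable that $\pi$ fixes), so $\delta'\sim\delta$ by \cref{ConeDesc}; and since there are at most $k-1$ free positions below $i$, the map $\pi$ is constant in at least one variable, hence factors through a degeneracy operator, and therefore so does $\delta'=\delta\cdot\pi$. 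Thus $q(\delta)=q(\delta')$ is degenerate.

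For the ``only if'' direction, the reduction supplies $1\le i\le n$ with $\delta_i=\const_0$ — this is the first clause of the statement — together with a degenerate $\delta'$ agreeing with $\delta$ on positions $\le i$, and I must produce a free position $j>i$ for \emph{this same} $i$. If none existed then, as $i$ is itself not a free position, all $k$ free positions would lie below $i$, so all $k$ coordinate projections would occur among $\delta_1,\dots,\delta_{i-1}$ and hence among the components of $\delta'=e\cdot s$ for some single degeneracy or connection $s\colon[1]^k\to[1]^{k-1}$. This is impossible, since then every component of $\delta'$ would factor through $s$, whereas $\mathrm{pr}_\ell$ factors through neither $\sigma_\ell$ nor $\gamma_{\ell,\varepsilon}$: in each case one exhibits two inputs with the same image under $s$ but differing in the $\ell$-th coordinate. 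The two points requiring care are precisely this non-factoring claim and the bookkeeping ensuring that the index $i$ in both clauses is the one extracted from \cref{ConeDesc}; everything else is routine manipulation of normal forms and the explicit formulas for $\partial$, $\sigma$, and $\gamma$.
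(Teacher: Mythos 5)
Your proof is correct, but it takes a genuinely different route from the paper. The paper proves the lemma by induction on $n$, using the recursive pushout definition of the cone functor: at each stage it identifies the cubes of $\Box^{m+n+1}$ that become degenerate in $\Box^1\otimes C^{m,n}$ (via the description of degeneracies in a geometric product and the inductive hypothesis, accounting for the index shift), and then observes that collapsing the $(1,0)$-face to the cone point makes degenerate exactly the positive-dimensional faces whose standard form contains $\bd_{1,0}$. You instead argue directly from the explicit description of the quotient in \cref{ConeDesc}: you translate the standard form of \cref{cube-standard-form} into the componentwise picture (constant versus projection coordinates), reduce degeneracy of $q(\delta)$ to the existence of a degenerate cube of $\Box^{m+n}$ in the same equivalence class (using degreewise surjectivity of $q$ and compatibility of the identification with the operators), and then either exhibit such a representative explicitly by killing the variables carried by free positions above $i$, or rule one out by the observation that all components of a cube in the image of $\sigma_\ell$ or $\gamma_{\ell,\varepsilon}$ factor through that operator while $\mathrm{pr}_\ell$ does not. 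Both of your delicate points check out: the non-factoring claim is verified by the two-input test you describe (using that $\gamma_{\ell,0}$ and $\gamma_{\ell,1}$ are $\max$ and $\min$), and the index $i$ produced by \cref{ConeDesc} is indeed the one for which you verify both clauses, as the statement requires. What the two approaches buy: the paper's induction stays close to the definition of $C$ and reuses the geometric-product description of degeneracies, so it needs no analysis of which maps factor through degeneracies or connections; your argument is non-inductive, leans only on \cref{ConeDesc}, and is more explicit, producing an actual degenerate representative of the class of $\delta$ in the ``if'' direction.
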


\begin{proof}
We fix $m$, and proceed by induction on $n$. For $n = 0$, we have $C^{m,0} = \Box^m$, and the quotient map is the identity; as there are no values $i$ for which the statement holds, it is therefore vacuously true.

Now suppose that the statement is proven for $C^{m,n}$ and consider $C^{m,n+1}$. This cubical set is constructed via the following pushout:
	\[
	\begin{tikzcd}
		C^{m,n}
		\arrow [r]
		\arrow [d, swap, "\bd_{1,0}"]
		\pushout
          &
		\Box^0 
		\arrow [d] \\
		\Box^1 \otimes C^{m,n}
		\arrow [r] &
		 C^{m,n+1}
	\end{tikzcd}
	\]
Because the functor $\Box^1 \otimes -$ preserves colimits, $\Box^1 \otimes C^{m,n}$ is a quotient of $\Box^1 \otimes \Box^{m+n} \cong \Box^{m+n+1}$, with the quotient map given by applying $\Box^1 \otimes -$ to the quotient map $\Box^{m+n} \to C^{m,n}$. 

The degenerate cubes of $\Box^1 \otimes C^{m,n}$ are those corresponding to pairs $(x,y)$ where either $x$ is a degenerate cube of $\Box^1$ or $y$ is a degenerate cube of $C^{m,n}$. 
Therefore, by the inductive hypothesis, the non-degenerate cubes of $\Box^1 \otimes \Box^{m+n}$ which are mapped to degenerate cubes of $\Box^1 \otimes C^{m,n}$ are those of the form $(x,y)$, where both $x$ and $y$ are non-degenerate, and $y$ satisfies the criteria given in the statement when considered as a cube $y \colon \Box^k \to \Box^{m+n}$ for some $1 \leq i \leq n$. 
Under the isomorphism $\Box^1 \otimes \Box^{m+n} \cong \Box^{m+n+1}$, such cubes correspond to faces $\delta \colon \Box^{k'} \to \Box^{m+n+1}$ such that the conditions in the statement of the theorem are satisfied for some $2 \leq i \leq n+1$.

The effect of the quotient map $\Box^1 \otimes C^{m,n} \to C^{m, n+1}$ is precisely to map the subcomplex $\{0\} \otimes C^{m, n}$ onto the cone point.
This subcomplex consists of the images under the quotient map $\Box^{m+n+1} \to \Box^1 \otimes C^{m,n}$ of the cubes of $\Box^{m+n+1}$ whose standard forms contain $\bd_{1,0}$.
Thus every cube of $\Box^{m+n+1}$ of positive dimension whose standard form contains $\bd_{1,0}$ is mapped to a degenerate cube by the composite quotient map $\Box^{m+n+1} \to \Box^1 \otimes C^{m,n} \to C^{m,n+1}$.
The cubes of $\Box^{m+n+1}$ of positive dimension whose standard forms contain $\bd_{1,0}$ are precisely the faces satisfying the criteria of the statement for $i = 1$ when considered as faces $\Box^k \to \Box^{m+n+1}$.
\end{proof}

Next we recall some results on cones from \cite{doherty-kapulkin-lindsey-sattler} which will be of use in our proof.

\begin{lem}[{\cite[Cor.~5.9]{doherty-kapulkin-lindsey-sattler}}]\label{ConeWLOG}
For $k \leq n$, the quotient map $\Box^{m+n} \to C^{m,n}$ factors through $C^{m+k,n-k}$. In particular, if $x \colon \Box^{m+n} \to X$ is an $(m,n)$-cone, then $x$ is also an $(m+k,n-k)$-cone for all $k \leq n$. \qed
\end{lem}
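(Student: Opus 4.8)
The plan is to argue directly from the explicit quotient description of the standard cones furnished by \cref{ConeDesc}, comparing the two quotients of a common cube. First I would note that $(m+k)+(n-k) = m+n$, so that both $C^{m,n}$ and $C^{m+k,n-k}$ are quotients of one and the same cube $\Box^{m+n}$; write $q \colon \Box^{m+n} \to C^{m,n}$ and $q' \colon \Box^{m+n} \to C^{m+k,n-k}$ for the two quotient maps. By \cref{ConeDesc}, $q$ is obtained by identifying two $\ell$-cubes $f,g$ whenever there is some $1 \le j \le n$ with $f_i = g_i$ for $i \le j$ and $f_j = g_j = \const_0$, while $q'$ is obtained by making exactly the same kind of identification but with $j$ restricted to $1 \le j \le n-k$.

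The key observation is then immediate: since $n-k \le n$, every pair of cubes identified in forming $C^{m+k,n-k}$ is also identified in forming $C^{m,n}$. Hence $q$ coequalizes all the generating identifications of $q'$, and since $q'$ is by construction the universal cubical map out of $\Box^{m+n}$ making those identifications, there is a unique map of cubical sets $p \colon C^{m+k,n-k} \to C^{m,n}$ with $p \circ q' = q$. This is precisely the asserted factorization of $\Box^{m+n} \to C^{m,n}$ through $C^{m+k,n-k}$. (One could instead induct on $k$, peeling off one layer of the pushout defining $C$ at a time and invoking \cref{Qcone}, but the direct comparison of quotients is cleaner and avoids tracking the cone-point collapse maps.)

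For the final assertion, suppose $x \colon \Box^{m+n} \to X$ is an $(m,n)$-cone, i.e. $x = \bar x \circ q$ for some $\bar x \colon C^{m,n} \to X$. Then $x = \bar x \circ p \circ q' = (\bar x \circ p) \circ q'$ factors through the quotient map $q'$, so $x$ is an $(m+k,n-k)$-cone, witnessed by $\bar x \circ p \colon C^{m+k,n-k} \to X$.

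I do not anticipate a genuine obstacle here; the only points that call for a little care are recording that the two cones really are quotients of a common cube $\Box^{m+n}$ (so the comparison is meaningful), and invoking the universal property of $q'$ correctly, namely that ``the quotient identifying a prescribed set of pairs of cubes'' is by definition initial among cubical maps making those identifications. Everything else is bookkeeping with the index $j$ and the inequality $n-k \le n$.
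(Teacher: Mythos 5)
Your argument is correct and is essentially the intended one: the paper simply cites \cite[Cor.~5.9]{doherty-kapulkin-lindsey-sattler}, where the statement is exactly a corollary of the quotient description recorded here as \cref{ConeDesc}, obtained by observing that the identifications defining $C^{m+k,n-k}$ (indices $1 \le j \le n-k$) form a subset of those defining $C^{m,n}$ (indices $1 \le j \le n$), so the universal property of the quotient gives the factorization, and precomposition yields the statement about cones. No gaps.
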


\begin{lem}[{\cite[Lem.~5.10]{doherty-kapulkin-lindsey-sattler}}]\label{FaceIso}
For $i \leq n$, the image of the composite map $\Box^{m+n-1} \xrightarrow{\partial_{i,0}} \Box^{m+n} \to C^{m,n}$ is isomorphic to $C^{m,n-1}$. For $i \geq n + 1$, $\varepsilon \in \{0,1\}$, the image of the composite map $\Box^{m+n-1} \xrightarrow{\partial_{i,\varepsilon}} \Box^{m+n} \to C^{m,n}$ is isomorphic to $C^{m-1,n}$. \qed
\end{lem}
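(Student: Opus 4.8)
The statement reduces at once to its unmarked counterpart: by \cref{left-negative-def} the marked cubical set $C^{m,n}$ has $C^{m,n}_{L,0}$ as its underlying cubical set, faces are computed on underlying cubical sets, and the claimed isomorphisms are isomorphisms of minimally marked cubical sets, so this is precisely \cite[Lem.~5.10]{doherty-kapulkin-lindsey-sattler}. I will nonetheless indicate a direct argument. The plan is to induct on $n$, exploiting the recursive presentation $C^{m,n} \cong C(C^{m,n-1})$ furnished by the pushout of \cref{left-negative-def}: writing $q \colon \Box^{m+n-1} \to C^{m,n-1}$ for the quotient map of the $(m,n-1)$-cone, the quotient map $r \colon \Box^{m+n} \cong \Box^1 \otimes \Box^{m+n-1} \to C^{m,n}$ factors as $r = p \circ (\Box^1 \otimes q)$, where $p$ collapses the $\bd_{1,0}$-end $\{0\} \otimes C^{m,n-1}$ to the cone point. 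The base case $n = 0$ holds since $C^{m,0} = \Box^m$ and $r$ is the identity.

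The induction rests on two facts. The first is that $C$ preserves monomorphisms: $\Box^1 \otimes -$ does, the defining pushout is a pushout of a monomorphism (namely $\bd_{1,0}\otimes-$) in a presheaf topos and so is again a monomorphism, and since the cone construction only collapses the $\{0\}$-end a monomorphism $A \hookrightarrow B$ induces a monomorphism $CA \hookrightarrow CB$; hence each subcomplex $J \subseteq C^{m,n-1}$ yields a subcomplex $CJ = p(\Box^1 \otimes J) \subseteq C^{m,n}$. The second is a bookkeeping identity: for $i \ge 2$, under $\Box^{m+n} \cong \Box^1 \otimes \Box^{m+n-1}$ the face $\bd^{m+n}_{i,\varepsilon}$ corresponds to $\id_{\Box^1} \otimes \bd^{m+n-1}_{i-1,\varepsilon}$, and since $\Box^1 \otimes -$ preserves epi–mono factorizations the image of $\bd^{m+n}_{i,\varepsilon}$ in $C^{m,n}$ is exactly $C(J)$, where $J \subseteq C^{m,n-1}$ is the image of $\bd^{m+n-1}_{i-1,\varepsilon}$; the two faces in the cone coordinate $i = 1$ are treated by hand, with $\bd_{1,0}$ landing on the cone point and $\bd_{1,1}$ mapping isomorphically onto the base copy of $C^{m,n-1}$. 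Feeding the inductive hypothesis for $C^{m,n-1}$ into this identity then closes the induction: a face in a cone direction ($i \le n$) has image $C(C^{m,n-2}) = C^{m,n-1}$, while a face in a base direction ($i \ge n+1$, which forces $m \ge 1$) has image $C(C^{m-1,n-1}) = C^{m-1,n}$, the equalities being instances of $C^{m,n} = C(C^{m,n-1})$, with \cref{Qcone} absorbing the corner cases where a cone factor degenerates to dimension zero.

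I expect the first fact to be the only genuine obstacle: one must be sure that the iterated quotient introduces no identifications beyond those already visible one level down, so that the images are genuinely \emph{isomorphic} to sub-cones rather than further quotients of them. The clean way to settle this is through the explicit quotient relation of \cref{ConeDesc}, or equivalently the degeneracy criterion of \cref{cone-desc-faces}: one checks that two faces of a given facet of $\Box^{m+n}$ are identified by $r$ if and only if their restrictions are identified by the quotient defining the relevant copy of $C^{m,n-1}$ or $C^{m-1,n}$, which amounts to a direct comparison of the witnessing indices $j \le n$ occurring in \cref{ConeDesc}. Everything else is routine manipulation of the normal forms of face maps, as in \cref{cube-standard-form}.
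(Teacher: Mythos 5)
Your opening reduction is exactly what the paper does: \cref{FaceIso} is stated with a citation to \cite[Lem.~5.10]{doherty-kapulkin-lindsey-sattler} and no further argument, and since $C^{m,n}$ and all of its regular subcomplexes carry the minimal marking, the unmarked statement upgrades immediately. On that route you and the paper agree.

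The problem is the $\varepsilon$-bookkeeping in your direct argument, and it is worth spelling out because it also shows that the clause you set out to prove cannot be read at face value. You correctly record the base case of your recursion: under the left-negative convention of \cref{left-negative-def}, $\bd_{1,0}$ is collapsed onto the cone point, and it is $\bd_{1,1}$ that maps isomorphically onto the base copy of $C^{m,n-1}$. Your bookkeeping identity (for $i \ge 2$, the image of $\bd_{i,\varepsilon}$ in $C^{m,n}$ is $C$ applied to the image of $\bd_{i-1,\varepsilon}$ in $C^{m,n-1}$) is fine, but once $\varepsilon$ is tracked through it, a cone-direction face with $\varepsilon = 0$ bottoms out at the cone point rather than at the base copy: for $i \le n$ the image of $\bd_{i,0}$ is $C^{0,i-1}$ (a single vertex when $i = 1$; for instance $m = n = i = 1$ gives a point, not $C^{1,0} = \Box^1$), whereas it is $\bd_{i,1}$, $i \le n$, whose image is $C^{m,n-1}$ --- in agreement with \cref{ConeDesc} (the $j = i$ relation collapses everything beyond coordinate $i-1$ once coordinate $i$ is constantly $0$), with \cref{ConeFaceDeg}(1), and with \cref{BMap}. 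So your summary sentence ``a face in a cone direction ($i \le n$) has image $C(C^{m,n-2}) = C^{m,n-1}$'' fails for $\varepsilon = 0$, and the claim that the statement ``is precisely'' the cited lemma ignores the $\varepsilon$-flip the paper itself warns about when importing results from that reference: as printed, the first clause of \cref{FaceIso} appears to have inherited the wrong $\varepsilon$, and what your induction actually proves (correctly) is the version with $\bd_{i,1}$. Either prove that corrected clause explicitly or flag the discrepancy; as written, your sketch asserts the printed clause while your own base case contradicts it.
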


\begin{lem}[{\cite[Lem.~5.11]{doherty-kapulkin-lindsey-sattler}}]\label{ConeFaceDeg}
Let $x$ be an $(m,n)$-cone in a cubical set $X$. Then:

\begin{enumerate}
\item\label{Low0FaceOfCone} for $1 \leq i \leq n$, $x\bd_{i,1}$ is an $(m,n-1)$-cone;
\item\label{High0FaceOfCone} for $n+1 \leq i \leq m + n$, $x\bd_{i,1}$ is an $(m-1,n)$-cone;
\item\label{1FaceOfCone} if $m \geq 1$, then for all $i$, $x\bd_{i,0}$ is an $(m-1,n)$-cone;
\item\label{DegenOfCone} for $n+1 \leq i \leq m + n + 1$, $x\sigma_{i} $ is an $(m+1,n)$-cone;
\item\label{Low0ConOfCone} for $1 \leq i \leq n$, $x\gamma_{i,1}$ is an $(m,n+1)$-cone;
\item\label{HighConOfCone} for $n+1 \leq i \leq m + n$, $x\gamma_{i,\varepsilon}$ is an $(m+1,n)$-cone. \qed
\end{enumerate}
\end{lem}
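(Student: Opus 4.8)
The plan is to argue directly from the combinatorial description of cones in \cref{ConeDesc}. Recall that, via the quotient map $q_{m,n} \colon \Box^{m+n} \to C^{m,n}$, an $(m,n)$-cone in $X$ is the same thing as a cube $x \colon \Box^{m+n} \to X$ that factors through $q_{m,n}$; equivalently, $x$ coequalizes the relation that identifies two $k$-cubes $f = (f_1, \dots, f_{m+n})$ and $g = (g_1, \dots, g_{m+n})$ of $\Box^{m+n}$ whenever there is some $1 \le j \le n$ with $f_i = g_i$ for all $i \le j$ and $f_j = g_j = \const_0$.

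For each of the six items, the cube in question is $x \circ \alpha$ for a suitable map of representables $\alpha \colon \Box^{m'+n'} \to \Box^{m+n}$ induced by the relevant face, degeneracy, or connection operator, with $(m',n')$ the claimed pair. So it suffices to show that $\alpha$ descends to the quotients, i.e.\ that $\alpha$ carries the defining relation for $C^{m',n'}$ into the defining relation for $C^{m,n}$. Granting this, $\alpha$ induces $\bar\alpha \colon C^{m',n'} \to C^{m,n}$, and writing $x = \bar x \circ q_{m,n}$ we obtain $x \circ \alpha = \bar x \circ \bar\alpha \circ q_{m',n'}$, which factors through $q_{m',n'}$ and hence exhibits $x \circ \alpha$ as an $(m',n')$-cone. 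Concretely: given $f,g$ related through an index $j \le n'$, one reads off from the explicit formula for $\alpha$ an index $j'$ (equal to $j$, to $j \pm 1$, or to the operator's index) for which $\alpha f$ and $\alpha g$ are related.

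The substance is this last verification, which I would carry out operator by operator. For the operators that only touch coordinates of index $> n$ --- items \ref{High0FaceOfCone}, \ref{DegenOfCone}, \ref{HighConOfCone}, and item \ref{1FaceOfCone} when the face index exceeds $j$ --- the first $n$ coordinates of $\alpha f$ and $\alpha g$ agree with those of $f$ and $g$, so the same index $j$ works and the check is immediate. The interactive cases are items \ref{Low0FaceOfCone} and \ref{Low0ConOfCone}, together with item \ref{1FaceOfCone} when the face index is $\le j$: here I would split into subcases according to the position of the operator's index relative to $j$, using that prepending a coordinate before position $j$ merely shifts $j$ up by one, and --- crucially for item \ref{Low0ConOfCone}, where only the positive connection $\gamma_{i,1}$ (i.e.\ the $\min$) is claimed --- that $\min\{\const_0, h\} = \const_0$. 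The only place demanding care is the boundary conventions (what happens when $j$ equals the operator's index, and at $j = 1$), but this is pure bookkeeping, and I expect no genuine obstacle. As this is \cite[Lem.~5.11]{doherty-kapulkin-lindsey-sattler}, the argument is not original here and may simply be cited.
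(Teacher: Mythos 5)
Your proposal is correct: since each of the six operations is precomposition $x\circ\alpha$ with the Yoneda image of a face, degeneracy, or connection, it suffices to check that $\alpha$ respects the generating identifications of \cref{ConeDesc}, and your case analysis identifies exactly the right points — operators acting only in coordinates $>n$ leave the witnessing index $j$ untouched, insertions below $j$ merely shift it by one, and $\min\{\const_0,h\}=\const_0$ is precisely what limits item (5) to the connection $\gamma_{i,1}$. The paper gives no proof of its own here (it cites \cite[Lem.~5.11]{doherty-kapulkin-lindsey-sattler}), and your descent-along-the-quotient argument is essentially the argument of that reference, so citing it, as you note, is also acceptable.
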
 

Recall that a ``standard form of a cube $x$'' was introduced after \cref{cube-standard-form}.
With that, we may provide further analysis of standard forms of cones.

\begin{lem}[{\cite[Lem.~5.16]{doherty-kapulkin-lindsey-sattler}}]\label{sa1}
Let $m \geq 1$, and let $x \colon C^{m,n} \to X$ be a degenerate $(m,n)$-cone.
\begin{enumerate}
\item\label{sa1-degen} If the standard form of $x$ is $z\sigma_{a_{p}}$, then $a_{p} \geq n + 1$.

\item\label{sa1-con} If the standard form of $x$ is $z\gamma_{b_{q},0}$, then $b_{q} \geq n+1$. \qed
\end{enumerate}
\end{lem}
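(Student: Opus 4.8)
The plan is to argue by contradiction in both cases, working through the presentation of $C^{m,n}$ as the quotient $q\colon\Box^{m+n}\twoheadrightarrow C^{m,n}$ of \cref{ConeDesc} and the description of its collapsed faces in \cref{cone-desc-faces}. The one geometric input is this: for $1\le i\le n$ and $m\ge1$, the face $\bd_{i,0}\colon\Box^{m+n-1}\to\Box^{m+n}$ is sent by $q$ to a \emph{degenerate} cube of $C^{m,n}$. Indeed, its normal form is the single face $\bd_{i,0}$, which contains $\bd_{i,0}$ with $i\le n$, and since $m\ge1$ there is an index $j$ with $i<j\le m+n$ not occurring in it; so \cref{cone-desc-faces} applies. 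By contrast, a positive connection $\gamma_{b,1}$ replaces two coordinates by their $\min$, and $\min(\const_0,-)=\const_0$, so such an operator is compatible with the quotient identifications — which is exactly why part (2) constrains only negative connections.

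The core of the argument is the codimension-one case. Suppose $x=z\sigma_{a_p}$ with $z$ non-degenerate and $a_p\le n$; the case $x=z\gamma_{b_q,0}$ with $b_q\le n$ is handled identically, using the identity $\gamma_{b_q,0}\bd_{b_q,0}=\id$ in place of $\sigma_{a_p}\bd_{a_p,0}=\id$. The cubical identities give $z=x\bd_{a_p,0}$. Since $x$ is an $(m,n)$-cone it is the value of a cubical map $\bar x\colon C^{m,n}\to X$ at the top cube $t$ of $C^{m,n}$, and cubical maps commute with face operators, so
\[
z \;=\; x\bd_{a_p,0}\;=\;\bar x\bigl(t\,\bd_{a_p,0}\bigr)\;=\;\bar x\bigl(q(\bd_{a_p,0})\bigr).
\]
By the observation above $q(\bd_{a_p,0})$ is degenerate in $C^{m,n}$, and a cubical map carries degenerate cubes to degenerate cubes; hence $z$ is degenerate, contradicting the choice of standard form. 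This already settles both statements whenever $z$ is the non-degenerate core of $x$.

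To reach the general case one peels elementary operators off the outside of the standard form of $x$, using \cref{ConeFaceDeg} to stay within the world of cones. An outermost operator of index $\ge n+1$ — a degeneracy $\sigma_a$ or a connection $\gamma_{b,\varepsilon}$ — can be removed via a suitable $\varepsilon$-face ($\bd_{a,0}$ for $\sigma_a$ and for $\gamma_{b,0}$, and $\bd_{b,1}$ for $\gamma_{b,1}$ with $b\ge n+1$), and the relevant clauses of \cref{ConeFaceDeg} show that the resulting cube is again an $(m-1,n)$-cone; an outermost positive connection $\gamma_{b,1}$ with $b\le n$ is removed via $\bd_{b,1}$, yielding an $(m,n-1)$-cone; and an outermost $\sigma_a$ or $\gamma_{b,0}$ of index $\le n$ is precisely what the codimension-one argument excludes. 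Iterating terminates at the non-degenerate core, proving the lemma.

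The main obstacle is the bookkeeping underlying this iteration: one must verify that deleting the outermost operator of a standard form again produces a standard form (so that the induction runs on a genuine invariant), keep track of the index shifts dictated by the cubical identities, and — the subtlest point — ensure that the reductions which decrease $n$ (peeling a positive connection $\gamma_{b,1}$ with $b\le n$) still propagate the \emph{sharp} bound ``index $\ge n+1$'' and not merely ``$\ge n$''. The natural way to control this is to run the induction on $n$ through the defining pushout $C^{m,n}=C(C^{m,n-1})$ rather than on the number of operators, exploiting that the $\bd_{1,1}$-face of an $(m,n)$-cone is its underlying $(m,n-1)$-cone, so that the interaction between the cone point and the connection structure is analysed one cone-coordinate at a time. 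Everything else reduces to routine manipulation with the cubical identities and the explicit quotient description.
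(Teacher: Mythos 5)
Your codimension-one computation is correct, but it only yields a contradiction when the cube obtained by stripping the final operator is non-degenerate, and your reduction of the general case to it does not go through. The lemma concerns only the \emph{final} operator of the standard form, and in the problematic case---final operator $\sigma_{a}$ or $\gamma_{b,0}$ of index $\le n$ while $z$ is still degenerate---your one geometric input produces only ``$z=x\bd_{a,0}$ is degenerate'', which is no contradiction at all (recall the paper's own example: the standard form of $y\sigma_1\sigma_2$ is $z\sigma_2$ with $z=y\sigma_1$ degenerate; note also that the totally degenerate square $v\sigma_1\sigma_2$ is a perfectly good degenerate $(1,1)$-cone, so small-index operators do occur in the interior of standard forms of cones, and any attempt to prove the stronger statement ``every operator has index $\ge n+1$'' by your peeling is doomed). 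After that, your iteration strips \emph{other} operators, about which the hypothesis says nothing, and the offending final operator never re-enters the argument; the claim that ``an outermost $\sigma_a$ or $\gamma_{b,0}$ of index $\le n$ is precisely what the codimension-one argument excludes'' is exactly the unproved case. Even if one recasts the peeling as an induction applying the lemma itself to $z=x\bd_{\,\cdot\,,0}$ (an $(m-1,n)$-cone by \cref{ConeFaceDeg}, or a $(1,n-1)$-cone via \cref{Qcone} when $m=1$), this settles part (1), since stripping a final $\sigma_{a_p}$ leaves a standard form ending in $\sigma_{a_{p-1}}$ with $a_{p-1}<a_p\le n$; but it breaks for part (2), because after stripping $\gamma_{b_q,0}$ the new final operator may be a positive connection of small index, about which the inductive hypothesis is silent.

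What is missing is a sharper use of \cref{ConeDesc} than the bare degeneracy of $q(\bd_{i,0})$ recorded in \cref{cone-desc-faces}: for $i\le n$ the face $\bd_{i,0}$ is identified in $C^{m,n}$ with the cube $(\pi_1,\dots,\pi_{i-1},\const_0,\dots,\const_0)$, i.e.\ with an iterated degeneracy, in \emph{all} directions $\ge i$, of an $(i-1)$-dimensional cube. Hence if the final operator has index $a\le n$, then $z=x\bd_{a,0}=w\sigma_a\sigma_{a+1}\cdots\sigma_{m+n-1}$ for some $(a-1)$-cube $w$; substituting this back into $x=z\sigma_a$ (resp.\ $x=z\gamma_{a,0}$) and rewriting with the cubical identities $\sigma_i\sigma_j=\sigma_j\sigma_{i+1}$ ($j\le i$), $\sigma_j\gamma_{a,0}=\gamma_{a,0}\sigma_{j+1}$ ($j>a$), $\sigma_a\gamma_{a,0}=\sigma_a\sigma_a$ shows that $x$ is a composite of degeneracies only, ending in $\sigma_{m+n}$; by uniqueness of the standard form this contradicts a final operator $\sigma_{a_p}$ with $a_p\le n$ in case (1), and contradicts the standard form ending in any connection in case (2). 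Some argument of this strength is unavoidable, and your sketch does not contain it. (For context: the paper does not reprove this statement but quotes it from \cite[Lem.~5.16]{doherty-kapulkin-lindsey-sattler}, so there is no in-paper proof to compare with; judged on its own, your proposal has the gap described above.)
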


\subsection{The functor $Q$}\label{sec:Q}
In this section, we construct the \emph{cubification} functor $Q \colon \msSet \to \mcSet$.
Later, we will exhibit this functor to be a homotopical inverse to the triangulation functor $T$.

We will use the cosimplcial object $Q_{L,0}$ from \cite[Prop.~6.5]{doherty-kapulkin-lindsey-sattler} combined with the involution $(-)^\op \colon \msSet \to \msSet$.
More explicitly, for $n \geq 0$, $Q^n$ is the cubical set $C^{0,n}$.
Using \cref{ConeDesc}, we can obtain the following alternative description of the objects $Q^n$, relating this definition of $Q$ to that given in \cite{kapulkin-lindsey-wong}. 

\begin{prop}
	For $n \geq 0$, $Q^n$ is given by the pushout square
	\[
	\begin{tikzcd}
		\displaystyle\coprod_{1 \le i \le n} \cube{i-1}\otimes\cube{n-i}
		\arrow [r]
		\arrow [d]
		\pushout &
		\cube n
		\arrow [d] \\
		\displaystyle\coprod_{1 \le i \le n} \cube{i-1}
		\arrow [r] &
		Q^n
	\end{tikzcd}
	\]
	where the upper horizontal map restricts to $\face i 0$ on the $i$-th summand, and the left vertical map is a coproduct of the projections $\cube{i-1}\otimes\cube{n-i} \to \cube{i-1}\otimes\cube{0}\cong\cube{i-1}$. \qed
\end{prop}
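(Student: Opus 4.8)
The plan is to unwind the iterated cone construction $Q^n = C^{0,n} = C^n \Box^0$ and compare it directly with the stated pushout. By \cref{Qcone}, $Q^n = C^{0,n} \cong C^{1,n-1}$, so equivalently we must identify $C^{1,n-1}$, i.e.\ the $(n-1)$-fold cone on $\Box^1$, with the claimed quotient of $\Box^n$. The key tool is \cref{ConeDesc}: $C^{m,n}$ is the quotient of $\Box^{m+n}$ identifying two $k$-cubes $f,g$ whenever they agree in the first $j$ coordinates and $f_j = g_j = \mathrm{const}_0$ for some $1 \le j \le n$. Applying this with $(m,n)$ replaced by $(1,n-1)$, we see that $Q^n$ is the quotient of $\Box^n$ by the equivalence relation generated by: $f \sim g$ whenever $f_i = g_i$ for $i < j$ and $f_j = g_j = \mathrm{const}_0$, for some $1 \le j \le n-1$. (Note the shift: the cone coordinates in $C^{1,n-1}$ are the last $n-1$, but after the isomorphism of \cref{Qcone} they become coordinates $2,\dots,n$, matching the range $1 \le i \le n$ in the pushout once reindexed.)

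First I would write down the proposed pushout explicitly. The coproduct $\coprod_{1\le i\le n}\Box^{i-1}\otimes\Box^{n-i}$ has $n$ summands; the $i$-th summand maps into $\Box^n$ via $\face i 0 \colon \Box^{i-1}\otimes\Box^{n-i} \cong \Box^{n-1} \to \Box^n$, the face picking out coordinate $i$ equal to $0$, and maps to $\coprod \Box^{i-1}$ by collapsing the $\Box^{n-i}$ factor. The pushout $P$ of this span is therefore $\Box^n$ with each face $\face i 0$ of it collapsed along the projection $\Box^{i-1}\otimes\Box^{n-i}\to\Box^{i-1}$: that is, a cube $f = (f_1,\dots,f_n)$ lying in the image of $\face i 0$ (so $f_i = \mathrm{const}_0$) is identified with $(f_1,\dots,f_{i-1})$, i.e.\ all its coordinates from $i$ onward are forgotten. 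Two cubes $f,g$ in the image of $\face i 0$ become equal in $P$ precisely when $f_k = g_k$ for $k < i$. This is exactly the relation from \cref{ConeDesc} described in the previous paragraph.

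Then I would make the comparison precise via the universal property. There is a canonical map $Q^n \to P$ (or $P \to Q^n$): the quotient map $\Box^n \to Q^n$ sends each $\face i 0$ to a cube determined only by its first $i-1$ coordinates (by \cref{FaceIso}, the image of $\face i 0$ is $C^{0,n-1}\cong C^{1,n-2}$, obtained by further collapsing), hence it coequalizes the two legs of the defining span of $P$, inducing $P \to Q^n$; conversely the quotient map $\Box^n \to P$ coequalizes the cone identifications, inducing $Q^n \to P$. One checks these are mutually inverse by noting both are quotients of $\Box^n$ by the same equivalence relation — the relation generated by the face-collapses $\face i 0$ is identical to the relation of \cref{ConeDesc} — so it suffices to verify the two relations coincide as relations on cubes of $\Box^n$. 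This is a direct unwinding: a generating identification for $P$ is $f \sim (f_1,\dots,f_{i-1},\mathrm{const}_0,\dots)$ when $f_i = \mathrm{const}_0$, read off coordinate-wise, and this matches \cref{ConeDesc} verbatim after the reindexing via \cref{Qcone}.

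The main obstacle I anticipate is bookkeeping the reindexing between the ``cone-coordinate'' convention of \cref{ConeDesc} (where the cone is built on the \emph{last} coordinates) and the symmetric-looking presentation in the statement, and confirming that the degenerate-cube analysis of \cref{cone-desc-faces,sa1} is consistent with the claim that $P$ has the $Q^n$-cubes as its non-degenerate cubes — in particular that no spurious identifications or degeneracies are introduced by the coproduct of projections. Once the equivalence relations are checked to agree on the nose, both the underlying-cubical-set statement and the fact that it is a pushout square follow formally, since pushouts of presheaves are computed objectwise and the map out of the coproduct is a monomorphism in each summand. I expect no difficulty with the marked structure, as the statement only concerns the underlying cubical set $Q^n$.
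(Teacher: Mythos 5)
Your argument is correct and follows the route the paper intends: the paper considers this immediate from \cref{ConeDesc} and provides no further details, and you correctly identify the generating relation of the pushout (two cubes of $\Box^n$ both with $\mathrm{const}_0$ in position $i$ are identified iff they agree in positions $<i$) and match it against the relation of \cref{ConeDesc}. Two small inaccuracies in your write-up, neither of which affects the conclusion: the detour through $C^{1,n-1}$ via \cref{Qcone} is unnecessary since \cref{ConeDesc} already covers $m=0$, and because $C\Box^0\cong\Box^1$ the isomorphism of \cref{Qcone} is the identity on $\Box^n$, so no reindexing is needed (the $j=n$ clause of the relation for $C^{0,n}$ is simply vacuous, which is why the relations for $C^{0,n}$ and $C^{1,n-1}$ coincide); and your parenthetical that the cone coordinates of $C^{1,n-1}$ are the ``last $n-1$'' has it backwards — by \cref{FaceIso} and the pushout defining $C$, the cone coordinates are always the first $n$.
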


For example, $Q^0$ and $Q^1$ are just $\Box^0$ and $\Box^1$ respectively.
The first non-trivial example $Q^2$ is $\Box^2$ with $\bd_{1,0}$ collapsed to a $0$-cube, which may be visualised as below left.
Below right is the same picture with the degenerate $1$-cube shrunk so that it better resembles the $2$-simplex:
\[
\begin{tikzpicture}
\filldraw
(0,0) circle [radius = 1pt]
(1,0) circle [radius = 1pt]
(0,1) circle [radius = 1pt]
(1,1) circle [radius = 1pt];

\draw[double] (0,0.2) -- (0,0.8);
\draw[->] (1,0.8) -- (1,0.2);
\draw[->] (0.2,0) -- (0.8,0);
\draw[->] (0.2,1) -- (0.8,1);

\draw[->, double] (0.3,0.3) -- (0.7,0.7);
\end{tikzpicture}
\hspace{5em}
\begin{tikzpicture}
	\filldraw
	(0,0.4) circle [radius = 1pt]
	(1,0) circle [radius = 1pt]
	(0,0.6) circle [radius = 1pt]
	(1,1) circle [radius = 1pt];
	
	\draw[double] (0,0.48) -- (0,0.52);
	\draw[->] (1,0.8) -- (1,0.2);
	\draw[->] (0.2,0.32) -- (0.8,0.08);
	\draw[->] (0.2,0.68) -- (0.8,0.92);
	
	\draw[->, double] (0.5,0.4) -- (0.8,0.7);
\end{tikzpicture}
\]
Similarly, $Q^3$ may be visualised as follows (with degenerate $2$-cubes shaded since they are marked):
\[
\begin{tikzpicture}
	\begin{scope}[blend group = multiply]
		\filldraw[shaded, rounded corners] (0.3,2.9) -- (1.9,2.9) -- (2.7,2.1) -- (1.1,2.1) -- cycle; 
		\filldraw[shaded, rounded corners] (0.1,1.1) -- (0.9,0.3) -- (0.9,1.9) -- (0.1,2.7) -- cycle; 
	\end{scope}
	
	\filldraw
	(1,0) circle [radius = 1pt]	
	(3,0) circle [radius = 1pt]	
	(1,2) circle [radius = 1pt]	
	(3,2) circle [radius = 1pt]	
	(0,1) circle [radius = 1pt]	
	(2,1) circle [radius = 1pt]	
	(0,3) circle [radius = 1pt]	
	(2,3) circle [radius = 1pt];
	
	\draw[->] (1.2,0) -- (2.8,0);
	\draw[->] (1.2,2) -- (2.8,2);
	\draw[->] (0.2,1) -- (1.8,1);
	\draw[->] (0.2,3) -- (1.8,3);
	
	\draw[double] (0,2.8) -- (0,1.2);
	\draw[->] (2,2.8) -- (2,1.2);
	\draw[double] (1,1.8) -- (1,0.2);
	\draw[->] (3,1.8) -- (3,0.2);
	
	\draw[double] (0.1,0.9) -- (0.9,0.1);
	\draw[->] (2.1,0.9) -- (2.9,0.1);
	\draw[double] (0.1,2.9) -- (0.9,2.1);
	\draw[double] (2.1,2.9) -- (2.9,2.1);
\end{tikzpicture}
\hspace{5em}
\begin{tikzpicture}
	\begin{scope}[blend group = multiply]
		\filldraw[shaded, rounded corners = 1pt] (0.62,2.04) -- (0.78,1.88) -- (2.38,2.36) -- (2.22,2.52) -- cycle; 
		\filldraw[shaded, rounded corners = 1pt] (0.42,1.74) -- (0.42,1.42) -- (0.58,1.26) -- (0.58,1.58) -- cycle; 
	\end{scope}
	
	\filldraw
	(0.3,2.1) circle [radius = 1pt]	
	(0.3,1.3) circle [radius = 1pt]	
	(0.7,1.7) circle [radius = 1pt]	
	(0.7,0.9) circle [radius = 1pt]	%
	(2.3,2.7) circle [radius = 1pt]	
	(2,1) circle [radius = 1pt]
	(2.7,2.3) circle [radius = 1pt]	
	(3,0) circle [radius = 1pt]	;
	
	\draw[->] (0.5,2.16) -- (2.1,2.64); %
	\draw[->] (0.47,1.27) -- (1.83,1.03);
	\draw[->] (0.9,1.76) -- (2.5,2.24); %
	\draw[->] (0.93,0.81) -- (2.77,0.09);
	
	\draw[double]
	(0.3,1.46) -- (0.3,1.94)
	(0.7,1.06) -- (0.7,1.54)
	(0.62,1.78) -- (0.38,2.02)
	(0.62,0.98) -- (0.38,1.22)
	(2.38,2.62) -- (2.62,2.38);
	
	\draw[->] (2.27,2.53) --(2.03,1.17);
	\draw[->] (2.73,2.07) -- (2.97,0.23);
	\draw[->] (2.1,0.9) -- (2.9,0.1);
\end{tikzpicture}
\]

Thus each $Q^n$ may be regarded as a quotient of $\cube n$.
Then the map $Q\sface i \colon Q^{n-1} \to Q^n$ is induced by:
\begin{itemize}
	\item $\face{i+1}{1} \colon \cube{n-1} \to \cube{n}$ if $i<n$; and
	\item $\face n 0 \colon \cube{n-1} \to \cube n$ if $i=n$,
\end{itemize}
whereas the map $Q(\sigma_i) \colon Q^{n+1} \to Q^n$ is induced by:
\begin{itemize}
	\item $\gamma_{i+1,1} \colon \cube{n+1} \to \cube n$ if $i<n$; and
	\item $\sigma_{n+1} \colon \cube{n+1} \to \cube n$ if $i=n$.
\end{itemize}

Recall that a face map $\delta \colon \Delta^{n-m} \to \Delta^n$ in the simplex category $\Delta$ admits the following unique decomposition, referred to as its \emph{standard form}, as $\delta = \partial^{n}_{i_m}\dots\partial^{n - m}_{i_1} \colon \Delta^{n-m} \to \Delta^n$ with $i_m>\dots>i_1$.
With that, we may observe that $Q$ ``preserves standard forms'' of faces in the following sense.

\begin{lem} \label{Q_NF}
  Let $\delta = \partial^{n}_{i_m}\dots\partial^{n-m}_{i_1} \colon \Delta^{n-m} \to \Delta^{n}$ with $i_m>\dots>i_1$ be the standard form of a face map $\delta$ in the simplex category.
  Let $\partial_{k_j, \varepsilon_j} \colon \Box^{n-m+j-1} \to \Box^{n-m+j}$ be the map inducing $Q\partial_{i_j} \colon Q^{n-m+j-1} \to Q^{n-m+j}$ in the definition above.
  Then $Q \delta \colon Q^{n-m} \to Q^n$ is induced by a map $\bar{\delta} \colon \Box^{n-m} \to \Box^n$ whose standard form is
   \[ \partial_{k_m, \varepsilon_m} \ldots \partial_{k_1, \varepsilon_1} \]
\end{lem}

In other words, in order to obtain the standard form of the map inducing $Q \delta$, one only needs to compose the maps inducing the images of the maps in the standard form of $\delta$; no rearrangement is needed, as the result is already in standard form.

\begin{proof}
	Consider a simplicial standard form $\partial^{n_m}_{i_m}\dots\partial^{n_1}_{i_1}$ with $i_m>\dots>i_1$.
	Then there exists a unique integer $1 \le r \le m+1$ such that $i_s = n_s$ for $s \ge r$ and $i_s < n_s$ for $s < r$.
	By definition of $Q$, we have that
	\[
	Q(\partial_{i_s})\text{ is induced by }
	\begin{cases}
		\partial_{i_s+1,1}, & s<r,\\
		\partial_{i_s,0}, & s \ge r.
	\end{cases}
	\]
	Thus the only place where $Q$ can potentially disrupt the standard form is $Q(\sface{i_{r-1}})$ versus $Q(\sface{i_r})$.
	But for $Q$ to actually disrupt it, we must have $i_{r-1}=i_r-1$, so
	\[
	n_{r-1} = n_r-1 = i_r-1 = i_{r-1}
	\]
	which contradicts our choice of $r$.
	This completes the proof.
\end{proof}

\begin{Def}
	We extend $Q$ to a functor $Q \colon \Delta^{+} \to \mcSet$ by defining $\mQ n = Q\mdelta n$ to be the marked cubical set obtained from $Q^n$ by marking the unique non-degenerate $n$-cube.
\end{Def}

\begin{prop}
	The above definition indeed defines a unique functor $Q \colon \Delta^{+}\to\mcSet$, and moreover it extends to a unique-up-to-isomorphism left adjoint functor $Q \colon \msSet \to \mcSet$.
\end{prop}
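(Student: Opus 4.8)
The plan is to prove the two halves separately: existence and uniqueness of $Q$ as a functor on $\Delta^+$, and then its extension to a left adjoint on all of $\msSet$.

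\textbf{The functor on $\Delta^+$.} I would exploit the two faithful functors in sight: the forgetful functor $\mcSet\to\cSet$ (faithful since a marking is a property, not structure) and the functor $(-)^{\mathrm{und}}\colon\Delta^+\to\Delta$ sending both $\Delta^n$ and $\widetilde\Delta^n$ to $[n]$ and a morphism of marked simplicial sets to its underlying simplicial operator. The cubical sets $Q^n=C^{0,n}$ together with the maps $Q(\delta^i)$, $Q(\sigma_i)$ described above already assemble into a cosimplicial cubical set $Q_\Delta\colon\Delta\to\cSet$ (this is the cosimplicial object $Q_{L,0}$ of \cite[Prop.~6.5]{doherty-kapulkin-lindsey-sattler} combined with the involution $(-)^\op$, as explained above), so $Q_\Delta\circ(-)^{\mathrm{und}}$ is a functor $\Delta^+\to\cSet$. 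It then suffices to show that this functor lifts, necessarily uniquely, along $\mcSet\to\cSet$ to a functor $Q\colon\Delta^+\to\mcSet$ taking $\Delta^n$ to its minimal marking and $\widetilde\Delta^n$ to $\widetilde Q^n$: uniqueness of such a lift is automatic from faithfulness once the object-values are pinned down, and the prescribed formulas for $Q$ on faces, degeneracies, and markers are then the forced values. Existence amounts to checking that for each morphism $\xi$ of $\Delta^+$ the cubical map $Q_\Delta(\xi^{\mathrm{und}})$ preserves the chosen markings. Since cubical maps and the quotient maps $\Box^{m+n}\to C^{0,n}$ preserve degeneracy, degenerate cubes cause no trouble; the only real point is that when $\xi^{\mathrm{und}}$ is non-injective, $Q_\Delta(\xi^{\mathrm{und}})$ sends the unique non-degenerate top cube of $\widetilde Q^m$ to a degenerate cube. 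I would obtain this by factoring $\xi^{\mathrm{und}}$ as an epimorphism followed by a monomorphism in $\Delta$ and using the explicit description of $Q$ on codegeneracies --- $Q(\sigma_i)$ is induced by a connection $\gamma_{i+1,0}$ or by $\sigma_{n+1}$, either of which collapses $\mathrm{id}_{[1]^{n+1}}$ --- which is exactly the combinatorics organized by \cref{Q_NF} and \cref{sa1}.

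\textbf{Extension to a left adjoint.} Here I would invoke Verity's description (\cite[Obs.~12]{verity:weak-complicial-1}, recalled above) of $\msSet$ as the reflective full subcategory of the presheaf category $\widehat{\Delta^+}=[(\Delta^+)^\op,\Set]$ on those presheaves sending each marker $\Delta^m\to\widetilde\Delta^m$ to a monomorphism. First observe that this monomorphism condition is precisely right-orthogonality to the codiagonals $\nabla_m\colon\widetilde\Delta^m\amalg_{\Delta^m}\widetilde\Delta^m\to\widetilde\Delta^m$, so the reflection $r\colon\widehat{\Delta^+}\to\msSet$ is the reflective localization inverting $\{\nabla_m\mid m\ge 1\}$; since each representable already satisfies the condition, the full embedding $\Delta^+\hookrightarrow\msSet$ is the Yoneda embedding followed by $r$. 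As $\mcSet$ is cocomplete, the pointwise left Kan extension $\mathrm{Lan}_y Q\colon\widehat{\Delta^+}\to\mcSet$ exists, is cocontinuous, and restricts to $Q$ along Yoneda. I would then check that $\mathrm{Lan}_y Q$ inverts every $\nabla_m$: it carries $\nabla_m$ to the codiagonal of the entire map $Q^m\to\widetilde Q^m$, entire maps are epimorphisms in $\mcSet$ (a map of marked cubical sets is determined by its underlying cubical map), and the codiagonal of an epimorphism is an isomorphism. A cocontinuous functor inverting these generators inverts all $r$-local equivalences, hence factors as $\mathrm{Lan}_y Q\cong Q\circ r$ for a cocontinuous functor $Q\colon\msSet\to\mcSet$ that restricts to the functor of the first part along $\Delta^+\hookrightarrow\msSet$; being a cocontinuous functor between locally presentable categories, it is a left adjoint. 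Uniqueness up to isomorphism is then immediate: every object of $\msSet$ is a colimit of objects of $\Delta^+$ (apply $r$ to the canonical presentation of a presheaf by representables), so a cocontinuous functor on $\msSet$ is determined up to isomorphism by its restriction to $\Delta^+$.

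\textbf{Main obstacle.} The only step with real content is the marking-compatibility check in the first part --- that $Q$ destroys the non-degeneracy of the top cube whenever a codegeneracy is applied --- and \cref{Q_NF} together with \cref{sa1} are tailor-made for it; the second part is formal, its one mild subtlety being the easy observation that $Q^m\to\widetilde Q^m$ is an epimorphism, so that its codiagonal is invertible.
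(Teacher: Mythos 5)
Your proof is correct and follows essentially the same route as the paper: the functor on $\Delta^+$ exists because each $Q(\sigma_i)$ collapses the unique non-degenerate top cube to a degenerate (hence marked) cube, and the extension to $\msSet$ rests on the fact that $Q$ sends the markers to entire, hence epimorphic, maps of marked cubical sets. The only difference is presentational: where you check that the left Kan extension inverts the codiagonals $\nabla_m$ and so factors through the reflection, the paper makes the adjoint-transposed observation that the right adjoint of the presheaf-level adjunction lands in $\msSet$ because precomposition along an epimorphism is injective.
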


\begin{proof}
	Since $Q\sigma_i \colon Q^n \to Q^{n-1}$ sends the unique non-degenerate $n$-cube to a degenerate (and so in particular marked) cube for each $i$, we immediately see that there is indeed such a unique functor $Q \colon \Delta^+ \to \mcSet$.
	This functor induces an adjunction between $[(\Delta^+)^\op,\Set]$ and $\mcSet$ whose left adjoint we still denote by $Q$.
	Since $Q(\varphi^n)$ is an epimorphism for each $n$, we see that the right adjoint to $Q$ lands in the full subcategory $\msSet \subset [(\Delta^+)^\op,\Set]$.
	Thus the restriction of this $Q$ to $\msSet$ is still a left adjoint functor; its uniqueness up to isomorphism is obvious.
\end{proof}

\begin{Def}
We denote the right adjoint of $Q$ by $\int \colon \mSet \to \mcSet$.
\end{Def}

By definition of $Q$, we obtain a commuting diagram:
	\[
	\begin{tikzcd}
		\msSet
		\arrow [rrr, bend left=10, "Q"]
		\arrow [dd]
          & & &
		 \mcSet 
		 \arrow [lll, bend left=10, "\int"]
		\arrow [dd] \\
		& & &
		\\
		\sSet
		\arrow [rrr, bend left=10, "Q"]
		& & &
		 \cSet 
		 \arrow [lll, bend left=10, "\int"]
	\end{tikzcd}
	\]
That $Q$ commutes with the vertical forgetful functors is immediate from the definition of the marked version of $Q$. It is also immediate from the definition that $Q$ commutes with the maximal marking functors; it thus follows that their right adjoints commute as well, i.e.~ that $\int$ commutes with the forgetful functors.

\begin{lem}\label{Q-unit-iso}
The unit $\id_{\cSet^+} \Rightarrow \int Q$ is a natural isomorphism.
\end{lem}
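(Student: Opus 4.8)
The plan is to show that for every marked simplicial set $X$, the component $\eta_X \colon X \to \int Q X$ of the unit is an isomorphism, and since both $Q$ and $\int$ are defined by Kan extension it suffices to check this on representables, i.e. on the objects $\Delta^n$ and $\mdelta n$ in $\Delta^+$. For the unmarked representables, by adjunction one has $(\int Q \Delta^n)_m = \mcSet(Q^m, Q^n)$ (and its marked variant for $\mdelta m$), so the claim becomes a combinatorial statement: every map $Q^m \to Q^n$ of cubical sets arises uniquely from a simplicial operator $[m] \to [n]$, compatibly with markings. Thus the heart of the argument is to identify $\mcSet(Q^\bullet, Q^n)$ with $\Delta(\bullet, n)$ as simplicial sets.

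First I would recall that each $Q^n$ is a quotient of $\cube n$ via the pushout square of the displayed Proposition, so a map $Q^m \to Q^n$ is the same as a map $\cube m \to Q^n$ that coequalizes the two projections $\cube{i-1}\otimes\cube{m-i}\rightrightarrows\cube{i-1}\to\cube m$ for each $i$; equivalently an $m$-cube of $Q^n$ killing the appropriate degeneracies. Since $Q^n$ is itself a quotient of $\cube n$, such an $m$-cube is represented by some face map (after reducing to normal form, using that $Q^n$ has no non-degenerate cubes above dimension $n$) $\cube m \to \cube n$, and \cref{Q_NF} tells us that $Q$ preserves simplicial normal forms, so the face maps of $\cube n$ that descend to $Q^n$ and represent a well-defined $m$-cube of $Q^n$ are exactly those in the image of $Q$ applied to a simplicial face map $[m]\hookrightarrow[n]$. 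One then checks surjectivity of the remaining simplicial operators (faces and degeneracies) is handled the same way, using the description of $Q(\sface i)$ and $Q(\sigma_i)$ given before \cref{Q_NF}, together with \cref{ConeDesc}/\cref{cone-desc-faces} to control which cubes of $\cube n$ become degenerate in $Q^n$. The upshot is a natural bijection $\mcSet(Q^m,Q^n)\cong\Delta(m,n) = (\Delta^n)_m$, which is precisely the statement that $\eta_{\Delta^n}$ is an isomorphism.

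For the marked representable $\mdelta n$, recall $\mQ n$ is obtained from $Q^n$ by marking the unique non-degenerate $n$-cube. So $(\int \mQ n)_m = \mcSet(Q^m, \mQ n)$ and $e(\int \mQ n)_m$ picks out those maps $Q^m \to \mQ n$ whose marked structure is preserved, i.e. the marked $m$-cubes of $\mQ n$. Using the bijection from the previous step and the fact that the only non-degenerate marked cube of $\mQ n$ is the top one, a map $Q^m \to Q^n$ lands a marked cube in the marked locus precisely when the corresponding simplicial operator $[m]\to[n]$ hits a marked simplex of $\mdelta n$ — and the marked simplices of $\mdelta n$ are exactly the degenerate ones together with the top non-degenerate one. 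Matching this up gives $\int \mQ n \cong \mdelta n$ as marked simplicial sets, naturally in $[n]$, completing the check on all of $\Delta^+$.

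The main obstacle I expect is the combinatorial bookkeeping in the first step: verifying that a general cubical map $Q^m \to Q^n$ — not just a face map — is forced to come from a simplicial operator. The subtlety is that $Q^n$ has non-degenerate cubes of many dimensions coming from the various summands $\cube{i-1}$, and one must argue that no "exotic" map into $Q^n$ exists that fails to lift to a simplicial operator; this is where \cref{Q_NF} (preservation of normal forms) does the real work, since it rules out the degenerate-collision that would otherwise let two different simplicial operators induce the same cubical map, and dually constrains the image. Once the normal-form analysis is in place the rest — naturality, and the marked variant — is routine.
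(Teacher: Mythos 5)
There is a genuine gap at the very first step: the reduction to representables is not valid. The functor $\int$ is a right adjoint (the ``nerve'' half of a nerve--realization adjunction), so $\int Q$ has no reason to preserve the colimits expressing a general marked simplicial set as a colimit of the objects $\Delta^n$ and $\mdelta{n}$; consequently, knowing that $\eta_{\Delta^n}$ and $\eta_{\mdelta{n}}$ are isomorphisms does not imply that $\eta_X$ is an isomorphism for general $X$. A standard counterexample to the principle you invoke is the adjunction $\tau_1 \dashv N$ between simplicial sets and categories: the unit $\Delta^n \to N\tau_1\Delta^n$ is an isomorphism for every $n$, yet the unit is far from an isomorphism on, say, $\partial\Delta^2$ or a horn. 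What your computation of $\mcSet(Q^m,Q^n)\cong\Delta([m],[n])$ would establish is full faithfulness of $Q$ \emph{restricted to the (co)simplicial object} $Q^\bullet$, which by Gabriel--Zisman is strictly weaker than the statement of the lemma; the lemma is equivalent to full faithfulness of $Q$ on all of $\msSet$, and that requires understanding arbitrary maps $Q^m \to QX$, i.e.\ $(0,m)$-cones in $QX$ for an arbitrary $X$, not just maps between the standard objects $Q^m \to Q^n$.

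This is exactly how the paper proceeds: it quotes the full faithfulness of the unmarked $Q \colon \sSet \to \cSet$ from \cite[Thm.~6.10]{doherty-kapulkin-lindsey-sattler} (whose proof rests on the analysis of cones in $QX$ for general $X$, cf.\ \cref{Q-counit-mono}), concludes via \cite[Prop.~1.3]{gabriel-zisman} that the unit is entire for every $X$, and only then reduces the remaining question about markings to the objects $\mdelta{n}$, where marked $m$-simplices of $\int\mQ{n}$ are identified by adjunction with marked $(0,m)$-cones of $\mQ{n}$. Your treatment of the marked representables matches that last step, but to repair the argument you must either import (or reprove) the full-faithfulness statement for general $X$, or otherwise justify why entirety of the unit for all $X$ follows from your representable computation --- the Kan-extension remark alone does not do this.
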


\begin{proof}
By \cite[Prop.~1.3]{gabriel-zisman}, the unit of an adjunction is an isomorphism if and only if the left adjoint is fully faithful. The functor $Q \colon \sSet \to \cSet$ is fully faithful by \cite[Thm.~6.10]{doherty-kapulkin-lindsey-sattler}, hence $\int Q  X \to X$ is an isomorphism on underlying simplicial sets, i.e. an entire map, for all $X$. In particular, this implies it is an isomorphism on all $\Delta^n$; it thus suffices to show that it is also an isomorphism on the objects $\widetilde{\Delta}^n$.

For this, it suffices to show that the marked simplices of $\int \widetilde{Q}^n$ coincide with those of $\widetilde{\Delta}^n$. To see this, observe that marked $m$-simplices of $\int \widetilde{Q}^n$ are given by maps $\widetilde{\Delta}^m \to \int \widetilde{Q}^n$; by adjointness, these correspond to maps $\widetilde{Q}^m \to \widetilde{Q}^n$, i.e. to marked $(0,m)$-cones of $\widetilde{Q}^n$. The only such cone which is non-degenerate is given by the identity on $\widetilde{Q}^n$, which indeed corresponds to the unique non-degenerate marked simplex of $\widetilde{\Delta}^n$.
\end{proof}

\begin{lem}\label{Q-counit-mono}
For any $X \in \cSet$, $Q \int X $ is the regular subcomplex of $X$ whose non-degenerate $n$-cubes are the $(0,n)$-cones in $X$, and the counit is the inclusion.
\end{lem}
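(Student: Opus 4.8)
The plan is to unwind the adjunction $Q \dashv \int$ directly, using the explicit description of $Q$ as a quotient of standard cubes together with \cref{ConeDesc} and \cref{Qcone}. First I would fix $X \in \cSet$ and describe the $n$-cubes of $Q \int X$. By definition of the Yoneda extension, an $n$-cube of $Q \int X$ is an element of $(\int X)(\Box^n)$ after applying $Q$, but more usefully: since $Q$ is a left adjoint and $Q^n = C^{0,n}$, an $n$-cube of $Q\int X$ is (a quotient of) a map $\Box^n \to Q\int X$. Using the adjunction $Q \dashv \int$ and \cref{Q-unit-iso} (or rather its proof), maps $Q^n \to X$ correspond to maps $\widetilde{}$... more precisely, I would argue that $n$-cubes of $Q \int X$ are in bijection with maps $Q^n \to X$, i.e.\ with $(0,n)$-cones in $X$. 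Here I would invoke \cref{Qcone} to identify $(0,n)$-cones with $(1,n-1)$-cones when convenient, and the fact (noted after \cref{Def} defining cones) that any cone in $X$ is pre-composition of the quotient $\Box^{m+n} \to C^{m,n}$ with a unique cube of $X$, so a $(0,n)$-cone determines and is determined by a single $n$-cube $x \colon \Box^n \to X$ that factors through the quotient $\Box^n \to Q^n = C^{0,n}$.

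Next I would check that this identification is compatible with the cubical operators, so that the assignment $n \mapsto \{(0,n)\text{-cones in }X\}$ is a subpresheaf $Z \subseteq X$: given a $(0,n)$-cone and a face, degeneracy, or connection operator, \cref{ConeFaceDeg} (applied with $m = 0$, or after using \cref{ConeWLOG}/\cref{Qcone} to rewrite as an $(m,n)$-cone with $m \geq 1$) shows the resulting cube is again a cone, hence the subset is closed under the $\Box$-action. The inclusion $Z \hookrightarrow X$ is then manifestly regular (a cube of $Z$ is marked in $Z$ iff it is marked in $X$, since $Z$ carries the subspace marking — though as stated the target is $\cSet$, not $\cSet^+$, so marking is not an issue and "regular subcomplex" just means a subpresheaf). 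Then I would identify the counit $Q\int X \to X$ with this inclusion: the counit is the map that sends an $n$-cube of $Q \int X$, viewed as a $(0,n)$-cone $Q^n \to X$, to the underlying cube of $X$ obtained by pre-composing with the quotient $\Box^n \to Q^n$; this is exactly the inclusion of $Z$. Finally, injectivity of the counit (that it is a monomorphism) follows because distinct $(0,n)$-cones have distinct underlying $n$-cubes — two cones $Q^n \to X$ agreeing on the underlying cube agree, since $\Box^n \to Q^n$ is an epimorphism.

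The main obstacle I anticipate is bookkeeping around the quotient map $\Box^n \to Q^n$: I must be careful that the set of $(0,n)$-cones in $X$ is genuinely the set of $n$-cubes $x$ of $X$ that \emph{factor through} the quotient $\Box^n \to Q^n$, and that this factorization, when it exists, is unique (which it is, by \cref{ConeDesc} combined with the epi property). A related subtlety is verifying closure under connections $\gamma_{i,\varepsilon}$: for a $(0,n)$-cone this requires first promoting to a $(1,n-1)$-cone via \cref{Qcone} before applying the relevant clause of \cref{ConeFaceDeg} (e.g.\ \cref{HighConOfCone} or \cref{Low0ConOfCone}), and one should check the index ranges line up. Once the presheaf $Z$ is seen to be well-defined and a subpresheaf of $X$, identifying it with $Q\int X$ and the structure map with the inclusion is formal from the universal property of $Q$ as a left Kan extension.
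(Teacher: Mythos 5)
There is a genuine gap: you have read the lemma as a statement about unmarked cubical sets and explicitly set aside the marking question (``marking is not an issue''), but the whole point of this lemma in the present paper is the marked statement. Despite the typo $X \in \cSet$ in the statement, the functor $\int$ is the right adjoint of $Q \colon \msSet \to \mcSet$, the phrase ``regular subcomplex'' only has content in the marked setting, and the lemma is used later (in the filtration $X^{2,-1} = Q\int X$ in the proof of \cref{counit-anodyne}) for a comical, hence marked, $X$. Accordingly, the paper's proof disposes of the underlying-cubical-set claim and the monomorphism claim by citing \cite[Lem.~6.9]{doherty-kapulkin-lindsey-sattler}, and devotes its actual argument to showing the counit is \emph{regular}. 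Your parenthetical ``$Z$ carries the subspace marking'' presupposes exactly what has to be proved: one direction (marked in $Q\int X$ implies marked in $X$) is automatic because the counit is a map of marked cubical sets, but the converse is not. The needed argument is: if the underlying cube of a $(0,n)$-cone $x \colon Q^n \to X$ is marked in $X$, then $x$ factors through $\mQ{n}$, so its adjunct $\Delta^n \to \int X$ factors through $\int \mQ{n} \cong \mDelta{n}$ (this is where \cref{Q-unit-iso} enters), i.e.\ the adjunct simplex is marked in $\int X$, and hence the corresponding cube of $Q\int X$ is marked. None of this appears in your proposal.

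A secondary weakness: for the unmarked part you attempt a direct argument rather than citing \cite{doherty-kapulkin-lindsey-sattler}, but your injectivity step only shows that the assignment sending a cone $Q^n \to X$ to its underlying $n$-cube is injective (via the epimorphism $\Box^n \to Q^n$). That is weaker than what is needed, namely that the counit $Q\int X \to X$ is a monomorphism on \emph{all} cubes of the colimit $Q\int X$ --- one must know that every cube of $Q\int X$ arises from a cone and that no further identifications occur in forming $Q\int X$; this is the substance of \cite[Lem.~6.9]{doherty-kapulkin-lindsey-sattler}, which you should either reproduce in detail or cite, as the paper does.
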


\begin{proof}
The proof that the counit is a monomorphism and characterization of the underlying cubical set of $Q \int X$  is \cite[Lem.~6.9]{doherty-kapulkin-lindsey-sattler}. To see that the counit is regular, let $x$ be a marked $(0,n)$-cone of $X$; then $x \colon Q^n \to X$ factors through $\widetilde{Q}^n = Q\mdelta n$. Thus the corresponding simplex of $\int X$, i.e. the adjunct $\overline{x} \colon \Delta^n \to \int X$, factors through $\int \widetilde{Q}^n \cong \widetilde{\Delta}^n$.
\end{proof}

\begin{prop}\label{Q_triv}
	There is a natural isomorphism $Q\triv{n} \cong \triv{n}Q$ for any $n \ge 0$.
\end{prop}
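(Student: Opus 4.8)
The plan is to exhibit, for each $X \in \msSet$, a canonical isomorphism $Q\triv n X \cong \triv n QX$ that is the identity on underlying cubical sets; since a map of marked cubical sets is determined by its underlying cubical map, such a family of isomorphisms is automatically natural in $X$. So the entire content is to check that $Q\triv n X$ and $\triv n QX$ have the same underlying cubical set and the same marked cubes. The first point is formal: $\triv n$ (on either $\msSet$ or $\mcSet$) does not alter underlying (co)simplicial or cubical sets, and $Q$ lifts the unmarked cubification $Q \colon \sSet \to \cSet$ along the forgetful functors, so both marked cubical sets have underlying cubical set $Q\bar X$, where $\bar X$ is the underlying simplicial set of $X$.

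For the markings I would use the following uniform description of $\triv n$: for any $Y \in \msSet$, the marked simplicial set $\triv n Y$ is obtained from $Y$ by additionally marking exactly its non-degenerate simplices of dimension $>n$, and this is realised as the pushout of the coproduct of markers $\coprod_{\sigma}(\Delta^{|\sigma|} \to \mdelta{|\sigma|})$ along $\coprod_{\sigma}\sigma \colon \coprod_{\sigma}\Delta^{|\sigma|} \to Y$, where $\sigma$ runs over the non-degenerate simplices of $Y$ with $|\sigma|>n$; the analogous description holds in $\mcSet$. Since $Q$ preserves colimits and sends each simplicial marker $\Delta^m \to \mdelta m$ to the entire map $Q^m \to \mQ m$ that marks the unique non-degenerate $m$-cube, applying $Q$ to this pushout shows that $Q\triv n X$ is obtained from $QX$ by marking the cubes $Q(\sigma)(u_{|\sigma|})$, where $u_k$ denotes the unique non-degenerate $k$-cube of $Q^k$ and $\sigma$ runs over the non-degenerate simplices of $X$ with $|\sigma|>n$; whereas $\triv n QX$ is obtained from $QX$ by marking all of its cubes of dimension $>n$. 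Hence it suffices to prove that the non-degenerate cubes of $QX$ of dimension $>n$ are precisely the cubes $Q(\sigma)(u_{|\sigma|})$ with $\sigma$ non-degenerate and $|\sigma|>n$; given this, the two collections of newly-marked cubes agree, the degenerate ones being marked in $QX$ already.

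For this I would invoke the description of cubes of $QX$: by \cref{Q-unit-iso} together with \cref{Q-counit-mono}, the $m$-cubes of $QX$ are exactly the $(0,m)$-cones in $QX$, i.e.\ the maps $Q^m = Q\Delta^m \to QX$, and by full faithfulness of $Q \colon \sSet \to \cSet$ (\cite[Thm.~6.10]{doherty-kapulkin-lindsey-sattler}) these are in bijection with the $m$-simplices of $X$, the cube $Q(\sigma)(u_m)$ corresponding to $\sigma$. It is immediate from the defining description of $Q$ on degeneracy operators --- each $Q(\sigma_i)$ is induced by a cubical degeneracy or connection --- that $Q(\sigma)(u_m)$ is degenerate whenever $\sigma$ is; granting the converse, that $Q$ of a non-degenerate simplex is a non-degenerate cube, this bijection restricts to one between the non-degenerate $m$-cubes of $QX$ and the non-degenerate $m$-simplices of $X$, which is exactly what is needed.

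The only point that is not purely formal is this converse --- that a non-degenerate cube of $QX$ never arises as $Q(\sigma)(u_m)$ for a degenerate $\sigma$. I expect to argue it by chasing a putative factorisation of such a cube through a cubical degeneracy or connection back through the quotient $\Box^m \to Q^m$, using that the only cubical maps $Q^m \to Q^{m'}$ are images under $Q$ of simplicial operators and that $Q$ reflects epimorphisms between representables (which in turn rests on the normal-form analysis of $Q$, cf.\ \cref{Q_NF}); this is the step that will require the most care, the remainder being a formal manipulation of the colimit-preserving functors $Q$ and $\triv n$.
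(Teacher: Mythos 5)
Your overall strategy is a genuinely different one from the paper's.  The paper's proof observes that both $Q\trunc n$ and $\trunc n Q$ are cocontinuous and hence determined (up to unique natural isomorphism) by their restrictions to the dense subcategory $\Delta^+ \subset \msSet$; thus the verification reduces to checking that $Q\trunc n$ and $\trunc n Q$ agree on $\Delta^m$ and on $\mdelta m$, which is a finite computation.  You instead work with a general $X$ and try to compare markings directly, reducing everything to the claim that the bijection between $m$-simplices of $X$ and $m$-cubes of $QX$ restricts to a bijection between \emph{non-degenerate} $m$-simplices and \emph{non-degenerate} $m$-cubes.  This is a perfectly reasonable plan, but it makes the problem strictly harder than it needs to be, and it is precisely the hard part that your write-up leaves open.

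Concretely, the gap is the ``converse'' that you flag yourself: that for an arbitrary $X$ and a non-degenerate $\sigma \in X_m$, the cube $Q(\sigma)(u_m) \in (QX)_m$ is again non-degenerate.  This does not follow from anything you have cited.  \cref{Q_NF} only addresses normal forms of \emph{face} maps, and the analogous statement for degeneracies and connections would need to be formulated and proved separately; worse, it cannot be a purely formal ``preservation of normal forms'' statement, because for a general cubical epimorphism $\epsilon \colon [1]^m \to [1]^{m'}$ the composite $\pi_{m'}\epsilon \colon \Box^m \to Q^{m'}$ does not factor through the quotient $\pi_m \colon \Box^m \to Q^m$ (for example $\sigma_1$ and $\gamma_{1,0}$ fail to descend for $m=2$), so ``chasing the factorisation back through the quotient'' is not available in the form you describe --- one has to exploit the extra structure of $QX$ (all of whose cubes are cones), and that argument is not spelled out.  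The paper sidesteps this entirely by reducing to representables, where the non-degenerate $k$-simplices of $\Delta^m$ are precisely the monomorphisms $\Delta^k \hookrightarrow \Delta^m$; since $Q \colon \sSet \to \cSet$ is fully faithful and preserves monomorphisms (\cite[Lem.~4.5]{kapulkin-lindsey-wong}), $Q\sigma$ is a mono and therefore carries the non-degenerate cube $u_k$ to a non-degenerate cube.  In short: your proposal is correct up to the point you stop, the remaining step is genuinely non-trivial for arbitrary $X$, and it is exactly the step the paper's cocontinuity reduction is designed to avoid.  I would recommend either completing the argument that $Q$ preserves non-degeneracy on arbitrary targets (likely via the representable reduction anyway), or adopting the paper's approach directly.
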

\begin{proof}
	Since both $Q\triv{n}$ and $\triv{n}Q$ are cocontinuous, it suffices to check on the marked and the unmarked standard simplices.
	These special cases are straightforward to check.
\end{proof}

\subsection{Quillen adjunctions} \label{sec:Q-is-Quillen}
Now we consider the interactions of the functors $C$ and $Q$ with the complicial and comical model structures.

The cone functor $C \colon \mcSet \to \mcSet$ has neither adjoint; it can be easily seen that it does not preserve products or coproducts. Thus it is neither a left nor a right Quillen functor. We may note, however, that the natural transformation $\Box^0 \Rightarrow C$ allows us to extend $C$ to a functor $C \colon \mcSet \to \mcSet_*$, where $\mcSet_*$ denotes the slice category $\Box^0 \downarrow \mcSet$; in other words, for $X \in \mcSet$, the marked cubical set $CX$ has a natural basepoint given by the cone point. Moreover, this basepoint is ``initial'', in the sense that for every $n$-cube $x$ of a marked cubical set $X$, there is a unique $(n+1)$-cube $Cx$ of $CX$ such that $(Cx)\bd_{1,0}$ is a degeneracy of the cone point and $(Cx) \bd_{1,1} = x$. This extension to $\mcSet_*$ allows us to view $C$ as a left Quillen functor, as we now show.

\begin{lem}\label{C-left-adj}
The functor $C \colon \mcSet \to \mcSet_*$ defined above is a left adjoint.
\end{lem}

\begin{proof}
Let $Y \in \mcSet_*$ with basepoint $y \colon \Box^0 \to Y$. For $X \in \mcSet$, maps from $CX$ to $Y$ in $\mcSet_*$ naturally correspond to diagrams of the form
\[
\xymatrix{
X \ar[r] \ar[d]_{\bd_{1,0} \otimes X} & \Box^0 \ar[d] \ar@/^/[ddr]^{y} \\
\Box^1 \otimes X \ar[r] \ar@/_/[drr] & CX \ar[dr] \\
&& Y \\
}
\]
In other words, these are maps $\Box^1 \otimes X \to Y$ which pre-compose with $\bd_{1,0} \otimes X$ to give the constant map at $y$. By adjointness, these correspond to maps $X \to \uhom_{R}(\Box^1,Y)$ which post-compose with the pre-composition map $\bd_{1,0}^* \colon \uhom_{R}(\Box^1,Y) \to \uhom_{R}(\Box^0,Y) \cong Y$ to give the constant map at $y$.  These maps, in turn, correspond to commuting diagrams of the form:
\[
\xymatrix{
X \ar[r] \ar[d] & \uhom_{R}(\Box^1, Y) \ar[d]^{\bd_{1,0}^*} \\\
\Box^0 \ar[r]^(0.3){y} & \uhom_{R}(\Box^0,Y) \\
}
\]

Thus $C$ has a right adjoint which sends each such $Y$ to the pullback object given by the cospan $\uhom_{R}(\Box^1,Y) \xrightarrow{\bd_{1,0}^*} \uhom_{R}(\Box^0,Y) \xleftarrow{y} \Box^0$.
\end{proof}

\begin{lem}\label{C-pres-cof}
The functor $C \colon \mcSet \to \mcSet$ preserves monomorphisms.
\end{lem}

\begin{proof}
This follows from \cref{geo-prod-description} and the construction of $CX$ as a quotient of $\Box^1 \otimes X$, which we may view as identifying all cubes of $\Box^1 \otimes  X$ which can be written as $\bd_{1,0} \otimes x$ for some cube $x$ in $X$.
\end{proof}

\begin{lem}\label{C-pres-we}
The functor $C \colon \mcSet \to \mcSet$ preserves the weak equivalences of each of the (saturated, $n$-trivial) comical model structures.
\end{lem}

\begin{proof}
Given a map $X \to Y$ in $\mcSet$, we have the following commuting diagram:
\[
\xymatrix{
X \ar[rr] \ar[dd]_{\bd_{1,0} \otimes X} \ar[dr] & & \Box^0 \ar[dd]|{\hole} \ar@{=}[dr] \\
& Y \ar[dd]_(0.3){\bd_{1,0} \otimes Y} \ar[rr] & & \Box^0 \ar[dd] \\
\Box^1 \otimes X \ar[dr] \ar[rr]|(0.53){\hole} & & CX \ar[dr] \\
& \Box^1 \otimes Y \ar[rr] & & CY \\
}
\]
The front and back squares are pushouts, and the maps $X \hookrightarrow \Box^1 \otimes X$ and $Y \hookrightarrow \Box^1 \otimes Y$ are cofibrations. If $X \to Y$ is a weak equivalence, then all of the back-to-front maps are weak equivalences; in the case of $\Box^1 \otimes X \to \Box^1 \otimes Y$ case this follows from the monoidality of the model structure. Thus the map between pushout objects $CX \to CY$ is a weak equivalence by the gluing lemma. 
\end{proof}

\begin{cor}\label{C-QA}
The functor $C \colon \mcSet \to \mcSet_*$ is left Quillen, where $\mcSet$ is equipped with any of the ($n$-trivial, saturated) comical model structures and $\mcSet_*$ is equipped with the corresponding slice category model structure.
\end{cor}

\begin{proof}
This is immediate from \cref{C-left-adj,C-pres-cof,C-pres-we}.
\end{proof}

A natural way of constructing cones on (marked) simplicial sets by adjoining an initial basepoint is given by taking the join on the left with the standard 0-simplex $\Delta^0$; as in the cubical case, we obtain a functor $\Delta^0 \star - \colon \sSet^+ \to \mSet_*$. In this case as well, the functor into the pointed category admits a right adjoint (see \cref{join-preserves-pushouts}), even though this is not the case for the corresponding endofunctor of the unpointed category. In fact,  there is a close relationship between cones on (marked) cubical sets and joins of (marked) simplicial sets with the point, mediated by the functor $Q$.


\begin{prop}\label{CQ-iso}
We have a natural isomorphism of functors $Q(\Delta^0 \star -) \cong CQ \colon \sSet \to \cSet$.
\end{prop}

\begin{proof}
We first note that since $Q \Delta^0 \cong \Box^0$, $Q$ defines a functor $\sSet_* \to \cSet_*$. Thus we may consider the following diagram of functors:
\[
\xymatrix{
\msSet \ar[r]^(0.45){\Delta^0 \star -} \ar[d]_{Q} & \msSet_* \ar[d]^{Q} \\
\mcSet \ar[r]^{C} & \mcSet_* \\
}
\]
We will show that this diagram commutes up to natural isomorphism; the desired natural isomorphism of functors $\sSet \to \cSet$ then follows by post-composition with the forgetful functor $\cSet_* \to \cSet$.

As all of the functors in the diagram above are left adjoints  (cf. \cref{C-left-adj}), it suffices to show that the composites agree on representable objects and maps between them. On objects, we may observe that for all $n \geq 0$ we have $Q(\Delta^0 \star \Delta^n) = Q\Delta^{n+1} = Q^{n+1}$, and similarly $CQ\Delta^n = CQ^n = Q^{n+1}$, with the basepoint in each case given by the initial vertex. For objects $\widetilde{\Delta}^n$, we may observe that $Q(\Delta^0 \star \widetilde{\Delta}^n)$ and $CQ\widetilde{\Delta}^n$ both have exactly two non-degenerate marked cubes, namely the interior $(n+1)$-cube (the image under the quotient map $\Box^{n+1} \to Q^{n+1}$ of $\id_{[1]^{n+1}}$) and the $n$-cube opposite the initial vertex (the image under the quotient map of $\bd_{1,1}$, also the image under $Q$ of $\bd_0$).

It remains to show that the two composites agree on maps between representables; for this, it suffices to consider faces, degeneracies, and markers. For markers, we note that both composites send each map $\Delta^n \to \widetilde{\Delta}^n$ to the unique entire map between the images of its domain and codomain.

We now prove that the composites agree on face maps; the proof for degeneracies is similar. Consider a (simiplicial) face map $\bd_{i} \colon \Delta^{n-1} \to \Delta^n$. The functor $\Delta^0 \star -$ sends this map to $\bd_{i+1} \colon \Delta^n \to \Delta^{n+1}$. The functor $Q$ then sends this map to the map $Q^n \to Q^{n+1}$ induced by the cubical face map $\bd_{i+2,1}$ (if $i < n$) or $\bd_{n+1,0}$ (if $i = n$). On the other hand, if $i < n$ then $Q$ sends $\bd_i$ to $\bd_{i+1,1}$, which $C$ then sends to $\bd_{i+2,1}$, while $Q$ sends $\bd_n$ to $\bd_{n,0}$, which $C$ then sends to $\bd_{n+1,0}$. In either case, the two composites agree.
\end{proof}

\begin{lem}\label{Q-join-we}
Let $X \to Y$ be a map in $\mSet$. If $QX \to QY$ is a weak equivalence in the ($n$-trivial, saturated) comical model structure on $\mcSet$, then so is $Q(\Delta^n \star X) \to Q(\Delta^n \star Y)$ for all $n \geq 0$.
\end{lem}

\begin{proof}
By the associativity of the join and the fact that $\Delta^m \star \Delta^n \cong \Delta^{m+n+1}$ for all $m, n \geq -1$, it suffices to consider the case $n = 0$. By \cref{CQ-iso}, the map $Q(\Delta^0 \star X) \to Q(\Delta^0 \star Y)$ is isomorphic to $CQX \to CQY$, which is a weak equivalence by \cref{C-pres-we}.
\end{proof}

\begin{rmk}
That cones on (marked) cubical sets correspond to joins of (marked) simplicial sets with the point on the left is a consequence of our decision to define cones using the functor $C_{L,0}$ of \cite{doherty-kapulkin-lindsey-sattler}. Had we instead defined $CX$ by collapsing the right end of a cylinder on $X$, i.e. had we used one of the functors $C_{L,1}$ or $C_{R,1}$ from \cite{doherty-kapulkin-lindsey-sattler}, we would instead have obtained an analogous relationship between cones and joins with the point on the right, mediated by the corresponding variant of $Q$.
\end{rmk}

We are now prepared to show that $Q$ is a left Quillen functor.

\begin{lem}\label{Q_horn}
	For any $0 \le k \le n$ and $\varepsilon \in \{0,1\}$, the map $Q(\horn n k \incl \adelta n k)$ is a pushout of a comical open box inclusion.
\end{lem}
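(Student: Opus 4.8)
The plan is to realize $Q(\horn n k\incl\adelta n k)$ as a restriction of the quotient map $q\colon\Box^n\to Q^n$ coming from the presentation of $Q^n$ recalled above (which is an epimorphism). Put $(i,\varepsilon)=(k+1,1)$ if $k<n$ and $(i,\varepsilon)=(n,0)$ if $k=n$; by the formulas for $Q$ on face maps, $Q(\sface k)\colon Q^{n-1}\to Q^n$ is the map induced by the cubical face $\face i\varepsilon$, so $(i,\varepsilon)$ is the index of the unique non-degenerate codimension-one face of $Q^n$ missing from $Q(\horn n k)$. I will produce a commuting square
\[
\begin{tikzcd}
\sqcap^n_{i,\varepsilon} \arrow[r] \arrow[d,hook] & Q(\horn n k) \arrow[d] \\
\Box^n_{i,\varepsilon} \arrow[r] & Q(\adelta n k)
\end{tikzcd}
\]
whose horizontal maps are restrictions of $q$ --- the sources on the left carrying the comical markings and $Q(\adelta n k)$ its own markings --- and show that it is a pushout; since the left-hand map is the $(i,\varepsilon)$-comical open box inclusion, this proves the lemma.

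First, for the underlying cubical sets: the non-degenerate codimension-one faces of $Q^n$ are exactly the $q$-images of $\face 11,\dots,\face n1$ and $\face n0$, and under $Q$ they correspond bijectively to $\sface 0,\dots,\sface n$. By \cref{cone-desc-faces}, the faces of $\Box^n$ that $q$ collapses are precisely those whose normal form contains some $\face j0$ with $j<n$ together with a strictly larger omitted index; every such face lies in $\sqcap^n_{i,\varepsilon}$, because $(i,\varepsilon)$ is either a positive face or $\face n0$. Hence $q$ makes no identifications outside $\sqcap^n_{i,\varepsilon}$, and $q(\sqcap^n_{i,\varepsilon})$ is the regular subcomplex of $Q^n$ generated by every non-degenerate codimension-one face except $Q(\sface k)$; as $Q$ is cocontinuous, $Q(\horn n k)$ (as a cubical set) is the union of the images of the monomorphisms $Q(\sface j)\colon Q^{n-1}\to Q^n$ for $j\ne k$, so this is precisely the underlying cubical set of $Q(\horn n k)$. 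Together with surjectivity of $q$, this shows the square is a pushout of cubical sets.

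The remaining --- and main --- point is to see that the square is a pushout of \emph{marked} cubical sets, i.e.\ that the markings on the bottom row agree. Since $Q$ is cocontinuous and carries each marker $\Delta^m\to\mdelta m$ to the entire map $Q^m\to\mQ m$, the marked cubical set $Q(\adelta n k)$ is obtained from $Q^n$ by marking $Q(\alpha)$ for each non-degenerate marked simplex $\alpha$ of $\adelta n k$, and similarly for $Q(\horn n k)$; so it suffices to show that for every non-degenerate $\alpha\colon\Delta^m\to\Delta^n$ the cube $Q(\alpha)$ is unmarked in $\Box^n_{i,\varepsilon}$ exactly when $\im\alpha$ fails to contain $\{k-1,k,k+1\}\cap[n]$ --- i.e.\ exactly when $\alpha$ is unmarked in $\adelta n k$. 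Here one uses that $Q$ preserves normal forms (\cref{Q_NF}): writing $M=[n]\setminus\im\alpha$ and decomposing $M=M_{\mathrm{top}}\sqcup M_{\mathrm{low}}$, where $M_{\mathrm{top}}$ is the largest subset of $M$ of the form $\{a,a+1,\dots,n\}$ (possibly empty), the normal form of $Q(\alpha)$ consists of the negative faces $\face j0$ for $j\in M_{\mathrm{top}}$ together with the positive faces $\face{j+1}1$ for $j\in M_{\mathrm{low}}$ --- so its negative faces form an upper interval and all its positive faces sit strictly below them. Comparing this shape with the three unmarking conditions for the comical cube $\Box^n_{i,\varepsilon}$ --- where the interval shape forces the witnessing index in conditions (b) and (c) to be immediately adjacent to $i$ --- a short case analysis, split on whether $M_{\mathrm{top}}$ is empty and on the position of $k$ relative to $a$, shows that $Q(\alpha)$ satisfies one of the three conditions precisely when one of $k-1,k,k+1$ lies in $M$; in the case $k=n$ only condition (a) is ever relevant, and it already reads ``$n-1\in M$ or $n\in M$''. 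This case analysis is the only genuinely computational step; everything else is formal.
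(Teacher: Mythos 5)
Your overall strategy is the same as the paper's (present $Q^n$ as a quotient of $\Box^n$, use \cref{Q_NF} to identify the cubical normal form of $Q(\alpha)$, and compare markings), and the marking equivalence you do state --- $Q(\alpha)$ is unmarked in $\Box^n_{i,\varepsilon}$ iff $\alpha$ is unmarked in $\adelta n k$ --- is correct and is indeed one half of what is needed. But there is a genuine gap at the reduction step ``so it suffices to show that for every non-degenerate $\alpha$ \dots''. The marked cubes of $\Box^n_{i,\varepsilon}$ (and of $\sqcap^n_{i,\varepsilon}$) are by no means all of the form $Q(\alpha)$: by \cref{Q_NF} the faces $Q(\alpha)$ are exactly those whose negative faces form an upper interval lying above all positive faces, whereas a general face of $\Box^n$ marked in the comical cube can contain, say, $\partial_{n-1,0}$ together with $\partial_{n,1}$ (for $k<n-3$ this face is marked in $\Box^n_{k+1,1}$ but is not any $Q(\alpha)$, and its image in $Q^n$ is non-degenerate). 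For such faces you must still prove that their images in $Q(\adelta n k)$ (resp.\ $Q(\horn n k)$) are marked; without this, the bottom horizontal arrow of your square --- and already the attaching map $\sqcap^n_{i,\varepsilon}\to Q(\horn n k)$ --- is not known to be a morphism of marked cubical sets, and the inclusion ``markings of the pushout $\subseteq$ markings of $Q(\adelta n k)$'' is not established. Your equivalence only yields the other inclusion (every marked cube of $Q(\adelta n k)$, being some $Q(\alpha)$ with $\alpha$ marked, is hit by a marked cube of the comical cube or of the horn).

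This missing step is exactly where the paper does its real work: in the sub-case where the normal form contains some $\partial_{r,0}$ below an omitted or positive index, one uses the identifications of \cref{ConeDesc}/\cref{cone-desc-faces} to replace the face by the equivalent representative whose negative faces form a trailing upper block (your $Q(\alpha)$-shape), and then checks --- using that the hypothesis forces this block to start strictly above $k+1$ --- that the corresponding simplicial face avoids $\sface{k-1},\sface{k},\sface{k+1}$ and hence is marked in $\adelta n k$. The fix is of the same flavor as your case analysis (one can show directly that if a non-collapsed face is marked in $\Box^n_{k+1,1}$ then its first negative coordinate sits above $k+1$ and the canonical representative obtained by setting all higher coordinates to $0$ is again marked), but as written your proof never confronts the faces identified by the quotient, so the pushout claim is not yet proved.
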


\begin{proof}
	We first consider the case $k < n$; in this case $Q(\sface k) = \face{k+1}1$.
	Then the underlying cubical map of $Q(\horn n k \incl \adelta n k)$ is a pushout of the (unmarked) open box inclusion $\obox{n}{k+1}{1} \incl \cube{n}$ (see \cite[Lem.~6.13]{doherty-kapulkin-lindsey-sattler} for details). Thus $Q \adelta n k$ is obtained from $Q \horn n k$ by adding a filler for the open box $\sqcap^n_{k+1,1} \to Q \horn n k$ given by composing the quotient map $\sqcap^n_{k+1,1} \to Q \Lambda^n$ with the entire map $Q \Lambda^n \to Q \horn n k$. This filler is marked in $Q \adelta n k$, as it factors through the image under $Q$ of the marked simplex $\id_{[n]} \colon \Delta^n \to \adelta n k$. Similarly, the $(k+1,1)$-face of the filler is the cube of $Q \adelta n k$ corresponding to the unmarked simplex $\delta_k \colon \Delta^{n-1} \to \adelta n k$; this face is therefore unmarked. Therefore, to exhibit  $Q(\horn n k \incl \adelta n k)$ as a pushout of $\obox{n}{k+1}{1} \incl \Box^n_{k+1,1}$, it suffices to show that the open box in $Q\horn n k$ which is filled to obtain $Q \adelta n k$ is $(k+1,1)$-comical, i.e.~ that its faces whose standard forms do not include $\face k 1$, $\face {k+1}0$, $\face {k+1}1$, or $\face{k+2}1$ are marked.
	
	
	To this end, let $\delta = \face{i_m}{\varepsilon_m}\dots\face{i_1}{\varepsilon_1}$ be a face of $\cube n$ in its standard form, and suppose that this standard form does not include $\face k 1$, $\face {k+1}0$, $\face {k+1}1$, or $\face{k+2}1$.
	We wish to show that the corresponding face of the open box described above, i.e.~ the image of $\delta$ under the composite map $\Box^n \to Q^n \to Q \horn n k$, is marked. To do this, we will apply the correspondence between cubical and simplicial standard forms established in \cref{Q_NF} to show that this cube of $Q \horn n k$ corresponds to a marked simplex of $\horn n k$.

	First we treat the sub-case where $\varepsilon_i = 1$ for all $i$.
	In this case we have the following commutative diagram:
	\[
	\begin{tikzcd}[column sep = large, row sep = large]
		\cube {n-m}
		\arrow [d]
		\arrow [r, "\face{i_1}{1}"] &
		\cube {n-m+1}
		\arrow[d]
		\arrow [r, "\face{i_2}{1}"] &
		\dots
		\arrow [r, "\face{i_m}{1}"] &
		\cube n
		\arrow [d]\\
		Q^{n-m}
		\arrow [r, "Q(\sface{i_1-1})"] &
		Q^{n-m+1}
		\arrow [r, "Q(\sface{i_2-1})"] &
		\dots
		\arrow [r, "Q(\sface{i_m-1})"] &
		Q^n
	\end{tikzcd}
	\]
By \cref{Q_NF}, the bottom composite is the image under $Q$ of a map $\overline{\delta} \colon \Delta^{n-m} \to \Delta^n$ whose standard form is $\sface{i_m-1}\dots\sface{i_1-1}$. Our assumption on the standard form of $\delta$ thus implies that the standard form of $\overline{\delta}$ does not involve $\sface{k-1}$, $\sface{k}$, or $\sface{k+1}$, so this face is marked in $\adelta n k$.
	It follows that the corresponding cube of $Q \horn n k$ is marked, as desired.
	
	We proceed similarly in the case where $\varepsilon_r = 0$ for some $1 \le r \le m$.
	Without loss of generality, we may assume that $r$ is the smallest such integer.
	Write $\bar r = n-m+r$;	then we may further assume $(i_r,\dots,i_m)=(\bar r,\dots,n)$, for otherwise we may apply \cref{cone-desc-faces} for $Q^n = C^{0,n}$ to see that this cube is degenerate in $Q^n$.
	It follows from \cref{ConeDesc} and our choice of $r$ that the two faces 
	\[
	\delta = \face{i_m}{\varepsilon_m}\dots\face{i_1}{\varepsilon_1} = \face{n}{\varepsilon_m}\dots\face{\bar r+1}{\varepsilon_{r+1}}\face{\bar r}{0}\face{i_{r-1}}{1}\dots\face{i_1}{1}
	\]
	and
	\[
	\face{n}{0}\dots\face{\bar r+1}{0}\face{\bar r}{0}\face{i_{r-1}}{1}\dots\face{i_1}{1}
	\]
	represent the same cube in $Q^n$.
	The latter standard form fits into the following commutative diagram:
	\[
	\begin{tikzcd}[column sep = large, row sep = large]
		\cube {n-m}
		\arrow [d]
		\arrow [r, "\face{i_1}{1}"] &
		\cube {n-m+1}
		\arrow[d]
		\arrow [r, "\face{i_2}{1}"] &
		\dots
		\arrow [r, "\face{i_{r-1}}{1}"] &
		\cube {\bar r-1}
		\arrow [d]
		\arrow [r, "\face{\bar r}{0}"] &
		\cube{\bar r}
		\arrow [d]
		\arrow [r, "\face{\bar r+1}{0}"] &
		\dots
		\arrow [r, "\face{n}{0}"] &
		\cube n
		\arrow [d]\\
		Q^{n-m}
		\arrow [r, "Q(\sface{i_1-1})"] &
		Q^{n-m+1}
		\arrow [r, "Q(\sface{i_2-1})"] &
		\dots
		\arrow [r, "Q(\sface{i_{r-1}-1})"] &
		Q^{\bar r -1}
		\arrow [r, "Q(\sface{\bar r})"] &
		Q^{\bar r}
		\arrow [r, "Q(\sface{\bar r+1})"] &
		\dots
		\arrow [r, "Q(\sface{n})"] &
		Q^n
	\end{tikzcd}
	\]
Similarly to the previous case, it follows by \cref{Q_NF} that the bottom composite is the image under $Q$ of a simplex $\overline{\delta} \colon \Delta^{n-m} \to \Delta^n$ whose standard form is given by $\sface{n}\dots\sface{\bar r}\sface{i_{r-1}-1}\dots\sface{i_1-1}$.	To prove that the corresponding cube is marked in $Q \horn n k$, it suffices to show that the standard form of $\overline{\delta}$  does not include $\sface{k-1}$, $\sface{k}$, or $\sface{k+1}$, and hence that $\overline{\delta}$ is marked as a simplex of $\adelta n k$.
	Note that, since $(i_r,\dots,i_m) = (\bar r,\dots,n)$ and the string $\face{i_m}{\varepsilon_m}\dots\face{i_1}{\varepsilon_1}$ does not include $\face {k+1}0$ or $\face {k+1}1$, we must have $\bar r > k+1$.
	Therefore, if $Q(\sface{k-1})$, $Q(\sface{k})$, or $Q(\sface{k+1})$ appears in the above diagram then it must appear to the left of $Q^{\bar r-1}$, which is impossible since the string $\face{i_{r-1}}{1}\dots\face{i_1}{1}$ does not include $\face k 1$, $\face {k+1}1$, or $\face{k+2}1$. It follows that the standard form of $\overline{\delta}$  does not include $\sface{k-1}$, $\sface{k}$, or $\sface{k+1}$. Thus the simplex $\overline{\delta}$ is marked as a face of $\adelta n k$, implying that the corresponding cube $\delta$ is marked as a face of $Q \adelta n k$.
	This completes the proof of the case $k<n$.
	
	The case $k=n$ can be proven similarly.
	In fact, this case is easier since it is impossible for the string $\face{i_m}{\varepsilon_m}\dots\face{i_1}{\varepsilon_1}$ to not include $\face{n}{0}$ or $\face{n}{1}$ while also having $(i_r,\dots,i_m) = (\bar r, \dots, n)$. This allows us to immediately dismiss the sub-case in which $\varepsilon_r = 0$ for some $r$, as all cubes falling under this sub-case are degenerate. 
\end{proof}

\begin{lem}\label{Q_marking_extension}
	For any $0 \le k \le n$ and $\varepsilon \in \{0,1\}$, the map $Q(\adeltap n k \incl \triv{n-2}\adelta n k)$ is a pushout of a comical marking extension.
\end{lem}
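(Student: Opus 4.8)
The plan is to exhibit $Q(\adeltap n k \incl \adeltapp n k)$ directly as a pushout of a single comical marking extension along the quotient map $q \colon \cube n \to Q^n$ (we may assume $n\ge 2$; for $n=1$ the map in question is an identity). First I would analyse the simplicial side. Since $\adeltapp n k = \tau_{n-2}\adelta n k$ marks every non-degenerate simplex of dimension $\ge n-1$, while $\adeltap n k$ already marks all of these except $\sface k$ (the $n$-simplex and each face $\sface j$ with $j\ne k$ are marked in $\adelta n k$, or are one of the two extra faces $\sface{k-1},\sface{k+1}$), the inclusion $\adeltap n k \incl \adeltapp n k$ is exactly the entire map marking the single non-degenerate $(n-1)$-simplex $\sface k$. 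Applying the cocontinuous functor $Q$, and recalling that $Q(\sface k)$ is induced by $\face{k+1}{1}$ when $k<n$ (by $\face n 0$ when $k=n$), this becomes the entire map of marked cubical sets adding exactly one marking, on the cube $c_0 := q(\face{k+1}{1})$ (resp.\ $q(\face n 0)$); by \cref{cone-desc-faces} this cube is non-degenerate, and by the standard description of the markings of $Q(\adeltap n k)$ (a non-degenerate cube of $Q^n$ is marked there precisely when it is $Q(\alpha)$ of the top cube for some marked non-degenerate simplex $\alpha$ of $\adeltap n k$) it is not yet marked there.

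I would then set $(I,\varepsilon) = (k+1,1)$ (resp.\ $(n,0)$ when $k=n$), so that $c_0 = q(\face I\varepsilon)$, and propose the commuting square whose horizontal legs are both induced by $q$ and whose left vertical is the comical marking extension $(\Box^n_{I,\varepsilon})' \to \tau_{n-2}\Box^n_{I,\varepsilon}$. On underlying cubical sets this square is trivially a pushout, since both vertical maps are entire, so the entire content is about markings; and since $(\Box^n_{I,\varepsilon})' \to \tau_{n-2}\Box^n_{I,\varepsilon}$ merely adds the marking on $\face I\varepsilon$ and $q(\face I\varepsilon)=c_0$, it suffices to check that $q$ carries every cube marked in $(\Box^n_{I,\varepsilon})'$ to a marked cube of $Q(\adeltap n k)$. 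Degenerate cubes, and cubes that $q$ collapses, are automatic, leaving two families: (a) the non-degenerate cubes already marked in $\Box^n_{I,\varepsilon}$, i.e.\ those whose normal form fails conditions \ref{comical-middle}--\ref{comical-low} for $(I,\varepsilon)$; and (b) the extra $(n-1)$-cubes $\face J\mu$ with $(J,\mu)\ne(I,\varepsilon)$.

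For (a), failing conditions \ref{comical-middle}--\ref{comical-low} for $(k+1,1)$ forces the normal form to avoid $\face k1$, $\face{k+1}0$, $\face{k+1}1$ and $\face{k+2}1$, so the marking computation already carried out inside the proof of \cref{Q_horn} applies verbatim and shows $q(\delta)$ is marked in $Q(\adelta n k)$, hence in $Q(\adeltap n k)$; thus (a) costs nothing new. For (b), \cref{cone-desc-faces} shows $q(\face J\mu)$ is degenerate unless $\mu=1$ or $J=n$, and in the surviving cases $q(\face J\mu)$ is $Q(\sface{J-1})$ (resp.\ $Q(\sface n)$) applied to the top cube, which is marked in $Q(\adeltap n k)$ as soon as the simplicial face $\sface{J-1}$ (resp.\ $\sface n$) is marked in $\adeltap n k$ — and this holds because the image of that face contains $\{k-1,k,k+1\}\cap[n]$, except in the boundary cases where the face is one of the two extras $\sface{k-1},\sface{k+1}$ (marked by fiat); the only cube for which this matching would fail is $\face I\varepsilon$ itself, which is precisely the cube excluded from $(\Box^n_{I,\varepsilon})'$. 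Finally, to conclude that the square is a pushout I would compare markings directly: the pushout of the marking extension along $q$ has as its marked cubes the degenerate ones, those marked in $Q(\adeltap n k)$, and $c_0 = q(\face I\varepsilon)$, and these are exactly the cubes marked in $Q(\adeltapp n k)$, since the only marked simplex of $\adeltapp n k$ not already marked in $\adeltap n k$ is $\sface k$, whose associated cube is $c_0$. The case $k=n$ runs identically with $\face n0$ in place of $\face{k+1}1$ throughout, and is marginally easier, as in the proof of \cref{Q_horn}.

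I expect the main obstacle to be the marking bookkeeping in step (b): tracking which of the $(n-1)$-cubes $\face J\mu$ remain non-degenerate under $q$, matching each survivor against the (slightly enlarged) set of marked simplices of $\adeltap n k$, and confirming that the unique failure is the excluded face $\face I\varepsilon$. Step (a) — which would otherwise be the genuinely hard combinatorial input — is essentially free, since it reduces to the analysis already performed for \cref{Q_horn}.
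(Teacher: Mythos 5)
Your proposal is correct and takes essentially the same approach as the paper: you exhibit the map as a pushout of the comical marking extension $(\Box^n_{k+1,1})' \to \tau_{n-2}\Box^n_{k+1,1}$ (resp.\ $(n,0)$ for $k=n$) along the quotient $q$, and reduce the key marking verification for cubes already marked in $\Box^n_{k+1,1}$ to the computation in the proof of \cref{Q_horn}. The paper sets up the same pushout more conceptually, via a diagram exploiting cocontinuity of $Q$ and $Q\triv{n-2}\cong\triv{n-2}Q$, and then simply points to the \cref{Q_horn} combinatorics; your version spells out the bookkeeping for the extra $(n-1)$-faces that the paper leaves implicit, but the mathematical content coincides.
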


\begin{proof}
	Observe that the map $\adeltap n k \incl \triv{n-2}\adelta n k$ fits in the dashed part of the following diagram:
	\[
	\begin{tikzcd}
		\horn n k
		\arrow [r]
		\arrow [d] 
		\arrow [dr, phantom, "\ulcorner" very near end] &
		\triv{n-2}\horn n k
		\arrow [d]
		\arrow [ddr, bend left] & \\
		\adelta n k
		\arrow [r]
		\arrow [drr, bend right] &
		\cdot
		\arrow [dr, dashed] \\
		& & \triv{n-2}\adelta n k
	\end{tikzcd}
	\]
	Since $Q$ is cocontinuous and commutes with trivialisations by \cref{Q_triv}, it follows that $Q(\adeltap n k \incl \triv{n-2}\adelta n k)$ likewise fits in:
	\[
	\begin{tikzcd}
		Q(\horn n k)
		\arrow [r]
		\arrow [d] 
		\arrow [dr, phantom, "\ulcorner" very near end] &
		\triv{n-2}Q(\horn n k)
		\arrow [d]
		\arrow [ddr, bend left] & \\
		Q(\adelta n k)
		\arrow [r]
		\arrow [drr, bend right] &
		\cdot
		\arrow [dr, dashed] \\
		& & \triv{n-2}Q(\adelta n k)
	\end{tikzcd}
	\]
	Thus we see that $Q(\adeltap n k)$ may be obtained from $Q(\adelta nk)$ by marking all $(n-1)$-cubes contained in $Q(\horn nk)$, and further marking the (unique) remaining $(n-1)$-cube yields $Q(\tau_{n-2}\adelta nk)$.
	With this description, it can be shown that the map in question is a pushout of an elementary comical marking extension; the details are similar to that in the proof of \cref{Q_horn}.
\end{proof}

\begin{thm}\label{Q-Quillen}
	The functor $Q$ is left Quillen with respect to the (saturated) ($n$-trivial) complicial model structure on $\msSet$ and the comical model structure on $\mcSet$.
\end{thm}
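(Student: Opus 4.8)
The plan is to mirror the proof of \cref{T-Quillen}. Since $Q$ is a left adjoint, it suffices to show that $Q$ preserves monomorphisms and that $Q$ carries each pseudo-generating anodyne map of a complicial model structure from \cref{complicial-model-structure} to a trivial cofibration in the correspondingly-decorated comical model structure of \cref{comical-model-structure}. For the first point, $Q$ is cocontinuous, so it is enough to check it on the standard cellular model $\{\sbdy n \incl \Delta^n : n \geq 0\} \cup \{\Delta^n \incl \mdelta n : n \geq 1\}$ of $\msSet$: the markers go to the entire maps $Q^n \incl \mQ n$, and that $Q(\sbdy n \incl \Delta^n)$ is a monomorphism follows from the description of $Q^n$ as a quotient of $\cube n$ and the analysis of its faces (cf.\ \cref{FaceIso} and \cite{doherty-kapulkin-lindsey-sattler}).

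For the horn and marking-extension anodynes, \cref{Q_horn} and \cref{Q_marking_extension} do the work: $Q$ sends a complicial horn inclusion to a pushout of a comical open box inclusion and a complicial marking extension to a pushout of a comical marking extension, both of which are anodyne in all four comical model structures. For the $n$-trivial case, $Q$ sends a marker $\Delta^m \to \mdelta m$ with $m > n$ to the entire map $Q^m \to \mQ m$, which only marks the unique non-degenerate $m$-cube of $Q^m$ (all other $m$-cubes being degenerate, hence already marked); this is a pushout of the cubical marker $\cube m \to \mcube m$ along the classifying map of that cube, so it is anodyne in the $n$-trivial comical model structure, exactly as for $T$ in \cref{T-Quillen} (cf.\ \cite[Thm.~7.2]{campion-kapulkin-maehara}).

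The saturated case is the crux. By \cref{Rezk-lift} it is enough to show that $Q$ carries each simplicial Rezk map $\Delta^r \star L \to \Delta^r \star L'$, $r \geq -1$, to a trivial cofibration in the saturated comical model structure. I would first prove a natural isomorphism $Q(\Delta^r \star X) \cong C^{r+1}(QX)$: both sides are cocontinuous in $X$ (the join with a fixed object preserves colimits, and $Q$, $C$ are cocontinuous), so it suffices to treat $X = \Delta^n$ and $X = \mdelta n$, where it follows from $Q^{n+1} = C^{0,n+1} = C(C^{0,n}) = C(Q^n)$ together with the companion identity $\mQ{n+1} \cong C(\mQ n)$, obtained by tracking the top cube through the cone construction. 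This rewrites $Q(\Delta^r \star L \to \Delta^r \star L')$ as $C^{r+1}(Q(L \to L'))$, and it then remains to verify two things.

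First, that the cone functor $C$ preserves entire trivial cofibrations in the saturated comical model structure: for $f \colon X \to Y$ entire, $C(f)$ factors as $C(X) \to C(X) \cup_{\cube 1 \otimes X} (\cube 1 \otimes Y) \xrightarrow{\cong} C(Y)$, where the second map is an isomorphism precisely because $f$ is entire, and the first is a pushout of $\cube 1 \otimes f = (\varnothing \to \cube 1) \hat\otimes f$, which is a trivial cofibration whenever $f$ is, by the monoidality of the comical model structures (\cref{comical-model-structure}); iterating covers $C^{r+1}$. Second --- and this is the main obstacle --- that the entire map $Q(L \to L')$ is itself a trivial cofibration in the saturated comical model structure. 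Here $Q(L)$ and $Q(L')$ are explicit quotients of a subcomplex of $\cube 3$, and I would compute this inclusion by hand and exhibit it as a composite of pushouts of cubical (elementary) Rezk maps $L_{x,y} \to L'_{x,y}$ and comical marking extensions. This last identification is the combinatorial heart of the proof: it is where the specific definitions of the $L_{x,y}$ and of the comical marking conditions must be matched against the faces of $Q^3$, and where I expect essentially all of the bookkeeping to live; the remaining ingredients (the reduction to pseudo-generators, \cref{Q_horn,Q_marking_extension}, the marker case, the join--cone identity, and the behaviour of $C$ on entire maps) are formal or short.
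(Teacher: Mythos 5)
Your overall skeleton — reduce to preservation of pseudo-generating anodynes, dispatch horns and marking extensions via \cref{Q_horn,Q_marking_extension}, treat markers as a short direct computation, and isolate the saturated case as the crux — matches the paper's. But your route through the saturated case has a genuine gap.

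The problematic step is the claimed identity $\mQ{n+1} \cong C(\mQ n)$. This is false. By definition $\mQ{n+1}$ is $Q^{n+1}$ with only the unique non-degenerate $(n+1)$-cube marked; but $C(\mQ n)$, computed as the quotient of $\cube 1 \otimes \mQ n$ by the $(1,0)$-face, inherits \emph{two} marked non-degenerate cubes: the top $(n+1)$-cube $\iota_1 \otimes \iota_n$ \emph{and} its $(1,1)$-face $\partial_{1,1} \otimes \iota_n$ (the face $\partial_{1,0}\otimes\iota_n$ collapses to the cone point, but $\partial_{1,1}\otimes\iota_n$ survives as a marked non-degenerate $n$-cube). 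An isomorphism would have to preserve the number of marked non-degenerate cubes, so $C(\mQ n) \ncong \mQ{n+1}$. What your argument actually requires is $Q(\Delta^r \star \mdelta n) \cong C^{r+1}(\mQ n)$, and here the relevant observation is that $\Delta^r \star \mdelta n$ is \emph{not} $\mdelta{r+n+1}$ — the join marks all the simplices $\alpha \star \iota_n$ for $\alpha$ a face of $\Delta^r$, not just the top one — so you cannot factor through the false identity above. (A secondary caveat: $\Delta^r \star -$ does not preserve the initial object or coproducts, so "cocontinuous in $X$" is an overstatement; the join only preserves connected colimits, which happens to suffice here because $L$ and $L'$ are connected, but this needs to be said carefully, and the naturality of the isomorphism over the morphisms of $\Delta^+$ must then be verified explicitly.)

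For comparison, the paper does not introduce the natural isomorphism with the cone at all. It computes $Q(\Delta^k \star L)$ directly from the colimit description of $L$ as two triangles glued along an edge, building an explicit map $\cube{k+1} \otimes L_{x,y} \to Q(\Delta^k \star L)$ and showing by inspection of normal forms that the square
\[
\begin{tikzcd}[row sep=small]
\cube{k+1}\otimes L_{x,y} \arrow[r]\arrow[d] & Q(\Delta^k \star L)\arrow[d]\\
\cube{k+1}\otimes L'_{x,y} \arrow[r] & Q(\Delta^k \star L')
\end{tikzcd}
\]
is a pushout. The left vertical map is $(\varnothing\to\cube{k+1})\hat\otimes(L_{x,y}\to L'_{x,y})$, which is anodyne by the cellular structure of $\varnothing\to\cube{k+1}$ and the definition of the cubical Rezk maps, so $Q(\Delta^k\star L\to\Delta^k\star L')$ is anodyne. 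In particular, the elementary case $QL\to QL'$ is a \emph{single} pushout of the cubical elementary Rezk map $L_{1,2}\to L'_{1,2}$, without any comical marking extensions; your expectation of needing the latter overestimates the combinatorics. Your cone-based reduction is an attractive idea and I believe the target isomorphism is in fact true, but establishing it requires recomputing the marked cubes of both $Q(\Delta^r\star\mdelta n)$ and $C^{r+1}(\mQ n)$ honestly and checking naturality; as written, the proof rests on an incorrect identity.
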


\begin{proof}
	First we must show that $Q$ preserves cofibrations.
	Since $\msSet \to \sSet$ preserves monomorphisms and $\mcSet \to \cSet$ reflects them, it suffices to check that the ``unmarked version'' $Q \colon \sSet \to \cSet$ preserves them.
	This is easy to check (and also appears as \cite[Lem.~4.5]{kapulkin-lindsey-wong}).
	
	We have shown in \cref{Q_horn,Q_marking_extension} $Q$ sends the complicial horn inclusions to pushouts of comical open box inclusions, and the complicial marking extensions to pushouts of comical marking extensions.
	It is also easy to see from our construction of $\mQ{n}$ that $Q$ sends each simplicial $n$-marker to a pushout of the cubical $n$-marker.
	
	It remains to treat the saturated case; by \cref{Rezk-lift}, it suffices to show that $Q$ sends all simplicial Rezk maps to trivial cofibrations.
	The object $L$, the domain of the elementary simplicial Rezk map, may be written as the colimit of the following diagram:
	\[
	\begin{tikzcd}[column sep = small]
		& \Delta^1
		\arrow [dr, "\sface{1}"]
		\arrow [dl] & &
		\Delta^1
		\arrow[dl, swap, "\sface{0}"]
		\arrow [dr, "\sface{2}"] & & 
		\Delta^1
		\arrow [dl, swap, "\sface{1}"]
		\arrow [dr] & \\
		\mDelta{1} & &
		\mDelta{2} & &
		\mDelta{2} & &
		\mDelta{1}
	\end{tikzcd}
	\]

	It follows from the above colimit description of $L$ that we can obtain $QL$ from two marked $2$-cubes, which we call \emph{left} and \emph{right}, by:
	\begin{itemize}
		\item collapsing each cube to $Q^2$ (by collapsing $\face 1 0$);
		\item gluing $\face 1 1$ of the left cube to $\face 2 0$ of the right cube; and
		\item marking $\face 2 1$ of each cube.
	\end{itemize}
	Thus $QL$ is the cubical set illustrated below on the left, while $L_{1,1}$ is illustrated on the right.
\[
\begin{tikzpicture}
	\foreach \x in {0,1,2,4,5,6}
	\filldraw
	(\x,0) circle [radius = 1pt]
	(\x,1) circle [radius = 1pt];
	
	\foreach \x in {1,2,4,5,6}
	\draw[->] (\x,0.8) -- (\x,0.2);
	
	\foreach \x in {0,1,4,5}
	\draw[->] (\x+0.2,0) -- (\x+0.8,0);
	
	\foreach \x in {0,4,5}
	\draw[->] (\x+0.2,1) -- (\x+0.8,1);
	
	\foreach \x in {0,1,4,5}
	\filldraw[shaded, rounded corners]
	(\x+0.1,0.1) -- (\x+0.9,0.1) -- (\x+0.9,0.9) -- (\x+0.1,0.9) -- cycle;
	
	\draw[double] (0,0.2) -- (0,0.8) (1.2,1) -- (1.8,1);
	
	\draw[->, double = shaded] (0.3,0.3) -- (0.7,0.7);
	\draw[->, double = shaded] (4.3,0.3) -- (4.7,0.7);
	\draw[->, double = shaded] (1.7,0.7) -- (1.3,0.3);
	\draw[->, double = shaded] (5.7,0.7) -- (5.3,0.3);
	
	\node at (0.5,-0.2) {$\sim$};
	\node at (4.5,-0.2) {$\sim$};
	\node at (2.2,0.5) {$\sim$};
	\node at (3.8,0.5) {$\sim$};
	\node at (6.2,0.5) {$\sim$};
	\node at (5.5,1.1) {$\sim$};
	
\end{tikzpicture}
\]

	So we obtain a map $L_{1,1} \to QL$, and similarly a map $L'_{1,1} \to QL'$.
	Since both $L_{1,1} \to L'_{1,1}$ and $QL \to QL'$ are entire, we can deduce by an easy analysis of the marked cubes that the latter is a pushout of the former.
	Therefore $QL \to QL'$ is a trivial cofibration in the saturated model structure. The case for general Rezk maps $\Delta^k \star L \to \Delta^k \star L'$ follows from this, together with \cref{Q-join-we}.
\end{proof}

\section{Triangulation is a Quillen equivalence}\label{sec:equivalence}

We show that the triangulation is a Quillen equivalence between the model structure for ($n$-trivial, saturated) comical sets and the model structure for ($n$-trivial, saturated) complicial sets. Our proofs will make use of the functor $Q$ and results concerning this functor established in \cref{sec:cones}; thus they are valid in $\cSet^+_0, \cSet^+_1$, and $\cSet^+$. Once again, we will work in $\cSet^+$ for the sake of concreteness, but our proofs will only require positive connections; thus these proofs work verbatim in $\cSet^+_1$ and can be dualized to $\cSet^+_0$. Since this paper first appeared, the results of this section have been generalized to $\cSet_{\varnothing}^+$ in \cite{doherty:without-connections}.

\subsection{$TQ$ is weakly equivalent to identity}
The aim of this section is to construct a natural transformation $\rho \colon TQ \Rightarrow \id$ and to exhibit it as a natural weak equivalence.

\begin{rmk}
	Whenever we say a \emph{weak equivalence} in this section, we will always mean one with respect to the complicial model structure (without saturation or $n$-triviality).
\end{rmk}

Note that, since $T$ is cocontinuous, we can compute $TQ^n$ as the following pushout:
\[
\begin{tikzcd}[row sep = large]
	\displaystyle\coprod_{1 \le i \le n} T(\cube{i-1}\otimes\cube{n-i})
	\arrow [r]
	\arrow [d]
	\pushout &
	T\cube n
	\arrow [d] \\
	\displaystyle\coprod_{1 \le i \le n} T\cube{i-1}
	\arrow [r] &
	TQ^n
\end{tikzcd}
\]
It inherits a unique unmarked $n$-simplex from $T\cube n$, and marking this simplex yields $T\mQ{n}$.

Recall that an $r$-simplex in $T\cube n$ corresponds to a sequence $\phi \in \{1,\dots,r,\pm\infty\}^n$.
Since the right vertical map in the above pushout square is an epimorphism, any $r$-simplex in $TQ^n$ may also be represented (not necessarily uniquely) by such $\phi$.
Two sequences $\phi$ and $\chi$ represent the same simplex if and only if, for any $i$ with $\phi_i \neq \chi_i$, there exists $j<i$ with $\phi_j=\chi_j=+\infty$.
\begin{Def}
	For $n \ge 0$, we define $\rho^n \colon TQ^n \to \Delta^n$ by sending an $r$-simplex represented by a sequence $\phi$ to $\rho^n(\phi) \colon [r] \to [n]$ given by
	\[
	\rho^n(\phi)(p) = \max \, \{k \in [n]~|~\phi_i\le p \text{ for all } i \leq k\} \text{.}
	\]
	Note that since the condition ``$\phi_i\le p$ for all $i \leq k$'' is always vacuously true for $k=0$, the set $\{k \in [n]~|~\phi_i\le p \text{ for all } i \leq k\}$ is always non-empty.
	The map $\widetilde{\rho}^n \colon T\mQ{n} \to \mDelta{n}$ (for $n \ge 1$) has the same underlying simplicial map.
\end{Def}

\begin{rmk}
	If we regard the vertices of $T\cube n$ as binary strings of length $n$, then $T\cube n \to TQ^n$ acts on those vertices by identifying any two strings that have their first $0$ in the same position.
	Intuitively, the map $\rho^n \colon TQ^n \to \Delta^n$ is well defined because it essentially counts the number of $1$'s before the first $0$.
\end{rmk}

\begin{prop}
	The above definitions indeed yield maps $\rho^n \colon TQ^n \to \Delta^n$ and $\widetilde{\rho}^n \colon T\mQ{n} \to \mDelta{n}$ in $\msSet$.
	Moreover these maps extend to a unique natural transformation $\rho \colon TQ \to \id$ with $\rho_{\Delta^n} = \rho^n$ and $\rho_{\mDelta n}=\widetilde{\rho}^n$.
\end{prop}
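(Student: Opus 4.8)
The plan is to introduce an auxiliary map $\bar\rho^n \colon T\cube n \to \Delta^n$ given by the same formula on the unique sequence representing a simplex of $T\cube n$, show it is simplicial, show it coequalizes the quotient $T\cube n \to TQ^n$ so that it descends to $\rho^n$, and then check the marking and naturality conditions.

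First I would verify that $\bar\rho^n$ is a map of simplicial sets. Monotonicity of $\bar\rho^n(\phi)$ in $p \in [m]$ is immediate, since enlarging $p$ only weakens the condition $\phi_i \le p$, so $\bar\rho^n(\phi)$ is a genuine $m$-simplex of $\Delta^n = N[n]$. Compatibility with a simplicial operator $\alpha \colon [q] \to [m]$ reduces, via the recalled formula for the action on sequences, to the equivalence $(\phi\alpha)_i \le p \Leftrightarrow \phi_i \le \alpha(p)$ for all $p \in [q]$, a short case analysis on whether $(\phi\alpha)_i$ is $-\infty$, finite, or $+\infty$; this yields $\bar\rho^n(\phi\alpha) = \bar\rho^n(\phi)\circ\alpha$. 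Next I would show $\bar\rho^n$ descends along the quotient: if sequences $\phi$ and $\chi$ represent the same simplex, so that $\phi_i \neq \chi_i$ forces some $j<i$ with $\phi_j = \chi_j = +\infty$, then for any $p$ and $k = \bar\rho^n(\phi)(p)$ the entries $\phi_1, \dots, \phi_k$ are all $\le p$, hence finite, so the criterion gives $\chi_i = \phi_i$ for $i \le k$ and therefore $\bar\rho^n(\chi)(p) \ge k$; by symmetry $\bar\rho^n(\phi)(p) = \bar\rho^n(\chi)(p)$. Since $T\cube n \to TQ^n$ is an epimorphism, $\bar\rho^n$ thus factors through it, defining the simplicial map $\rho^n \colon TQ^n \to \Delta^n$.

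To see that $\rho^n$ and $\widetilde\rho^n$ respect markings, the key observation is that a sequence $\chi$ with no complete substring has $\rho^n(\chi)$ degenerate: if $\rho^n(\chi)$ were nondegenerate, hence strictly increasing, then writing $a_p = \rho^n(\chi)(p)$, for each $1 \le p \le m$ the entry of $\chi$ in position $a_{p-1}+1$ is $\le p$ but not $\le p-1$, hence equals $p$, so $p \mapsto a_{p-1}+1$ is a complete substring. As the marked simplices of $TQ^n$ are exactly the images of marked (complete-substring-free) sequences, $\rho^n$ carries them to degenerate, hence marked, simplices of $(\Delta^n)^\flat$, so $\rho^n$ is a map in $\msSet$. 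Since $\widetilde\rho^n$ has the same underlying simplicial map and the only marked simplex of $T\mQ n$ not already marked in $TQ^n$ is the image of $\iota_n$, which $\rho^n$ sends to $\mathrm{id}_{[n]}$, the marked top simplex of $\mDelta n$, the map $\widetilde\rho^n$ also lies in $\msSet$; the same computation (showing $\rho^n(\chi) = \mathrm{id}_{[n]}$ only for $\chi = \iota_n$) confirms that the marking of $T\mQ n$ introduces precisely the simplex it should.

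Finally I would assemble the $\rho^n$ and $\widetilde\rho^n$ into a natural transformation $\rho \colon TQ \Rightarrow \mathrm{id}$. Both $TQ$ and $\mathrm{id}$ are cocontinuous endofunctors of $\msSet$ ($T$ and $Q$ being left adjoints), and $\Delta^+$ is dense in $\msSet$, so a natural transformation $TQ \Rightarrow \mathrm{id}$ amounts to, and is uniquely recovered from, a natural transformation between the restrictions along $\Delta^+ \hookrightarrow \msSet$. It therefore suffices to check that $n \mapsto (\rho^n, \widetilde\rho^n)$ is natural for the morphisms of $\Delta^+$, and by factoring these through cofaces, codegeneracies, the markers $\Delta^n \to \mDelta n$, and their marked analogues, only these generators need be treated: for the markers this is immediate since $\rho^n$ and $\widetilde\rho^n$ share an underlying map and $TQ^n \to T\mQ n$ is entire, and for the (co)faces and (co)degeneracies it is a routine computation against the explicit descriptions of $Q$ on structure maps together with the action on sequences. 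Uniqueness of $\rho$ is built into the density argument. I expect the main obstacle to be not any single deep step — the well-definedness in the second paragraph is the conceptual heart but is brief — but rather the bookkeeping here, since $Q$ acts on a face map via a case split on the position $r$ in its normal form, so tracing the naturality squares requires care in keeping the conventions around $\pm\infty$ and the cone point straight.
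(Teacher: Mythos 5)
Your proposal is correct and follows essentially the same strategy as the paper: verify well-definedness and compatibility with the structure maps at the level of sequences, check markings, and then extend to all of $\msSet$ by a density argument (the paper phrases this via the universal property of the free cocompletion $[(\Delta^+)^\op,\Set]$, which is equivalent to your direct density formulation, and cites \cite{doherty-kapulkin-lindsey-sattler} for the unmarked well-definedness and naturality that you sketch directly). The one cosmetic difference is your marking argument, which runs contrapositively (``$\rho^n(\chi)$ nondegenerate $\Rightarrow$ $\chi$ has a complete substring'') where the paper exhibits a degeneracy of $\rho^n(\phi)$ directly; both are fine, though your final sentence about ``confirming the marking of $T\mQ n$ introduces precisely the simplex it should'' is unnecessary for showing $\widetilde\rho^n$ is a map — you only need $\rho^n(\iota_n)=\id_{[n]}$, not that $\iota_n$ is the unique preimage.
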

\begin{proof}
	We must show that:
	\begin{enumerate}
		\item $\rho^n$ at least defines a valid map between the underlying simplicial sets;
		\item $\rho^n$ preserves marked simplices;
		\item $\rho$ is natural in $n$; and
		\item similarly for $\widetilde{\rho}$; and
		\item those maps indeed extends to a unique natural transformation $\rho$.
	\end{enumerate}
	For (1) and (3), see Proposition 6.18 and Lemma 6.19 in \cite{doherty-kapulkin-lindsey-sattler} respectively.
	We will skip (4) since it will be almost identical to (1-3).
	
	To prove (2), consider an $r$-simplex $\phi$ in $T\cube n$.
	Suppose that this simplex is:
	\begin{itemize}
		\item[(i)] non-degenerate, or equivalently each integer in $\{1,\dots,r\}$ appears at least once in $\phi$; and
		\item[(ii)] marked, or equivalently there is no sequence $1 \le i_1 < \dots < i_r \le n$ such that $\phi_{i_p} = p$.
	\end{itemize}
	We must show that $\rho^n$ sends the image of such $\phi$ in $TQ^n$ to a marked simplex in $\Delta^n$.
	We claim that there exist $1 \le p < q \le r$ with
	\[
	\min\{i~|~\phi_i=p\}>\min\{i~|~\phi_i=q\}.
	\]
	Indeed, for any $p$ and $q$, both minima are well defined because of (i), and such $p,q$ must exist for otherwise setting $i_p = \min\{i~|~\phi_i=p\}$ would violate (ii).
Now observe that there can be no $k \in [n]$ such that for all $i \leq k$, we have $\phi_i \leq p$, with $\phi_i = p$ for some such $i$. This is because the conditions defining $p$ and $q$ imply that if some such $k$ and $i$ did exist, we would have $\phi_{j} = q > p$ for some $j < i$, contradicting our assumption. Thus the sets $\{k \in [n]~|~(\forall i \le k)~\phi_i\le p\}$ and $\{k \in [n]~|~(\forall i \le k)~\phi_i\le p-1\}$ are equal; it follows that $\rho^n(\phi)(p) = \rho^n(\phi)(p-1)$. We thus see that the simplex $\rho^n(\phi)$ is degenerate (and hence marked) in $\Delta^n$.
	
	It remains to prove (5).
	Observe that, by construction of $Q$, we can regard $TQ$ as the restriction of a cocontinuous functor $[(\Delta^+)^\op,\Set] \to \msSet$ to the full subcategory $\msSet$.
	Similarly, we may regard the identity functor on $\msSet$ as the restriction of the (cocontinuous) reflection $[(\Delta^+)^\op,\Set] \to \msSet$.
	Since $[(\Delta^+)^\op,\Set]$ is the free cocompletion of $\Delta^+$, the maps $\rho^n$ and $\widetilde{\rho}^n$ extend to a unique natural transformation between those functors $[(\Delta^+)^\op,\Set] \to \msSet$.
	We thus obtain the desired natural transformation by restricting it to $\msSet$.
	Its uniqueness follows from the fact that any marked simplicial set can be written as a colimit of $\Delta^n$ and $\mDelta{n}$.
\end{proof}

Now we prove that $\rho$ is a natural weak equivalence.

\begin{lem}\label{TQ-unmarked-simplex}
	The component $\rho^n \colon TQ^n \to \Delta^n$ is a weak equivalence.
\end{lem}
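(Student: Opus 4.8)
The plan is to reduce to a statement about a section of $\rho^n$. The image of the main-diagonal $n$-simplex $\iota_n\in(T\Box^n)_n$ under the quotient $T\Box^n\to TQ^n$ is the unique non-degenerate unmarked $n$-simplex of $TQ^n$, and the defining formula for $\rho^n$ gives directly $\rho^n(\iota_n)=\id_{[n]}$. Hence the map $s^n\colon\Delta^n\to TQ^n$ classifying this simplex is a section: $\rho^n\circ s^n=\id_{\Delta^n}$. By two-out-of-three it therefore suffices to prove that $s^n$ is a weak equivalence, and for this it is enough to show that $s^n$ is a \emph{complicial} map, i.e.\ lies in the cellular closure of the complicial horn inclusions and the elementary complicial marking extensions; such maps are trivial cofibrations in the model structure for complicial sets, hence in all the model structures of \cref{complicial-model-structure}. (In particular $s^n$ is automatically a monomorphism, as one also checks directly: the faces of $\iota_n$ have strings whose $+\infty$-entries form a terminal block, and such strings are never identified in $TQ^n$ with each other or with a degenerate string.)

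To show $s^n$ is complicial, I would, identifying $\Delta^n$ with $\im s^n$, build a filtration $\Delta^n=Z_0\subseteq Z_1\subseteq\cdots\subseteq Z_N=TQ^n$ in which each inclusion is a pushout of a complicial horn inclusion or of an elementary complicial marking extension. On underlying simplicial sets this is essentially \cite[Sec.~6]{doherty-kapulkin-lindsey-sattler}: using \cref{ConeDesc,cone-desc-faces} one enumerates the non-degenerate simplices of $TQ^n$ together with their faces, introduces a normalization-type pairing on them analogous to \cref{omega-def,N-bijection,N-order}, and sequences the horn fillings by a well-founded order so that each unpaired simplex is added along a complicial horn all of whose faces are already present, the relevant faces being marked by a ``disorderedness'' argument as in \cref{disordered-complicial,N-complicial}. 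The markings are handled by interleaving elementary marking extensions: the simplices of $TQ^n$ that must be marked are exactly those with no complete substring, and one marks them via \cref{more-markings} and the linearization analysis of \cref{linearization-marking}, after checking that the relevant regular subcomplexes of $TQ^n$ are, as quotients of $T\Box^n$, closed under normalization. Composing the filtration shows $s^n$ is complicial, which by two-out-of-three finishes the proof.

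The main obstacle is that the combinatorics take place in the quotient $Q^n$ rather than in $\Box^n$: several distinct face maps of $\Box^n$ become identified in $Q^n$, so at each stage one must carefully track which simplices and which markings of $TQ^n$ are genuinely new and verify that the normalization pairing and the well-founded order descend to the quotient. Once this bookkeeping is in place, the anodyneness of each elementary step follows exactly as in \cref{Quillen-functor}.
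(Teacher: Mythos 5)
Your reduction step agrees with the paper: the unique unmarked non-degenerate $n$-simplex gives a section $\zeta^n$ of $\rho^n$, and by two-out-of-three it suffices to prove that this section is a weak equivalence. From there, however, the paper argues quite differently: it exhibits $\Delta^n$ as a deformation retract of $TQ^n$, constructing an explicit homotopy $H\colon(\Delta^1)^{\otimes n}\times\widetilde{\Delta}^1\to(\Delta^1)^{\otimes n}$ (collapse all coordinates after the first $0$), checking that it preserves markings, and descending it to a left homotopy $TQ^n\times\widetilde{\Delta}^1\to TQ^n$ from $\zeta^n\rho^n$ to the identity.

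The central claim of your proposal, namely that $s^n$ lies in the cellular closure of the complicial horn inclusions and elementary complicial marking extensions, is false for $n\ge 3$, so the proposed filtration cannot exist. Take $n=3$ and the non-degenerate $3$-simplex of $TQ^3$ represented by the string $2\,1\,3$ (its class is a singleton since it contains no $+$; it is marked and not in the image of $s^3$). Its faces are $\partial_0=1\,{-}\,2$, $\partial_1=1\,1\,2$, $\partial_2=2\,1\,2$, $\partial_3=2\,1\,{+}$; only $2\,1\,{+}$ is marked in $TQ^3$, and $1\,1\,2=\iota_3\partial_1$ already lies in the image of $s^3$. In any expression of $s^3$ as a transfinite composite of pushouts of these generators, every stage embeds into $TQ^3$ as a marked simplicial set (the generators are monomorphisms, and these are stable under pushout and transfinite composition), so any simplex unmarked in $TQ^3$ is unmarked at every stage, and each horn filling adjoins a genuinely new filler together with a genuinely new free face. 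Since $TQ^3$ has no non-degenerate $4$-simplices, the simplex $2\,1\,3$ could only ever be attached as the filler of some $\horn 3 k \incl \adelta 3 k$: the case $k=1$ is impossible because its free face would be $1\,1\,2$, which is already present; $k=0$ requires $2\,1\,2$ to be marked, $k=2$ requires $1\,{-}\,2$ to be marked, and $k=3$ requires $1\,{-}\,2$ and $1\,1\,2$ to be marked, all of which fail in $TQ^3$ and hence at every stage. So the section is a trivial cofibration but not a complicial map; the marking constraints on complicial horns are exactly the obstruction that the paper's deformation-retract argument sidesteps. (A smaller point: in the quotient $TQ^n$ a class is marked as soon as one representative is marked, so ``marked iff no complete substring'' must be read on the representative whose entries after the first $+$ are all $-$; but this is secondary to the obstruction above.)
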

\begin{proof}
	We will exhibit $\Delta^n$ as a deformation retract of $TQ^n$ with the retraction part given by $\rho^n$.
	
	We define $\zeta^n \colon \Delta^n \to TQ^n$ to be the unique map picking out the unique unmarked $n$-simplex $\iota$, \emph{i.e.}\;the one represented by the sequence $1\,2\,\dots\,n$.
	Then it is straightforward to check that $\rho^n\zeta^n=\id$ holds.
	Note that, more explicitly, $\zeta^n$ sends an $r$-simplex $\alpha \colon [r] \to [n]$ to the one represented by the sequence $\phi \in \{1,\dots,r,\pm\infty\}^n$ given by
	\[
	\phi_i = \begin{cases}
		+\infty, & i>\alpha(r),\\
		p, & \alpha(p-1)<i\le\alpha(p),\\
		-\infty, & i \le \alpha(0).
	\end{cases}
	\]
	
Comparing the definitions of $\rho^n$ and $\zeta^n$, we can see that the map $\zeta^n \rho^n \colon TQ^n \to TQ^n$ sends a simplex represented by a sequence $\phi \in \{1,\ldots,r, \pm \infty\}^n$ to the simplex represented by the sequence $\phi'$ given by $\phi'_i = \max\{\phi_j | j \leq i\}$.

	We will construct a (left) homotopy between $\zeta^n\rho^n$ and the identity at $TQ^n$. Note that for any $X \in \sSet^+$, we have a cylinder object on $X$ given by $X \times \mDelta 1$; since $X \times \mDelta 1$ is equal to the Gray tensor product $X \otimes \mDelta 1$, this follows from \cref{gray-homotopical} together with the fact that $\mDelta 1$ is contractible.
	
	We begin by defining a map $H \colon (\Delta^1)^n\times \Delta^1 \to (\Delta^1)^n$ in $\sSet$ so that its action on the $0$-simplices (regarded as binary strings) is given by:
	\[
	\begin{aligned}
		H(\varepsilon_1,\varepsilon_2,\dots,\varepsilon_n,0) &= \bigl(\min\{\varepsilon_1\},\min\{\varepsilon_1,\varepsilon_2\},\dots,\min\{\varepsilon_1,\dots,\varepsilon_n\}\bigr),\\
		H(\varepsilon_1,\varepsilon_2,\dots,\varepsilon_n,1) &= (\varepsilon_1,\varepsilon_2,\dots,\varepsilon_n).
	\end{aligned}
	\]
	In other words, $H(-,1)$ acts as the identity and $H(-,0)$ replaces all entries after the first $0$ (if it exists) by $0$'s.
	
Now we will show that $H$ lifts to a map $H \colon T \Box^n \times \mDelta 1 \to T \Box^n$ in $\msSet$.
		To see this, let us describe this map $H$ in terms of the sequence representation of simplices.
		Fix an $r$-simplex $\phi \in \{1,\dots,r,\pm\infty\}^{n+1}$, and write $q = \phi_{n+1}$.
		Then $H$ sends $\phi$ to $\chi \in \{1,\dots,r,\pm\infty\}^n$ given by
		\[
		\chi_i = 
		\begin{cases}
			\phi_i, & \phi_i \ge q,\\
			\min\bigl\{\max\{\phi_1,\dots,\phi_i\},\phi_{n+1}\bigr\}, & \phi_i<q.
		\end{cases}
		\]
		Suppose that this $\chi$ is unmarked when regarded as an $r$-simplex in $T \Box^n$, i.e.~ that it has a complete substring as defined in \cref{complete-substring-def}.
		Then there exist
		\[
		1 \le j_1 < \dots < j_{q-1} < i_q < \dots < i_r \le n
		\]
		such that
		\begin{itemize}
			\item $\phi_{i_p} = p$ for $q \le p \le r$; and
			\item $\max\{\phi_1,\dots,\phi_{j_p}\} = p$ for $1 \le p < q$.
		\end{itemize}
		(If $q=-\infty$ then this is interpreted as the existence of such $i_1,\dots,i_r$, and if $q=+\infty$ then this is interpreted as the existence of such $j_1,\dots,j_r$.)
		We can then deduce by an elementary analysis of the $\max$ function that there exist
		\[
		1 \le i_1 \le j_1 < i_2 \le j_2 < \dots < i_{q-1} \le j_{q-1}
		\]
		such that $\phi_{i_p} = p$ for all $1 \le p < q$.
		It follows that the projection of $\phi$ onto the first factor $(\Delta^1)^n$ is unmarked when regarded as an $r$-simplex in $T \Box^n$, which in turn implies that $\phi$ itself is unmarked as an $r$-simplex in $T \Box^n \times\mDelta 1$.
Thus we do indeed obtain a marked simplicial set map $H \colon T \Box^n \times \mDelta 1 \to T \Box^n$ whose underlying simplicial set map is as described above.

We must now show that the map $H \colon T \Box^n \times \mDelta 1 \to T \Box^n$ induces a map between the quotients $TQ^n \times \mDelta 1 \to TQ^n$. In other words, we must show that if a pair of simplices of $T \Box^n$ are identified in $TQ^n \times \mDelta 1$, then their images under $H$ are identified in $TQ^n$.
	
Recall that two sequences $\phi, \psi \in \{1,\ldots,r,\pm \infty\}^n$ represent the same $r$-simplex of $TQ^n$ (in other words, the corresponding simplices of $T \Box^n$ are identified in $TQ^n$) if and only if, for any $i$ with $\phi_i \neq \psi_i$, there exists $j<i$ with $\phi_j=\psi_j=+\infty$. It follows that a pair of simplices of $T \Box^n \times \mDelta 1$ represented by sequences $\phi, \psi \in \{1,\ldots,r,\pm \infty\}^{n+1}$ are identified in $TQ^n \times \mDelta 1$ if and only if both of the following two conditions are satisfied:

\begin{itemize}
\item $\phi_{n+1} = \psi_{n+1}$;
\item for any $1 \leq i \leq n$ such that $\phi_i = \psi_i$, there exists $j < i$ with $\phi_j = \psi_j = + \infty$. 
\end{itemize}
	
Suppose that $\phi$ and $\psi$ satisfy these conditions, and let their images under $H$ be respectively defined by sequences $\chi, \xi \in \{1,\ldots,r,\pm \infty\}^n$. Denote the common value $\phi_{n+1} = \psi_{n+1}$ by $q$. Suppose that for some $1 \leq i \leq n$ we have $\chi_i \neq \xi_i$, and assume without loss of generality that $\phi_i \leq \psi_i$. 

We consider three possible cases.
First, suppose that $\phi_i \leq \psi_i < q$. Then we see that $\min\{\max\{\phi_1,\ldots,\phi_i\},q\} \neq \min\{\max\{\psi_1,\ldots,\psi_i\},q\}$; in particular, this means that the sets $\{\phi_1,\ldots,\phi_i\}$ and $\{\psi_1,\ldots,\psi_i\}$ are not equal. Thus there is some $k \leq i$ such that $\phi_k \neq \psi_k$; by assumption, this implies that there is some $j < k \leq i$ such that $\phi_j = \psi_j = + \infty$. It follows that $\chi_j = \xi_j = + \infty$.

Next we consider the case in which $\phi_i < q$ while $\phi_i \geq q$; in this case it is immediate that $\phi_i \neq \psi_i$, so once again there is $j < i$ such that $\phi_j = \psi_j = + \infty$, implying that $\chi_j = \xi_j = + \infty$ as well. Finally we consider the case in which $q \leq \phi_i \leq \psi_i$; then the assumption that $\chi_i \neq \xi_i$ implies $\phi_i \neq \psi_i$, and we proceed as in the previous case.
	
Thus we see that $H$ induces a map $TQ^n \times \mDelta 1 \to TQ^n$. From the definition of $H$ and the description of $\zeta^n \rho^n$ above, we can see that this map restricts to $\zeta^n \rho^n$ on $T \Box^n \times \{0\}$ and to $\id_{T \Box^n}$ on $T \Box^n \times \{1\}$.  Thus $H$ defines a left homotopy between $\zeta^n \rho^n$ and the identity on $TQ^n$. This completes the proof.
\end{proof}

\begin{lem}\label{TQ-marked-simplex}
	The component $\widetilde{\rho}^n \colon T\mQ{n} \to \mDelta{n}$ is a weak equivalence.
\end{lem}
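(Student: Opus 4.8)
The plan is to adapt the proof of \cref{TQ-unmarked-simplex} and exhibit $\mDelta n$ as a deformation retract of $T\mQ n$ with retraction $\widetilde{\rho}^n$. Recall from that proof the section $\zeta^n \colon \Delta^n \to TQ^n$, which picks out the unique unmarked $n$-simplex $\iota$ of $TQ^n$ (the one represented by the sequence $1\,2\,\dots\,n$) and satisfies $\rho^n\zeta^n = \id$; one computes directly that $\rho^n(\iota) = \id_{[n]}$. Since $T\mQ n$ is obtained from $TQ^n$ by marking $\iota$, and $\mDelta n$ is obtained from $\Delta^n$ by marking $\id_{[n]}$, the map $\zeta^n$ factors through a map $\widetilde{\zeta}^n \colon \mDelta n \to T\mQ n$ in $\msSet$, and $\widetilde{\rho}^n \widetilde{\zeta}^n = \id_{\mDelta n}$ because the underlying simplicial maps agree.

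Next I would show that the left homotopy $H \colon TQ^n \times \mDelta 1 \to TQ^n$ from $\zeta^n\rho^n$ to $\id$ built in the proof of \cref{TQ-unmarked-simplex} lifts to a map $\widetilde{H} \colon T\mQ n \times \mDelta 1 \to T\mQ n$ in $\msSet$. Its underlying simplicial map is unchanged, and the two endpoint identities continue to hold in $\msSet$, so the only thing to check is that $\widetilde{H}$ preserves markings. A marked simplex of $T\mQ n \times \mDelta 1$ is a pair $(\phi,\psi)$ with $\phi$ marked in $T\mQ n$ and $\psi$ marked in $\mDelta 1$. If $\phi$ is already marked in $TQ^n$, then $(\phi,\psi)$ is marked in $TQ^n \times \mDelta 1$, so $H(\phi,\psi)$ is marked in $TQ^n \subseteq T\mQ n$ by the corresponding claim in the proof of \cref{TQ-unmarked-simplex}. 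Otherwise $\phi$ is marked in $T\mQ n$ but not in $TQ^n$, which (since marking $\iota$ adds no new marked simplices other than $\iota$ itself) forces $\phi = \iota$; then $(\phi,\psi)$ is an $n$-simplex, represented after adjoining the $\mDelta 1$-coordinate $q$ by the sequence $1\,2\,\dots\,n\,q$. Since the string $1\,2\,\dots\,n$ is non-decreasing, the explicit formula defining $H$ (in the proof of the Claim inside \cref{TQ-unmarked-simplex}) fixes every entry, so $\widetilde{H}(\phi,\psi) = \iota$, which is marked.

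Finally, $\widetilde{H}$ together with $\widetilde{\zeta}^n$ exhibits $\mDelta n$ as a deformation retract of $T\mQ n$ along the cylinder $- \times \mDelta 1$, and the same formal argument as in \cref{TQ-unmarked-simplex} then shows that $\widetilde{\rho}^n$ is a weak equivalence. The only step with genuine content is the marking-preservation check for $\widetilde{H}$ in the case $\phi = \iota$; everything else is a routine transfer of the unmarked argument.
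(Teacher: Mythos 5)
Your argument is correct, but it takes a genuinely different route from the paper's. You re-run the deformation-retract argument of \cref{TQ-unmarked-simplex} at the marked level: you lift $\zeta^n$ to a section $\widetilde{\zeta}^n \colon \mDelta{n} \to T\mQ{n}$ of $\widetilde{\rho}^n$, and you check that the homotopy $H$ still defines a map $T\mQ{n} \times \mDelta{1} \to T\mQ{n}$; since the only marked simplices of $T\mQ{n}\times\mDelta{1}$ not already marked in $TQ^n\times\mDelta{1}$ are the pairs $(\iota,\psi)$, and the explicit formula for $H$ sends the string $1\,2\,\dots\,n\,q$ to $1\,2\,\dots\,n$ for every $q$, these land on the (now marked) simplex $\iota$ -- this one-case check is the only new content, and you carry it out correctly. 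The paper instead argues purely formally: since $\rho^n\zeta^n=\id$ and $\rho^n$ is a weak equivalence by \cref{TQ-unmarked-simplex}, the cofibration $\zeta^n$ is a trivial cofibration by two-out-of-three; its pushout along the $n$-marker $\Delta^n \to \mDelta{n}$ is exactly your map $\widetilde{\zeta}^n \colon \mDelta{n} \to T\mQ{n}$ (marking $\iota$ is the pushout of marking $\id_{[n]}$), hence again a trivial cofibration, and since it is a section of $\widetilde{\rho}^n$ the lemma follows by two-out-of-three. The paper's route is shorter and avoids revisiting the combinatorics of $H$ and its markings altogether; your route costs that one extra marking verification but yields the slightly stronger, explicit conclusion that $\mDelta{n}$ is a marked deformation retract of $T\mQ{n}$, relying on no cylinder-object input beyond what \cref{TQ-unmarked-simplex} already uses.
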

\begin{proof}
	Fix $n \ge 1$.
	Then it follows from \cref{TQ-unmarked-simplex} that $\zeta^n$ is a trivial cofibration.
	Thus its pushout along the $n$-marker $\Delta^n \to \mDelta{n}$ is also a trivial cofibration.
	But it is easy to check that this pushout is a section of $\widetilde{\rho}^n$, so the lemma follows by the 2-out-of-3 property.
\end{proof}

\begin{thm}\label{TQ}
	The component $\rho_X \colon TQX \to X$ at any $X \in \msSet$ is a weak equivalence.
\end{thm}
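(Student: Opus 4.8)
The plan is to bootstrap from the two special cases already in hand --- namely $\rho^n \colon TQ^n \to \Delta^n$ of \cref{TQ-unmarked-simplex} and $\widetilde{\rho}^n \colon T\mQ{n} \to \mDelta{n}$ of \cref{TQ-marked-simplex} --- to an arbitrary $X$ by a cellular induction. The two structural facts that make this work are that $TQ$ and $\id$ are both cocontinuous and preserve monomorphisms (for $TQ$, because $Q$ preserves cofibrations and $T$ preserves monomorphisms), and that the complicial model structure on $\msSet$ is left proper, since every object is cofibrant (the cofibrations being exactly the monomorphisms). I would let $\mathcal{W} \subseteq \msSet$ be the class of those $X$ for which $\rho_X$ is a weak equivalence, and aim to prove $\mathcal{W} = \msSet$.

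First I would record the closure properties of $\mathcal{W}$. It contains $\varnothing$, where $\rho$ is an identity, and it is closed under small coproducts, since coproducts preserve weak equivalences between cofibrant objects and here all objects are cofibrant. The essential step is a gluing argument: if $j \colon S \hookrightarrow T$ is a coproduct of generating cofibrations $\partial\Delta^n \hookrightarrow \Delta^n$ and $\Delta^n \hookrightarrow \mDelta{n}$, if $S, T \in \mathcal{W}$, and if $A \in \mathcal{W}$ receives a map $S \to A$, then $A \cup_S T \in \mathcal{W}$. To see this, apply $TQ$ and $\id$ to the span $A \leftarrow S \xrightarrow{j} T$; because both functors are cocontinuous, one obtains a map of spans $(TQA \leftarrow TQS \to TQT) \to (A \leftarrow S \to T)$ with components $\rho_A,\rho_S,\rho_T$, hence a map of pushout squares whose three given vertical legs $\rho_A,\rho_S,\rho_T$ are weak equivalences, whose horizontal legs $j$ and $TQ(j)$ are cofibrations (the latter since $TQ$ preserves monomorphisms), and whose induced fourth vertical component is $\rho_{A \cup_S T} \colon TQ(A\cup_S T) \to A \cup_S T$; the gluing lemma for left proper model categories then applies. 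Lastly, $\mathcal{W}$ is closed under transfinite composition along cofibrations, since $TQ$ preserves transfinite composites and weak equivalences in a combinatorial model structure are closed under transfinite composition along cofibrations.

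Next I would verify that the (co)domains of the generating cofibrations lie in $\mathcal{W}$. The objects $\Delta^n$ and $\mDelta{n}$ are in $\mathcal{W}$ by \cref{TQ-unmarked-simplex,TQ-marked-simplex}, and $\partial\Delta^n \in \mathcal{W}$ follows by induction on $n$: the base case $\partial\Delta^0 = \varnothing$ is trivial, and for $n \geq 1$ the skeletal filtration exhibits $\partial\Delta^n$ as obtained from $\varnothing$ by finitely many pushouts of coproducts of the cofibrations $\partial\Delta^k \hookrightarrow \Delta^k$ with $k < n$, so the closure properties above, the inductive hypothesis, and $\Delta^k \in \mathcal{W}$ give the claim. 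With every generating (co)domain in $\mathcal{W}$, the theorem follows: an arbitrary $X \in \msSet$ is obtained from $\varnothing$ by a transfinite composition of pushouts of coproducts of the generating cofibrations --- this is exactly the statement that $\{\partial\Delta^n \hookrightarrow \Delta^n\} \cup \{\Delta^n \hookrightarrow \mDelta{n}\}$ is a cellular model for $\msSet$ --- so the closure properties yield $X \in \mathcal{W}$. Concretely, one builds the underlying simplicial set of $X$ through its skeletal filtration and then adjoins all the markings in one further pushout along a coproduct of markers $\Delta^m \hookrightarrow \mDelta{m}$.

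I do not anticipate a substantial obstacle: all of the homotopy-theoretic content has been concentrated into \cref{TQ-unmarked-simplex,TQ-marked-simplex}, and what remains is the bookkeeping of a routine cellular induction. The single point demanding care --- the closest thing to a difficulty --- is ensuring that every colimit appearing in the cellular presentation of $X$ is formed along a cofibration and that $TQ$ sends that colimit to a cofibration as well, so that the gluing lemma is genuinely applicable; this is precisely where it matters that $TQ$ preserves monomorphisms.
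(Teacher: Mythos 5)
Your proposal is correct and takes essentially the same route as the paper: both arguments reduce to \cref{TQ-unmarked-simplex,TQ-marked-simplex} by a cellular induction, using that $TQ$ is cocontinuous and preserves cofibrations, the gluing lemma (valid since all objects are cofibrant and the model structure is left proper), and the fact that a levelwise weak equivalence between (transfinite) sequences of cofibrations induces a weak equivalence on colimits. The only difference is organizational --- the paper runs the induction over the regular skeleta of $X$, attaching unmarked and marked $n$-simplices via $\partial\Delta^n \hookrightarrow \Delta^n$ and $\partial\Delta^n \hookrightarrow \mDelta{n}$ in one pushout per dimension, whereas you phrase it as a closure-class argument over a cellular model and add the markings in a final pushout of markers --- so the mathematical content is the same.
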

\begin{proof}
	First, we prove the special case where $X$ is $n$-skeletal (\emph{i.e.}\,the underlying simplicial set of $X$ is $n$-skeletal.)
	We proceed by induction on $n \ge -1$.
	
	The base case is easy since the cocontinuity of $TQ$ implies that $\rho_\varnothing$ is invertible.
	For the inductive step, fix $n \ge 0$ and assume that $\rho_{Y}$ is a weak equivalence for any $(n-1)$-skeletal $Y$.
	Let $X$ be an $n$-skeletal marked simplicial set and denote by $X'$ its regular $(n-1)$-skeleton.
	Then we may obtain $\rho_{X}$ by taking the pushout of each row in
	\[
	\begin{tikzcd}[row sep = large]
		TQX'
		\arrow [d, swap, "\rho_{X^{n-1}}"] &
		\bigl(\coprod TQ \partial\Delta^n\bigr)\amalg\bigl(\coprod TQ\partial\Delta^n\bigr)
		\arrow [d, "(\coprod\rho)\amalg(\coprod\rho)"]
		\arrow [r]
		\arrow [l] &
		\bigl(\coprod TQ\Delta^n\bigr)\amalg\bigl(\coprod TQ\mDelta n\bigr)
		\arrow [d, "(\coprod\rho)\amalg(\coprod\rho)"]\\
		X' &
		\bigl(\coprod \partial\Delta^n\bigr)\amalg\bigl(\coprod\partial\Delta^n\bigr)
		\arrow [r]
		\arrow [l]  &
		\bigl(\coprod\Delta^n\bigr)\amalg\bigl(\coprod\mDelta n\bigr)
	\end{tikzcd}
	\]
	where, in each of the right four objects, the first (respectively second) coproduct ranges over the unmarked (resp.\,non-degenerate marked) $n$-simplices in $X$.
	The left and the middle vertical maps are weak equivalences by the inductive hypothesis.
	The right vertical map is also a weak equivalence by \cref{TQ-unmarked-simplex,TQ-marked-simplex} (note that the weak equivalences are closed under coproducts since they may be factorised as a trivial cofibration followed by a retraction of a trivial cofibration).
	Since both of the right-pointing arrows are cofibrations, it follows that the induced map $\rho_X$ is again a weak equivalence.
	
	Now we prove the theorem for general $X$.
	For each $n$, write $X^n$ for the regular $n$-skeleton of $X$.
	Then $n \mapsto TQX^n$ and $n \mapsto X^n$ yield two sequences  $\omega^\op \to \msSet$ of cofibrations.
	Since $\rho$ provides a natural weak equivalence between these two sequences, the colimit $\rho_X \colon TQX \to X$ is still a weak equivalence.
	This completes the proof.
\end{proof}

\begin{cor}\label{Q-pres-refl-we}
The functor $Q \colon \mSet \to \mcSet$ preserves and reflects weak equivalences, where $\mSet$ is equipped with the model structure for ($n$-trivial, saturated) complicial sets, and $\mcSet$ is equipped with the model structure for the corresponding comical sets. 
\end{cor}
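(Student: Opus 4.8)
The plan is to derive \cref{Q-pres-refl-we} formally from three results already in hand: $Q$ is left Quillen (\cref{Q-Quillen}), $T$ is left Quillen (\cref{T-Quillen}), and $\rho \colon TQ \Rightarrow \id$ is a natural weak equivalence (\cref{TQ}). The essential simplification is that the cofibrations in both $\msSet$ and $\mcSet$ are precisely the monomorphisms, so \emph{every} object in either category is cofibrant; hence Ken Brown's lemma promotes each of $Q$ and $T$ from a functor preserving weak equivalences between cofibrant objects to one preserving all weak equivalences.

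First I would prove that $Q$ preserves weak equivalences. By \cref{Q-Quillen}, $Q$ is left Quillen from the model structure for ($n$-trivial, saturated) complicial sets to the model structure for the corresponding comical sets; as every marked simplicial set is cofibrant, Ken Brown's lemma immediately gives that $Q$ sends weak equivalences to weak equivalences.

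Next I would prove reflection. Suppose $f \colon X \to Y$ in $\msSet$ is such that $Qf$ is a weak equivalence in the corresponding comical model structure. Since $T$ is left Quillen (\cref{T-Quillen}) and every marked cubical set is cofibrant, $TQf$ is then a weak equivalence in the ($n$-trivial, saturated) complicial model structure. Naturality of $\rho$ provides the commuting square
\[
\begin{tikzcd}
TQX \arrow[r, "TQf"] \arrow[d, swap, "\rho_X"] & TQY \arrow[d, "\rho_Y"] \\
X \arrow[r, swap, "f"] & Y
\end{tikzcd}
\]
in which $\rho_X$ and $\rho_Y$ are weak equivalences by \cref{TQ} --- a priori in the model structure for complicial sets, but hence also in the model structure for ($n$-trivial, saturated) complicial sets, since the latter is obtained from the former by enlarging the set of pseudo-generating trivial cofibrations, so that every weak equivalence of the former is one of the latter. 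Two-out-of-three applied to the square then forces $f$ to be a weak equivalence, as desired.

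The only subtlety --- rather than a real obstacle --- is precisely this last transfer of \cref{TQ} from the plain complicial model structure (in which it is phrased, per the remark opening that subsection) to its $n$-trivial and/or saturated variants; beyond that, the argument is purely formal, resting on Ken Brown's lemma and two-out-of-three. One should also double-check that the invoked instances of \cref{Q-Quillen} and \cref{T-Quillen} are the matching ones: when $\msSet$ carries the ($n$-trivial, saturated) complicial model structure, $Q$ is left Quillen into the ($n$-trivial, saturated) comical model structure and $T$ is left Quillen back again, which is exactly how those theorems are stated.
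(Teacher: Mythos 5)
Your proof is correct and follows essentially the same route as the paper: preservation is immediate from \cref{Q-Quillen} (all objects being cofibrant), and reflection follows from the naturality square for $\rho$, the fact that $T$ preserves weak equivalences by \cref{T-Quillen}, and two-out-of-three. Your explicit handling of the transfer of \cref{TQ} to the $n$-trivial/saturated variants is a careful touch the paper leaves implicit, but it does not change the argument.
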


\begin{proof}
That $Q$ preserves weak equivalences is immediate from \cref{Q-Quillen}.

To see that $Q$ reflects weak equivalences, let $X \to Y$ be a map in $\mSet$, such that $QX \to QY$ is a weak equivalence. By \cref{TQ}, we have a commuting diagram:
	\[
	\begin{tikzcd}
		T Q X
		\arrow [r]
		\arrow [d]
          &
		 T Q Y
		\arrow [d] \\
		X 
		\arrow [r] &
		Y
	\end{tikzcd}
	\]
in which the two vertical maps are weak equivalences. Since $T$ preserves weak equivalences by \cref{T-Quillen}, $TQX \to TQY$ is a weak equivalence as well. Thus $X \to Y$ is a weak equivalence by two-out-of-three.
\end{proof}

\subsection{The Quillen equivalence}

In this final section of the paper, we show that $T \dashv U$ is a Quillen equivalence, following the strategy used in the unmarked case in \cite{doherty-kapulkin-lindsey-sattler}.
That is, we first show that this is true of $Q \dashv \int$ (cf.~\cref{Q-equivalence}), and then apply \cref{TQ} to deduce \cref{T-equivalence}.
Thus the bulk of section deals with the adjunction $Q \dashv \int$ by establishing its counit as an anodyne morphism.
Our key technical device is the notion of a coherent family of composites, which we adapt to the marked setting from \cite[Sec.~5]{doherty-kapulkin-lindsey-sattler}.

In \cite{doherty-kapulkin-lindsey-sattler}, coherent families of composites were constructed using certain subcomplexes $B^{m, n}$ of the standard cones $C^{m,n}$ and showing that the inclusions $B^{m,n} \hookrightarrow C^{m,n}$ are anodyne.
The analogous constructions in our setting require the notions of strongly comical cubes and strongly comical cones, which we now introduce.

\begin{Def}
For $n \geq 1$, $1 \leq i \leq n$, the \emph{strongly $(i,1)$-comical cube}, denoted $\overline{\Box}^n_{i,1}$, is the marked cubical set whose underlying cubical set is $\Box^n$, with a non-degenerate face marked if and only if its standard form does not contain any of the maps $\bd_{i-1,1}$ (if $i > 1$), $\bd_{i,0}$, or $\bd_{i,1}$. 

A cube $\Box^n \to X$ of a marked cubical set $X$ is \emph{strongly $(i,1)$-comical} if it factors through $\Box^n \to \overline{\Box}^n_{i,1}$.


For example, $\overline{\Box}^n_{1,1}$ is the marked $1$-cube $\mcube 1$.
We depict below the strongly comical cubes $\overline{\Box}^n_{i,1}$ for $n = 2,3$ and $1 \le i \le n$ (recall the drawing convention described in \cref{cSet-unmarked}); the non-degenerate marked cubes in these comical cubes are listed in \cref{strongly-comical-table}.
\[
\begin{tikzpicture}
	\filldraw
	(0,0) circle [radius = 1pt]
	(0,1) circle [radius = 1pt]
	(1,0) circle [radius = 1pt]
	(1,1) circle [radius = 1pt];
	
	\draw[->] (0.2,0) -- (0.8,0) node [midway, below] {$\sim$};
	\draw[->] (0.2,1) -- (0.8,1) node [midway, above] {$\sim$};
	\draw[->] (0,0.8) -- (0,0.2);
	\draw[->] (1,0.8) -- (1,0.2);
	
	\filldraw[shaded, rounded corners]
	(0.1,0.1) -- (0.9,0.1) -- (0.9,0.9) -- (0.1,0.9) -- cycle;
	
	\draw[->, double = shaded] (0.3,0.3) -- (0.7,0.7);
	
	\node at (0.5,-1) {$\overline{\Box}^2_{1,1}$};
\end{tikzpicture}
\hspace{5em}
\begin{tikzpicture}
	\filldraw
	(0,0) circle [radius = 1pt]
	(0,1) circle [radius = 1pt]
	(1,0) circle [radius = 1pt]
	(1,1) circle [radius = 1pt];
	
	\draw[->] (0.2,0) -- (0.8,0);
	\draw[->] (0.2,1) -- (0.8,1);
	\draw[->] (0,0.8) -- (0,0.2) node [midway, left] {$\sim$};
	\draw[->] (1,0.8) -- (1,0.2);
	
	\filldraw[shaded, rounded corners]
	(0.1,0.1) -- (0.9,0.1) -- (0.9,0.9) -- (0.1,0.9) -- cycle;
	
	\draw[->, double = shaded] (0.3,0.3) -- (0.7,0.7);
	
	\node at (0.5,-1) {$\overline{\Box}^2_{2,1}$};
\end{tikzpicture}
\]

\[
\begin{tikzpicture}
	\begin{scope}[blend group = multiply]
		\filldraw[shaded, rounded corners] (0.3,2.9) -- (1.9,2.9) -- (2.7,2.1) -- (1.1,2.1) -- cycle; 
		\filldraw[shaded, rounded corners] (0.3,0.9) -- (1.9,0.9) -- (2.7,0.1) -- (1.1,0.1) -- cycle; 
		\filldraw[shaded, rounded corners] (0.2,1.2) -- (1.8,1.2) -- (1.8,2.8) -- (0.2,2.8) -- cycle; 
		\filldraw[shaded, rounded corners] (1.2,0.2) -- (2.8,0.2) -- (2.8,1.8) -- (1.2,1.8) -- cycle; 
	\end{scope}
	
	\filldraw
	(1,0) circle [radius = 1pt]	
	(3,0) circle [radius = 1pt]	
	(1,2) circle [radius = 1pt]	
	(3,2) circle [radius = 1pt]	
	(0,1) circle [radius = 1pt]	
	(2,1) circle [radius = 1pt]	
	(0,3) circle [radius = 1pt]	
	(2,3) circle [radius = 1pt];
	
	\draw[->] (1.2,0) -- (2.8,0) node [midway, below] {$\sim$};
	\draw[->] (1.2,2) -- (2.8,2) node [near start, above] {$\sim$};
	\draw[->] (0.2,1) -- (1.8,1) node [near start, above] {$\sim$};
	\draw[->] (0.2,3) -- (1.8,3) node [midway, above] {$\sim$};
	
	\draw[->] (0,2.8) -- (0,1.2);
	\draw[->] (2,2.8) -- (2,1.2);
	\draw[->] (1,1.8) -- (1,0.2);
	\draw[->] (3,1.8) -- (3,0.2);
	
	\draw[->] (0.1,0.9) -- (0.9,0.1);
	\draw[->] (2.1,0.9) -- (2.9,0.1);
	\draw[->] (0.1,2.9) -- (0.9,2.1);
	\draw[->] (2.1,2.9) -- (2.9,2.1);
	
	\node at (1.5,-1) {$\overline{\Box}^3_{1,1}$};
\end{tikzpicture}
\hspace{5em}
\begin{tikzpicture}
	\begin{scope}[blend group = multiply]
		\filldraw[shaded, rounded corners] (0.2,1.2) -- (1.8,1.2) -- (1.8,2.8) -- (0.2,2.8) -- cycle; 
		\filldraw[shaded, rounded corners] (1.2,0.2) -- (2.8,0.2) -- (2.8,1.8) -- (1.2,1.8) -- cycle; 
		\filldraw[shaded, rounded corners] (0.1,1.1) -- (0.9,0.3) -- (0.9,1.9) -- (0.1,2.7) -- cycle; 
	\end{scope}
	
	\filldraw
	(1,0) circle [radius = 1pt]	
	(3,0) circle [radius = 1pt]	
	(1,2) circle [radius = 1pt]	
	(3,2) circle [radius = 1pt]	
	(0,1) circle [radius = 1pt]	
	(2,1) circle [radius = 1pt]	
	(0,3) circle [radius = 1pt]	
	(2,3) circle [radius = 1pt];
	
	\draw[->] (1.2,0) -- (2.8,0);
	\draw[->] (1.2,2) -- (2.8,2);
	\draw[->] (0.2,1) -- (1.8,1);
	\draw[->] (0.2,3) -- (1.8,3);
	
	\draw[->] (0,2.8) -- (0,1.2) node [midway, left] {$\sim$};
	\draw[->] (2,2.8) -- (2,1.2);
	\draw[->] (1,1.8) -- (1,0.2) node [near start, left] {$\sim$};
	\draw[->] (3,1.8) -- (3,0.2);
	
	\draw[->] (0.1,0.9) -- (0.9,0.1);
	\draw[->] (2.1,0.9) -- (2.9,0.1);
	\draw[->] (0.1,2.9) -- (0.9,2.1);
	\draw[->] (2.1,2.9) -- (2.9,2.1);
	
	\node at (1.5,-1) {$\overline{\Box}^3_{2,1}$};
\end{tikzpicture}
\hspace{5em}
\begin{tikzpicture}
	\begin{scope}[blend group = multiply]
		\filldraw[shaded, rounded corners] (0.3,2.9) -- (1.9,2.9) -- (2.7,2.1) -- (1.1,2.1) -- cycle; 
		\filldraw[shaded, rounded corners] (0.1,1.1) -- (0.9,0.3) -- (0.9,1.9) -- (0.1,2.7) -- cycle; 
		\filldraw[shaded, rounded corners] (2.1,1.1) -- (2.9,0.3) -- (2.9,1.9) -- (2.1,2.7) -- cycle; 
	\end{scope}
	
	\filldraw
	(1,0) circle [radius = 1pt]	
	(3,0) circle [radius = 1pt]	
	(1,2) circle [radius = 1pt]	
	(3,2) circle [radius = 1pt]	
	(0,1) circle [radius = 1pt]	
	(2,1) circle [radius = 1pt]	
	(0,3) circle [radius = 1pt]	
	(2,3) circle [radius = 1pt];
	
	\draw[->] (1.2,0) -- (2.8,0);
	\draw[->] (1.2,2) -- (2.8,2);
	\draw[->] (0.2,1) -- (1.8,1);
	\draw[->] (0.2,3) -- (1.8,3);
	
	\draw[->] (0,2.8) -- (0,1.2);
	\draw[->] (2,2.8) -- (2,1.2);
	\draw[->] (1,1.8) -- (1,0.2);
	\draw[->] (3,1.8) -- (3,0.2);
	
	\draw[->] (0.1,0.9) -- (0.9,0.1);
	\draw[->] (2.1,0.9) -- (2.9,0.1);
	\draw[->] (0.1,2.9) -- (0.9,2.1) node [midway, below] {$\sim$};
	\draw[->] (2.1,2.9) -- (2.9,2.1) node [midway, right] {$\sim$};
	
	\node at (1.5,-1) {$\overline{\Box}^3_{3,1}$};
\end{tikzpicture}
\]

\begin{table}
	\begin{center}
		\begin{tikzpicture}
			\matrix (magic) [matrix of nodes,nodes={minimum width=3cm,minimum height=1cm,draw,very thin},draw,inner sep=0]
			{
				& marked $3$-cubes & marked $2$-cubes  & marked $1$-cubes \\
				$\overline{\Box}^2_{1,1}$ & - & $\id$ & $\bd_{2,0},~\bd_{2,1}$ \\
				$\overline{\Box}^2_{2,1}$ & - & $\id$ & $\bd_{1,0}$ \\
				$\overline{\Box}^3_{1,1}$ & $\id$ & $\begin{gathered}\bd_{2,0},~\bd_{2,1},\\\bd_{3,0},~\bd_{3,1}\end{gathered}$ & $\begin{gathered}\bd_{3,0}\bd_{2,0},~\bd_{3,0}\bd_{2,1},\\\bd_{3,1}\bd_{2,0},~\bd_{3,1}\bd_{2,1}\end{gathered}$\\
				$\overline{\Box}^3_{2,1}$ & $\id$ & $\bd_{1,0},~\bd_{3,0},~\bd_{3,1}$ & $\bd_{3,0}\bd_{1,0},~\bd_{3,1}\bd_{1,0}$ \\
				$\overline{\Box}^3_{3,1}$ & $\id$ & $\bd_{1,0},~\bd_{1,1},~\bd_{2,0}$ &  $\bd_{2,0}\bd_{1,0},~\bd_{2,0}\bd_{1,1}$ \\
			};
		\end{tikzpicture}
		\caption{Marked cubes in $\overline{\Box}^n_{i,1}$}
		\label{strongly-comical-table}
	\end{center}
\end{table}

For $i + 1 \leq j \leq n + 1$, we let $\Gamma^n_{i,j}$ denote the regular subcomplex of $\stbox^n_{i,1}$ consisting of all negative faces, as well as the positive faces $(k,1)$ for which $1 \leq k \leq i-1$ or $j \leq k \leq n$.
\end{Def}

Note that in the case $j = n + 1$,  the only positive faces contained in $\Gamma^n_{i,j}$ are $(1,1)$ through $(i-1,1)$.

\begin{lem}\label{strong-comical-degens}
For $n \geq 1$, $1 \leq i \leq n$, let $x$ be an $n$-cube in a cubical set $X$.

\begin{enumerate}
\item\label{strong-comical-sigma} If $x$ is strongly $(i,1)$-comical, then so is $x \sigma_j$ for $j \geq i$.
\item\label{strong-comical-gamma-low} If $x$ is strongly $(i,1)$-comical, then $x \gamma_{j,1}$ is strongly $(i+1,1)$-comical for $j \leq i - 1$.
\item\label{strong-comical-gamma-high} If $x$ is strongly $(i,1)$-comical, then so is $x \gamma_{j, \varepsilon}$ for $j \geq i, \varepsilon \in \{0,1\}$.
\item\label{strong-comical-gamma-i} The cube $x \gamma_{i,1}$ is strongly $(i+1,1)$-comical.  
\end{enumerate}
\end{lem}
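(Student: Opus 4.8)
The plan is to prove all four statements by a single mechanism. In each case the cube in question has the form $y = x\mu$ for one degeneracy or connection $\mu$, so for a face map $\delta$ of $\Box^{n+1}$ we have $y\delta = x(\mu\delta)$, and what must be shown is: whenever the normal form of $\delta$ (in the sense of \cref{cube-standard-form}) avoids the face maps forbidden by the definition of the relevant strongly comical cube, the composite $\mu\delta$ is either degenerate in $\Box^n$ — so that $x(\mu\delta)$ is a degenerate cube — or a non-degenerate face $\delta'$ of $\Box^n$ whose normal form avoids the maps forbidden by $\stbox^n_{i,1}$ — so that $x(\mu\delta) = x\delta'$ is handled by the hypothesis that $x$ is strongly $(i,1)$-comical. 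To carry this out one commutes $\mu$ leftward past the face maps in the normal form of $\delta$, one at a time from the highest index downward, using the cubical identities. Either $\mu$ is eventually absorbed against some $\partial_{a,\varepsilon}$ (via $\sigma_j\partial_{a,\varepsilon}=\id$ or $\gamma_{j,\varepsilon'}\partial_{a,\varepsilon}=\id$), leaving the shorter face $\delta'$, or $\mu$ (possibly after being converted to a degeneracy through $\gamma_{j,\varepsilon'}\partial_{a,\varepsilon}=\partial_{a,\varepsilon}\sigma_a$) survives to the innermost position, so $\mu\delta$ lies in the image of a degeneracy or connection and is degenerate.

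The content of the argument is the index bookkeeping needed, in the ``absorbed'' case, to see that $\delta'$ avoids the forbidden maps of $\stbox^n_{i,1}$. For the $\sigma_j$ statement the target index equals the source index $i$, and the hypothesis $j\ge i$ means that $\sigma_j$ affects only coordinates $\ge i$: if $j=i$ then $\delta$ being marked in $\stbox^{n+1}_{i,1}$ forces $\delta$ not to fix coordinate $i$, so $\sigma_i\delta$ is degenerate; if $j>i$ then $\delta'$ agrees with $\delta$ on all coordinates $\le i$, hence still avoids $\bd_{i-1,1}$, $\bd_{i,0}$, $\bd_{i,1}$. The $\gamma_{j,\varepsilon}$ statement with $j\ge i$ is handled the same way, since $\gamma_{j,\varepsilon}$ either spawns a connection (the degenerate case) or is absorbed exactly as $\sigma_j$ is. For the $\gamma_{j,1}$ statement with $j\le i-1$ the index shifts: the relevant instances of $\gamma_{j,\varepsilon'}\partial_{a,\varepsilon}=\partial_{a-1,\varepsilon}\gamma_{j,\varepsilon'}$ carry the forbidden set $\{\bd_{i,1},\bd_{i+1,0},\bd_{i+1,1}\}$ of $\stbox^{n+1}_{i+1,1}$ onto $\{\bd_{i-1,1},\bd_{i,0},\bd_{i,1}\}$ of $\stbox^n_{i,1}$ under $i+1\mapsto i$, and the one remaining subtlety (when $j=i-1$ and $\delta$ contains $\bd_{i,0}$) is resolved by noting that $\gamma_{i-1,1}\partial_{i,0}=\partial_{i,0}\sigma_i$ already makes $\gamma_{j,1}\delta$ degenerate.

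The last statement, about $x\gamma_{i,1}$, is the one that assumes nothing about $x$, and accordingly it is proved by showing that the ``absorbed into a non-degenerate face'' alternative never occurs: every $\delta$ marked in $\stbox^{n+1}_{i+1,1}$ is sent by $\gamma_{i,1}$ to a degenerate cube. Such a $\delta$ contains neither $\bd_{i+1,0}$ nor $\bd_{i+1,1}$, so when $\gamma_{i,1}$ is commuted leftward it never meets a $\partial_{i+1,\varepsilon}$; the first face of index $\le i$ it meets cannot be $\bd_{i,1}$ (also forbidden), so it is either $\bd_{i,0}$, against which $\gamma_{i,1}\partial_{i,0}=\partial_{i,0}\sigma_i$ produces a degeneracy, or a face of index $<i$, against which $\gamma_{i,1}$ commutes via $\gamma_{j,1}\partial_{a,1}=\partial_{a,1}\gamma_{j-1,1}$, its index dropping by at most $i-1$ and so remaining $\ge 1$; in that case (and when no face of index $\le i$ occurs at all) $\gamma_{i,1}$ survives to the innermost position and $\gamma_{i,1}\delta$ lies in the image of a connection. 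Either way $x(\gamma_{i,1}\delta)$ is degenerate.

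I expect the main obstacle to be the index-shift bookkeeping in the $\sigma_j$ and $\gamma_{j,\varepsilon}$ ($j\ge i$) statements, since commuting the operator past the high-index faces renumbers the later ones, and the critical indices $i-1$, $i$, $i+1$ must be tracked against the partially rewritten word rather than the original normal form. The cleanest organization is probably to isolate a single auxiliary lemma — for $\mu$ one of the listed operators and $\delta$ a non-degenerate face of $\Box^m$ avoiding the forbidden maps of the appropriate strongly comical cube, $\mu\delta$ is either degenerate or a non-degenerate face of $\Box^{m-1}$ avoiding the forbidden maps of $\stbox^{m-1}_{i',1}$ for the shifted index $i'$ — and to prove it by induction on the length of the normal form of $\delta$, after which all four parts follow at once.
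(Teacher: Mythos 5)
Your proposal is correct and follows essentially the same route as the paper: write a face avoiding the forbidden maps in normal form, commute the degeneracy or connection through it using the cubical identities, and conclude that the result is either a face of $x$ whose normal form still avoids $\bd_{i-1,1}$, $\bd_{i,0}$, $\bd_{i,1}$ (hence marked by the hypothesis on $x$) or a degenerate cube (hence marked), with part (4) always landing in the degenerate case. The paper writes out (1) and (4) explicitly and declares (2)--(3) similar, so your packaging of all four parts into one auxiliary lemma proved by induction on the length of the normal form is only a cosmetic difference.
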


\begin{proof}
We prove \cref{strong-comical-sigma,strong-comical-gamma-i}; the proofs of \cref{strong-comical-gamma-low,strong-comical-gamma-high} are similar to these.

For \cref{strong-comical-sigma}, consider a $x \sigma_i \delta$, with $\delta$ written in standard form as follows:

$$
x\sigma_i \bd_{a_1,\varepsilon_1} \ldots \bd_{a_p,\varepsilon_p} \bd_{b_1,\varepsilon'_1}\ldots\bd_{b_q,\varepsilon'_q}
$$
where $a_p \geq i + 1$, $b_1 \leq i - 1$, and if $b_1 = i - 1$ then $\varepsilon_1 = 0$. First, suppose that $a_k = j$ for some $1 \leq k \leq p$; then  we can rewrite this expression into standard form as:

$$
x \bd_{a_1-1,\varepsilon_1} \ldots \bd_{a_{k-1}-1,\varepsilon_{k-1}}\bd_{a_{k+1},\varepsilon_{k+1}}\ldots \bd_{a_p,\varepsilon_p} \bd_{b_1,\varepsilon'_1}\ldots\bd_{b_q,\varepsilon'_q}
$$

By assumption, $a_{l} > a_{k} \geq a_{p} \geq i + 1$ for all $l < k$, so the indices $a_{l} - 1$ are still greater than or equal to $i + 1$, while all other maps in this standard form are unchanged. Thus this face of $x$ is marked by the assumption that $x$ is strongly $(i,1)$-comical.

On the other hand, suppose that no $a_{k}$ is equal to $j$; it must also be true that no $b_k$ is equal to $j$ as $b_k \leq i - 1 < j$ for all $k$. Then let $l$ be maximal such that $a_l > j$; we can rewrite this expression into standard form as:

$$
x\bd_{a_1,\varepsilon_1} \ldots \bd_{a_{l}-1,\varepsilon_{l}}\bd_{a_{l+1},\varepsilon_{l+1}}\ldots \bd_{a_p,\varepsilon_p} \bd_{b_1,\varepsilon'_1}\ldots\bd_{b_q,\varepsilon'_q}\sigma_{i-p+l-q} 
$$

This is degenerate, hence marked.

For \cref{strong-comical-gamma-i}, consider a face of $x \gamma_{i,1}$ written in standard form as

$$
x\gamma_{i,1} \bd_{a_1,\varepsilon_1} \ldots \bd_{a_p,\varepsilon_p} \bd_{b_1,\varepsilon'_1}\ldots\bd_{b_q,\varepsilon'_q}
$$
where now $a_p \geq i + 2$, $b_1 \leq i$, and if $b_1 = i$ then $\varepsilon'_1 = 0$. We can rewrite this expression as:

$$
x \bd_{a_1-1,\varepsilon_1} \ldots \bd_{a_p-1,\varepsilon_p} \gamma_{i,1} \bd_{b_1,\varepsilon'_1}\ldots\bd_{b_q,\varepsilon'_q}
$$

We consider two possible cases based on the value of $b_1$. If $b_1 = i$ then $\varepsilon'_1 = 0$, and we can rewrite the expression as:

$$
x \bd_{a_1-1,\varepsilon_1} \ldots \bd_{a_p-1,\varepsilon_p}  \bd_{b_1,\varepsilon'_1} \sigma_{i}\bd_{b_2,\varepsilon'_2}\ldots\bd_{b_q,\varepsilon'_q} =  x \bd_{a_1-1,\varepsilon_1} \ldots \bd_{a_p-1,\varepsilon_p}  \bd_{b_1,\varepsilon'_1} \bd_{b_2,\varepsilon'_2}\ldots\bd_{b_q,\varepsilon'_q}\sigma_{i-q+1}
$$

On the other hand, if $b_1 < i$, then the expression becomes:

$$
x \bd_{a_1-1,\varepsilon_1} \ldots \bd_{a_p-1,\varepsilon_p}  \bd_{b_1,\varepsilon'_1} \ldots\bd_{b_q,\varepsilon'_q}\gamma_{i-q} 
$$

Either way this cube is degenerate, hence marked.
\end{proof}

\begin{lem}\label{strong-comical-iso}
For $n, i$ as above and $i < k \leq n$, the $(k,1)$-face of $\stbox^n_{i,1}$ is isomorphic to $\stbox^{n-1}_{i,1}$.
\end{lem}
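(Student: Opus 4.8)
The plan is to realise the $(k,1)$-face of $\stbox^n_{i,1}$ as the image of the face inclusion $\partial_{k,1}\colon\Box^{n-1}\hookrightarrow\Box^n$, and to check that this inclusion carries the marking of $\stbox^{n-1}_{i,1}$ bijectively onto the marking inherited from $\stbox^n_{i,1}$. Since $\partial_{k,1}$ is already a monomorphism of cubical sets whose image is the $(k,1)$-face, it suffices to show that it both preserves markings (hence defines a map $\stbox^{n-1}_{i,1}\to\stbox^n_{i,1}$) and reflects them (hence is regular); the stated isomorphism follows at once. As degenerate cubes are marked on either side, only the non-degenerate faces need to be considered.

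The key computation is the effect of $\partial_{k,1}$ on normal forms. Given a non-degenerate face $\delta\colon\Box^m\to\Box^{n-1}$ with normal form $\partial_{a_1,\varepsilon_1}\cdots\partial_{a_p,\varepsilon_p}$, $a_1>\dots>a_p$, let $s$ be the largest index with $a_s\geq k$ (so $a_{s+1}<k$, with either the prefix or the suffix possibly empty). Applying the cubical identity $\partial_{j,\varepsilon'}\partial_{l,\varepsilon}=\partial_{l+1,\varepsilon}\partial_{j,\varepsilon'}$ (valid for $j\leq l$) repeatedly to move $\partial_{k,1}$ past $\partial_{a_1,\varepsilon_1},\dots,\partial_{a_s,\varepsilon_s}$, each of which raises the corresponding index by one, I would obtain
\[
\partial_{k,1}\,\delta=\partial_{a_1+1,\varepsilon_1}\cdots\partial_{a_s+1,\varepsilon_s}\,\partial_{k,1}\,\partial_{a_{s+1},\varepsilon_{s+1}}\cdots\partial_{a_p,\varepsilon_p},
\]
which is already in normal form since $a_s+1>k>a_{s+1}$.

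With this description the marking comparison becomes transparent. Since $k>i$, the map $\partial_{k,1}$ and all the shifted maps $\partial_{a_t+1,\varepsilon_t}$ for $t\leq s$ have index $\geq k>i$, hence are distinct from each of the forbidden maps $\partial_{i-1,1}$ (when $i>1$), $\partial_{i,0}$ and $\partial_{i,1}$; and those forbidden maps all have index $\leq i<k$, so in the normal form above they can only occur inside the terminal segment $\partial_{a_{s+1},\varepsilon_{s+1}}\cdots\partial_{a_p,\varepsilon_p}$, which is identical (indices and signs) to the terminal segment of $\delta$ consisting of the factors with index $<k$. Hence the normal form of $\delta$ avoids all the forbidden maps if and only if that of $\partial_{k,1}\,\delta$ does, i.e.\ $\delta$ is marked in $\stbox^{n-1}_{i,1}$ if and only if $\partial_{k,1}\,\delta$ is marked in $\stbox^n_{i,1}$. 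This yields both the preservation and the reflection of markings, completing the proof.

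The only obstacle I anticipate is routine bookkeeping in the normal-form manipulation: keeping track of the cutoff $s$ and of the index shifts, and checking the degenerate edge cases (for instance $i=1$, so that $\partial_{i-1,1}$ is vacuously absent; or $s=0$; or $s=p$; or $k=n$). None of these present any genuine difficulty.
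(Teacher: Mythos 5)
Your proof is correct and follows essentially the same route as the paper's: realise the $(k,1)$-face via the inclusion $\partial_{k,1}$, commute $\partial_{k,1}$ to its normal-form position inside $\partial_{k,1}\delta$ by bumping the indices of the face maps with subscript $\ge k$ by one, and observe that since $k>i$ the bumped maps and $\partial_{k,1}$ itself are never among $\partial_{i-1,1},\partial_{i,0},\partial_{i,1}$, so the forbidden maps occur in the normal form of $\partial_{k,1}\delta$ exactly when they occur in that of $\delta$. Your write-up is a little more explicit than the paper's (naming the cutoff index, spelling out preservation and reflection of markings), but the core computation is identical.
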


\begin{proof}
It is clear that the underlying cubical set of $\bd_{k,1}$ is $\Box^{n-1}$, so it remains to be verified that the marked faces of $\bd_{k,1}$ are precisely those which are marked in $\stbox^{n-1}_{i,1}$.

To see this, consider a face $\bd_{k,1} \delta$; write the standard form of $\delta$ as $\bd_{a_1,\varepsilon_1}\ldots\bd_{a_p,\varepsilon_p}\ldots\bd_{a_{q},\varepsilon_{q}}$, where $p$ is maximal such that $a_{p} \geq k$. Then we can rearrange $\bd_{k} \delta$ into standard form as:

$$
\bd_{a_1 + 1, \varepsilon_1}\ldots\bd_{a_p + 1,\varepsilon_p}\bd_{k,1}\bd_{a_{p+1},\varepsilon_{p+1}}\ldots\bd_{a_q,\varepsilon_q}
$$

This cube is marked if and only if this standard form does not contain any of the maps $\bd_{i-1,1}, \bd_{i,0},$ or $\bd_{i,1}$. As $k > i$ by assumption, this holds if and only if none of these maps appear in the standard form of $\delta$.
\end{proof}

\begin{lem}\label{strong-comical-anodyne}
For $n, i, j$ as above, the inclusion $\Gamma^n_{i,j} \hookrightarrow \stbox^n_{i,1}$ is anodyne.
\end{lem}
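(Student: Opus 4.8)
The plan is to induct on $n$. For fixed $n$, $i$, $j$ I would consider the filtration
\[
\Gamma^n_{i,n+1} \subseteq \Gamma^n_{i,n} \subseteq \dots \subseteq \Gamma^n_{i,i+1} \subseteq \stbox^n_{i,1},
\]
in which $\Gamma^n_{i,k+1} \subseteq \Gamma^n_{i,k}$ adds precisely the positive face $(k,1)$, while the final inclusion $\Gamma^n_{i,i+1} \subseteq \stbox^n_{i,1}$ adds the face $(i,1)$ together with the top $n$-cube. The inclusion in the statement is a composite of consecutive terms of this filtration (those from $\Gamma^n_{i,j}\subseteq\Gamma^n_{i,j-1}$ through $\Gamma^n_{i,i+1}\subseteq\stbox^n_{i,1}$), so since anodyne maps are closed under composition it suffices to treat these two kinds of inclusion. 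For $n=1$ one has necessarily $i=1$, $j=2$, and only the final inclusion occurs, so the induction can start there.

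For the final inclusion, I would observe that $\Gamma^n_{i,i+1}$ and the comical open box $\sqcap^n_{i,1}$ share the same underlying cubical set (all faces of $\Box^n$ except $(i,1)$). Moreover any face marked in $\sqcap^n_{i,1}$ --- equivalently, one whose normal form avoids all the strings forbidden in the comical cube $\Box^n_{i,1}$ --- in particular avoids $\bd_{i-1,1}$, $\bd_{i,0}$, and $\bd_{i,1}$, hence is marked in $\stbox^n_{i,1}$ and so in $\Gamma^n_{i,i+1}$; this yields an entire map $\sqcap^n_{i,1} \to \Gamma^n_{i,i+1}$. A direct inspection of the markings of $\Box^n$, comparing the defining conditions of $\Box^n_{i,1}$ and $\stbox^n_{i,1}$, then shows that the pushout of the comical open box inclusion $\sqcap^n_{i,1} \hookrightarrow \Box^n_{i,1}$ along this entire map is exactly $\Gamma^n_{i,i+1} \hookrightarrow \stbox^n_{i,1}$. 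Being a pushout of a comical open box inclusion, this inclusion is anodyne. (This argument does not invoke the inductive hypothesis, and in the base case it is the whole statement.)

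For an intermediate inclusion $\Gamma^n_{i,k+1} \hookrightarrow \Gamma^n_{i,k}$ with $i+1 \le k \le n$, I would use the inductive hypothesis in dimension $n-1$. By \cref{strong-comical-iso}, the $(k,1)$-face of $\stbox^n_{i,1}$ (legitimate since $i<k\le n$), with its induced marking, is isomorphic to $\stbox^{n-1}_{i,1}$. Working out the normal forms of the relevant codimension-two faces of $\Box^n$, one checks that a codimension-one face $(l,\nu)$ of this $(k,1)$-face lies in $\Gamma^n_{i,k+1}$ exactly when $\nu=0$, or $\nu=1$ with $l\le i-1$ or $l\ge k$ --- which is precisely the condition defining $\Gamma^{n-1}_{i,k}$. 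Hence $\Gamma^n_{i,k}$ is the pushout of $\stbox^{n-1}_{i,1} \leftarrow \Gamma^{n-1}_{i,k} \hookrightarrow \Gamma^n_{i,k+1}$, and since its left leg $\Gamma^{n-1}_{i,k} \hookrightarrow \stbox^{n-1}_{i,1}$ is anodyne by the inductive hypothesis (note $i+1\le k\le (n-1)+1$), so is $\Gamma^n_{i,k+1} \hookrightarrow \Gamma^n_{i,k}$. Composing all the pieces gives the lemma.

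The only real difficulty is bookkeeping rather than conceptual: identifying the intersection in the previous paragraph with the correct $\Gamma^{n-1}_{i,k}$ requires care with cubical normal forms and getting the second index right, and one must verify that the two pushouts above reproduce $\Gamma^n_{i,k}$ and $\stbox^n_{i,1}$ on the nose --- including all induced markings --- for which \cref{strong-comical-iso}, together with \cref{strong-comical-degens} for tracking the degenerate cubes, does the work. Once these identifications are in place, the conclusion follows purely from closure of anodyne maps under pushout and composition.
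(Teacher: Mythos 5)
Your proposal is correct and follows essentially the same route as the paper: induct on $n$, peel off the positive faces $(k,1)$ for $i+1 \le k \le j-1$ one at a time, identifying each step $\Gamma^n_{i,k+1}\hookrightarrow\Gamma^n_{i,k}$ as a pushout of $\Gamma^{n-1}_{i,k}\hookrightarrow\stbox^{n-1}_{i,1}$ via \cref{strong-comical-iso} and the inductive hypothesis, and then realize the last step $\Gamma^n_{i,i+1}\hookrightarrow\stbox^n_{i,1}$ as a pushout of the $(i,1)$-comical open box inclusion along the entire comparison map (which exists because every cube marked in $\Box^n_{i,1}$ is marked in $\stbox^n_{i,1}$). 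The only cosmetic difference is your appeal to \cref{strong-comical-degens}, which is not actually needed for this argument.
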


\begin{proof}
We proceed by induction on $n$. For $n = 1$, the only case to consider is the inclusion $\Gamma^1_{1,2} \hookrightarrow \stbox^n_{1,1}$, but this is isomorphic to the $(1,1)$-comical open box filling in dimension 1.

Now consider $n \geq 2$, and suppose the statement holds for $n - 1$. We first show that for $i + 2 \leq j \leq n + 1$, the inclusion $\Gamma^n_{i,j} \hookrightarrow \Gamma^n_{i,j-1}$ is anodyne. To see this, we consider the intersection of $\bd_{j-1,1}$ with $\Gamma^n_{i,j}$, i.e. the intersections of $\bd_{j-1,1}$ with each of the faces contained in $\Gamma^n_{i,j}$. 

\begin{itemize}
\item For $1 \leq k \leq j - 2$, the intersection of $\bd_{j-1,1}$ with $\bd_{k,0}$ is $\bd_{j-1,1}\bd_{k,0}$.
\item The intersection of $\bd_{j-1,1}$ with $\bd_{j-1,0}$ is empty.
\item For $j \leq k \leq  n$, the intersection of $\bd_{j-1,1}$ with $\bd_{k,0}$ is $\bd_{k,0}\bd_{j-1,1} = \bd_{j-1,1}\bd_{k-1,0}$.
\item For $1 \leq k \leq i - 1$, the intersection of $\bd_{j-1,1}$ with $\bd_{k,1}$ is $\bd_{j-1,1}\bd_{k,1}$.
\item For $j \leq k \leq n$, the intersection of $\bd_{j-1,1}$ with $\bd_{k,1}$ is $\bd_{k,1}\bd_{j-1,1} = \bd_{j-1,1}\bd_{k-1,1}$.
\end{itemize}

Thus we see that the intersection of $\bd_{j-1,1}$ with $\Gamma^n_{i,j}$ consists of all negative faces of $\bd_{j-1,1}$, together with its positive faces $\bd_{k,1}$ for $1 \leq k \leq i - 1$ and $j-1 \leq k \leq n - 1$. By \cref{strong-comical-iso}, this implies that $\bd_{j-1,1} \cap \Gamma^n_{i,j}$ is isomorphic to $\Gamma^{n-1}_{i,j-1}$, and that the inclusion $\Gamma^n_{i,j} \hookrightarrow \Gamma^n_{i,j-1}$ is the pushout of $\Gamma^{n-1}_{i,j-1} \hookrightarrow \stbox^{n-1}_{i,1}$ along the inclusion $\Gamma^{n-1}_{i,j-1} \cong \bd_{j-1,1} \cup \Gamma^n_{i,j} \hookrightarrow \Gamma^n_{i,j}$. Thus this inclusion is anodyne by the induction hypothesis.

To prove the desired statement for $n$, it thus suffices to show that $\Gamma^{n}_{i,i+1} \hookrightarrow \stbox^n_{i,1}$ is anodyne. But since every marked cube of $\Box^n_{i,1}$ is marked in $\stbox^n_{i,1}$, this map is a pushout of the $(i,1)$-comical open box inclusion.
\end{proof}

\begin{Def}
For $m \geq 1, n \geq 0$, $B^{m,n}$ is the subcomplex of $C^{m,n}$ consisting of the images of the faces $\partial_{1,1}$ through $\partial_{n+1,1}$, as well as all all faces $\partial_{i,0}$, under the quotient map $\Box^{m+n} \to C^{m,n}$.

The \emph{strongly comical $(m,n)$-cone} $\stc^{m,n}$ is given by the following pushout:
	\[
	\begin{tikzcd}
		\Box^{m+n}
		\arrow [r]
		\arrow [d]
		\pushout
          &
		 C^{m,n}
		\arrow [d] \\
		\stbox^{m+n}_{n+1,1}
		\arrow [r] &
		\stc^{m,n}
	\end{tikzcd}
	\]
In other words, $\stc^{m,n}$ is obtained from $C^{m,n}$ by marking the images under the quotient map $\Box^{m+n} \to C^{m,n}$ of the marked cubes of $\stbox^{m+n+1}_{n+1,1}$. 

An $(m,n)$-cone $x \colon C^{m,n} \to X$ in a marked cubical set $X$ is \emph{strongly comical} if it factors through $C^{m,n} \to \stc^{m,n}$. Note that this is equivalent to the assertion that the composite $\Box^{m+n} \to C^{m+n} \xrightarrow{x} X$ factors through $\overline{\Box}^{m+n}_{n+1}$. Thus a strongly comical $(m,n)$-cone in a cubical set $X$ is an $(m,n)$-cone in $X$ which is strongly $(n+1,1)$-comical when considered as an $(m+n)$-cube of $X$.

We define $\stb^{m,n}$ to be the regular subcomplex of $\stc^{m,n}$ with underlying cubical set $B^{m,n}$.
\end{Def}

\begin{lem}\label{b-c-anodyne}
For all $m, n \geq 0$, the inclusion $\stb^{m,n} \to \stc^{m,n}$ is anodyne.
\end{lem}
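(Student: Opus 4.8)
The plan is to lift the proof of the unmarked statement from \cite{doherty-kapulkin-lindsey-sattler} --- that $B^{m,n}\hookrightarrow C^{m,n}$ is anodyne --- to the marked setting, the one genuinely new ingredient being the anodyne inclusions $\Gamma^N_{i,j}\hookrightarrow\stbox^N_{i,1}$ of \cref{strong-comical-anodyne}. Since $\stb^{m,n}\to\stc^{m,n}$ has the unmarked $B^{m,n}\hookrightarrow C^{m,n}$ as its underlying cubical map, the whole content is to track the markings: we must check that when $C^{m,n}$ is assembled from $B^{m,n}$ by the sequence of cube-adjunctions used in that reference, each such adjunction, carrying the markings of $\stc^{m,n}$, is a pushout of a comical open box inclusion, a comical marking extension, a strongly comical open box inclusion $\Gamma^N_{i,j}\hookrightarrow\stbox^N_{i,1}$, or a lower-dimensional instance of the lemma itself.

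First I would work out the effect of the cone quotient $\Box^{m+n}\to C^{m,n}$ on markings. Using \cref{cone-desc-faces} to identify which faces become degenerate, \cref{ConeDesc} for the remaining identifications, and \cref{strong-comical-iso}, I would show: the image of $\stbox^{m+n}_{n+1,1}$ in $C^{m,n}$ is exactly $\stc^{m,n}$; for $k\ge n+2$ the face $\partial_{k,1}$ of $\stc^{m,n}$ is a copy of the strongly comical cone $\stc^{m-1,n}$, meeting the rest of $\stc^{m,n}$ in $\stb^{m-1,n}$; and the face $\partial_{n+1,1}$, together with the interior cubes, assembles into a strongly comical cube in direction $n+1$ matching up with $\stbox^{m+n}_{n+1,1}$ and its subcomplex $\Gamma^{m+n}_{n+1,n+2}$. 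This is the step that makes essential use of the strongly comical machinery (\cref{strong-comical-degens,strong-comical-iso}), which was prepared precisely for it.

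With these identifications in hand, the argument runs by induction on $n$ with an inner induction on $m$: the faces $\partial_{n+2,1},\dots,\partial_{m+n,1}$ absent from $\stb^{m,n}$ are adjoined one at a time, each being a pushout of $\stb^{m-1,n}\hookrightarrow\stc^{m-1,n}$ and hence anodyne by induction; the remaining interior is then filled by a pushout of $\Gamma^{m+n}_{n+1,n+2}\hookrightarrow\stbox^{m+n}_{n+1,1}$ (anodyne by \cref{strong-comical-anodyne}), together with a comical marking extension to mark the top cube where needed. The base cases ($n=0$, or small $m$, with the degenerate situation $m=0$ handled via \cref{Qcone}) reduce directly to a comical open box inclusion or an instance of \cref{strong-comical-anodyne}. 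As in \cite{doherty-kapulkin-lindsey-sattler}, the degenerate cubes arising from the faces $\partial_{i,0}$ with $i\le n$ cause no trouble since they are already present.

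The main obstacle I expect is exactly the marking bookkeeping of the last two paragraphs: verifying that the cone quotient creates no identifications between $\Gamma^{m+n}_{n+1,n+2}$ and its complement in $\stbox^{m+n}_{n+1,1}$ (so that the relevant squares really are pushouts), and that a cube of $\stc^{m,n}$ is marked exactly when the rules for $\stbox$, $\Gamma$, comical open boxes, or comical marking extensions dictate. The underlying cubical combinatorics is identical to the unmarked case treated in \cite{doherty-kapulkin-lindsey-sattler}, so the difficulty is confined to these marking computations.
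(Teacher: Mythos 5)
Your overall strategy---rebuilding $\stc^{m,n}$ from $\stb^{m,n}$ face by face and inducting on the lemma itself---contains a genuine gap. The positive face $\bd_{k,1}$ of $\stc^{m,n}$ for $k\ge n+2$ is indeed a copy of $\stc^{m-1,n}$ (\cref{FaceIso,strong-comical-iso}), but it is not attached along a copy of $\stb^{m-1,n}$: its intersection with the subcomplex built so far also contains its intersections with the positive faces adjoined at earlier stages, so in any order of attachment every step after the first takes place along a strictly larger, $\Gamma$-type subcomplex. Hence ``each being a pushout of $\stb^{m-1,n}\hookrightarrow\stc^{m-1,n}$ and hence anodyne by induction'' fails as stated; repairing it forces you to prove a two-parameter, $\Gamma$-indexed family of statements at the cone level, i.e.\ to redo the induction that \cref{strong-comical-anodyne} has already carried out once and for all at the level of cubes. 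Your concluding step is also inconsistent with the definition of $B^{m,n}$ you are using: since $\bd_{n+1,1}$ already lies in $\stb^{m,n}$, after your sweep through $\bd_{n+2,1},\dots,\bd_{m+n,1}$ the entire boundary is present and only the interior remains; that remaining inclusion is not anodyne (a complete boundary inclusion), and it is not a pushout of $\Gamma^{m+n}_{n+1,n+2}\hookrightarrow\stbox^{m+n}_{n+1,1}$---pushing that map out along $\Gamma^{m+n}_{n+1,n+2}\to(\text{full boundary})$ glues in a second copy of the $(n+1,1)$-face rather than filling the interior. The open face and the interior must be added together in a single open-box step (and no marking extension for the top cube is ever needed, since the top cube is already marked in both $\Box^{m+n}_{n+1,1}$ and $\stbox^{m+n}_{n+1,1}$).

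The paper's proof avoids all of this: by \cref{ConeDesc}, the quotient $\Box^{m+n}\to C^{m,n}$ identifies cubes only when they lie in faces of the form $\bd_{k,0}$, and all such cubes already belong to $B^{m,n}$; moreover the marked cubes of $\stc^{m,n}$ are, by its very definition, exactly the images of the marked cubes of $\stbox^{m+n}_{n+1,1}$. Consequently everything outside $\stb^{m,n}$ is attached freely, and $\stb^{m,n}\hookrightarrow\stc^{m,n}$ is a \emph{single} pushout of the inclusion $\Gamma^{m+n}_{i,j}\hookrightarrow\stbox^{m+n}_{n+1,1}$ whose $\Gamma$ consists precisely of the faces making up $B^{m,n}$; the lemma is then immediate from \cref{strong-comical-anodyne}, with no induction on $(m,n)$ and no intermediate filtration at the cone level. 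Your opening reduction (identifying the image of $\stbox^{m+n}_{n+1,1}$ in $C^{m,n}$ with $\stc^{m,n}$) is the right start, but the subsequent cell-by-cell reconstruction both misidentifies the attaching maps and breaks at the interior, and it is in any case unnecessary.
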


\begin{proof}
We will show that the following square is a pushout:
	\[
	\begin{tikzcd}
		\Gamma^{m+n}_{n+2}
		\arrow [r]
		\arrow [d]
          &
		 \stb^{m,n} 
		\arrow [d] \\
		\stbox^{m+n}_{m+n+1,0}
		\arrow [r] &
		 \stc^{m,n} 
	\end{tikzcd}
	\]
By \cref{ConeDesc}, if two cubes of $\Box^{m+n}$ are identified in $C^{m,n}$ then they are both contained in some face $\bd_{k,0}$, and hence in $B^{m,n}$. Thus the underlying diagram of cubical sets is a pushout, as the cubes of $C^{m,n}$ not present in $B^{m,n}$ are subject to no identifications.
It thus follows that the square itself is a pushout, as the non-degenerate marked cubes of $\stc^{m,n}$ are precisely the images under the quotient map $\Box^{m+n} \to C^{m,n}$ of those which are marked in $\stbox^{m+n}$.
\end{proof}

\begin{lem}[{\cite[Lem.~5.21]{doherty-kapulkin-lindsey-sattler}}]\label{BMap}
For a marked cubical set $X$, a map $x \colon B^{m,n} \to X$ is determined by a set of $(m,n-1)$-cones $x_{i,1} \colon C^{m,n-1} \to X$ for $1 \leq i \leq n$ and a set of $(m-1,n)$-cones $x_{i,0}$ for $n+1 \leq i \leq m+n$ such that for all $i_{1} < i_{2}, \varepsilon_{1}, \varepsilon_{2} \in \{0,1\}, x_{i_{2},\varepsilon_{2}}\bd_{i_{1},\varepsilon_{1}} = x_{i_{1},\varepsilon_{1}}\bd_{i_{2}-1,\varepsilon_{2}}$, with $x_{i,\varepsilon}$ being the image of $\partial_{i,\varepsilon}$ under $x$. \qed 
\end{lem}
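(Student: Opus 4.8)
This is a colimit-decomposition lemma of van Kampen type, so the plan is to present $B^{m,n}$ as a colimit of a diagram assembled from the listed faces and their pairwise overlaps, and then to read off that a map out of such a colimit is exactly a compatible family of the stated form. Concretely, let $A \subseteq \Box^{m+n}$ be the union of the faces $\partial_{i,1}$ for $1 \le i \le n+1$ together with $\partial_{i,0}$ for $1 \le i \le m+n$, so that $B^{m,n}$ is the image of $A$ under the cone quotient $q \colon \Box^{m+n} \to C^{m,n}$. Since $A$ is a union of subobjects of $\Box^{m+n}$ closed under pairwise intersection -- by the cubical identity $\partial_{i_2,\varepsilon_2}\partial_{i_1,\varepsilon_1} = \partial_{i_1,\varepsilon_1}\partial_{i_2-1,\varepsilon_2}$ for $i_1 < i_2$, each such intersection is the codimension-two face $\partial_{i_1,\varepsilon_1}\partial_{i_2-1,\varepsilon_2}$, and all higher intersections factor through these -- the subcomplex $A$ is the colimit of the evident poset-diagram of its faces and codimension-two faces. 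Pushing this union presentation forward along $q$ presents $B^{m,n}$ as the colimit of the images under $q$ of this diagram.

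The second step is to identify those images. By \cref{FaceIso}, for $1 \le i \le n$ the image of the positive face $q\partial_{i,1}$ is a copy of $C^{m,n-1}$, and for $n+1 \le i \le m+n$ the image $q\partial_{i,\varepsilon}$ is a copy of $C^{m-1,n}$; hence the restrictions of a map $x \colon B^{m,n} \to X$ along these faces are precisely $(m,n-1)$-cones $x_{i,1}$ for $1 \le i \le n$ and $(m-1,n)$-cones $x_{i,0}$ for $n+1 \le i \le m+n$. It then remains to check, by inspection of the cone identifications via \cref{ConeDesc,cone-desc-faces}, that the remaining faces of $A$ -- namely $\partial_{n+1,1}$ and $\partial_{i,0}$ for $1 \le i \le n$ -- carry no independent data: the composite $q\partial_{i,\varepsilon}$ for each of these factors through the images of the faces already listed, so that $x$ on them is forced by the cones $x_{j,\varepsilon}$ and the overlap conditions (here \cref{ConeWLOG} is used to reduce each face to the appropriate cone dimension). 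Translating the pairwise-overlap compatibility of the colimit diagram through the cubical identity above yields exactly the relations $x_{i_2,\varepsilon_2}\bd_{i_1,\varepsilon_1} = x_{i_1,\varepsilon_1}\bd_{i_2-1,\varepsilon_2}$ for $i_1 < i_2$; conversely, given cones satisfying these relations, the induced maps on all faces of $A$ agree on overlaps and are constant on the fibres of $q$, hence descend to a map $x \colon B^{m,n} \to X$, which is the unique one with the prescribed restrictions.

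The main obstacle is the bookkeeping in the second step: verifying that after passing to the quotient $C^{m,n}$ the subcomplex $B^{m,n}$ is still the colimit of only the \emph{pairwise}-overlap diagram -- i.e.\ that the cone identifications do not glue the faces together in some further, unrecorded way -- and, relatedly, that the faces $\partial_{i,0}$ for $i \le n$ and the face $\partial_{n+1,1}$ become redundant once one works in $C^{m,n}$ rather than in $\Box^{m+n}$. Both points reduce to an elementary but careful analysis of cubical normal forms against the collapse criteria of \cref{ConeDesc,cone-desc-faces}, organised, as throughout this section, by induction on $n$.
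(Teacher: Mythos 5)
The paper itself gives no argument here (the lemma is quoted from \cite[Lem.~5.21]{doherty-kapulkin-lindsey-sattler} with a \qed), and your overall strategy---write $B^{m,n}$ as a union of face-images, identify each image with a cone via \cref{FaceIso}, check that pairwise overlaps are the codimension-two faces, and conclude that a map out of the union is a compatible family---is the natural one and essentially the cited proof. But one step of your sketch is false: the claim that the face $\partial_{n+1,1}$ ``carries no independent data'' because $q\partial_{n+1,1}$ factors through the images of the other listed faces. By \cref{ConeDesc} the cone quotient only identifies cubes having some cone coordinate (an index $\le n$) constant at $0$; it does nothing to the positive face in direction $n+1$, whose image is a full copy of $C^{m-1,n}$ (this is exactly what \cref{FaceIso} asserts for $i \ge n+1$, and you use this yourself), and whose top nondegenerate cube lies in none of the faces $\partial_{i,1}$ ($i\le n$), $\partial_{i,0}$ ($i\ge n+1$), nor in any identified class containing such a cube. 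Already for $m=n=1$ the image of $\partial_{2,1}$ in $C^{1,1}$ is the edge joining the cone point to one end of the base, which is contained in neither the base edge $\partial_{1,1}$, the other side $\partial_{2,0}$, nor the cone point. So if $\partial_{n+1,1}$ really belonged to $B^{m,n}$, the data $x_{i,1}$ ($i\le n$), $x_{i,0}$ ($i\ge n+1$) would simply not determine $x$ on it, and no redundancy argument can repair that.

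The resolution is that $\partial_{n+1,1}$ should not be in your subcomplex $A$ at all: the operative $B^{m,n}$ consists of the images of $\partial_{i,1}$ for $1\le i\le n$ together with all $\partial_{i,0}$ (the latter redundant for $i\le n$, each being collapsed into the image of $\partial_{n+1,0}$). This is what the data in \cref{BMap} records and what the paper actually uses: the subcomplex $\Gamma$ in the proof of \cref{b-c-anodyne} contains only the positive faces $(1,1),\dots,(n,1)$, and in \cref{fix-face-theta} the faces fixed by $(\Theta 1)$--$(\Theta 3)$ are precisely the positive faces $(i,1)$ with $i\le n+1$ plus the negative ones, matching \cref{BMap} for $B^{m,n+1}$; the phrase ``through $\partial_{n+1,1}$'' in the paper's definition of $B^{m,n}$ is an off-by-one slip that you inherited. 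With that face removed, your argument does go through: the remaining content is the check you correctly flag as the main obstacle, namely that the cone identifications create no extra gluing, i.e.\ that intersections of the face-images in $C^{m,n}$ are exactly the images of the codimension-two faces (a direct verification from \cref{ConeDesc}), after which the compatibility relations are exactly the cubical identity $\partial_{i_2,\varepsilon_2}\partial_{i_1,\varepsilon_1}=\partial_{i_1,\varepsilon_1}\partial_{i_2-1,\varepsilon_2}$ as you say.
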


We can adapt the notion of coherent families of composites from \cite[Def.~5.23]{doherty-kapulkin-lindsey-sattler} to the marked setting.
As in the case of $(\infty,1)$-categories, we show that every comical set admits such a family.

Recall that a coherent family of composites in a cubical quasicategory (i.e. a fibrant object in the cubical Joyal model structure on $\cSet$) represented a choice of a composite edge for each $n$-cube in the cubical quasicategory.
Although the definition of a coherent family of composites in a comical set is similar to its analogue for cubical quasicategories, the intuition differs somewhat.
Intuitively, choosing a coherent family of composites in a comical set $X$ amounts to replacing any $(m,n)$-cone in $X$ by an $(m-1,n+1)$-cone which is homotopic to it.
More precisely, thinking of filling comical open boxes as composition as in \cite[Sec.~3]{campion-kapulkin-maehara}, for an $(m,n)$-cone $x$ in a comical set $X$, we define $\theta^{m,n}(x)$ witnessing the composition of $x$ with a set of equivalences to obtain the $(m-1, n+1)$-cone $\theta^{m,n}(x) \partial_{n+2, 1}$.
Conversely,  \cref{theta-n+1-comical,theta-markings} show that we can also view $\theta^{m,n}(x)$ as composing $\theta^{m,n}(x) \partial_{n+2, 1}$ with equivalences to obtain $x$.

\begin{Def}\label{theta-construction}
A \emph{coherent family of composites} $\theta$ in a comical set $X$ is a family of functions $\theta^{m,n} \colon \cSet^+(C^{m,n},X) \to \cSet^+(\stc^{m,n+1},X)$ for $m \geq 1, n \geq 0$, satisfying the following identities: 

\begin{enumerate}
\myitem[$(\Theta 1)$]\label{ThetaFace1} for $i \leq n$, $\theta^{m,n}(x)\bd_{i,1} = \theta^{m,n-1}(x\bd_{i,1})$;
\myitem[$(\Theta 2)$]\label{ThetaFace1Id} $\theta^{m,n}(x)\bd_{n+1,1} = x$;
\myitem[$(\Theta 3)$]\label{ThetaFace0} for $m \geq 2$ and $i \geq n + 2$, $\theta^{m,n}(x)\bd_{i,0} = \theta^{m-1,n}(x\bd_{i-1,0})$;
\myitem[$(\Theta 4)$]\label{ThetaDegen} for $i \geq n + 2$, $\theta^{m,n}(x)\sigma_{i} = \theta^{m+1,n}(x\sigma_{i-1})$;
\myitem[$(\Theta 5)$]\label{ThetaLowCon} for $i \leq n$, $ \theta^{m,n}(x)\gamma_{i,1} = \theta^{m,n+1}(x\gamma_{i,1})$;
\myitem[$(\Theta 6)$]\label{ThetaHighCon} for $i \geq n + 2$, then $\theta^{m,n}(x)\gamma_{i,\varepsilon} = \theta^{m+1,n}(x\gamma_{i-1,\varepsilon})$;
\myitem[$(\Theta 7)$]\label{ThetaTheta} $\theta^{m,n+1}(\theta^{m,n}(x)) = \theta^{m,n}(x)\gamma_{n+1,1}$
\myitem[$(\Theta 8)$]\label{ThetaConeWLOG} for $x \colon C^{m-1,n+1} \to X$, $\theta^{m,n}(x) = x\gamma_{n+1,1}$.
\end{enumerate}
\end{Def}

Note that in \cite{doherty-kapulkin-lindsey-sattler}, $\theta^{0,n}$ is defined to be the degeneracy operator $\sigma_{n+1}$. This would not be appropriate in our setting, as $\stc^{m,n}$ is only defined for $m \geq 1$. However, as the definition of $\theta^{0,n}$ is merely a notational convenience, the combinatorial proofs of \cite{doherty-kapulkin-lindsey-sattler} remain valid here.

Our next goal is to prove the following theorem.

\begin{thm}\label{theta-exists}
Every comical set admits a coherent family of composites.
\end{thm}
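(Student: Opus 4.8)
The plan is to construct the coherent family of composites $\theta = \{\theta^{m,n}\}$ by induction, filling strongly comical open boxes in the comical set $X$, while carefully arranging the filler choices so that the compatibility identities $(\Theta 1)$--$(\Theta 8)$ hold on the nose. The overall strategy follows the unmarked case \cite[Sec.~5]{doherty-kapulkin-lindsey-sattler} closely, so the real content here is to check that the marked analogues of the key geometric inputs go through: that the inclusion $\stb^{m,n} \hookrightarrow \stc^{m,n}$ is anodyne (this is \cref{b-c-anodyne}), that a map out of $B^{m,n}$ is the same data as a compatible family of lower-dimensional cones (\cref{BMap}), and that the combinatorial bookkeeping relating faces, degeneracies and connections of cones behaves as in \cref{ConeFaceDeg,sa1}.

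First I would set up the induction. One orders the pairs $(m,n)$ (say lexicographically in $m+n$ and then in $n$, or however \cite{doherty-kapulkin-lindsey-sattler} does it) and, assuming $\theta^{m',n'}$ has been defined for all smaller pairs and satisfies the required identities there, one defines $\theta^{m,n}(x)$ for an $(m,n)$-cone $x \colon C^{m,n} \to X$ as follows. Using \cref{BMap} together with identities $(\Theta 1)$--$(\Theta 8)$ applied to the already-constructed $\theta$'s, one assembles a map $\stb^{m,n+1} \to X$ whose component cones are prescribed: the $(i,1)$-faces for $i \le n$ are $\theta^{m,n-1}(x\bd_{i,1})$, the $(n+1,1)$-face is forced to be $x$ itself by $(\Theta 2)$, the $(i,0)$-faces for $i \ge n+2$ are $\theta^{m-1,n}(x\bd_{i-1,0})$, etc. The crucial point is that these prescriptions are mutually compatible, i.e.\ they really do glue to a single map out of $\stb^{m,n+1}$; this is a finite check using the induction hypotheses and the cubical identities, exactly as in the unmarked case, and one must additionally verify that the markings are respected — i.e.\ that a cube marked in $\stb^{m,n+1}$ is sent to a marked cube of $X$. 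Then, since $\stb^{m,n+1} \hookrightarrow \stc^{m,n+1}$ is anodyne by \cref{b-c-anodyne} and $X$ is fibrant (a comical set has the RLP against all comical open box fillings and marking extensions, hence against all anodyne maps), we may extend along this inclusion to obtain $\theta^{m,n}(x) \colon \stc^{m,n+1} \to X$. One should also observe that when $x$ already factors through $C^{m-1,n+1}$, rather than filling we simply \emph{define} $\theta^{m,n}(x) = x\gamma_{n+1,1}$, which immediately gives $(\Theta 8)$ and is consistent with $(\Theta 7)$ via the cubical identity $\gamma_{n+1,1}\gamma_{n+2,1} = \gamma_{n+1,1}\gamma_{n+1,1}$; one has to make the filling and the degeneracy prescriptions agree on overlaps, which is again a matter of \cref{sa1} and the connection identities.

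The identities $(\Theta 1)$--$(\Theta 6)$ for the newly-defined $\theta^{m,n}$ then hold by construction on the faces and degeneracies we prescribed, and $(\Theta 7)$, $(\Theta 8)$ hold by the special-case definition above; the slightly delicate ones are $(\Theta 2)$ (immediate, since $\bd_{n+1,1}$ of the filler is exactly the prescribed cone $x$) and $(\Theta 4)$, $(\Theta 6)$, which relate $\theta^{m,n}$ to $\theta^{m+1,n}$ and hence to a \emph{larger} pair — here one has to be careful about the induction order, and in \cite{doherty-kapulkin-lindsey-sattler} this is handled by noting that the relevant faces/degeneracies of the strongly comical cone are already determined by lower data, so no circularity arises. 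The main obstacle, as in the unmarked case, is precisely this consistency-of-prescriptions step: verifying that the family of lower-dimensional cones assigned to the faces of $\stb^{m,n+1}$ satisfies the compatibility condition of \cref{BMap} requires systematically invoking all eight identities at the previous stages together with the full list of cubical identities among faces, degeneracies, and connections, and in the marked setting one must additionally track which cubes are marked throughout — using \cref{strong-comical-degens,linear-simplex-marked}-type observations — to ensure every extension problem we solve is a genuine map of \emph{marked} cubical sets. Once this is in place, the conclusion is immediate: the resulting family $\theta$ satisfies $(\Theta 1)$--$(\Theta 8)$, which is exactly the assertion of \cref{theta-exists}.
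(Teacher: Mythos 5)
Your proposal takes essentially the same approach as the paper: define $\theta^{m,n}$ by induction, prescribe the faces of $\theta^{m,n}(x)$ from $(\Theta 1)$--$(\Theta 3)$ via \cref{fix-face-theta}, extend along the anodyne inclusion $\stb^{m,n+1} \hookrightarrow \stc^{m,n+1}$ of \cref{b-c-anodyne}, and treat degenerate and already-determined cones separately so that $(\Theta 4)$--$(\Theta 8)$ hold. One small point worth sharpening: the paper realises your ``arranging and reconciling'' step as six mutually exclusive cases (trailing $\sigma$, trailing $\gamma$ with low or high index, $(m-1,n+1)$-cone, image of $\theta^{m,n-1}$, and the generic fill), with \cref{strong-comical-degens} guaranteeing that the explicitly defined values in the first five cases are still strongly comical, so there is no genuine ``overlap'' between filler and degeneracy prescriptions to reconcile --- the degenerate cases are defined outright to make $(\Theta 4)$--$(\Theta 8)$ true.
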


The following lemma will be used in defining $\theta^{m,n}$ in the inductive case.

\begin{lem}\label{fix-face-theta}
Let $m \geq 2, n \geq 0$, and let $X$ be a comical set equipped with functions $\theta^{m,n}$ satisfying the identities of \cref{theta-construction} for all pairs $(m',n')$ such that $m' \leq m$, $n' \leq n$, and at least one of these two inequalities is strict. Then for any $x \colon C^{m,n} \to X$, the faces of $\theta^{m,n}(x)$ fixed by the identities $(\Theta 1)$ through $(\Theta 3)$ define a map $\stb^{m,n+1} \to X$.
\end{lem}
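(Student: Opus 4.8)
The plan is to produce the map $\stb^{m,n+1}\to X$ by assembling the faces dictated by $(\Theta 1)$--$(\Theta 3)$ and invoking (the marked refinement of) \cref{BMap}. Concretely, a map $\stb^{m,n+1}\to X$ in $\cSet^+$ amounts to a marking-preserving map $B^{m,n+1}\to X$ of underlying cubical sets, and by \cref{BMap} the underlying map is the same datum as a family of $(m,n)$-cones $y_{i,1}\colon C^{m,n}\to X$ for $1\le i\le n+1$ together with a family of $(m-1,n+1)$-cones $y_{i,0}\colon C^{m-1,n+1}\to X$ for $n+2\le i\le m+n+1$, subject to the compatibility relations $y_{i_2,\varepsilon_2}\bd_{i_1,\varepsilon_1}=y_{i_1,\varepsilon_1}\bd_{i_2-1,\varepsilon_2}$ for $i_1<i_2$. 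I would set $y_{i,1}:=\theta^{m,n-1}(x\bd_{i,1})$ for $1\le i\le n$, $y_{n+1,1}:=x$, and $y_{i,0}:=\theta^{m-1,n}(x\bd_{i-1,0})$ for $n+2\le i\le m+n+1$, which are exactly the faces prescribed by $(\Theta 1)$, $(\Theta 2)$, $(\Theta 3)$. The first thing to check is that these are well defined: by \cref{ConeFaceDeg} the cube $x\bd_{i,1}$ is an $(m,n-1)$-cone and $x\bd_{i-1,0}$ is an $(m-1,n)$-cone, and since both pairs $(m,n-1)$ and $(m-1,n)$ lie below $(m,n)$ with at least one strict inequality, the operators $\theta^{m,n-1}$ and $\theta^{m-1,n}$ are available by hypothesis; moreover they take values in $\cSet^+(\stc^{m,n},X)$ and $\cSet^+(\stc^{m-1,n+1},X)$, whose underlying maps are cones of the required shapes.

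Next I would verify the compatibility relations. This is a finite case analysis on the positions of $i_1<i_2$ relative to the thresholds $n+1$ and $n+2$. In each case one rewrites both sides using $(\Theta 1)$ and $(\Theta 3)$ (and $(\Theta 2)$ where one of the indices equals $n+1$) for the already-constructed lower operators $\theta^{m',n'}$, reducing the claimed identity to one of the form $\theta^{m'',n''}(x\alpha)=\theta^{m'',n''}(x\beta)$, where $x\alpha=x\beta$ holds by the cubical identities for iterated faces (\cref{cube-standard-form}) together with the cone descriptions of \cref{ConeDesc,ConeFaceDeg}. These manipulations are the same as in the unmarked setting, so I would follow, and cite, the corresponding argument in \cite[Sec.~5]{doherty-kapulkin-lindsey-sattler}. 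Finally, for marking preservation: each $y_{i,\varepsilon}$ is already a morphism of marked cubical sets, so it suffices to observe that any cube marked in $\stb^{m,n+1}$ lies in one of the faces $\partial_{i,1}$ ($i\le n+1$) or $\partial_{i,0}$ ($i\ge n+2$), and that the marking it carries there agrees with the marking of the corresponding cube of $\stc^{m,n}$ (resp.\ $\stc^{m-1,n+1}$); this follows from the definition of the strongly comical cube together with \cref{strong-comical-iso} and the analogous computations with the cubical identities for the remaining faces.

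The main obstacle I anticipate is the bookkeeping in the compatibility check, specifically the ``mixed'' cases with $i_1\le n+1<i_2$ or $i_1=n+1$, where one must reconcile $(\Theta 2)$ with $(\Theta 1)$ and $(\Theta 3)$ while tracking how the cone type $(m',n')$ changes under face operators, and, in the marked case, confirm that no markings are created or destroyed when passing between faces. Since $X$ is comical and every face occurring is the value of a previously defined $\theta$, no lifting is needed; the entire content is the commutation of the prescribed faces, which is where I expect the work to concentrate.
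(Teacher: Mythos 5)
Your assembly of the underlying map $B^{m,n+1}\to X$ from the faces prescribed by $(\Theta 1)$--$(\Theta 3)$, with well-definedness from the inductive hypothesis and the compatibility relations imported from \cite[Sec.~5]{doherty-kapulkin-lindsey-sattler}, is exactly what the paper does (it cites \cite[Lem.~5.28]{doherty-kapulkin-lindsey-sattler} for this). The gap is in the marking step. Your claim that every cube marked in $\stb^{m,n+1}$ lies in a prescribed face on which the marking ``agrees with'' that of $\stc^{m,n}$ (resp.\ $\stc^{m-1,n+1}$) is not correct for two of the prescribed faces: on $\bd_{n+1,1}$ the prescribed value is $x\colon C^{m,n}\to X$ itself, which satisfies no marking condition whatsoever (there is no strongly comical cone to ``agree'' with), and on $\bd_{n+2,0}$ every face map whose standard form contains $\bd_{n+2,0}$ is unmarked in $\stbox^{m+n+1}_{n+2,1}$, so the induced marking there is unrelated to that of $\stc^{m-1,n+1}$. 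Consequently the reduction ``each $y_{i,\varepsilon}$ is a map of marked cubical sets'' does not by itself dispose of marked cubes represented by face maps using only $\bd_{k,0}$ with $k\le n+1$, nor explain why the $x$-face causes no trouble.

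The missing ingredient is precisely the paper's first case: if a face $\delta$ is marked in $\stbox^{m+n+1}_{n+2,1}$ (its standard form avoids $\bd_{n+1,1}$, $\bd_{n+2,0}$, $\bd_{n+2,1}$) and contains some $\bd_{k,0}$ with $k\le n+1$, then the coordinate $n+2$ is free in $\delta$, so by \cref{cone-desc-faces} its image in $C^{m,n+1}$ is degenerate and hence automatically marked; in particular no non-degenerate marked cube of $\stb^{m,n+1}$ lies in $\bd_{n+1,1}$ or $\bd_{n+2,0}$. Once this is observed, every remaining marked cube is represented by a standard form using only $\bd_{a,0}$ with $a\ge n+3$ and $\bd_{b,1}$ with $b\le n$, and your transfer-through-a-face argument does close the proof (it is the one-step version of the paper's rewriting via $(\Theta 1)$ and $(\Theta 3)$, which lands on the marked top cube of a lower strongly comical cone). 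Note also that \cref{strong-comical-iso} only treats the positive faces $(k,1)$ with $k>i$, so the marking comparison on the faces you actually use --- $(b,1)$ with $b\le n$ and $(a,0)$ with $a\ge n+3$ --- still requires the index-shift computation you allude to but do not carry out; that computation is routine, but the degeneracy case above is not optional and must be stated.
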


\begin{proof}
That these faces define a map $B^{m,n+1} \to X$ follows from \cref{BMap}; see the proof of \cite[Lem.~5.28]{doherty-kapulkin-lindsey-sattler} for details. Thus it remains to show that this map factors through $\stb^{m,n+1}$. In other words, we must show that any face of the form $\theta^{m,n}(x)\delta$ is marked if the standard form of $\delta$ is non-empty and contains only maps of the form $\bd_{k,\varepsilon}$ where either $\varepsilon = 0$ and $k \neq n + 2$ or $\varepsilon = 1$ and $k \leq n $.

First suppose that the standard form of $\delta$ contains some map of the form $\bd_{k,0}$ for $k \leq n + 1$. Then, since it contains no face of the form $\bd_{n+2,\varepsilon}$ by assumption, this is a degenerate face of $B^{m,n+1}$ by \cref{cone-desc-faces}, and is therefore marked.

Now consider a face written in standard form as:

$$
\theta^{m,n}(x)\bd_{a_1,0} \ldots \bd_{a_p,0} \bd_{b_{1},1} \ldots \bd_{b_q,1}
$$
where $a_p \geq n + 3$ and $b_1 \leq n$. Using the identities $(\Theta 1)$ and $(\Theta 3)$, we can rewrite this as :

$$
\theta^{m-p,n-q}(x\bd_{a_1-1,0} \ldots \bd_{a_p-1,0} \bd_{b_{1},1} \ldots \bd_{b_q,1})
$$
which is marked by assumption. (Note that $m - p \geq 1$ since there can be at most $m - 1$ face maps with index greater than or equal to $n + 3$.)
\end{proof}

\begin{proof}[Proof of \cref{theta-exists}]
We construct the functions $\theta^{m,n}$ by induction, following \cite[Defs.~5.25 \& 5.29]{doherty-kapulkin-lindsey-sattler}. For the base case $m = 1$, we set $\theta^{1,n}(x) = x \gamma_{n+1,1}$. (In fact, since every $(1,n)$-cone is a $(0,n+1)$-cone by \cref{Qcone}, this definition is required by identity $(\Theta 8)$.) 

Now suppose we have defined $\theta^{m',n'}$ for all such pairs with $m' \leq m, n' \leq n$, and at least one of these inequalities strict.
We will define $\theta^{m,n}$ by case analysis on $x \colon C^{m,n} \to X$, with cases depending on the ``standard form of $x$'' in the sense of the convention explained after \cref{cube-standard-form}.
For $x \colon C^{m,n} \to X$, we define $\theta^{m,n}(x)$ as follows:
\begin{enumerate}
\item If the standard form of $x$ is $z\sigma_{a_{p}}$ for some $a_{p} \geq n + 1$, then $\theta^{m,n}(x) = \theta^{m-1,n}(z)\sigma_{a_{p}+1}$;
\item If the standard form of $x$ is $z\gamma_{b_{q},1}$ for some $b_{q} \leq n - 1$, then $\theta^{m,n}(x) = \theta^{m,n-1}(z)\gamma_{b_{q},1}$;
\item If the standard form of $x$ is $z\gamma_{b_{q},\varepsilon}$ for some $b_{q} \geq n + 1$, then $\theta^{m,n}(x) = \theta^{m-1,n}(z)\gamma_{b_{q}+1,\varepsilon}$;
\item If $x$ is an $(m-1,n+1)$-cone not covered under any of cases (1) through (3), then $\theta^{m,n}(x) = x\gamma_{n+1,1}$;
\item If $x = \theta^{m,n-1}(x')$ for some $x' \colon C^{m,n-1} \to X$ and $x$ is not covered under any of cases (1) through (4) then $\theta^{m,n}(x) = x\gamma_{n,1}$;
\item If $x$ is not covered under any of cases (1) through (5), we construct $\theta^{m,n}(x)$ by extending the map $\stb^{m,n+1} \to X$ of \cref{fix-face-theta} to $\stc^{m,n+1}$, by taking a lift in the diagram
	\[
	\begin{tikzcd}
		\stb^{m,n+1}
		\arrow [r]
		\arrow [d]
          &
		 X
		\arrow [d] \\
		\stc^{m,n+1}
		\arrow [r] &
		\Box^0
	\end{tikzcd}
	\]
That the desired lift exists follows from \cref{b-c-anodyne}.
\end{enumerate}

That this definition satisfies identities $(\Theta 1)$ through $(\Theta 8)$ is proven in \cite[Props.~A1--A5]{doherty-kapulkin-lindsey-sattler}. Thus it remains only to be shown that $\theta^{m,n}(x)$ is a strongly comical $(m,n+1)$-cone (i.e., factors through $\overline{C}^{m,n+1}$) for all $x \colon C^{m,n} \to X$. For cases (1) through (5) this follows from \cref{strong-comical-degens}, while for case (6) it holds by construction.
\end{proof}

\begin{lem}\label{theta-n+1-comical}
Let $X$ be a comical set equipped with a coherent family of composites $\theta$. For $m \geq 1, n \geq 0$ and $x \colon C^{m,n} \to X$, the $(m+n+1)$-cube $\theta^{m,n}(x)$ is $(n+1,1)$-comical.
\end{lem}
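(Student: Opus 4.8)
The plan is to induct on $m \ge 1$, exploiting a fact already established in the course of proving \cref{theta-exists}: that $\theta^{m,n}(x)$ is a \emph{strongly comical} $(m,n+1)$-cone. Concretely this records two things — that the $(m{+}n{+}1)$-cube $\theta^{m,n}(x)$ factors through the quotient $\Box^{m+n+1} \to C^{m,n+1}$, and that the map $\Box^{m+n+1} \to X$ classifying it carries every cube that is marked in the strongly comical cube $\stbox^{m+n+1}_{n+2,1}$ to a marked cube of $X$. Unwinding the definition of $(n{+}1)$-comicality, what I must show is that this classifying map carries every cube marked in $\Box^{m+n+1}_{n+1,1}$ to a marked cube of $X$.

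First I would record, by direct inspection of the defining conditions \ref{comical-middle}--\ref{comical-low} of $\Box^{m+n+1}_{n+1,1}$, that the normal form of any face $\delta$ marked in $\Box^{m+n+1}_{n+1,1}$ contains none of $\bd_{n,1}$, $\bd_{n+1,0}$, $\bd_{n+1,1}$, $\bd_{n+2,1}$: the first and last trigger the (vacuous, at distance one from $n{+}1$) runs of conditions \ref{comical-low} and \ref{comical-high}, while the middle two trigger condition \ref{comical-middle}. Now split into cases. If the normal form of $\delta$ also omits $\bd_{n+2,0}$, then $\delta$ is in fact marked in $\stbox^{m+n+1}_{n+2,1}$, so $\theta^{m,n}(x)\delta$ is marked because $\theta^{m,n}(x)$ is strongly $(n{+}2,1)$-comical; this disposes of every face not involving $\bd_{n+2,0}$.

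It remains to treat a face $\delta$ whose normal form does contain $\bd_{n+2,0}$ — necessarily as its lowest face of index $>n{+}1$, since it contains no $\bd_{n+1,\varepsilon}$. Then $\delta$ factors uniquely as $\delta = \partial_{n+2,0}\,\delta'$ with $\delta' \colon \Box^k \to \Box^{m+n}$, so identity \ref{ThetaFace0} gives $\theta^{m,n}(x)\delta = \theta^{m-1,n}(x\,\partial_{n+1,0})\,\delta'$ when $m \ge 2$ (and $\theta^{m,n}(x)\delta = (x\,\partial_{n+1,0})\sigma_{n+1}\,\delta'$, a degenerate hence marked cube, when $m=1$, using the harmless convention $\theta^{0,n}=\sigma_{n+1}$). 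By part \ref{1FaceOfCone} of \cref{ConeFaceDeg} the cube $x\,\partial_{n+1,0}$ is an $(m{-}1,n)$-cone, so $\theta^{m-1,n}(x\,\partial_{n+1,0})$ is defined, and by the inductive hypothesis it is $(n{+}1)$-comical; hence it suffices to check that $\delta'$ is marked in $\Box^{(m-1)+n+1}_{n+1,1}=\Box^{m+n}_{n+1,1}$. This is a short verification: coordinate $j$ of $\delta'$ agrees with coordinate $j$ of $\delta$ for $j\le n{+}1$ and with coordinate $j{+}1$ of $\delta$ for $j\ge n{+}2$; moreover $\delta$ marked in $\Box^{m+n+1}_{n+1,1}$ with a $\bd_{n+2,0}$ present forbids $\bd_{n+3,1}$ (it would otherwise trigger condition \ref{comical-high} via the run $\bd_{n+2,0}$), so one sees that conditions \ref{comical-middle}--\ref{comical-low} fail for $\delta'$ at index $n{+}1$ precisely because they fail for $\delta$ at index $n{+}1$. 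The base case $m=1$ is settled separately by the same bookkeeping: for $\delta$ marked in $\Box^{n+2}_{n+1,1}$ one commutes the connection in $\theta^{1,n}(x)=x\gamma_{n+1,1}$ through $\delta$ using the cubical identities and finds that the result is always degenerate (either a $\sigma$ is produced when the connection meets $\bd_{n+2,0}$, or else the connection survives at the right end as some $\gamma_{\ell,1}$), hence marked.

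I expect the only mildly delicate point to be the normal-form check in the third paragraph — that ``marked in $\Box^{m+n+1}_{n+1,1}$'' together with the presence of $\bd_{n+2,0}$ descends, after deleting that face and reindexing, to ``marked in $\Box^{m+n}_{n+1,1}$'' — together with the cubical-identity computation underlying the base case. Both are finite and elementary, in the spirit of \cref{strong-comical-degens} and \cite[App.~A]{doherty-kapulkin-lindsey-sattler}, but require care with the way the runs in conditions \ref{comical-high}--\ref{comical-low} interact with the index shift.
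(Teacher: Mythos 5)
Your proof is correct, and it takes a genuinely different route from the paper's. Both arguments begin the same way: since $\theta^{m,n}(x)$ is strongly $(n+2,1)$-comical, the only faces that still need attention are those marked in $\Box^{m+n+1}_{n+1,1}$ whose normal form contains $\bd_{n+2,0}$ (and consequently avoids $\bd_{n,1}$, $\bd_{n+1,\varepsilon}$, $\bd_{n+2,1}$). From here the paper splits into two further subcases depending on whether the remaining face maps above index $n+2$ all carry $\varepsilon=0$ or not: in the all-zero case it pushes everything into the image of some $\theta^{m',n'}$ by repeated use of $(\Theta 1)$ and $(\Theta 3)$, and otherwise it invokes a ``gap'' argument — the absence of an excluded run forces a missing index, which guarantees that after commuting the zero-$\varepsilon$ block past $(\Theta 3)$ the surviving $\partial_{\,\cdot,1}$ sits strictly above $n+2$ and is hence absorbed by strong comicality. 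You sidestep this entire case split: you strip off exactly the one face $\bd_{n+2,0}$ via a single application of $(\Theta 3)$ and then appeal to the inductive hypothesis on $m$, together with the elementary observation that deleting $\bd_{n+2,0}$ from a face marked in $\Box^{m+n+1}_{n+1,1}$ yields a face marked in $\Box^{m+n}_{n+1,1}$. That observation carries the same combinatorial content as the paper's gap argument (indeed, the crucial point in both is that $\bd_{n+2,0}$ together with $\bd_{n+3,1}$ is forbidden, so the $\varepsilon_p=1$ sub-sub-case cannot arise after the shift), but it is packaged much more cleanly: the recursion eats the consecutive run of $\bd_{\cdot,0}$'s one face at a time, and the terminal step lands either in the strong-comicality regime or in your explicit $m=1$ base computation with $\gamma_{n+1,1}$. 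The induction on $m$ buys a modular argument at the price of two bookkeeping lemmas (the reindexing claim about $\delta'$ and the connection computation for $m=1$), both of which you've correctly identified as the delicate points and both of which hold. Two small remarks: the paper explicitly sets aside the sub-case where $\delta$ also contains some $\bd_{k,0}$ for $k\le n$ (these are degenerate in $C^{m,n+1}$ by \cref{cone-desc-faces}), whereas you absorb it silently into the induction — that is fine, since the reindexed $\delta'$ is still marked and the inductive hypothesis covers degenerate cubes, but it is worth being aware of; and your phrase ``precisely because'' overstates what is needed (and what is true) — only the forward implication, marked for $\delta$ implies marked for $\delta'$, matters.
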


\begin{proof}
We will show that all faces of $\theta^{m,n}(x)$ corresponding to marked faces of $\Box^{m+n+1}_{n+1,1}$ are marked. Recall that by construction, $\theta^{m,n}(x)$ factors through $\stc^{m,n+1}$, and is therefore strongly $(n+2,1)$-comical, meaning that the standard form of any unmarked face must contain $\partial_{n+1,1},\partial_{n+2,0}$, or $\partial_{n+2,1}$. Faces whose standard forms contain $\bd_{n+1,1}$ or $\bd_{n+2,1}$ are permitted to be unmarked by the definition of the $(n+1,1)$-comical $(m+n+1)$-cube, thus we may restrict our attention to those faces whose standard forms contain $\partial_{n+2,0}$.

Let $\delta$ denote a face of $\Box^{m+n+1}$ whose standard form contains $\bd_{n+2,0}$, and which is marked in $\Box^{m+n+1}_{n+1,1}$, and consider the face $\theta^{m,n}(x)\delta$. The assumption that $\delta$ is marked in $\Box^{m+n+1}_{n+1,1}$ implies that its standard form contains no map of the form $\bd_{n+1,\varepsilon}$. Therefore, if the standard form of $\delta$ contains any map of the form $\bd_{k,0}$ for $k \leq n$, then $\theta^{m,n}(x)\delta$ is degenerate by \cref{cone-desc-faces}, and hence marked. So assume that the standard form of $\delta$ contains no such map; then we may write this face in standard form as:

$$
z = \theta^{m,n}(x)\partial_{a_{1},\varepsilon_{1}}...\partial_{a_{p},\varepsilon_p}\partial_{n+2,0}\partial_{b_{1},1}...\partial_{b_{q},1}
$$
where $a_p \geq n + 3$. Moreover, as the standard form of $\delta$ contains no map of the form $\bd_{n+1,\varepsilon}$, we have $b_1 \leq n$.

First, suppose that $\varepsilon_i = 0$ for all $1 \leq i \leq p$; then we can rewrite this expression using the identities $(\Theta 1)$ and $(\Theta 3)$ to obtain:

$$
z = \theta^{m-p-1,n-q}(x\partial_{a_{1}-1,0}...\partial_{a_{p}-1,0}\partial_{n+1,0}\partial_{b_{1},1}...\partial_{b_{q},1})
$$
hence this face is marked.

Now suppose that $\varepsilon_i = 1$ for some $1 \leq i \leq p$, and consider the maximal value $i$ such that this condition is satisfied. Then the standard form above contains the string $\partial_{a_i,1}\partial_{a_{i+1},0}...\partial_{a_{p},0}\partial_{n+2,0}$. Our assumption that $\delta$ is marked in $\Box^{m+n+1}_{n+1,1}$ implies that there is a ``gap'' in this string, i.e. that there is some value between $n+2$ and $a_i$ which does not appear as some $a_{k}$. In particular, this implies that there are fewer than $a_i - (n + 2)$ maps in the string $\partial_{a_{i+1},0}...\partial_{a_{p},0}\partial_{n+2,0}$, i.e. that $p - i + 1 < a_i - (n+2)$. 

We apply cubical identities to the given standard form to move this string to the front, and then repeatedly apply identity $(\Theta 1)$, as follows.
 
\begin{align*}
z  & = \theta^{m,n}(x)\partial_{a_1,\varepsilon_1}\ldots\partial_{a_i,1}\partial_{a_{i+1},0}\ldots\partial_{a_{p},0}\partial_{n+2,0}\partial_{b_1,1}\ldots\partial_{b_q,1} \\
& = \theta^{m,n}(x)\partial_{a_{i+1},0}\ldots\partial_{a_{p},0}\partial_{n+2,0}\partial_{a_1 - (p - i + 1),\varepsilon_1}\ldots\partial_{a_{i} - (p - i + 1),1}\partial_{b_1,1}\ldots\partial_{b_q,1} \\
& = \theta^{m - (p - a + 1),n}(x\partial_{a_{i+1}-1,0}\ldots\partial_{a_{p}-1,0}\partial_{n+1,0})\partial_{a_1 - (p - i + 1),\varepsilon_1}\ldots\partial_{a_{i} - (p - i + 1),1}\partial_{b_1,1}\ldots\partial_{b_q,1} \\
\end{align*}

The expression $\partial_{a_1 - (p - i + 1),\varepsilon_1}\ldots\partial_{a_{i} - (p - i + 1),1}\partial_{b_1,1}\ldots\partial_{b_q,1}$ is again in standard form, and by the inequality above, we can see that $a_i - (p - i + 1) > n + 2$. Thus the standard form of the face map being applied to $\theta^{m - (p - i + 1),n}(x\partial_{a_{i+1}-1,0}...\partial_{a_{p}-1,0}\partial_{n+1,0})$ does not contain any of the maps $\partial_{n+2,0},\partial_{n+2,1},$ or $\partial_{n+1,1}$. Since $\theta^{m - (p - i + 1),n}(x\partial_{a_{i+1}-1,0}...\partial_{a_{p}-1,0}\partial_{n+1,0})$ is a strongly $(n+2,1)$-comical cube by construction, this implies that $z$ is marked.
\end{proof}

\begin{lem}\label{theta-markings}
Let $X$ be a comical set equipped with a coherent family of composites $\theta$. For $m \geq 1, n \geq 0$, and let $x \colon C^{m,n} \to X$, all faces of $\theta^{m,n}(x)$ other than $x$ itself and $\theta^{m,n}(x)\bd_{n+2,1}$ are marked. Moreover, $x$ is marked if and only if $\theta^{m,n}(x)\bd_{n+2,1}$ is marked.
\end{lem}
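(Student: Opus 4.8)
The plan is to derive both assertions from the elementary comical marking extensions, exploiting the fact that $\theta^{m,n}(x)$ simultaneously fills the two comical open boxes $\sqcap^{m+n+1}_{n+1,1}$ and $\sqcap^{m+n+1}_{n+2,1}$. I would first dispose of the first assertion: by identity $(\Theta 2)$ the $\partial_{n+1,1}$-face of $\theta^{m,n}(x)$ equals $x$, and \cref{theta-n+1-comical} states that $\theta^{m,n}(x)$ is $(n+1)$-comical, which unwinds to the assertion that every codimension-one face of $\theta^{m,n}(x)$ whose normal form involves neither $\partial_{n+1,1}$ nor $\partial_{n+2,1}$ is marked in $X$ --- precisely the faces other than $x$ and $\theta^{m,n}(x)\partial_{n+2,1}$. (The only face requiring real care here is $\partial_{n+2,0}$, which is unmarked in $\stc^{m,n+1}$ itself but is shown to be marked in $X$ within the proof of \cref{theta-n+1-comical}.)

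For the ``if and only if'', write $N = m+n+1$, so that $N \ge 2$ since $m \ge 1$, and set $w = \theta^{m,n}(x)\partial_{n+2,1}$. Since $X$ is comical, it has the right lifting property with respect to the $(n+1,1)$- and $(n+2,1)$-comical marking extensions $(\Box^N_{i,1})' \to \tau_{N-2}\Box^N_{i,1}$ for $i \in \{n+1,n+2\}$. The key intermediate step is to check that, for each such $i$, the map $\theta^{m,n}(x)$ carries every cube marked in $\Box^N_{i,1}$ to a marked cube of $X$: since $\theta^{m,n}(x)$ is defined as a map out of $\stc^{m,n+1}$, it sends every cube whose normal form avoids $\partial_{n+1,1}$, $\partial_{n+2,0}$, $\partial_{n+2,1}$ --- as well as every cube that becomes degenerate in $C^{m,n+1}$ --- to a marked cube, and the $\partial_{n+2,0}$-case of the proof of \cref{theta-n+1-comical} takes care of the rest, once one observes that a cube marked in $\Box^N_{n+1,1}$ whose normal form involves $\partial_{n+2,0}$ must avoid the ``consecutive-face'' conditions \ref{comical-high} and \ref{comical-low} for $\Box^N_{n+2,1}$ (the extra $\partial_{n+2,0}$ closes the gap at index $n+2$) and must avoid $\partial_{n+1,1}$ and $\partial_{n+2,1}$. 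Granting this, the two directions are symmetric. If $x$ is marked, then by the first assertion every codimension-one face of $\theta^{m,n}(x)$ except $w$ is marked, so $\theta^{m,n}(x)$ factors through $(\Box^N_{n+2,1})'$, and lifting against $(\Box^N_{n+2,1})' \to \tau_{N-2}\Box^N_{n+2,1}$ (whose codomain marks all $(N-1)$-cubes) forces $w$ to be marked. Conversely, if $w$ is marked, then every codimension-one face except $x$ is marked, $\theta^{m,n}(x)$ factors through $(\Box^N_{n+1,1})'$, and lifting against $(\Box^N_{n+1,1})' \to \tau_{N-2}\Box^N_{n+1,1}$ forces $x$ to be marked.

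The step I expect to be the main obstacle is this intermediate one: comparing the three marking schemes on $\Box^N$ --- the one pulled back from $\stc^{m,n+1}$, that of $\Box^N_{n+1,1}$, and that of $\Box^N_{n+2,1}$ --- and confirming that $\theta^{m,n}(x)$ is an admissibly marked filler of $\sqcap^N_{n+1,1}$ as well as of $\sqcap^N_{n+2,1}$. This is essentially a bookkeeping argument with normal forms and the cubical identities; the one genuinely new ingredient, beyond routine manipulation, is that $\theta^{m,n}(x)$ may legitimately send cubes that are unmarked in its own domain $\stc^{m,n+1}$ (such as the $\partial_{n+2,0}$-face) to marked cubes of $X$, which is exactly the extra information supplied by \cref{theta-n+1-comical}.
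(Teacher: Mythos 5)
Your main argument --- transfer the marking between $x$ and $w=\theta^{m,n}(x)\bd_{n+2,1}$ by factoring $\theta^{m,n}(x)$ through $(\Box^{m+n+1}_{n+2,1})'$ (resp.\ $(\Box^{m+n+1}_{n+1,1})'$) and lifting against the corresponding elementary comical marking extension, which $X$ admits --- is exactly the paper's proof of the ``if and only if''. Moreover, the step you flag as the main obstacle is lighter than you think: $(n+1,1)$-comicality of $\theta^{m,n}(x)$ is literally the statement of \cref{theta-n+1-comical} and needs no re-verification, while $(n+2,1)$-comicality is immediate from the construction, since any cube marked in $\Box^{m+n+1}_{n+2,1}$ has a standard form avoiding $\bd_{n+2,0}$ and $\bd_{n+2,1}$ (condition (a)) and $\bd_{n+1,1}$ (condition (c) with $j=n+1$), hence is already marked in $\stbox^{m+n+1}_{n+2,1}$, i.e.\ in the domain $\stc^{m,n+1}$ of $\theta^{m,n}(x)$; your discussion of cubes containing $\bd_{n+2,0}$ is only relevant to the $(n+1,1)$ case, where it reproduces part of the proof of \cref{theta-n+1-comical}.

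The one genuine flaw is the justification of the first assertion. $(n+1,1)$-comicality does \emph{not} unwind to ``every codimension-one face avoiding $\bd_{n+1,1}$ and $\bd_{n+2,1}$ is marked'': the faces $\bd_{n+1,0}$ and (for $n\ge 1$) $\bd_{n,1}$ are exempt from the marking requirements of $\Box^{m+n+1}_{n+1,1}$, by condition (a) and by condition (c) with $j=n$ respectively, so \cref{theta-n+1-comical} says nothing about them, and your parenthetical only repairs the $\bd_{n+2,0}$ face. These two faces are nevertheless marked, but a separate argument is needed, and this is where the paper spends its bullet points: $\theta^{m,n}(x)\bd_{i,0}$ is degenerate for $i\le n+1$ by \cref{cone-desc-faces}, and $\theta^{m,n}(x)\bd_{i,1}=\theta^{m,n-1}(x\bd_{i,1})$ for $i\le n$ is marked by $(\Theta 1)$, being the image of the (marked) top cube of $\stc^{m,n}$. (Alternatively, and closer in spirit to your parenthetical: the strong $(n+2,1)$-comicality of the domain already marks every codimension-one face except $\bd_{n+1,1}$, $\bd_{n+2,0}$, $\bd_{n+2,1}$, and $\bd_{n+2,0}$ is then handled by $(\Theta 3)$ or by \cref{theta-n+1-comical}.) With that repair your proof coincides with the paper's; the paper also treats $m=1$ separately, where $w=x$ and all other faces are degenerate, but this is a convenience rather than a necessity.
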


\begin{proof}
For $m = 1$ this is trivial, as $\theta^{1,n}(x)\bd_{n+2,1} = x\gamma_{n+1,1}\bd_{n+2,1} = x$, while the other two faces are degenerate. 

Now consider $m \geq 2$. We begin by showing that all $(m+n)$-dimensional faces of $\theta^{m,n}(x)$, other than $\theta^{m,n}(x)\bd_{n+1,1} = x$ and $\theta^{m,n}(x)\bd_{n+2,1}$, are marked. To see this, observe that:

\begin{itemize}
\item for $i \leq n+1$, $\theta^{m,n}(x)\bd_{i,0}$ is degenerate by \cref{cone-desc-faces}, hence marked;
\item for $i \geq n+2$, $\theta^{m,n}(x)\bd_{i,0}$ is marked by $(\Theta 3)$;
\item for $i \leq n$, $\theta^{m,n}(x) \bd_{i,1}$ is marked by $(\Theta 1)$;
\item for $i \geq n + 3$, $\theta^{m,n}(x) \bd_{i,1}$ is marked because $\theta^{m,n}(x)$ factors through $\overline{C}^{m,n+1}$ by definition, and is therefore strongly comical.
\end{itemize}

Now suppose that $x$ is marked. 
Then $\theta^{m,n}(x)$ is an $(n+2,1)$-comical cube whose faces of codimension one (i.e., the $(m+n)$-dimensional ones) other than $\partial_{n+2, 1}$ are marked, this implies that $\theta^{m,n}(x) \colon \Box^{m+n+1} \to X$ factors through $(\Box^{m+n+1}_{n+2,1})'$.
Moreover, since $X$ is a comical set, we obtain a lift in
	\[
	\begin{tikzcd}
		(\Box^{m+n+1}_{n+2,1})'
		\arrow [r, "\theta^{m,n}(x)"]
		\arrow [d]
          &
		 X
		\arrow [d] \\
		\tau_{m+n-1} \Box^{m+n+1}_{n+2,1}
		\arrow [r] &
		\Box^0
	\end{tikzcd}
	\]
Thus we see that all $(m+n)$-dimensional faces of $\theta^{m,n}(x)$ are marked, including $\theta^{m,n}(x)\bd_{n+2,1}$. 

In the case where $\theta^{m,n}(x)\bd_{n+2,1}$ is marked, we can show that $x$ is marked via a similar proof using \cref{theta-n+1-comical}.
\end{proof}

\begin{prop}\label{counit-anodyne}
For any comical set $X$, the counit $Q \int X \hookrightarrow X$ is anodyne.
\end{prop}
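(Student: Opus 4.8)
Our plan is to realise the counit $Q\int X\hookrightarrow X$ — a monomorphism by \cref{Q-counit-mono} — as a transfinite composite of pushouts of comical open box inclusions and elementary comical marking extensions, using a coherent family of composites $\theta$ on $X$ (which exists by \cref{theta-exists}) to supply the fillers.

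For a cube $y$ of $X$, write $\mu(y)$ for the least $\mu\ge 0$ such that $y$ is an $(\mu,\dim y-\mu)$-cone in $X$; this is well defined by \cref{ConeWLOG}. By \cref{Q-counit-mono}, the subcomplex $Q\int X\subseteq X$ is the regular subcomplex generated by the cubes $y$ with $\mu(y)=0$, so we set, for $m\ge 0$, $F_m\subseteq X$ to be the regular subcomplex generated by all cubes $y$ with $\mu(y)\le m$. Then $F_0=Q\int X$, the chain $F_0\subseteq F_1\subseteq\cdots$ exhausts $X$ (every $d$-cube is a $(d,0)$-cone), and it suffices to prove that each inclusion $F_{m-1}\hookrightarrow F_m$ is anodyne.

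Fix $m\ge 1$. The non-degenerate cubes of $F_m$ outside $F_{m-1}$ are exactly the non-degenerate $(m,n)$-cones $x$ with $\mu(x)=m$ (for varying $n\ge 0$), together with their degeneracies and connections. For such an $x$ with $n\ge 1$ that is a \emph{composite}, i.e.\ $x=\theta^{m,n-1}(x\partial_{n,1})$, one checks using $(\Theta 7)$, $(\Theta 8)$ and \cref{Qcone,sa1} that $x$ has already been adjoined — as the companion of $x\partial_{n,1}$, at dimension $\dim x-1$; every other $x$ falls under the generic case (6) of the construction of $\theta$ in the proof of \cref{theta-exists}, and then \cref{sa1} shows that $\theta^{m,n}(x)$ is a non-degenerate $(m+n+1)$-cube with $\mu(\theta^{m,n}(x))=m$, itself a new cube of $F_m$. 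We process the non-composite cubes $x$ in order of increasing $\dim x$, adjoining the pair $x,\theta^{m,n}(x)$ in a single step: by \cref{theta-n+1-comical}, $\theta^{m,n}(x)$ defines a map of marked cubical sets $\Box^{m+n+1}_{n+1,1}\to X$, and combining $(\Theta 1)$--$(\Theta 3)$ with \cref{cone-desc-faces,ConeFaceDeg,Qcone} shows that every face $\theta^{m,n}(x)\partial_{i,\varepsilon}$ with $(i,\varepsilon)\neq(n+1,1)$ is already present (being degenerate, or in $F_{m-1}$, or of the form $\theta^{m,n'}$ of a cube of strictly smaller dimension already treated). Hence the step adjoining $x$ (the missing face $\partial_{n+1,1}$, since $\theta^{m,n}(x)\partial_{n+1,1}=x$ by $(\Theta 2)$) and $\theta^{m,n}(x)$ (the top cube) is a pushout of the comical open box inclusion $\sqcap^{m+n+1}_{n+1,1}\hookrightarrow\Box^{m+n+1}_{n+1,1}$.

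It remains to handle markings. Whenever $x$ is marked in $X$, we follow the step above by a pushout of the elementary comical marking extension $(\Box^{m+n+1}_{n+1,1})'\hookrightarrow\tau_{m+n-1}\Box^{m+n+1}_{n+1,1}$ along $\theta^{m,n}(x)$, which marks precisely $x$; this is legitimate by \cref{theta-markings}, since every $(m+n)$-dimensional face of $\theta^{m,n}(x)$ other than $x$ is marked, and $x$ is marked if and only if $\theta^{m,n}(x)\partial_{n+2,1}$ is. Carrying out these open box fillings and marking extensions dimension by dimension realises $F_{m-1}\hookrightarrow F_m$ as a composite of trivial cofibrations, and composing over all $m\ge 1$ completes the proof. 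The main obstacle is the bookkeeping behind the phrase ``already present'': one must check that at each step $x$ and $\theta^{m,n}(x)$ are genuinely new, non-degenerate cubes subject to no accidental identifications, and that all remaining faces of $\theta^{m,n}(x)$ carry the markings demanded by $\Box^{m+n+1}_{n+1,1}$. These are the marked refinements of the unmarked argument of \cite[Sec.~6]{doherty-kapulkin-lindsey-sattler}, the genuinely new input being \cref{theta-n+1-comical,theta-markings}.
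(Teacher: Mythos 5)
Your argument is correct and is essentially the paper's own proof: the same filtration (first by the minimal cone index $m$, then within each $m$ by dimension, matching the paper's lexicographic $X^{m,n}$-filtration), the same attaching cubes $\theta^{m,n}(x)$ for non-degenerate, non-composite $(m,n)$-cones with exact cone index $m$, and the same use of \cref{theta-n+1-comical,theta-markings} to fill $(n+1,1)$-comical open boxes and then apply marking extensions along $\theta^{m,n}(x)$ when $x$ is marked. The only correction is that the non-degeneracy and ``newness'' of $\theta^{m,n}(x)$ (in particular that it is not an $(m-1,n+2)$-cone, so that it genuinely lies outside the previously built subcomplex) should be cited to \cref{theta-non-degen} rather than to \cref{sa1} and case (6) of the construction of $\theta$.
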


Before proving this proposition, we recall one final result characterizing those $(m,n)$-cones for which $\theta^{m,n}$ is non-degenerate.

\begin{lem}[{\cite[Prop.~5.32]{doherty-kapulkin-lindsey-sattler}}]\label{theta-non-degen}
Let $X$ be a comical set $m \geq 1, n \geq 0$, and $x \colon C^{m,n} \to X$ an $(m,n)$-cone of $X$. Then $\theta^{m,n}(x)$ is non-degenerate if and only if $x$ is non-degenerate, not an $(m-1,n+1)$-cone, and not in the image of $\theta^{m,n-1}$. Moreover, in this case $\theta^{m,n}(x)$ is not an $(m-1,n+2)$-cone. \qed
\end{lem}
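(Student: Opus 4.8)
The plan is to reduce the statement to the corresponding one about underlying cubical sets and then appeal to \cite[Prop.~5.32]{doherty-kapulkin-lindsey-sattler}. Indeed, nothing in the conclusion involves markings: non-degeneracy of $\theta^{m,n}(x)$, and whether $\theta^{m,n}(x)$ or $x$ is a cone of a prescribed bidimension, are properties of the underlying cubical sets only. Moreover, by construction the underlying cubical map of $\theta^{m,n}(x)$ is produced by exactly the case analysis of \cite[Defs.~5.25 \& 5.29]{doherty-kapulkin-lindsey-sattler} (our cases (1)--(6) in the proof of \cref{theta-exists}); the sole discrepancy is the convention $\theta^{0,n}=\sigma_{n+1}$ used there, which, as noted after \cref{theta-construction}, does not affect the range $m\ge 1$ considered here. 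So the lemma is literally \cite[Prop.~5.32]{doherty-kapulkin-lindsey-sattler}. For the reader I would nonetheless recall the argument, in three steps.

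For the ``only if'' direction I would argue contrapositively: if $x$ is degenerate, is an $(m-1,n+1)$-cone, or lies in the image of $\theta^{m,n-1}$, then $\theta^{m,n}(x)$ is degenerate. The latter two possibilities are precisely the hypotheses of cases (4) and (5) of the construction, whose outputs $x\gamma_{n+1,1}$ and $x\gamma_{n,1}$ are degenerate; and if $x$ is degenerate then, by \cref{sa1} together with the bookkeeping of which generalised degeneracies an $(m,n)$-cone can carry, $x$ falls under one of cases (1)--(5), each of which returns a cube of the form (connection or degeneracy)$\,\circ\,$(lower cube). For the ``if'' direction, suppose $x$ is non-degenerate, not an $(m-1,n+1)$-cone, and not in the image of $\theta^{m,n-1}$, so that $\theta^{m,n}(x)$ is produced by case (6) as a lift (here $m\ge 2$, since for $m=1$ every $(1,n)$-cone is a $(0,n+1)$-cone by \cref{Qcone} and the hypothesis is vacuous). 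Were $\theta^{m,n}(x)$ degenerate, then, being an $(m,n+1)$-cone, by \cref{sa1} its standard form would end in $\sigma_{a}$ with $a\ge n+2$ or in $\gamma_{b,0}$ with $b\ge n+2$; applying the cubical identities $\sigma_a\bd_{n+1,1}=\bd_{n+1,1}\sigma_{a-1}$, resp.\ $\gamma_{b,0}\bd_{n+1,1}=\bd_{n+1,1}\gamma_{b-1,0}$, followed by identity $(\Theta 2)$, would exhibit $x=\theta^{m,n}(x)\bd_{n+1,1}$ as degenerate, contradicting the hypothesis.

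Finally, for the ``moreover'' clause: if $\theta^{m,n}(x)$ were an $(m-1,n+2)$-cone, then taking the positive face in the collapsed coordinate $n+1\le n+2$ would, by \cref{ConeFaceDeg}, make $\theta^{m,n}(x)\bd_{n+1,1}$ an $(m-1,n+1)$-cone; but $(\Theta 2)$ identifies this face with $x$, contradicting that $x$ is not an $(m-1,n+1)$-cone. I expect the main obstacle to be confirming that the case split (1)--(5) is genuinely exhaustive on degenerate cones (in particular that no degenerate $(m,n)$-cone has a standard form ending in $\gamma_{b,1}$ with $b=n$): this is exactly the combinatorial analysis of standard forms of cones carried out in \cite{doherty-kapulkin-lindsey-sattler}, which I would cite rather than reproduce, since the argument there transfers to our $\theta^{m,n}$ unchanged.
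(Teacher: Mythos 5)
Your top-level reading is exactly what the paper does: the statement concerns only underlying cubical sets, the $\theta^{0,n}$ convention is the sole cosmetic discrepancy with \cite{doherty-kapulkin-lindsey-sattler}, and the paper indeed proves this lemma by a bare citation to \cite[Prop.~5.32]{doherty-kapulkin-lindsey-sattler} with no further argument. Your treatment of the ``moreover'' clause is also correct.

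The optional sketch of the ``if'' direction, however, misreads \cref{sa1}. That lemma says: \emph{if} the standard form of a degenerate $(m,n+1)$-cone ends in $\sigma_a$, then $a \geq n+2$, and \emph{if} it ends in $\gamma_{b,0}$, then $b \geq n+2$. It does not assert that the standard form must end in one of those two shapes; it is silent about positive connections $\gamma_{b,1}$, which are genuine possibilities by \cref{ConeFaceDeg}(5) (for $1 \le b \le n$). The sub-case $b < n$ is dispatched by the same trick ($\gamma_{b,1}\bd_{n+1,1} = \bd_{n,1}\gamma_{b,1}$, so $x$ would be degenerate), but the sub-case $b = n$ is not: there $\gamma_{n,1}\bd_{n+1,1} = \id$, so $(\Theta 2)$ only gives $\theta^{m,n}(x) = x\gamma_{n,1}$ without exhibiting $x$ as degenerate. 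The contradiction must instead come from $(\Theta 1)$ at a lower index: since $\gamma_{n,1}\bd_{n,1} = \id$ as well, one computes $x = \theta^{m,n}(x)\bd_{n,1} = \theta^{m,n-1}(x\bd_{n,1})$, contradicting the hypothesis that $x$ is not in the image of $\theta^{m,n-1}$ (for $n = 0$ the sub-case is vacuous, so this is always available). Your acknowledged uncertainty about the exhaustiveness of cases (1)--(5) in the ``only if'' direction is the right thing to flag and is legitimately deferred to the cited combinatorial analysis.
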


\begin{proof}[Proof of \cref{counit-anodyne}]
Our proof will follow the structure of the proof of \cite[Prop.~6.23]{doherty-kapulkin-lindsey-sattler}. By \cref{theta-exists}, we may equip $X$ with a coherent family of composites $\theta$. We will build $X$ from $Q \int X$ via successive comical open box fillings and comical marking extensions, thereby showing that the inclusion of $Q \int X$ into $X$ is anodyne. 

For $m \geq 2, n \geq -1$, let $X^{m,n}$ denote the smallest regular subcomplex of $X$ containing all $(m',n')$-cones of $X$, as well as all cones of the form $\theta^{m',n'}(x)$, for $m' < m$ or $m' = m, n' \leq n$. Note that $(1,n)$-cubes and $(0,n+1)$-cubes coincide by \cref{Qcone}, and all cubes in the image of $\theta^{1,n}$ are degenerate; thus $X^{2,-1}$ is the regular subcomplex of $X$ whose non-degenerate $n$-cubes are the $(0,n)$-cones of $X$. By \cref{Q-counit-mono}, this implies $X^{2,-1} = Q \int X$.

For $m < m'$ or $m = m', n \leq n'$, $X^{m,n}$ is a regular subcomplex of $X^{m',n'}$. Thus we obtain a sequence of regular inclusions:

$$
Q \textstyle{\int} X = X^{2,-1} \hookrightarrow X^{2,0} \hookrightarrow ... \hookrightarrow X^{3,-1} \hookrightarrow X^{3,0} \hookrightarrow ... \hookrightarrow X^{m,n} \hookrightarrow ...
$$

Observe that the colimit of this sequence is $X$. Furthermore, for each $m$, $X^{m,-1}$ is the union of all regular subcomplexes $X^{m',n}$ for $m' < m$, i.e. the colimit of the sequence of regular inclusions $Q \int X \hookrightarrow ... \hookrightarrow X^{m',n} \hookrightarrow ...$. So to show that $Q \int X \hookrightarrow X$ is anodyne, it suffices to show that each map $X^{m,n-1} \hookrightarrow X^{m,n}$ for $n \geq 0$ is anodyne.

Fix $m \geq 2, n \geq 0$, and let $S$ denote the set of non-degenerate $(m,n)$-cones of $X$ which are not $(m-1,n+1)$-cones, and are not in the image of $\theta^{m,n-1}$. Let $S^+$ denote the set of all marked cubes in $S$. To construct $X^{m,n}$ from $X^{m,n-1}$, we must adjoin to $X^{m,n-1}$ all $(m,n)$-cones of $X$, and images of such cones under $\theta^{m,n}$, which are not already present in $X^{m,n}$, and mark those which are marked in $X$. Using \cref{sa1,ConeFaceDeg,theta-non-degen}, and the identities $(\Theta 1)$ to $(\Theta 8)$, we can see that these are precisely the cones in $S$ and their images under $\theta^{m,n}$.

Let $x \in S$; we will analyze the faces of $\theta^{m,n}(x)$ to determine which of them are contained in $X^{m,n-1}$. For $i \leq n$ we have $\theta^{m,n}(x) \bd_{i,1} = \theta^{m,n-1}(x\bd_{i,1})$, while for $i \geq n + 2$ or $\varepsilon = 0$, $\theta^{m,n}(x) \bd_{i,\varepsilon}$ is an $(m-1,n+1)$-cone by \cref{ConeFaceDeg}. Thus we see that the only face of $\theta^{m,n}(x)$ which is not contained in $X^{m,n-1}$ is $\theta^{m,n}(x)\bd_{n+1,1} = x$. Furthermore, the faces of $\theta^{m,n}(x)$ which are contained in $X^{m,n-1}$ form an $(n+1,1)$-comical open box by \cref{theta-n+1-comical}. 

We now construct a marked cubical set $\widetilde{X}^{m,n}$ by adding all cubes in $S$ and their images under $\theta^{m,n}$ to $X^{m,n-1}$, with the cubes of $S$ unmarked but their images under $\theta^{m,n}$ marked; this amounts to filling all of these comical open boxes. In other words, we construct the following pushout diagram:
	\[
	\begin{tikzcd} [row sep = large]
		\displaystyle{\coprod_{S} \sqcap^{m+n+1}_{n+1,1} }
		\arrow [r]
		\arrow [d]
		\pushout
          &
		 X^{m,n-1} 
		\arrow [d] \\
		\displaystyle{\coprod_{S} \Box^{m+n+1}_{n+1,1} }
		\arrow [r] &
		\widetilde{X}^{m,n}
	\end{tikzcd}
	\]
%
The marked cubical set $\widetilde{X}^{m,n}$ has the same underlying cubical set as $X^{m,n}$ but is missing some markings, namely those of the cubes in $S^+$. The map $X^{m,n-1} \hookrightarrow \widetilde{X}^{m,n}$ is anodyne, as a pushout of a coproduct of anodyne maps.

To obtain $X^{m,n}$ from $\widetilde{X}^{m,n}$, we must mark all the cubes of $S^+$. Let $x \in S^+$; then \cref{theta-n+1-comical,theta-markings} imply that all faces of $\theta^{m,n}(x)$ other than $x$ are marked in $X$, and hence also in $X^{m,n-1}$. It follows that $\theta^{m,n}(x) \colon \Box^{m+n+1} \to \widetilde{X}^{m,n}$ factors through $(\Box^{m,n}_{n+1,1})'$. We thus have the following pushout diagram:
	\[
	\begin{tikzcd} [row sep = large]
		\displaystyle{\coprod_{S^+} (\Box^{m+n+1}_{n+1,1})' }
		\arrow [r]
		\arrow [d]
		\pushout
          &
		 \widetilde{X}^{m,n}
		\arrow [d] \\
		\displaystyle{\coprod_{S^+} \tau_{m+n-1}\Box^{m+n+1}_{n+1,1} }
		\arrow [r] &
		X^{m,n}
	\end{tikzcd}
	\]
Thus the map $\widetilde{X}^{m,n} \to X^{m,n}$ is anodyne, as a pushout of a coproduct of anodyne maps. The composite map $X^{m,n-1} \hookrightarrow \widetilde{X}^{m,n} \to X^{m,n}$ is therefore anodyne as well.
\end{proof}

We can now prove our main results.

\begin{thm}\label{Q-equivalence}
The adjunction $Q : \sSet^+ \rightleftarrows \cSet^+ : \int$ is a Quillen equivalence between the model structure on $\sSet^+$ for ($n$-trivial, saturated) complicial sets and the corresponding model structure on $\cSet^+$.
\end{thm}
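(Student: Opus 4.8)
The plan is to assemble results already in place. By \cref{Q-Quillen}, $Q \dashv \int$ is a Quillen adjunction for each of the four pairs of model structures under consideration, so only the defining condition of a Quillen equivalence remains to be checked. Since the cofibrations in both $\sSet^+$ and $\cSet^+$ are exactly the monomorphisms, every object is cofibrant; hence it suffices to show that, for every $A \in \sSet^+$, every fibrant $X \in \cSet^+$, and every map $f \colon QA \to X$, the map $f$ is a weak equivalence if and only if its adjunct $\bar f \colon A \to \int X$ is.

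The key input is that the counit $\epsilon_X \colon Q\int X \to X$ is a weak equivalence whenever $X$ is fibrant. For this I would invoke \cref{counit-anodyne}: the counit is anodyne for every comical set $X$, and a fibrant object in any of the model structures of \cref{comical-model-structure} is in particular a comical set. Moreover the comical open box inclusions and comical marking extensions appear among the pseudo-generating trivial cofibrations of all four model structures, so any anodyne map, lying in the cellular closure of these two classes, is a trivial cofibration — hence a weak equivalence — in each of them. Given $f \colon QA \to X$ with adjunct $\bar f$, the triangle identity writes $f$ as the composite $QA \xrightarrow{Q\bar f} Q\int X \xrightarrow{\epsilon_X} X$; since $\epsilon_X$ is a weak equivalence, two-out-of-three shows that $f$ is a weak equivalence if and only if $Q\bar f$ is, and by \cref{Q-pres-refl-we} (which says that $Q$ both preserves and reflects weak equivalences) this holds if and only if $\bar f$ is. This is precisely the required condition, so $Q \dashv \int$ is a Quillen equivalence.

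There is no genuine obstacle left, since the substantive work — left Quillenness of $Q$, the preservation and reflection of weak equivalences, and the anodyneness of the counit — has already been carried out in Sections \ref{Quillen-functor}, \ref{sec:cones} and the earlier part of this section. The only point demanding a little care is the routine bookkeeping that \cref{Q-Quillen,Q-pres-refl-we,counit-anodyne} apply uniformly across all four model structures, which they do.
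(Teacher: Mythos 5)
Your proposal is correct and follows essentially the same route as the paper: the paper's proof likewise reduces the statement to the facts that $Q$ preserves and reflects weak equivalences (\cref{Q-pres-refl-we}) and that the counit $Q\int X \to X$ is a weak equivalence for fibrant $X$ (via \cref{counit-anodyne}). You merely spell out the standard bookkeeping (cofibrancy of all objects, the factorization $f = \epsilon_X \circ Q\bar f$, and two-out-of-three) that the paper leaves implicit.
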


\begin{proof}
It suffices to show that the left adjoint $Q$ creates the weak equivalences of $\sSet^+$, and that the counit $Q \int X \hookrightarrow X$ is a weak equivalence for all fibrant objects $X$. The former statement is \cref{Q-pres-refl-we}, while the latter is immediate from \cref{counit-anodyne}.
\end{proof}

\begin{thm}\label{T-equivalence}
The adjunction $T : \cSet^+ \rightleftarrows \sSet^+ : U$ is a Quillen equivalence between the model structure on $\cSet^+$ for ($n$-trivial, saturated) comical sets and the corresponding model structure on $\sSet^+$.
\end{thm}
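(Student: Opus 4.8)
The plan is to deduce this formally from the Quillen equivalence $Q \dashv \int$ of \cref{Q-equivalence} together with \cref{TQ}, which says that $TQ$ is naturally weakly equivalent to the identity on $\sSet^+$. The argument is a standard two-out-of-three for Quillen equivalences, which I would carry out as follows.

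First I would note that $T$ and $Q$ are both left Quillen, by \cref{T-Quillen} and \cref{Q-Quillen}, so the composite $TQ \colon \sSet^+ \to \sSet^+$ is left Quillen as well; since $Q$ preserves cofibrant objects there is a canonical natural isomorphism $\mathbb{L}(TQ) \cong \mathbb{L}T \circ \mathbb{L}Q$ of endofunctors of $\mathrm{Ho}(\sSet^+)$, with respect to whichever of the four families of model structures we have fixed. Next I would observe that the natural transformation $\rho \colon TQ \Rightarrow \id$ of \cref{TQ}, although stated there for the plain complicial model structure, is an objectwise weak equivalence in each of the model structures of \cref{complicial-model-structure}: each of those has the same cofibrations as the plain complicial one and strictly more (pseudo-)generating trivial cofibrations, so every plain complicial weak equivalence remains a weak equivalence in it. Restricting $\rho$ to cofibrant objects and passing to homotopy categories then produces a natural isomorphism $\mathbb{L}(TQ) \cong \id_{\mathrm{Ho}(\sSet^+)}$, whence $\mathbb{L}T \circ \mathbb{L}Q \cong \id_{\mathrm{Ho}(\sSet^+)}$.

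Finally, since $Q \dashv \int$ is a Quillen equivalence by \cref{Q-equivalence}, the functor $\mathbb{L}Q$ is an equivalence of categories; combined with $\mathbb{L}T \circ \mathbb{L}Q \cong \id$ this forces $\mathbb{L}T$ to be naturally isomorphic to a quasi-inverse of $\mathbb{L}Q$, and in particular an equivalence. A Quillen adjunction whose total left derived functor is an equivalence of homotopy categories is a Quillen equivalence, so $T \dashv U$ is a Quillen equivalence for all four families of model structures at once. I do not expect any genuine obstacle in this argument, since all the substantial work is done by the already-established \cref{Q-equivalence} (which in turn rests on \cref{counit-anodyne}) and \cref{TQ}; the only point that needs a line of justification is the claim in the second step that $\rho$ is a weak equivalence in the saturated and $n$-trivial model structures, which follows from the localization remark just made.
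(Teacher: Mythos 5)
Your proposal is correct and follows essentially the same route as the paper: deduce $\mathbb{L}T \circ \mathbb{L}Q \cong \id$ from the natural weak equivalence $TQ \Rightarrow \id$ of \cref{TQ}, then use that $\mathbb{L}Q$ is an equivalence by \cref{Q-equivalence} to conclude $\mathbb{L}T$ is one too. Your explicit justification that $\rho$ remains a weak equivalence in the saturated and $n$-trivial model structures (same cofibrations, larger class of trivial cofibrations) is a point the paper leaves implicit, but it is the same argument.
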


\begin{proof}
By \cref{TQ}, we have a natural weak equivalence $TQ \Rightarrow \id_{\sSet^+}$. Thus the composite of the derived functors of $T$ and $Q$ is naturally isomorphic to the identity on the homotopy category of $\sSet^+$. Since the derived functor of $Q$ is an equivalence of categories by \cref{Q-equivalence}, the same is therefore true of the derived functor of $T$.
\end{proof}


\bibliographystyle{amsalphaurlmod}
\bibliography{all-refs}

\end{document}